\newtheorem{thm}{Theorem}[subsection]
\newtheorem{lem}[thm]{Lemma}
\newtheorem{cor}[thm]{Corollary}
\newtheorem{prop}[thm]{Proposition}
\theoremstyle{definition}
\newtheorem{defn}[thm]{Definition}
\theoremstyle{remark}
\newtheorem{rem}[thm]{Remark}
\numberwithin{equation}{section}
\def\vs#1{\vskip .#1 cm} 
\def\noi{\noindent}
\def\xrarrow{\xrightarrow} 
\def\into{\hookrightarrow}
\def\rel{{\rm\ rel\ }}
\def\smallcoprod{\,{\textstyle{\coprod}}\,}
\def\<{\left<}
\def\>{\right>}
\DeclareMathOperator{\interior}{int}%
\DeclareMathOperator{\colim}{colim}%
\newcommand{\Gamsub}[2]{\Gam_{#1,#2}}
\newcommand{\ttd}[1]{\widetilde\cS^{t/d}_{#1}} 
\newcommand{\std}[2]{\widetilde\cS^s_{#1,#2}} 
\newcommand{\stdone}[1]{\widetilde\cS^{s}_{#1}}
\newcommand{\IK}{\text{\rm IK}}
\newcommand{\field}[1]{\mathbb{#1}}
\newcommand{\ZZ}{\ensuremath{{\field{Z}}}}
\newcommand{\RR}{\ensuremath{{\field{R}}}}
\newcommand{\QQ}{\ensuremath{{\field{Q}}}}
\newcommand{\commentout}[1]{}
\newcommand{\size}[1]{\ensuremath{\vert #1 \vert}}
\newcommand{\vertical}{{\text{\sf v}}}
\def\vv{^\vertical\!}
\def\vvv{^\vertical}
\def\ll{\lambda}
\newcommand{\cC}{\ensuremath{{\mathcal{C}}}}
\newcommand{\cH}{\ensuremath{{\mathcal{H}}}}
\newcommand{\cS}{\ensuremath{{\mathcal{S}}}}
\def\a{\alpha}
\def\b{\beta}
\def\g{\gamma}
\def\Gam{\Gamma}
\def\d{\partial}
\def\e{\epsilon}
\def\f{\varphi}
\def\r{\rho}
\def\s{\sigma}
\def\Sig{\Sigma}
\def\t{\tau}
\def\th{\theta}
\def\w{\omega}
\def\ov{\overline}
\def\st{\,|\,}
\newcommand{\subsubsub}[2]{\vs2\noi {#1}. \emph{#2}}
\title[Exotic smooth structures I]{Exotic smooth structures on topological fibre bundles I}
\author{Sebastian Goette}
\address{Mathematisches Institut, Universit\"at Freiburg, Eckerstr. 1, 79104 Freiburg, Germany}
\email{sebastian.goette@math.uni-freiburg.de}
\author{Kiyoshi Igusa}
\address{Department of Mathematics, Brandeis University, Waltham, MA 02454}
\email{igusa@brandeis.edu}
\author{Bruce Williams}
\address{Department of Mathematics, University of Notre Dame, Notre Dame, IN 46556}
\email{williams.4@nd.edu}
\subjclass[2000]{Primary 57R22; Secondary 57R10, 57Q10}
\begin{document}

\begin{abstract} When two smooth manifold bundles over the same base are fiberwise tangentially homeomorphic, the difference is measured by a homology class in the total space of the bundle. We call this the relative smooth structure class. Rationally and stably, this is a complete invariant. We give a more or less complete and self-contained exposition of this theory which is a reformulation of some of the results of \cite{DWW}.

An important application is the computation of the Igusa-Klein higher Reidemeister torsion invariants of these exotic smooth structures. Namely, the higher torsion invariant is equal to the Poincar\'e dual of the image of the smooth structure class in the homology of the base. This is proved in the companion paper \cite{First} written by the first two authors. 
\end{abstract}

\maketitle

\tableofcontents

 
 %
 %

%

 %
 %


\section*{Introduction and outline}


Higher Reidemeister torsion is a cohomology class in the base of a smooth manifold bundle which can be used to distinguish between different smooth structures on the same topological manifold bundle. In other words, if $M\to B$ and $M'\to B$ are two smooth manifold bundles over the same base which are equivalent as topological bundles but have different higher Reidemeister torsion invariants then they cannot be equivalent as smooth bundles. The higher torsion is not always defined. However, given a fiberwise homeomorphism $f:M\to M'$ over $B$ one can always define the relative torsion $\t(M',M)\in H^\ast(B;\RR)$ which must vanish in order for $f$ to be fiberwise homotopic to a diffeomorphism.

There are three different definitions of the higher torsion due to Igusa-Klein \cite {IK1:Borel2}, \cite{I:BookOne}, Dwyer-Weiss-Williams \cite{DWW} and Bismut-Lott \cite {Bismut-Lott95}, \cite{BG2} which are now known to be related in a precise way \cite{BDKW}, \cite{Goette08}, \cite{I:Axioms0}. However, none of these is a complete invariant. There are differences in smooth structure which are not detected by the higher relative torsion. 

The purpose of this paper is to reformulate the Dwyer-Weiss-Williams result \cite{DWW} which calculates, as a fiberwise generalized homology theory, the space of all stable fiberwise tangential smoothings of a compact topological manifold bundle and gives a complete rational invariant for this problem. In our reformulation, smooth structures on a topological manifold bundle are classified (rationally and stably) by a homology class in the total space of the bundle called the ``{smooth structure class}''. In the companion paper \cite{First}, the first two authors show that the higher relative \IK-torsion is the Poincar\'e dual of the image of the smooth structure class in the homology of the base.

We strive to give a complete description of classical results needed to derive the final statements in a useful format, especially for the application in our other paper. This paper is based on a handwritten outline written by the third author \cite{WilliamsNotes06} to explain this version of \cite{DWW} to the first two authors.

\subsection{Main results}

Let $p:M\to B$ be a smooth manifold bundle, i.e., a fiber bundle in which the fiber $X$, base $B$ and total space $M$ are compact smooth oriented manifolds. Suppose that $X$ is $N$-dimensional. Then the \emph{vertical tangent bundle} $T\vv M$ is the $N$-dimensional subbundle of the tangent bundle $TM$ of $M$ which is the kernel of the vector bundle morphism $Tp:TM\to TB$. 

An \emph{exotic smooth structure} on $M$ is another smooth manifold bundle $M'\to B$ with fiber $X'$ which is fiberwise \emph{tangentially homeomorphic} to $M$ in the sense that there is a homeomorphism $f:M\cong M'$ which commutes with projection to $B$ and which is covered by an isomorphism of vertical tangent bundles so that this linear isomorphism is compatible with the topological tangent bundle map associated to $f$. See Section \ref{subsec14} for details.

We need to \emph{stabilize}, by which we mean take the direct limit with respect to all linear disk bundles over the total space $M$. Then the space of stable fiberwise tangential smoothings of the bundle $M$ which we denote $\stdone{B}(M)$ is an infinite loop space and $\pi_0\stdone{B}(M)$ is a finitely generated abelian group. The main theorem (\ref{computation of stable smooth structures}) of this paper is:
\[
	\pi_0\stdone B(M)\otimes\RR\cong\bigoplus_{k>0} H_{\dim B-4k}(M,M_{\d B};\RR)\cong_D \bigoplus_{k>0} H^{\dim X+4k}(M,\d\vv M;\RR)
\]
where $M_{\d B}=p^{-1}(\d B)$ and $\d\vv M$ is the vertical boundary of $M$, which is the fiber bundle over $B$ with fiber $\d X$ (so that $\d M=M_{\d B}\cup \d\vv M$) and $D$ is Poincar\'e duality.
In fact we consider the relative version $\std{B}{\d_0B}(M)$, which is the group of all stable exotic smooth structures on $M$ which agree with the given smooth structure over $\d_0B\subset \d B$.

Any exotic smooth structure $M'$ on $M$ gives an element of $\stdone B(M)$. The image of this element in $\bigoplus_{k>0} H_{\dim B-4k}(M,M_{\d B};\RR)$ is called the \emph{smooth structure class} of $M'$ (relative to $M$) and denoted $\Theta(M',M)$. The main theorem of the companion paper \cite{First} is that the Poincar\'e dual of the relative higher \IK-torsion invariant is the image of the smooth structure class in $H_\ast(B,\d B)$:
\begin{equation}\label{tIK is the pushdown of Theta}
	D\t^\IK(M',M)=p_\ast \Theta(M',M)\in \bigoplus_{k>0} H_{\dim B-4k}(B,\d B;\RR)
\end{equation}
In the case when $M\to B$ is a linear disk bundle, this is a reformulation of the well-known result of Farrell-Hsiang \cite{FarrellHsiangDiffDn}. We prove this in Proposition \ref{prop: normalization of theta}. 

In order to obtain \eqref{tIK is the pushdown of Theta} in general we need a construction to embed the disk models into any smooth bundle and another theorem which relates the smooth structure of a subbundle with that of the whole bundle. In the case of embedded subbundles, we can simply use the naturality of the smooth structure map $\Theta$. Unfortunately, there are not enough embedded subbundles, so we need stratified subbundles (Definition \ref{def:stratified embedding-immersion pair}) to generate all of the homology of $M$ (in the right degrees). The exotic smooth structures on $M$ generated by these stratified subbundles play an essential role in our main application \cite{First}. So, we need an extension of the naturality of $\Theta$ to the stratified case. This is Corollary \ref{cor of stratified homotopy}.




\subsection{Outline of paper}

This paper has two sections. In the first section we construct the space $\stdone B(M)$ of stable tangential smoothings of a compact manifold bundle and show that it is homotopy equivalent to a space of sections of a certain fiberwise homology bundle and we show that this homotopy equivalence is compatible with stratified embeddings.

Section \ref{Appendix A} is divided into five parts:

(\ref{subsec11}) \emph{Spaces of manifolds.} We define spaces of manifolds as simplicial sets. Then a smooth/topological bundle over $B$ is a continuous mapping into the realization of the corresponding simplicial set. The smoothing problem for bundles over $B$ is the problem of computing the homotopy fiber of the mapping $|\cS_\bullet^d(n)|^B\to |\cS_\bullet^t(n)|^B$ induced by the simplicial forgetful map $\f:\cS_\bullet^d(n)\to \cS_\bullet^t(n)$ from the space of smooth manifolds to the space of topological manifolds.

(\ref{subsec12}) \emph{The functor $EM$.} Euclidean bundles are topological bundles with fiber $\RR^n$ together with a zero section. A microbundle is the germ of such a bundle around the zero section. We define a microbundle to be the entire Euclidean bundle with morphisms of microbundles being germs at the zero section of maps. This gives an equivalent category with more convenient objects. We explain the well known fact that every paracompact topological manifold $M$ has a tangent Euclidean bundle $EM$ given as a neighborhood of the diagonal in $M\times M$. Then we construct the fiberwise tangent Euclidean bundle $E\vv M$ for any topological bundle $M\to B$.

(\ref{subsec13}) \emph{Linearized Euclidean bundle $VM$.} We define a \emph{linearized  manifold} to be a triple $(M,VM,\ll)$ where $M$ is a topological manifold $VM$ is a vector bundle over $M$ and $\ll:VM\to EM$ is an equivalence of microbundles. We define a \emph{tangential homeomorphism} between linearized manifolds and show that such a structure is equivalent to a one parameter family of linearized manifolds going from the domain to the target. A smooth manifold is linearized by its tangent bundle and an exponential map $TM\to EM$. We refer to a tangential homeomorphism as ``smooth'' if it is a diffeomorphism.


(\ref{subsec14}) \emph{Tangential smoothing.} We define a \emph{tangential smoothing} of a linearized manifold $(M_0,VM_0)$ to be a tangential homeomorphism with a smooth manifold $(M_1,TM_1)$. Proposition \ref{the homotopy fiber of the map Sd to lin St} shows that the space of tangential smoothings of a fixed linearized manifold is the homotopy fiber of the simplicial forgetful map 
$
	\widetilde\f:\cS_\bullet^d(n)\to \widetilde\cS_\bullet^t(n)
$ where $ \widetilde\cS_\bullet^t(n)$ is the simplicial set of linearized topological $n$ manifolds.

Let $\ttd{B,A}(M)$ be the space (simplicial set) of all fiberwise tangential smoothings (relative to $A$) of a linearized topological bundle $M\to B$ (Definition \ref{def: fiberwise tangential smoothings}). Then we define the \emph{stable tangential smoothing space} to be the direct limit
\[
	\std{B}{A}(M_0):=\lim_\to
	\ttd{B,A}(D(\xi))
\]
where the direct limit is with respect to all linear disk bundles $\xi$ over $M_0$. This definition has the effect of ``forgetting'' the tangent bundle of $M_0$.

(\ref{subsec15}) \emph{Smoothing theorems.} In this section we prove the main smoothing theorem \ref{main smoothing thm} which gives a homotopy equivalence
\[
	\g_W: \std{B}{\d_0B}(W)\simeq\Gamsub{B}{\d_0B}\cH^\%_B(W)
\]
if $\d B=\d_0B\cup\d_1B$ where $\cH^\%_B(W)$ is the fiberwise $\cH^\%$ homology bundle over $B$, i.e., the bundle whose fiber over $b$ is $\cH^\%(W_b)$ where $\cH^\%$ is the homology theory associated to the $\Omega$-spectrum of the infinite loop space
\[
	\cH(\ast)=\stdone\ast(\ast)
\]
which is the space of all stable tangential smoothings of a point $\ast$ considered as a bundle over itself.

The computation of $\std{B}{\d_0B}(W)$ is broken up into several subsections of Section \ref{subsec15}. Given a fixed smooth bundle $W\to B$ and another bundle $W'\to B$ which is fiberwise tangentially equivalent to $W$, the image of $W'$ in $\std{B}{\d_0B}(W)$ is the obstruction to finding a stable fiberwise diffeomorphism $W\to W'$. Using immersion theory we show that there is no obstruction to finding this fiberwise diffeomorphism on the core of $W$. (The \emph{core} being any codimension $0$ subbundle of the fiberwise interior of $W$ which is a fiberwise deformation retract.) Then, by a theorem of Morlet, an extension of this diffeomorphism to all of $W$ is the same as a section of the bundle over $B$ with fiber 
\[
\cH^\%(X)=\Omega^\infty(X_+\wedge\cH(\ast))
\]
which is a homology theory in the fiber $X$.

Finally, Section \ref{subsec15} ends with a discussion of stratified subbundles. A {stratified subbundle} $E^\delta$ of a smooth bundle $W\to B$ is the image $\tilde\ll(E)$ of a stratified embedding-immersion pair $(\tilde\ll,\ll)$ with a certain part deleted. A \emph{stratified embedding} from another smooth bundle $\pi:E\to L$ into $p:W\to B$ is defined to be a pair consisting of a codimension 0 embedding $\tilde\ll:E\to W$ and a codimension 0 immersion $\ll:L\to B$ which has transverse self-intersections and meets the boundary of $B$ transversely so that $p\circ\tilde\ll=\ll\circ\pi$ (Definition \ref{def:stratified embedding-immersion pair}). Let $\d_0L$ be the closure of the complement in $\d L$ of $\ll^{-1}(\d B)$. Then $E^\delta$ is the image $\tilde\ll(E)$ with $\tilde\ll(E|{\d_0L})$ deleted. Corollary \ref {corollary of stratified smoothing theorem} proves the key result that the homotopy equivalence $\g_W$ is compatible with stratified embeddings.
\vs3

In Section \ref{Appendix B} we review elementary homotopy theory (Section \ref{subsec21}) and use it in Section \ref{subsec22} to calculate, rationally, $\pi_0$ of the spaces of sections obtained in Section \ref{Appendix A}. The main result is Corollary \ref{main homotopy calculation}: Suppose that the base $B$ and fiber $X$ of $M\to B$ are oriented manifolds. Then we have an isomorphism
\[
	\th_M:\pi_0\Gamsub{B}{\d_0B}\cH^\%_B(M)\otimes\RR\cong \bigoplus_{k>0} H_{\dim B-4k}(M,\d_1M;\RR)
\]
where $\d_1M=M_{\d_1B}$. Together, with the homotopy equivalence $\g_M$ obtained in Section \ref{Appendix A}, this calculations give the isomorphism:
\[
	\Theta_M=\th_M\circ(\g_M)_\ast:\std{B}{\d_0B}(M)\otimes\RR\xrarrow\simeq
	\bigoplus_{k>0} H_{q-4k}(M,\d_1M;\RR)
\]
Corollary \ref{main homotopy calculation} follows from Theorem \ref{thm: homotopy type of space of sections} which is proved in Section \ref{subsec23}. Finally, in Section \ref{subsec24}, we show that these isomorphisms are compatible with stratified embeddings. In particular, we obtain the crucial fact (Corollary \ref{main corollary}), needed in our other paper, that $\Theta$ is compatible with stratified embedding.


\subsection{Acknowledgments} 

This paper originated at the 2006 Arbeitsgemeinshaft at Oberwolfach on ``Higher Torsion Invariants in Differential Topology and Algebraic K-Theory.''  The American Institute of Mathematics in Palo Alto also helped us to finish this project by hosting a workshop on higher Reidemeister torsion in 2009. Both meeting were very stimulating and productive and without them this work would never have been completed. Also, we would like to thank the referee for many suggestions which greatly improved this paper and the companion paper \cite{First}.

%
%

%
%
%



\section{Smoothing theory}\label{Appendix A}




%
%

\subsection{Spaces of manifolds}\label{subsec11}

First we recall the basic construction which is that a compact topological/smooth manifold bundle $M\to E\to B$ is equivalent to a mapping from $B$ to the space of topological/smooth manifolds which are homeomorphic/diffeomorphic to $M$. This leads us to consider two spaces of manifolds: topological manifolds and smooth manifolds and the homotopy fiber of the forgetful map
\[
	\f: \cS_\bullet^d(n)\to \cS_\bullet^t(n)
\]
which is the space of all smoothings of a fixed topological $n$-manifold. We also need one other space of manifolds: topological manifolds with linear structures on their tangent Euclidean bundles. We will discuss this after we recall the theory of Euclidean bundles in the next section.




\subsubsection{Topological manifolds} Let $S^t(n)$ be the set of all compact topological $n$-submanifolds of $\RR^\infty$ so that \[
(M,\d M)\subset ([0,\infty)\times \RR^\infty,0\times\RR^\infty)
\]
This condition allows us to attach a canonical external open collar $C=\d M\times(-1,0]\subset \RR^\infty\times(-1,0]$. Instead of defining a topology on the space $S^t(n)$ we will take the easy approach of making it into a simplicial set. So, let $\cS_\bullet^t(n)$ be the simplicial set whose $k$-simplices are continuous $\Delta^k$ families of compact topological $n$-manifolds $M_t,t\in \Delta^k$. By a \emph{continuous family} we mean a mapping $f:\Delta^k\to S^t(n)$ with the property that its adjoint
\[
	W=\{(t,x)\in\Delta^t\times \RR^\infty\st x\in M_t\}\subseteq \Delta^k\times\RR^\infty
\]
is a fiber bundle over $\Delta^k$ with fiber $f(t)=M_t$.

There is a \emph{tautological bundle} over the geometric realization $\size{\cS_\bullet^t(n)}$ whose fibers are compact $n$-manifolds embedded in $\RR^\infty$. The inverse image of a simplex is the corresponding manifold $W$ given above. Since this bundle contains all possible $W$, it is universal, i.e., any compact $n$-manifold bundle $p:E\to B$ over a triangulated space $B$ is classified by a mapping $B\to S^t(n)$ which is continuous on each simplex of $B$ in the above sense. Therefore, we get a simplicial map 
\[
	simp\ B\to {\cS_\bullet^t(n)}
\]
from the set of simplices of $B$ to ${\cS_\bullet^t(n)}$ which sends a $k$-simplex $\s$ to $p^{-1}(\s)\in \cS^t_k(n)$ (assuming that a fiberwise embedding $E\into B\times\RR^\infty$ has been chosen). This proves the following well-known theorem where $Homeo(M)$ is the group of homeomorphisms of $M$ with the compact-open topology.

\begin{thm} The geometric realization of $\cS_\bullet^t(n)$:
\[
	\size{\cS_\bullet^t(n)}\simeq\coprod BHomeo(M)
\]
is the disjoint union over all homeomorphism classes of compact $n$-manifolds $M$ of the classifying space of $Homeo(M)$.\qed
\end{thm}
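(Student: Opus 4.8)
The plan is to decompose $|\cS_\bullet^t(n)|$ into its path components, index these by homeomorphism types of compact $n$-manifolds, and then identify the component of a given $M$ with $BHomeo(M)$ by exhibiting over it a principal $Homeo(M)$-bundle with contractible total space. For the components: a $1$-simplex of $\cS_\bullet^t(n)$ is by definition a fiber bundle over $\Delta^1$, so two $0$-simplices joined by an edge are homeomorphic manifolds, whence manifolds in a common path component of $|\cS_\bullet^t(n)|$ are homeomorphic. Conversely any two embeddings of a fixed compact $n$-manifold $M$ into $\RR^\infty$ are isotopic: split $\RR^\infty=V\oplus V'$ into two copies of $\RR^\infty$, use that any embedding is isotopic to one into $V$ (resp.\ into $V'$) through the connected space of linear isometric embeddings, and then linearly interpolate between an embedding into $V$ and one into $V'$ — the interpolant is injective because $V\cap V'=0$ and is an embedding because $M$ is compact; this interpolation also shows $\Emb(M,\RR^\infty)$ is contractible. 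Since such an isotopy is a $1$-simplex of $\cS_\bullet^t(n)$, we get $\pi_0|\cS_\bullet^t(n)|=\{\text{homeomorphism classes}\}$ and $|\cS_\bullet^t(n)|=\coprod_{[M]}|\cS_\bullet^t(M)|$, where $\cS_\bullet^t(M)\subseteq\cS_\bullet^t(n)$ is the sub-simplicial set of families all of whose fibers are homeomorphic to $M$. It remains to prove $|\cS_\bullet^t(M)|\simeq BHomeo(M)$.

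Fix $M$ and put $E_M:=|\cS_\bullet^t(M)|$. The tautological bundle $\eta\to E_M$ is a fiber bundle with fiber $M$: over each open simplex it is the restriction of a family, hence trivial, and these trivializations glue to a locally trivial bundle over all of $E_M$; since any two trivializations of $\eta$ over an open set differ by a map into $Homeo(M)$ — continuous into the compact-open topology, because both the homeomorphism and its inverse vary continuously — the bundle $\eta$ has structure group $Homeo(M)$. Let $P_M\to E_M$ be the associated principal bundle, $P_M=\{(x,\psi)\mid x\in E_M,\ \psi\colon M\xrightarrow{\cong}\eta_x\}$, so that $E_M=P_M/Homeo(M)$; as $E_M$ is a CW complex this bundle is numerable, so it suffices to show $P_M$ is contractible. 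Now $P_M$ is the realization of the simplicial set $\cE_\bullet(M)$ whose $k$-simplices are pairs $(W,\phi)$ with $W$ a $k$-simplex of $\cS_\bullet^t(M)$ and $\phi\colon\Delta^k\times M\xrightarrow{\cong}W$ a trivialization over $\Delta^k$ (such a pair records precisely a point of $P_M$ above each point of $W$), and $(W,\phi)\mapsto\bigl(t\mapsto\mathrm{pr}_{\RR^\infty}\circ\,\phi(t,\cdot)\bigr)$ identifies $\cE_\bullet(M)$ with the singular simplicial set of $\Emb(M,\RR^\infty)$, its inverse sending a singular simplex $g$ to the $\Delta^k$-family trivialized by $g$. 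Hence $P_M=|\cE_\bullet(M)|\simeq\Emb(M,\RR^\infty)\simeq\ast$, so $P_M$ is a model for $EHomeo(M)$ and $E_M$ a model for $BHomeo(M)$. Summing over $[M]$ gives the theorem.

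The step needing the most care is the comparison between the simplicial and the topological pictures: that the realized tautological bundle is genuinely locally trivial over all of $E_M$ with $Homeo(M)$ as structure group, and that $P_M$ — a priori only a geometric realization — is the total space of the resulting principal bundle together with its natural $Homeo(M)$-action. This is the place where one must keep in mind that $Homeo(M)$ is a topological, not a discrete, group, so transition functions must be checked continuous into the compact-open topology and the group action on $|\cE_\bullet(M)|$ seen to be the expected one, for which routing through $\mathrm{Sing}\,\Emb(M,\RR^\infty)$ is the convenient device. (Had one instead defined the space of manifolds as the honest topological space $S^t(n)$ of submanifolds of $\RR^\infty$ and taken its singular complex, the argument would in addition require the Edwards--Kirby local contractibility of homeomorphism groups to know that $\Emb(M,\RR^\infty)\to S^t(M)$ is a principal bundle; building local triviality of families into the definition of $\cS_\bullet^t$ is precisely what sidesteps this.)
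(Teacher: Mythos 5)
Your decomposition into components and your computation of the would-be total space are fine as far as they go (two remarks: the interpolation argument must respect the paper's normalization $(M,\d M)\subset([0,\infty)\times\RR^\infty,0\times\RR^\infty)$, which it does if you interpolate the collar coordinate separately, since a convex combination of two nonnegative functions vanishing exactly on $\d M$ again vanishes exactly on $\d M$; and this whole route differs from the paper, which does not touch principal bundles at all but simply invokes the universality of the tautological bundle for manifold bundles over triangulated bases). The genuine gap is the central identification ``$P_M$ is the realization of $\cE_\bullet(M)$.'' It is false on both counts. First, a $k$-simplex $(W,\phi)$ of $\cE_\bullet(M)$ records an entire trivialization over $\Delta^k$, not just a frame at one point, so the natural map $|\cE_\bullet(M)|\to P_M$, $((W,\phi),t)\mapsto\bigl(|(W,t)|,\phi(t,\cdot)\bigr)$, is badly non-injective: two trivializations of $W$ that agree at $t$ but not elsewhere give distinct points of the realization with the same image. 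Second, and more seriously, the topological group $Homeo(M)$ does not act continuously on the CW complex $|\cE_\bullet(M)|$ by $(W,\phi)\cdot h=(W,\phi\circ(\mathrm{id}\times h))$: already on vertices the orbit of $(M_0,\psi)$ consists of uncountably many distinct vertices, and the $0$-skeleton of a realization is discrete, so a nonconstant path in $Homeo(M)$ cannot act continuously. Hence the contractibility of $|\cE_\bullet(M)|\cong|\mathrm{Sing}\,\Emb(M,\RR^\infty)|$ does not exhibit the component of $M$ as the base of a principal $Homeo(M)$-bundle with contractible total space. Relatedly, the assertion that the trivializations over open simplices ``glue to a locally trivial bundle over all of $E_M$'' with structure group $Homeo(M)$ is precisely the delicate point (local triviality at points of lower-dimensional faces), and it is asserted rather than proved; routing through $\mathrm{Sing}\,\Emb(M,\RR^\infty)$ does not address it.

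The repair that preserves your idea is to stay simplicial throughout and never put the compact-open topology on a set of simplices. The forgetful map $\cE_\bullet(M)\to\cS_\bullet^t(M)$ is a principal bundle of simplicial sets for the simplicial group $\mathrm{Sing}\,Homeo(M)$: the trivializations of a fixed $W\in\cS^t_k(M)$ form a free transitive $\Map(\Delta^k,Homeo(M))$-set, nonempty because a fiber bundle over the contractible paracompact base $\Delta^k$ is trivial. Since the total space is $\mathrm{Sing}\,\Emb(M,\RR^\infty)\simeq\ast$ by your swindle, $\cS_\bullet^t(M)$ is weakly equivalent to the classifying simplicial set of $\mathrm{Sing}\,Homeo(M)$, and therefore $\size{\cS_\bullet^t(M)}\simeq B\size{\mathrm{Sing}\,Homeo(M)}\simeq BHomeo(M)$, using that $|\mathrm{Sing}\,Homeo(M)|\to Homeo(M)$ is a weak equivalence of topological groups. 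Alternatively one can follow the paper's (sketched) universality argument: both $\size{\cS^t_\bullet(M)}$ and $BHomeo(M)$ represent concordance classes of $M$-bundles on CW complexes, and the tautological bundle provides the comparison. Either way, the step where your write-up currently breaks is exactly the conflation of the simplicial principal bundle with a topological principal $Homeo(M)$-bundle on the realization.
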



\subsubsection{Smooth manifolds} We define a smoothing of a topological manifold $M$ without boundary to be continuous mapping $\a:M\to \RR^\infty$ whose image is an immersed smooth submanifold. We call $\a$ an ``immersion''.

\begin{defn} If $M$ is a topological manifold and $N$ is a smooth manifold we define an \emph{immersion} to be a continuous mapping $\a:M\to N$ so that for every $x\in M$ there is an open neighborhood $U$ of $x$ in $M$ so that $\a$ is an embedding on $U$ with image $\a(U)$ a smooth submanifold of $N$ with smooth boundary. If $M_t$ is a family of topological manifolds without boundary forming the fibers of a bundle $W\to B$ over a smooth manifold $B$ then by a \emph{family of immersions} we mean a continuous family of maps $\a_t:M_t\to N$ giving an immersion $\a:W\to N\times B$.
\end{defn}

If $U$ is an open subset of the interior of a topological manifold $M$ we define a \emph{smoothing} of $U$ to be an immersion $\a:U\to \RR^\infty$. We defined a \emph{smoothing} of a closed subset $A$ of $M$ to be the germ of a smoothing of a neighborhood of $A$ in the interior of $M$. Thus a smoothing of $A$ is an equivalence class of smoothings in open neighborhoods of $A$ where two such smoothings are equivalent if they agree on a third smaller neighborhood of $A$. When we pass to manifold bundles we always define these open neighborhoods to be \emph{uniform open neighborhoods} meaning they are open subsets of the total space of the bundle. 

If $M$ is a manifold with boundary then we define a smoothing of $M$ to be the germ of a smoothing of a neighborhood of $M$ in $M'=M\cup C$ where $C=\d M\times(-1,0]$ is the standard external open collar for $M$. Smoothings of closed subsets of $M$ are defined similarly. 

The key point about this version of the definition of smoothing is that it is clearly \emph{excisive} in the following sense.

\begin{prop}\label{smoothing is excisive}
If $M$ is the union of two closed subsets $M=A\cup B$, then a smoothing $\a$ of $M$ is the same as a pair of smoothings $\a_A,\a_B$ for $A$ and $B$ which agree on $A\cap B$.
\end{prop}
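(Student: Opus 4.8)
The plan is to unwind the definitions on both sides and observe that the claimed equivalence is essentially a tautology once the germ-of-immersion formalism is in place. Recall that a smoothing of a closed set was defined as a germ near that set of an immersion into $\RR^\infty$ (extended over the external collar when there is boundary), and that germs are equivalence classes under restriction to smaller neighbourhoods. So the proof amounts to constructing the two maps in each direction and checking they are mutually inverse.

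First I would set up notation: write $M = A\cup B$ with $A$, $B$ closed, so $A\cap B$ is closed as well, and fix for each of $M$, $A$, $B$, $A\cap B$ the relevant notion of ``neighbourhood'' (interior neighbourhood, or neighbourhood in $M\cup C$ if there is boundary — note $\d M\subseteq \d A\cup\d B$, so the collars are compatible). In the forward direction, given a smoothing $\a$ of $M$, represented by an immersion on some open $U\supseteq M$, simply restrict: its germ along $A$ gives $\a_A$, its germ along $B$ gives $\a_B$, and these obviously restrict to the same germ along $A\cap B$, so we get a compatible pair. This direction requires nothing more than that restriction of germs is well defined and that the single representative $U$ serves as a neighbourhood of all four sets at once.

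The content is in the reverse direction. Given $\a_A$, $\a_B$ agreeing on $A\cap B$, choose representatives: an immersion $\a_A'$ on an open $U_A\supseteq A$ and $\a_B'$ on an open $U_B\supseteq B$. The hypothesis that they agree on $A\cap B$ means there is an open $V$ with $A\cap B\subseteq V\subseteq U_A\cap U_B$ on which $\a_A'|_V = \a_B'|_V$. The point of the argument is a shrinking lemma: one can find open sets $U_A''\supseteq A$ and $U_B''\supseteq B$ with $U_A''\cap U_B''\subseteq V$, obtained by separating the closed sets $A\setminus V$ and $B\setminus V$ (which are disjoint since any point of $A\cap B$ lies in $V$) — here I would invoke normality of $M$ (a manifold, hence paracompact Hausdorff, hence normal) to get disjoint open sets $P\supseteq A\setminus V$, $Q\supseteq B\setminus V$, and then take $U_A'' = (U_A\cap P)\cup V$ and $U_B'' = (U_B\cap Q)\cup V$. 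Then $\a_A'$ and $\a_B'$ glue to a well-defined immersion on the open set $U_A''\cup U_B''\supseteq A\cup B = M$ because they agree on the overlap, which sits inside $V$. This gives a smoothing of $M$ whose germs along $A$ and $B$ are $\a_A$, $\a_B$ by construction, so the two maps are inverse to each other.

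The main obstacle — really the only subtlety — is the bundle-parametrised version alluded to in the paper's remark about \emph{uniform open neighbourhoods}: when $M = M_t$ is the fibrewise total space of a bundle over a simplex (or over $B$) and the neighbourhoods are required to be open in the total space rather than fibrewise, one must perform the separation argument once in the total space rather than fibrewise, and check that the resulting open sets still contain the relevant closed \emph{subbundles}. This works verbatim since the total space of a manifold bundle over a compact base is again a (paracompact, normal) manifold, so the same normality argument applies; I would remark that the construction is natural in the base and hence passes to families, rather than rewriting the proof. I would also note that ``agree on $A\cap B$'' for germs is exactly the statement that the representatives agree on \emph{some} neighbourhood of $A\cap B$, which is what licenses the choice of $V$ above, so no compatibility data beyond equality of germs is needed.
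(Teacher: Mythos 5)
Your argument is correct and is essentially the paper's proof: the paper simply glues the representative immersions on neighbourhoods of $A$ and $B$ in $M'$ to get an immersion on a neighbourhood of $M$. Your write-up just supplies the detail the paper leaves implicit (shrinking the neighbourhoods via normality so the overlap lies where the representatives actually agree), which is a fine and correct elaboration of the same approach.
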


\begin{proof}
The smoothings $\a_A,\a_B$ are given by immersions on neighborhoods of $A,B$ in $M'$ which agree on the intersection. This gives an immersion defined on a neighborhood of $M$ in $M'$.
\end{proof}

Let $S^d(n)$ be the set of all pairs $(M,\a)$ where $M$ is an element of $S^t(n)$, i.e. a compact topological submanifold of $\RR^\infty$ and $\a$ is a smoothing of $M$. Let $\cS_\bullet^d(n)$ denote the simplicial set whose $k$-simplices are pairs $(W,\a)$ where $W\in\cS_\bullet^t(n)$ and $\a:W\to \RR^\infty\times B$ is an immersion over $B$.


Note that there is a simplicial forgetful map
\[
	\f:\cS_\bullet^d(n)\to\cS_\bullet^t(n)
\]
which is given in every degree by projection to the first coordinate: $\f_k(W,\a)=W$.

\begin{thm} The geometric realization of $\cS_\bullet^d(n)$:
\[
	\size{\cS_\bullet^d(n)}\simeq\coprod BDif\!f(M)
\]
is weakly homotopy equivalent to the disjoint union over all diffeomorphism classes of compact $n$-manifolds $M$ of the classifying space of the group $Dif\!f(M)$ of diffeomorphisms of $M$ with the $C^\infty$ topology.\qed
\end{thm}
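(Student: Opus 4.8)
The plan is to argue, just as for $\cS_\bullet^t(n)$ in the preceding theorem, that $\size{\cS_\bullet^d(n)}$ is a classifying space --- now for compact smooth $n$-manifold bundles --- and then to split it into path components. A $k$-simplex $(W,\a)$ should be viewed as a smooth fibre bundle $W\to\Delta^k$: the immersion $\a$ puts a smooth structure on each fibre, and, after arranging (see below) that the total space is smooth with the projection a proper submersion, Ehresmann's theorem for manifolds with corners shows that $W\to\Delta^k$ is locally, hence globally, trivial over the convex base. Conversely, a compact smooth $n$-manifold bundle $E\to B$ over a CW base yields a classifying map $B\to\size{\cS_\bullet^d(n)}$ once one picks a fibrewise topological embedding of the underlying topological bundle into $B\times\RR^\infty$ together with a fibrewise smooth immersion of it into $B\times\RR^\infty$ realizing the given fibrewise smooth structure. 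Since the diffeomorphism type of the fibre is locally constant along each $\Delta^k$, the simplicial set $\cS_\bullet^d(n)$ is the disjoint union of the full subcomplexes $\cS_\bullet^d(n)_{M}$ consisting of simplices whose fibres are diffeomorphic to a fixed compact smooth $n$-manifold $M$; and, $\size{\cS_\bullet^d(n)}$ being a CW complex, it is the corresponding disjoint union of the realizations $\size{\cS_\bullet^d(n)_{M}}$, with $\pi_0$ the set of diffeomorphism classes.

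It then remains to identify $\size{\cS_\bullet^d(n)_{M}}$ with $BDiff(M)$, which I would do by the standard ``space of submanifolds is a classifying space'' argument. Pulling back the tautological family turns a homotopy class of maps into $\size{\cS_\bullet^d(n)_{M}}$ naturally into an isomorphism class of smooth $M$-bundles over the domain; the construction of the first paragraph provides an inverse, well-defined because the space of fibrewise topological embeddings into $B\times\RR^\infty$ and the space of fibrewise smooth immersions into $B\times\RR^\infty$ are both weakly contractible (general position in the infinite-dimensional $\RR^\infty$). Since the functor of isomorphism classes of smooth $M$-bundles is represented by $BDiff(M)$, and two CW complexes representing the same functor are weakly homotopy equivalent, we get $\size{\cS_\bullet^d(n)_{M}}\simeq BDiff(M)$. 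The equivalence is only a weak one because the simplicial model detects $Diff(M)$, with its $C^\infty$ topology, and the ambient mapping spaces only up to weak homotopy equivalence.

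The step I expect to be the main obstacle is the one deferred above: showing that the hybrid data $(W,\a)$ --- a fibrewise topologically embedded family together with a fibrewise immersion --- really amounts to a smooth fibre bundle in the usual sense. This requires arranging that such a family has a smooth total space with the projection a proper submersion, invoking Ehresmann's theorem for submersions of manifolds with corners (the base $\Delta^k$ has corners), and tracking the compatibility of the smoothing germs along the external collars $\d M\times(-1,0]$. Once this identification is available, the rest is the formal classifying-space bookkeeping that already underlies the topological case.
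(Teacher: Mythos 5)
The paper records this result without proof (the $\qed$ follows the statement directly), treating it as the smooth analogue of the universality argument just given for $\cS_\bullet^t(n)$; your outline is exactly that standard argument carried over to the smooth case, and you have correctly isolated the one step that requires real work. Two points worth sharpening. For the Ehresmann step: by the paper's definition an ``immersion'' is a continuous map which is \emph{locally an embedding onto a smooth submanifold}, so $\alpha$ equips $W$ with the structure of a smooth manifold with corners, the projection to $\Delta^k$ is a proper submersion, and Ehresmann for manifolds with corners applies; there is no difficulty from $W$ being a priori only topological, since the smooth structure is carried entirely by $\alpha$. For the contractibility claim: the paper permits self-intersecting immersions rather than only embeddings, so the statement that the relevant mapping spaces into $B\times\RR^\infty$ are weakly contractible needs the extra observation that in $\RR^\infty$ any compact immersed family can be isotoped, through immersions with smooth image, to an embedded one by pushing into a fresh block of coordinates; with that remark the spaces of immersions and of embeddings into $\RR^\infty$ are weakly equivalent and both weakly contractible, and your representability argument goes through.
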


Our definition of smoothing also gives us an exponential map
\[
	\mu_M:TM\to M'
\]
defined in some neighborhood $U(M)$ of the zero section of the tangent bundle $TM\subset M\times \RR^\infty$. This is given by the inverse function theorem as the second coordinate of the inverse of $\pi:M\times M'\to TM$ given by $\pi(x,y)=\pi_x(\a'(y)-\a(x))$ where $\pi_x$ is orthogonal projection to the tangent plane to $\a(M)$ at $\a(x)$ translated to the origin.


\subsubsection{Homotopy fiber of $\f$}\label{subsec:homotopy fiber of phi}

Given a single topological manifold $X$, the space of smoothings $\a$ of $X$ is homeomorphic to the space of all pairs $(X,\a)$. This is the same as the inverse image of $X$ (and its degeneracies) under the simplicial forgetful map $\f:\cS_\bullet^d\to\cS_\bullet^t$. An elementary argument shows that this space is the homotopy fiber of the simplicial map $\f$. More generally, the space of smoothings of a continuous $\Delta^k$ family $W_0$ of topological manifolds is homotopy equivalent to the space of smoothings of $W_0$ which we define as follows.

\begin{defn} Suppose $W_0$ is a $\Delta^k$ family of topological $n$-manifolds, i.e. $(W_0\to \Delta^k)\in \cS^t_k(n)$. Then, a \emph{smoothing} of $W_0$ is defined to be a pair $(W,\a)$ where
\begin{enumerate}
\item $W$ is a continuous $\Delta^k\times I$ family of topological manifolds,
\item $W|\Delta^k\times0=W_0$,
\item $\a$ is a smoothing of $W_1=W|\Delta^k\times1$.
\end{enumerate}
\end{defn}

Since $W$ is topologically isomorphic to $W_0\times I$, the space of smoothings of $W_0$ is homotopy equivalent to the space of actual smoothings of $W_0$. Homotopy smoothings have some additional properties which we state without proof.

Let $\cS_\bullet^{t/d}(n)$ denote the simplicial set whose $k$-simplices are smoothings $(W,\a)$ of $k$-simplices $W_0\in\cS_k^t(n)$ as defined above. We have the following observation.

\begin{prop}\label{prop:smoothing = actual smoothing}
The simplicial forgetful map $\cS_\bullet^{t/d}(n)\to\cS_\bullet^d(n)$ sending $(W,\a)$ to $(W_1,\a)$ is a homotopy equivalence.\qed
\end{prop}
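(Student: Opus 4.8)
The plan is to exhibit the forgetful map $\cS_\bullet^{t/d}(n)\to\cS_\bullet^d(n)$, $(W,\a)\mapsto (W_1,\a)$, as a levelwise weak equivalence (or at least as a map inducing a weak homotopy equivalence on realizations) by producing an explicit homotopy inverse and chain homotopies. The natural candidate for the inverse sends a smoothing $(W_1,\a)\in\cS^d_k(n)$ to the ``constant cylinder'' smoothing $(W_1\times I,\a)$, where $W_1\times I$ is viewed as a $\Delta^k\times I$ family whose restriction to $\Delta^k\times 0$ is $W_1$ (so $W_0=W_1$ here) and whose smoothing on $\Delta^k\times 1$ is the given $\a$. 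Composing $\cS^{t/d}\to\cS^d\to\cS^{t/d}$ replaces a general cylinder $W$ over $\Delta^k\times I$ by the constant cylinder $W_1\times I$; the other composite $\cS^d\to\cS^{t/d}\to\cS^d$ is the identity on the nose. So the whole content is that $(W,\a)$ and $(W_1\times I,\a)$ represent the same point of $|\cS^{t/d}_\bullet(n)|$, i.e. that one can interpolate between an arbitrary topological cylinder on $W_1$ and the product cylinder.

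The key input is the one already flagged in the excerpt: since $W$ is a continuous $\Delta^k\times I$ family of topological manifolds with $W_0:=W|\Delta^k\times 0$, the bundle $W\to\Delta^k\times I$ is a fiber bundle over a (collapsible) base, hence topologically trivial, giving a homeomorphism $W\cong W_0\times I$ over $\Delta^k\times I$ — and we may arrange the trivialization to be the identity on $W_0$. First I would make this trivialization precise and functorial enough in the simplicial variable to yield a simplicial homotopy; concretely one uses the standard product $\Delta^k\times\Delta^1\to\Delta^{k}$-type subdivision, or more simply one defines a $\Delta^{k+1}$-family $\mathcal W$ over $\Delta^k\times I\times J$ (the extra coordinate being the homotopy parameter) that at $J=0$ is $W$ and at $J=1$ is $W_1\times I$, carrying along the smoothing $\a$ on the top face $\Delta^k\times 1$ throughout. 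Because the smoothing $\a$ is excisive and lives only over the top face, it is unaffected by reparametrizing or trivializing the $I$-direction, so it extends over the homotopy without modification.

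Then I would package this: define the homotopy $H:|\cS^{t/d}_\bullet(n)|\times I\to |\cS^{t/d}_\bullet(n)|$ (or its simplicial analog, a map $\cS^{t/d}_\bullet(n)\times\Delta^1\to\cS^{t/d}_\bullet(n)$ after a suitable fibrant replacement, or simply a homotopy of realizations) from the identity to the composite $\cS^{t/d}\to\cS^d\to\cS^{t/d}$, using the $J$-parametrized family above; symmetrically the composite the other way is literally the identity, so no homotopy is needed there. Combined with the remark in the excerpt that ``the space of smoothings of a continuous $\Delta^k$ family $W_0$ is homotopy equivalent to the space of actual smoothings of $W_0$'', this shows the forgetful map is a weak homotopy equivalence on realizations. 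If one wants a statement at the simplicial level rather than after realization, one instead checks that the map is a Kan fibration with contractible fibers, or applies the realization lemma after observing it is a levelwise weak equivalence of the relevant function complexes.

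The main obstacle I expect is the bookkeeping needed to make the ``trivialize the $I$-direction'' move genuinely natural in the simplicial coordinate $\Delta^k$ — i.e. to produce an honest simplicial homotopy (or a compatible family of trivializations over all faces and degeneracies of $\Delta^k$) rather than just a fiberwise homeomorphism for each fixed $k$. This is the kind of thing that is ``clear'' but requires either invoking a parametrized isotopy-extension / bundle-trivialization theorem with good functoriality, or using the Moore-path / telescoping trick so that the trivializations compose strictly. Once that naturality is in hand, everything else — that the smoothing $\a$ rides along untouched, and that the reverse composite is the identity — is immediate from the definitions, so I would not expect to ``grind through'' anything beyond this point.
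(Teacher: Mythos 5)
Your plan is right, and since the paper attaches \qed to the statement with no written argument (the justification is the one-line remark ``Since $W$ is topologically isomorphic to $W_0\times I$, the space of smoothings of $W_0$ is homotopy equivalent to the space of actual smoothings of $W_0$''), you are not deviating from the paper so much as filling in the proof it elides. The section $s:(W_1,\a)\mapsto (W_1\times I,\a)$ with $r\circ s=\mathrm{id}$ on the nose, and a homotopy $s\circ r\simeq\mathrm{id}$ interpolating $W$ to $W_1\times I$, is exactly the right shape of argument.

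The one thing worth flagging is that the ``main obstacle'' you anticipate — making a trivialization $W\cong W_0\times I$ compatible with faces and degeneracies — never actually arises if you build the homotopy differently. Instead of choosing a trivialization, pull back along the reparametrizations $q_j:I\to I$, $q_j(t)=j+(1-j)t$. For $(W,\a)\in\cS^{t/d}_k(n)$ with $W\subset\Delta^k\times I\times\RR^\infty$, set $\mathcal W_j:=(\mathrm{id}_{\Delta^k}\times q_j)^\ast W$. This is again a $\Delta^k\times I$-family of manifolds embedded in $\RR^\infty$; it satisfies $\mathcal W_0=W$, $\mathcal W_1=W_1\times I$, and its top face $\mathcal W_j\vert_{\Delta^k\times 1}=W_1$ is unchanged for all $j$, so $\a$ rides along tautologically. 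Because $q_j$ acts only on the $I$-coordinate, this construction strictly commutes with all simplicial operators in the $\Delta^k$-variable, so no isotopy-extension, no choice of trivialization, and no Moore/telescope bookkeeping is required — the $j$-parametrized family is already natural. With that replacement, every step of your outline closes up.
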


\begin{prop} If $X\in\cS_0^t(n)$ let
$\cS_\bullet^{t/d}(X)$ denote the simplicial subset of $\cS_\bullet^{t/d}(n)$ whose $k$-simplices are the smoothings of $X\times \Delta^k$. Then $|\cS_\bullet^{t/d}(X)|$ is the homotopy fiber of the forgetful map
\[
	|\f|:|\cS_\bullet^d(n)|\to|\cS_\bullet^t(n)|
\]
over $X\in \cS_0^t(n)$.\qed
\end{prop}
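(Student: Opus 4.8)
The plan is to exhibit the simplicial forgetful map
\[
	\psi\colon\cS_\bullet^{t/d}(n)\longrightarrow\cS_\bullet^t(n),\qquad \psi_k(W,\a)=W|\Delta^k\times 0,
\]
as a Kan fibration whose strict fibre over the vertex $X$ (and its degeneracies) is precisely $\cS_\bullet^{t/d}(X)$, and then to transport the conclusion along the homotopy equivalence of Proposition \ref{prop:smoothing = actual smoothing}. Granting the fibration claim, geometric realization takes $\psi$ to a Serre fibration (Quillen's theorem), and since $|{-}|$ commutes with the pullback along $\Delta[0]\xrightarrow{X}\cS_\bullet^t(n)$ defining the strict fibre, the fibre of $|\psi|$ over the point $|X|$ is $|\cS_\bullet^{t/d}(X)|$; for a fibration the strict fibre over a point is its homotopy fibre.

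To check the Kan condition, fix a horn $\Lambda^k_i\to\cS_\bullet^{t/d}(n)$ and a filler $W_0\colon\Delta^k\to\cS_\bullet^t(n)$ of the composite horn into $\cS_\bullet^t(n)$. The horn provides a continuous family $W^\Lambda$ of topological $n$-manifolds over $\Lambda^k_i\times I$ together with a smoothing $\a^\Lambda$ of $W^\Lambda_1=W^\Lambda|\Lambda^k_i\times 1$, and $W^\Lambda|\Lambda^k_i\times 0$ agrees with $W_0|\Lambda^k_i$. \emph{Extend the family:} the inclusion $A:=\Delta^k\times 0\cup\Lambda^k_i\times I\hookrightarrow\Delta^k\times I$ is a strong deformation retract, so composing the family over $A$ (namely $W_0$ glued to $W^\Lambda$) with a retraction $r\colon\Delta^k\times I\to A$ yields a family $W$ over $\Delta^k\times I$ that restricts to the given one over $A$, with the required fibrewise submanifolds of $\RR^\infty$ obtained by placing over $q\in\Delta^k\times I$ the fibre sitting over $r(q)$ (there is no obstruction to the $\RR^\infty$-embedding since the target is $\RR^\infty$). \emph{Extend the smoothing:} a smoothing is a germ along the fibres of a fibrewise immersion into $\RR^\infty$, and such a germ is carried along any bundle isomorphism by precomposition; since $\Lambda^k_i\hookrightarrow\Delta^k$ is a deformation retract, $W_1\to\Delta^k$ is isomorphic, by an isomorphism that is the identity over $\Lambda^k_i$, to the pullback of $W_1|\Lambda^k_i$ along a retraction $\Delta^k\to\Lambda^k_i$; pulling $\a^\Lambda$ back through this retraction and this isomorphism produces a smoothing $\a$ of $W_1$ restricting to $\a^\Lambda$ over $\Lambda^k_i$. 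Then $(W,\a)$ is the desired filler and $\psi(W,\a)=W_0$. The argument is uniform in $i$, and the strict fibre over the degenerate simplices $X\times\Delta^k$ is by definition $\cS_\bullet^{t/d}(X)$.

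It remains to replace $\psi$ by $|\f|$. By Proposition \ref{prop:smoothing = actual smoothing} the map $q\colon\cS_\bullet^{t/d}(n)\to\cS_\bullet^d(n)$, $(W,\a)\mapsto(W_1,\a)$, is a homotopy equivalence, and $\f\circ q$ is homotopic to $\psi$ on realizations: for each simplex $(W,\a)$ the family $W$ is a bundle over $\Delta^k\times\Delta^1$, hence is classified by a map $\Delta^k\times\Delta^1\to|\cS_\bullet^t(n)|$ interpolating the classifying maps of $W_0=\psi(W,\a)$ and $W_1=\f(q(W,\a))$; these assemble, compatibly with faces and degeneracies, into a homotopy $|\cS_\bullet^{t/d}(n)|\times I\to|\cS_\bullet^t(n)|$. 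Hence over the point $|X|$,
\[
	\operatorname{hofib}(|\f|)\ \simeq\ \operatorname{hofib}(|\f|\circ|q|)\ \simeq\ \operatorname{hofib}(|\psi|)\ =\ |\cS_\bullet^{t/d}(X)|,
\]
using homotopy invariance of the homotopy fibre and the previous paragraph.

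The main obstacle is the Kan-fibration verification, and inside it the extension of the smoothing across the horn: one must make precise that a germ of a fibrewise immersion into $\RR^\infty$ is functorial under bundle isomorphisms and under pullback along retractions of the base, and that $W_1\to\Delta^k$ is isomorphic relative to $\Lambda^k_i$ to the relevant pullback (bundles pulled back along maps homotopic rel a subspace are isomorphic rel that subspace). The remaining points — extending the topological family, building the homotopy $\f\circ q\simeq\psi$, and the formal juggling of homotopy fibres — are routine once the bookkeeping with the external collars (which is what makes smoothings near the boundary well defined) is carried out consistently.
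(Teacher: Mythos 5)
Your proposal is correct and is essentially the argument the paper leaves implicit: the proposition is stated with a \qed and only the remark that ``an elementary argument'' identifies the homotopy fibre, and your route --- showing that evaluation at $\Delta^k\times 0$ gives a Kan fibration $\psi:\cS_\bullet^{t/d}(n)\to\cS_\bullet^t(n)$ with strict fibre $\cS_\bullet^{t/d}(X)$ (horn filling by pulling the family back along a retraction onto $\Delta^k\times 0\cup\Lambda^k_i\times I$ and transporting the smoothing germ along a rel-$\Lambda^k_i$ bundle isomorphism), then comparing $\psi$ with $\f$ via the equivalence of Proposition \ref{prop:smoothing = actual smoothing} and the prism homotopy built from the families over $\Delta^k\times I$ --- is exactly what the Lees-style definition of $\cS_\bullet^{t/d}$ is designed to make work. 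The technical points you flag (local triviality of the glued family along the seam $\Lambda^k_i\times 0$, the relative bundle covering-homotopy theorem, and the germ/collar bookkeeping) are standard and do not conceal a gap.
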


More generally, given any topological manifold bundle $W_0\to B$ where $B$ is a smooth manifold, we define a \emph{smoothing} of $W_0$ to be a topological embedding $\a:W_0\to B\times\RR^\infty$ over $B$ whose image is a smooth submanifold of $B\times \RR^\infty$. Smooth $\Delta^k$ families of such embeddings form a simplicial set which represents the space of all  homotopy liftings of the classifying map $B\to|\cS_\bullet^t(n)|$ to $|\cS_\bullet^d(n)|$, in other words a point in the homotopy fiber of the map
\[
	|\cS_\bullet^d(n)|^B\to |\cS_\bullet^t(n)|^B.
\]We call this the \emph{space of fiberwise smoothings} of $W_0$.

By a \emph{fiberwise smoothing} of $W_0$ we mean a pair $(W,\a)$ where $W$ is a topological manifold bundle over $B\times I$ which is equal to $W_0$ over $B\times 0$ and $\a$ is a smoothing of $W|B\times 1$. Taking $\Delta^k$ families we can form a simplicial set which we call the \emph{space of fiberwise smoothings} of $W_0$. As in Proposition \ref {prop:smoothing = actual smoothing}, it is clear that this space is homotopy equivalent to the space of actual smoothings of a topological bundle. However, this definition (developed by J. Lees \cite{Lees}) makes it easier to compare two different smoothings of the same bundle.  

Suppose that $f:W_0\to W_1$ is a homeomorphism between two smooth bundles over $B$. We can construct the associated smoothing
\[
	W=W_0\times I\cup_f W_1
\]
where $W_1$ is identified with $W_0\times I$ using the homeomorphism $f$. The problem is to determine whether there is a smooth structure on $W$ which extends the given smooth structure on $W_0$ and $W_1$. If $f$ is a smooth embedding then we can take the smooth structure on $W$ induced from $W_0\times I$. Using our strict definitions, this would be given by the family of smoothings $h_t=(1-t)\a_0+t\a_1\circ f:W_0\to\RR^\infty$. This will be a family of embeddings if we assume that the smoothings $\a_i$ of $W_i$ have image in linearly independent subspaces of $\RR^\infty$, if not we can simply pass through a third smoothing with this property.

\begin{prop}
Suppose that $f:W_0\to W_1$ is a fiberwise homeomorphism between smooth bundles over $B$ and suppose that $f$ is homotopic through continuous fiberwise embeddings over $B$ to a smooth embedding. Then there is a smooth structure on the fiberwise interior of $W$ which agrees with the smooth structure on $W_0$ and $W_1$.
\end{prop}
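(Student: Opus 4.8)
The plan is to use the homotopy to exhibit the fiberwise interior $W^{\circ}$ of $W$ directly as a codimension-$0$ topological submanifold of $W_1^{\circ}\times I$ (here $W_i^{\circ}$ denotes the fiberwise interior of $W_i$) and then to equip it with the smooth structure pulled back from the product smooth structure on $W_1^{\circ}\times I$; this seems cleaner than splicing the straight-line smoothing described just above with a separate concordance argument. Let $f_t$, $t\in I$, be the given homotopy of continuous fiberwise embeddings over $B$, with $f_0=f$ and $f_1=g$ a smooth fiberwise embedding. Each $f_t$ restricts to an embedding of the boundaryless fibers of $W_0^{\circ}$ into $W_1$, so by invariance of domain it carries $W_0^{\circ}$ onto an open subset of $W_1^{\circ}$; in particular $g$ restricts to a smooth codimension-$0$ embedding $W_0^{\circ}\hookrightarrow W_1^{\circ}$ over $B$, which therefore pulls the given smooth structure on $W_1^{\circ}$ back to the given one on $W_0^{\circ}$. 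Since $W$ is topologically $W_0\times I$ with the slice over $B\times 1$ identified to $W_1$ via $f$, it suffices to put a smooth structure on $W_0^{\circ}\times I$, compatible with the projection to $B\times I$, that restricts over $B\times 0$ to the structure on $W_0^{\circ}$ and over $B\times 1$ to the $f$-pullback of the structure on $W_1^{\circ}$.

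For this, set
\[
\Phi\colon W_0^{\circ}\times I\longrightarrow W_1^{\circ}\times I,\qquad \Phi(x,t)=\bigl(f_{1-t}(x),\,t\bigr).
\]
Because $W_0$ is compact and the $f_t$ form a continuous family of fiberwise embeddings, $\Phi$ is a topological embedding over $B\times I$, and its image is a codimension-$0$ topological submanifold with boundary of $W_1^{\circ}\times I$, with boundary faces $g(W_0^{\circ})\times 0$ and $W_1^{\circ}\times 1$. Let $\tau$ be the smooth structure on $W_0^{\circ}\times I$ pulled back through $\Phi$ from the product structure on $W_1^{\circ}\times I$. Since $W_1^{\circ}\times I\to B\times I$ is a smooth submersion and $\Phi$ is a codimension-$0$ embedding over $B\times I$, $\tau$ is compatible with the projection to $B\times I$; and because $\Phi$ restricts at $t=0$ to the smooth embedding $g$ and at $t=1$ to $f$, the structure $\tau$ restricts over $B\times 0$ to the given structure on $W_0^{\circ}$ and over $B\times 1$ to the $f$-pullback of the structure on $W_1^{\circ}$. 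Transporting $\tau$ across the topological identification $W^{\circ}\cong W_0^{\circ}\times I$ then gives the desired smooth structure.

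The step I expect to require the most care, and the reason the statement concerns only the fiberwise interior, is checking that $\tau$ really does extend as a smooth manifold-with-boundary structure out to the two faces $t=0,1$; equivalently, that $\Phi(W_0^{\circ}\times I)$ is a smooth codimension-$0$ submanifold with boundary of $W_1^{\circ}\times I$. Over $(0,1)$ this is immediate from invariance of domain. Near a point $(g(x),0)$, continuity of the family $f_t$ (using compactness of $W_0$) furnishes a neighbourhood $V$ of $g(x)$ in $W_1^{\circ}$ with $V\subseteq f_{1-t}(W_0^{\circ})$ for all small $t$, so that $\Phi(W_0^{\circ}\times I)$ coincides near $(g(x),0)$ with the smooth half-space $V\times[0,\varepsilon)$; the face $t=1$ is handled the same way using $f(W_0^{\circ})=W_1^{\circ}$. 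Over all of $W_0$ this breaks down: an intermediate $f_t$ may carry $\d W_0$ into the fiberwise interior of $W_1$ and need not match collars, so $\Phi$ need not extend to a codimension-$0$ embedding of manifolds with corners and the pulled-back structure would be ill-behaved along the vertical boundary. Passing to fiberwise interiors removes exactly this obstruction, since there each $f_t$ is an open embedding of boundaryless manifolds.
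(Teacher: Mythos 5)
Your proposal is correct and is essentially the paper's own argument: the paper also obtains the smoothing on the fiberwise interior by transporting the smooth structure of $W_1$ along the images of the family $f_t$ (open, by invariance of domain, in the smooth bundle $W_1$), and your map $\Phi(x,t)=(f_{1-t}(x),t)$ just packages those slicewise images as one open embedding into $W_1^{\circ}\times I$ whose pullback structure restricts correctly at the two faces. Your flagged boundary-face point is the right one to worry about, and your sketch there is fine (it can be made immediate by extending the homotopy constantly past $t=0,1$ and applying invariance of domain to the track), so this matches the paper's proof in substance with a bit more care at the faces.
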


\begin{proof}
The continuous image of the fiberwise interior of $W_t$ under a topological embedding $f_t:W_t\to W_1$ is a smooth manifold. Therefore, the image of $f_t$ gives the desired smoothing.
\end{proof}

In classical smoothing theory, a smoothing of a topological manifold (of dimension $\ge5$) is given by a lifting of the tangent microbundle to a linear bundle. In other words, a vector bundle structure on the topological tangent bundle gives a smoothing of a single manifold. This is not true for a topological manifold bundle $W\to B$. If we choose a vector bundle structure on the vertical topological tangent bundle of the topological bundle $W\to B$ we have a further obstruction to smoothing. To study this obstruction we need to construct a third space of manifolds: compact topological manifolds with vector bundle structures on their topological tangent bundles. We call these ``linearized manifolds.''


%
%




%
%

\subsection{The functor {\it EM}}\label{subsec12}

A compact topological manifold $M$ has a topological tangent bundle 
\[
	\pi: EM\to M
\]
which is called the \emph{tangent Euclidean bundle} of $M$. The fiber $\pi^{-1}(x)$ is homeomorphic to an open ball neighborhood of $x$ in $M$. This section gives the basic properties of Euclidean bundles in general and the functorial properties of the tangent Euclidean bundle of a manifold.



\subsubsection{Euclidean bundles}
A \emph{Euclidean bundle} is a fiber bundle
\[
	\pi:E\to B
\]
with fiber $\RR^n$ and structure group $\cH omeo(\RR^n,0)$, the group of homeomorphisms of $\RR^n$ fixing $0$ with the compact open topology. By a \emph{Euclidean subbundle} of $E$ we mean an open subset $E_0\subseteq E$ which includes the zero section and which is also a Euclidean bundle. A \emph{morphism} of Euclidean bundles $E_0\to E_1$ over $B$ is a fiberwise open embedding which preserves the zero section, i.e., an isomorphism of $E_0$ with a subbundle of $E_1$.

\begin{lem}[Kister \cite{Kister64}]\label{Kister's lemma} Let $E\to B$ be a Euclidean bundle over a finite cell complex $B$ and let $E_0$ be a Euclidean subbundle of $E$. Then $E_0$ is fiberwise isotopic to $E$ fixing a neighborhood of the zero section. I.e., there is a continuous one-parameter family of Euclidean morphisms $f_t:E_0\to E$ which are the identity in a neighborhood of the zero section so that $f_0$ is the inclusion map and $f_1$ is a homeomorphism.\qed
\end{lem}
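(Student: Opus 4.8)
The plan is to prove Kister's lemma by induction over the cells of the finite complex $B$, reducing each step to the case of a Euclidean bundle over a disk $D^k$, where one may trivialize the bundle and argue directly on the level of germs of homeomorphisms of $\RR^n$ fixing $0$. Over a cell, after choosing a trivialization, the subbundle $E_0\subseteq E=D^k\times\RR^n$ is described by a map $D^k\to \cH omeo(\RR^n,0)$ (or more precisely by a continuously varying family of Euclidean embeddings $\RR^n\into\RR^n$ whose images form the fibers of $E_0$), and we want to engulf this family up to the standard $D^k\times\RR^n$ by an isotopy supported away from the zero section. The key technical input, essentially due to Kister (and Mazur), is that the space of topological embeddings $(\RR^n,0)\into(\RR^n,0)$ is, as an $H$-space acting on itself, "homogeneous" enough that one can push any embedded copy of $\RR^n$ out to all of $\RR^n$ by a canonical isotopy fixing a neighborhood of the origin; the canonicity is what lets the construction be done with parameters in $D^k$ and be compatible with what has already been fixed on $\d D^k$.

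The steps, in order: (1) Reduce to the local problem. Choose a finite cover of $B$ by cells, or simply induct on a CW-structure; the inductive hypothesis supplies an isotopy $f_t$ over the $(k-1)$-skeleton, identity near the zero section, and over each $k$-cell we must extend it. Since a $k$-cell is contractible, both $E$ and $E_0$ trivialize over it, and the already-constructed isotopy over the boundary sphere is, after trivialization, an isotopy of the standard trivial bundle. (2) Solve the parametrized local problem. Over $D^k$ with the given boundary data, use the Kister--Mazur construction: for a single embedding $h:\RR^n\into\RR^n$ with $h(0)=0$ and $h$ the identity near $0$, there is a canonical isotopy from $h$ to the identity through embeddings that are the identity near $0$ — obtained by "zooming in", e.g. conjugating by the dilations $x\mapsto sx$ and letting $s\to 0$, and then reparametrizing so the isotopy is the identity on a fixed small ball throughout. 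This construction is natural in $h$, hence applies fiberwise and extends a boundary isotopy. (3) Assemble. Patch the local isotopies over cells using a partition-of-unity / collar argument, exploiting that each local isotopy is the identity near the zero section so the patching does not disturb that neighborhood; conclude that the resulting global $f_t$ has $f_0=\mathrm{incl}$ and $f_1$ a homeomorphism $E_0\xrightarrow{\ \cong\ }E$.

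I expect the main obstacle to be step (2): making the "zoom in to the origin" isotopy genuinely \emph{canonical} and \emph{continuous in the family parameter}, while simultaneously keeping it the identity on a \emph{fixed} neighborhood of the zero section (the size of which cannot be allowed to shrink to zero as the parameter varies over the compact $D^k$, but over a general base might). The standard fix is to first use compactness of $B$ (or of each cell) to find a uniform neighborhood of the zero section on which $E_0=E$, rescale so this neighborhood is the unit ball in every fiber, and only then run the engulfing isotopy on the complement; the reparametrization that freezes the isotopy near $0$ must be chosen continuously, which is where one has to be slightly careful. Everything else — the induction over skeleta, the trivializations over cells, the patching — is routine once the parametrized engulfing statement is in hand. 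Since the lemma is quoted verbatim from Kister, for the purposes of this exposition it is legitimate to cite \cite{Kister64} for step (2) and only sketch (1) and (3); but the proof above indicates how one would fill in the details if desired.
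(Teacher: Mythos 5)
The paper itself gives no proof of this lemma: it is quoted from Kister with a \qed, so deferring the key step to \cite{Kister64}, as you propose at the end, is exactly what the paper does, and your overall architecture (induction over the cells of $B$, trivialization over each cell, a canonical parametrized engulfing isotopy, patching that exploits the fact that everything is fixed near the zero section) is indeed the standard skeleton of Kister's argument.

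However, the mechanism you sketch for the key step (2) has a genuine gap. The embedding to which the engulfing must be applied is the Euclidean chart of $E_0$ composed into the trivialized $E$, i.e.\ an arbitrary topological open embedding $h\colon(\RR^n,0)\to(\RR^n,0)$. Such an $h$ has no derivative at $0$, so the dilation conjugates $x\mapsto s^{-1}h(sx)$ need not converge as $s\to 0$; the ``zoom-in'' isotopy only works under your added hypothesis that $h$ is the identity near $0$, and that hypothesis cannot be arranged. The subbundle $E_0$ does agree with $E$ \emph{as a subset} near the zero section (openness plus compactness gives a uniform ball bundle inside $E_0$), but making the Euclidean \emph{chart} of $E_0$ agree with that of $E$ near the zero section is essentially the uniqueness half of Kister's theorem, so this reduction is circular; and applying the trick instead to the literal inclusion $U_b\hookrightarrow\RR^n$ (which is the identity where defined) does not parse, since its domain is not all of $\RR^n$. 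Kister's actual key lemma is different: a stretching construction which, for any open embedding $h\colon(\RR^n,0)\to(\RR^n,0)$, produces an isotopy through embeddings that agree with $h$ on a fixed ball about the origin and end at a surjection, and which is continuous in $h$ in the compact-open topology; it is this continuity (rather than any naturality of a rescaling formula) that makes the construction work with parameters in a cell and rel the boundary sphere, and hence makes your steps (1) and (3) go through. With that lemma substituted for your step (2) — or, as you say, simply cited from \cite{Kister64} — the outline is correct.
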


\begin{rem}\label{rem:Euclidean subbundles are isomorphic to the whole}
The conclusion of this lemma can be rephrased as saying that there is a Euclidean subbundle of $E\times I\to B\times I$ which is equal to $E_0$ over $B\times0$ and equal to $E$ over $B\times 1$.
\end{rem}


\begin{prop}
If $E,E'$ are Euclidean bundles over $B$ with isomorphic subbundles $\f_0:E_0\cong E_0'$ then there is an isomorphism $\f:E\to E'$ which agrees with $\f_0$ in some neighborhood of the zero section.
\end{prop}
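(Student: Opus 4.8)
The plan is to upgrade Kister's Lemma \ref{Kister's lemma} (in the relative form of Remark \ref{rem:Euclidean subbundles are isomorphic to the whole}) from a single Euclidean bundle to a pair, using the given isomorphism $\f_0$ to glue. First I would apply Kister's lemma separately to the subbundle $E_0\subseteq E$ and to the subbundle $E_0'\subseteq E'$, obtaining fiberwise isotopies $f_t:E_0\to E$ and $f_t':E_0'\to E'$, each the identity near the zero section, with $f_0,f_0'$ the inclusions and $f_1,f_1'$ homeomorphisms. Then the composite $\f:=f_1'\circ\f_0\circ f_1^{-1}:E\to E'$ is an isomorphism of Euclidean bundles over $B$. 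Since $f_1$ and $f_1'$ are the identity in a neighborhood of the zero section, on a (possibly smaller) neighborhood $U$ of the zero section we have $\f|_U=\f_0|_U$, which is exactly the assertion.

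The one point that needs care is that ``identity near the zero section'' for $f_1$ and ``identity near the zero section'' for $f_1'$ refer to possibly different neighborhoods, and likewise $\f_0$ is only an isomorphism onto a subbundle, so I should shrink to a common neighborhood $U$ contained in the domain where all three maps agree with the relevant inclusions/$\f_0$; this is routine since the zero section is covered by finitely many trivializing charts (using compactness/finiteness of $B$, as in Kister's lemma) on each of which the neighborhoods can be intersected. I would also remark, via Remark \ref{rem:Euclidean subbundles are isomorphic to the whole}, that the isotopy can equivalently be packaged as a Euclidean subbundle of $E\times I\to B\times I$ restricting to $E_0$ over $B\times 0$ and $E$ over $B\times 1$, which is the form most convenient for later gluing constructions.

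I do not anticipate a genuine obstacle here: the entire content is Kister's lemma, and the proposition is essentially its ``two-variable'' restatement. The only thing to get right is the bookkeeping of neighborhoods so that the final $\f$ honestly equals $\f_0$ on an open set, rather than merely being isotopic to something extending $\f_0$. If one wanted the strongest statement — that $\f$ is isotopic rel the zero section to any prescribed extension — that would again follow from the uniqueness clause in Kister's theorem, but for the stated proposition the composite construction above suffices.
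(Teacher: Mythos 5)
Your proposal is correct and is essentially the paper's proof: the paper's one-line argument $E\cong E_0\cong E_0'\cong E'$ is exactly your composite $f_1'\circ\f_0\circ f_1^{-1}$ built from two applications of Kister's Lemma \ref{Kister's lemma}, with the agreement near the zero section coming from the fact that $f_1,f_1'$ are the identity there. Your extra care about shrinking to a common neighborhood so that $\f_0$ lands where $f_1'$ is the identity is just the routine bookkeeping the paper leaves implicit.
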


\begin{proof}
$E\cong E_0\cong E_0'\cong E'$ by the lemma.
\end{proof}

\begin{lem}\label{lem:extension of E0 from subcomplex}
Let $E\to B$ be a Euclidean bundle over a finite dimensional CW-complex $B$ and let $E_0$ be a subbundle of the restriction $E|A$ of $E$ to a subcomplex $A\subseteq B$. Then there is a Euclidean subbundle $E_0'$ of $E$ so that $E_0=E_0'|A$.
\end{lem}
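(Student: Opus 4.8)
The plan is to extend $E_0$ over $B$ one cell at a time, proceeding by induction on the skeleta of $B$ relative to $A$. Since $B$ is finite dimensional, this process terminates after finitely many steps, so it suffices to handle the addition of a single cell $e^n$ of $B$ not contained in $A$, assuming inductively that we already have a Euclidean subbundle $E_0'$ defined over the subcomplex $A'$ obtained so far (with $A\subseteq A'$). Let $A''=A'\cup e^n$ and let $\varphi:D^n\to A''$ be the characteristic map of the cell, so that $\varphi$ restricts to an attaching map $S^{n-1}\to A'$. Pulling back, we have a Euclidean bundle $\varphi^*E$ over $D^n$ together with a Euclidean subbundle over $S^{n-1}=\d D^n$ coming from $E_0'|A'$. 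Because the cell is attached along a map into the part where $E_0'$ is already defined, constructing $E_0'$ over $e^n$ compatibly with its given value over $A'$ is equivalent to extending this subbundle of $\varphi^*E$ from $\d D^n$ to all of $D^n$.

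First I would observe that $\varphi^*E$ is trivial over $D^n$ (as $D^n$ is contractible), say $\varphi^*E\cong D^n\times\RR^n$, so the problem becomes: given a Euclidean subbundle $F_0$ of $S^{n-1}\times\RR^n$, extend it to a Euclidean subbundle of $D^n\times\RR^n$. A Euclidean subbundle of the trivial bundle over a base $Z$ is classified (up to the kind of morphisms allowed here) by a map $Z\to \cH omeo(\RR^n,0)/\sim$; more concretely, by Kister's lemma (Lemma \ref{Kister's lemma}) together with Remark \ref{rem:Euclidean subbundles are isomorphic to the whole}, the inclusion $F_0\hookrightarrow S^{n-1}\times\RR^n$ is fiberwise isotopic, fixing a neighborhood of the zero section, to an isomorphism; hence $F_0$ is itself isomorphic to the trivial Euclidean bundle over $S^{n-1}$, and the data of the subbundle inclusion amounts to a map $S^{n-1}\to E(n)$ into the space of ``embeddings of $\RR^n$ as a Euclidean subbundle of $\RR^n$,'' which is precisely the space $\cH omeo(\RR^n,0)$-equivariant germs — the relevant point being that this space is \emph{contractible}. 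Indeed the space of Euclidean subbundle structures on $\RR^n$ (germs of open embeddings $\RR^n\to\RR^n$ fixing $0$ that are isotopic to the identity) deformation retracts to the identity, e.g. by the Alexander-type isotopy $f\mapsto f_t$, $f_t(x)=\tfrac1t f(tx)$ for $t\in(0,1]$ with $f_0=Df(0)$ followed by a linear contraction — so every such map on $S^{n-1}$ extends over $D^n$.

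Concretely, then, the key steps are: (1) set up the skeletal induction relative to $A$, reducing to attaching one cell; (2) pull everything back along the characteristic map $\varphi:D^n\to B$ and trivialize $\varphi^*E$; (3) use Kister's lemma to see the given subbundle over $\d D^n$ as classified by a map into a contractible space, hence null-homotopic, hence extendable over $D^n$; (4) push the extension forward along $\varphi$ — here one must check this is well-defined, i.e. that the extension agrees on $\d D^n$ with what was already there, which it does by construction, so gluing the new subbundle over $\bar e^n$ to $E_0'$ over $A'$ yields a Euclidean subbundle over $A''$ because the two agree near $\d e^n$ (using that ``subbundle'' is a local, open condition and the agreement is on an open set after the usual collaring); (5) conclude by induction over all cells and all skeleta.

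The main obstacle is step (3) — making precise the claim that Euclidean subbundles of a trivial Euclidean bundle are, up to the allowed isotopies, governed by a contractible parameter space, so that extension problems over cells have no obstruction. This is really a restatement of the content of Kister's theorem in a parametrized form, and the cleanest route is to invoke Lemma \ref{Kister's lemma} and Remark \ref{rem:Euclidean subbundles are isomorphic to the whole} directly over the base $D^n$ itself rather than over $S^{n-1}$: apply Kister's lemma to the Euclidean bundle $\varphi^*E\times I\to D^n\times I$ — but we want the conclusion over $D^n$ with prescribed behavior on $\d D^n$, which requires the relative version of Kister's lemma (isotopy fixing the subbundle over a subcomplex). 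If one prefers to avoid a relative Kister statement, the alternative is the explicit contraction argument sketched above, for which the mild technical point to verify is that the retraction is continuous and compatible with the germ equivalence; either way the homotopy-theoretic input is elementary once Kister's lemma is in hand. The remaining steps are formal: the skeletal induction is standard, and the gluing in step (4)--(5) uses only that being a Euclidean subbundle is an open condition checkable locally on the base.
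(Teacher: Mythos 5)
Your proposal is correct and follows essentially the same route as the paper's terse proof: skeletal induction relative to $A$, with the single-cell extension supplied by Kister's Lemma. The relative Kister's lemma you worry about at the end is not actually needed — applying Lemma \ref{Kister's lemma} over $S^{n-1}$ and then coning via Remark \ref{rem:Euclidean subbundles are isomorphic to the whole} (after trivializing $\varphi^*E$ over $D^n$, as you do) already produces a subbundle over $D^n$ with the prescribed boundary value, which is exactly what the paper means by ``using the lemma as rephrased in the remark.''
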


\begin{proof}
Assuming that we have constructed $E_0'$ over $A\cup B^k$ we can extend $E_0'$ to $A\cup B^{k+1}$ one $(k+1)$-cell at a time using the lemma as rephrased in the remark.
\end{proof}


\subsubsection{Microbundles}

A \emph{(topological) microbundle} over $B$ is defined to be a space $E$ containing $B$ with inclusion map $s:B\to E$ and retraction $\pi:E\to B$ so that $B$ is covered by open subsets $U$ of $E$ for which $\pi(U)=U\cap B$ and so that $\pi|U:U\to U\cap B$ is a Euclidean bundle.

A \emph{morphism} of topological microbundles $E_0\to E_1$ over $B$ is defined to be the germ along $B$ of a fiberwise homeomorphism $f:E_0\to E_1$ which is the identity on $B$. Thus, $f$ is only defined in some neighborhood of $B$ and any two such maps are equivalent if they agree on some neighborhood of $B$ in $E_0$. 

\begin{thm}[Kister, Masur]\label{thm: Kister Masur}
Suppose that $B$ is (the realization of) a finite dimensional, locally finite simplicial complex and $\pi:E\to B$ is a microbundle. Then $E$ contains an open neighborhood $E_0$ of $s(B)$ so that $(E_0,s(B))$ is a Euclidean bundle over $B$. Furthermore, $E_0$ is unique up to isomorphism.
\end{thm}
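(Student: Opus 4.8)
The plan is to reduce to Kister's Lemma \ref{Kister's lemma} (and its subcomplex version, Lemma \ref{lem:extension of E0 from subcomplex}) by first extracting from the microbundle $\pi:E\to B$ a genuine Euclidean bundle neighborhood, built up inductively over the skeleta of $B$. First I would recall the definition: $B$ is covered by open sets $U\subseteq E$ with $\pi(U)=U\cap B$ and $\pi|U$ a Euclidean bundle. Since $B$ is a locally finite, finite-dimensional simplicial complex, I would pass to a locally finite open cover $\{U_i\}$ of $s(B)$ in $E$ subordinate to the simplicial structure, so that over each (closed) simplex $\s$ the microbundle restricts to a trivial-type situation sitting inside some $U_i$, i.e., $E|\s$ contains a Euclidean subbundle. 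The goal of the first half of the argument is to glue these local Euclidean structures into a single Euclidean subbundle $E_0\subseteq E$ over all of $B$.

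The inductive construction proceeds skeleton by skeleton, exactly as in the proof of Lemma \ref{lem:extension of E0 from subcomplex}. Suppose a Euclidean subbundle $E_0$ of $E$ has been constructed over the $k$-skeleton $B^k$ (together with any simplices already handled by local finiteness). For each $(k+1)$-simplex $\s$, the restriction $E|\s$ is a microbundle over a simplex, hence — being locally a Euclidean bundle and $\s$ being contractible — contains a Euclidean subbundle $E_\s$ over $\s$, which over $\d\s$ is a (possibly different) Euclidean subbundle of $E|\d\s$; by Kister's Lemma applied over $\d\s$, $E_0|\d\s$ and $E_\s|\d\s$ are fiberwise isotopic as Euclidean subbundles of $E|\d\s$ fixing a neighborhood of the zero section, and by the rephrasing in Remark \ref{rem:Euclidean subbundles are isomorphic to the whole} this isotopy lets us modify $E_\s$ near $\d\s$ so that it agrees with $E_0$ there. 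Gluing $E_0$ and the modified $E_\s$ over all $(k+1)$-simplices (using local finiteness to make this a well-defined open set) extends $E_0$ over $B^{k+1}$. Since $B$ is finite-dimensional the process terminates, producing a Euclidean subbundle $E_0$ of $E$ which is an open neighborhood of $s(B)$.

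For uniqueness, suppose $E_0$ and $E_0'$ are two Euclidean bundle neighborhoods of $s(B)$ in $E$. They are both Euclidean subbundles of the microbundle $E$, so after shrinking one inside the other (their intersection contains a Euclidean subbundle of each, since both contain $s(B)$) we reduce to the case where one is a Euclidean subbundle of the other; Kister's Lemma then gives a fiberwise isotopy of one onto the other fixing a neighborhood of the zero section, which in particular is an isomorphism of Euclidean bundles. This is the content of the Proposition preceding Lemma \ref{lem:extension of E0 from subcomplex}, applied with $E=E'$ the ambient microbundle.

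I expect the main obstacle to be purely bookkeeping: handling the gluing over an infinite, locally finite complex so that the union of the locally defined Euclidean subbundles is genuinely an \emph{open} subset of $E$ with a well-defined fiber-bundle structure (not merely a Euclidean bundle over each simplex), and ensuring the modifications near $\d\s$ from Remark \ref{rem:Euclidean subbundles are isomorphic to the whole} are compatible across the simplices meeting a given cell. Once the skeletal induction is set up carefully with local finiteness controlling the supports of the isotopies, each individual step is an immediate application of Kister's Lemma, so the substance of the theorem is really the combinatorial organization of these applications rather than any new geometric input.
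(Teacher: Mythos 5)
Your overall plan (local Euclidean bundles over simplices, skeletal induction, uniqueness via a common subbundle plus Kister's Lemma) is the same as the paper's, but the inductive step has a genuine gap. When you extend over a $(k+1)$-simplex $\s$, you compare the inductively constructed bundle $E_0|\d\s$ with $E_\s|\d\s$ and claim they are ``fiberwise isotopic as Euclidean subbundles of $E|\d\s$ by Kister's Lemma.'' Lemma \ref{Kister's lemma} does not say this: it compares a Euclidean subbundle with an ambient Euclidean bundle \emph{containing} it, and here neither of your two subbundles need contain the other, while the ambient object $E|\d\s$ is only a microbundle. Comparing two non-nested Euclidean neighborhoods requires first producing a common Euclidean subbundle of both, which is essentially the uniqueness statement you are in the middle of proving (or an induction on dimension you have not set up), so as written the step is circular. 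Moreover, even granting such an isotopy, it is an isotopy of embeddings, not an ambient one, so ``modify $E_\s$ near $\d\s$ so that it agrees with $E_0$ there'' does not follow from Remark \ref{rem:Euclidean subbundles are isomorphic to the whole} without further argument. The paper avoids all of this by one extra choice at the start: using local finiteness and finite dimensionality, the bundles $E_\s$ are chosen \emph{nested}, $E_\s\subseteq E_\t|\s$ for all $\s\subseteq\t$. Then the inductively built bundle over the $k$-skeleton restricts over $\d\t$ to a genuine subbundle of the Euclidean bundle $E_\t|\d\t$, and Lemma \ref{lem:extension of E0 from subcomplex} applies verbatim to extend it over $\t$, agreeing with what is already built; no isotopy or boundary modification is needed.

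A smaller instance of the same issue appears in your uniqueness argument: the parenthetical claim that the intersection of two Euclidean neighborhoods ``contains a Euclidean subbundle of each, since both contain $s(B)$'' is exactly the nontrivial point. It is true, but the justification is to observe that the intersection is again a microbundle over $B$ and to apply the existence part just proved (equivalently, rerun the inductive construction constrained to lie inside both), which is precisely what the paper does before invoking Kister's Lemma to conclude that all three bundles are isomorphic. With the nesting device added to your induction and the uniqueness step justified this way, your argument becomes the paper's proof.
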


\begin{rem}
This implies that any topological microbundle over $B$ is microbundle isomorphic to a Euclidean bundle which is uniquely determined up to isomorphism.
\end{rem}

\begin{proof}
We can refine the triangulation of $B$ so that each simplex is contained in one of the open sets $U$. This gives a Euclidean bundle $E_\s$ over each simplex $\s$. Since $B$ is locally finite and finite dimensional we can choose the $E_\s$ so that $E_\s\subseteq E_\t|\s$ for all $\s\subseteq\t$. Then a Euclidean bundle $E_n$ can be constructed over the $n$-skeleton of $B$ by induction on $n$: $E_0$ already exists. Given $E_n$, we can extend $E_{n}$ to each $n+1$ simplex using Lemma \ref{lem:extension of E0 from subcomplex}. This shows existence.

To prove the uniqueness, we take any two Euclidean bundles and use the above argument to construct a third Euclidean bundle which is a subbundle of both. By Kister's Lemma \ref{Kister's lemma}, all three Euclidean bundles are isomorphic.
\end{proof}

\begin{cor}
If $B$ is dominated by a finite dimensional locally finite simplicial complex (for example a paracompact topological manifold) then any microbundle over $B$ contains a Euclidean bundle neighborhood of its section.
\end{cor}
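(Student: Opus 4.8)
The plan is to bootstrap from Theorem~\ref{thm: Kister Masur} using the domination. Fix a finite dimensional, locally finite simplicial complex $K$ together with maps $i\colon B\to K$ and $r\colon K\to B$ such that $r\circ i$ is homotopic to $\mathrm{id}_B$ (a paracompact topological manifold is so dominated, which is why it serves as the parenthetical example). First I would pull the microbundle $\pi\colon E\to B$ back along $r$ to obtain a microbundle $r^{*}E\to K$: its total space is $\{(k,e)\in K\times E\st r(k)=\pi(e)\}$ with the evident section and retraction, and it is locally a Euclidean bundle precisely because $E$ is. Since $K$ is a finite dimensional, locally finite simplicial complex, Theorem~\ref{thm: Kister Masur} applies and yields an open neighborhood $F$ of the section of $r^{*}E$ which is a Euclidean bundle over $K$.

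Next I would pull $F$ back along $i$. A pullback of a Euclidean bundle is again a Euclidean bundle, and pullback preserves open inclusions and sections, so $i^{*}F$ is an open neighborhood of the section of $i^{*}(r^{*}E)=(r\circ i)^{*}E$ which is a Euclidean bundle over $B$. Thus the microbundle $(r\circ i)^{*}E$ over $B$ already contains a Euclidean bundle neighborhood of its section, even though $B$ itself need not be a simplicial complex. It then remains only to transfer this conclusion from $(r\circ i)^{*}E$ to $E$, and for that I would invoke the homotopy invariance of microbundles (Milnor's covering homotopy theorem): since $r\circ i\simeq\mathrm{id}_B$, the microbundles $(r\circ i)^{*}E$ and $\mathrm{id}_B^{*}E=E$ are isomorphic, i.e.\ there is a homeomorphism between open neighborhoods of their sections commuting with projections and sections. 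Carrying $i^{*}F$ across this isomorphism --- shrinking to the common domain of definition and, if necessary, using Kister's Lemma~\ref{Kister's lemma} to pass from the resulting open neighborhood of the section to an honest Euclidean subbundle of it --- exhibits the desired Euclidean bundle neighborhood of $s(B)$ inside $E$.

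The two pullback steps are purely formal; the real content, and the expected obstacle, is the last step. One needs the covering homotopy theorem for microbundles to be available over $B$, which is exactly why the hypothesis is phrased in terms of domination by a well-behaved complex (paracompactness of $B$ being enough), and one must take a little care that the object transported across the isomorphism is genuinely a Euclidean bundle rather than merely an open neighborhood of the section --- a point already handled by the uniqueness assertion in Theorem~\ref{thm: Kister Masur}.
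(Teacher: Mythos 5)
Your proof is essentially correct, but it takes a genuinely different and slightly heavier route than the paper. The paper interprets the hypothesis as saying that $B$ is an \emph{actual} retract of a nice complex $X$ (which is what the parenthetical example gives: a paracompact $n$-manifold properly embeds in $\RR^{2n+1}$ and, by Tietze, is a retract of an open, hence simplicial, neighborhood). With $B\subset X$ and a genuine retraction $r\colon X\to B$, the restriction of $r^{*}E$ to $B$ is \emph{literally} $E$, so the Euclidean bundle produced inside $r^{*}E$ by Theorem~\ref{thm: Kister Masur} simply restricts to a Euclidean bundle neighborhood of the section of $E$. No covering homotopy theorem, no shrinking, no second application of Kister are needed. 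Your version, reading ``dominated'' as homotopy domination, buys extra generality but at the cost of invoking Milnor's covering homotopy theorem for microbundles --- a tool the paper never introduces.

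There is also a small gap at the end of your argument. After transporting $i^{*}F$ across the microbundle isomorphism $(r\circ i)^{*}E\cong E$ and shrinking to the common domain, what you have is only an open neighborhood of the zero section inside a Euclidean bundle over $B$, and you appeal to Lemma~\ref{Kister's lemma} to promote it to a Euclidean subbundle. But that lemma, as stated in the paper, is for Euclidean bundles over a \emph{finite cell complex}, and its input is a given Euclidean subbundle, not an arbitrary open neighborhood of the section; here the base is $B$, which is not assumed to be a finite complex. What you actually need is the (true, but not cited) fact that in a Euclidean bundle over a paracompact base, any open neighborhood of the zero section contains a Euclidean subbundle --- provable by a fiberwise $\e$-shrinking argument using local trivializations and a partition of unity, or by quoting Kister's original theorem in a form valid over paracompact bases. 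Either fix closes the gap, but as written the citation does not apply. The paper's retraction argument sidesteps this entirely.
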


\begin{proof}
If $B$ is a retract of a nice space $X$ then any microbundle over $B$ pulls back to a microbundle over $X$ which contains a Euclidean bundle which restricts to a Euclidean bundle neighborhood of the section of the original microbundle.

Any paracompact $n$-manifold $M$ satisfies this condition since each component of $M$ is second countable and therefore can be properly embedded in $\RR^{2n+1}$. This is an exercise in Munkres. The Tietze extension theorem can be used to show that $M$ is a retract of a neighborhood which we can take to be simplicial.
\end{proof}



\subsubsection{Tangent Euclidean bundle}

The discussion above implies the well-known fact that every paracompact topological manifold $M$ has a tangent Euclidean bundle: we first attach the standard external open collar $C=\d M\times(-1,0]$ and embed $M$ in the open manifold $M'=M\cup C$. The \emph{tangent microbundle} of $M$ is the equivalence class of $p_1:M\times M'\to M$ (projection to the first coordinate), together with the diagonal section $\Delta:M\to M\times M'$.

\begin{defn} If $M$ is a topological manifold with external open collar $C=\d M\times(-1,0]$ and $\pi:E\to M$ is a Euclidean bundle then a \emph{topological exponential map} is defined to be a continuous map
\[
	\eta: E\to M'=M\cup C
\]
so that
\begin{enumerate}
\item $\eta(s(x))=x$ for all $x\in M$ where $s:M\to E$ is the zero section,
\item $\eta$ maps each fiber $E_x=\pi^{-1}(x)$ homeomorphically onto an open neighborhood of $x$ in $M'$.
\end{enumerate}
We call $(E,\eta)=(E,s,\pi,\eta)$ a \emph{tangent Euclidean bundle} for $M$.\end{defn}

If $(E,\eta)$ is a tangent Euclidean bundle for $M$ then the germ of $E$ around $s(B)$ is the tangent microbundle of $M$. To see this note that the two mappings $\pi,\eta$ form an open embedding $(\pi,\eta):E\into M\times M'$ by invariance of domain. The image of $(\pi,\eta)$ determines $(E,\eta)$ up to isomorphism. We say that $(E_0,\eta_0)$ is \emph{contained in} $(E_1,\eta_1)$ if the image of $E_0$ in $M\times M'$ is contained in the image of $E_1$. Kister's theorem on the existence and uniqueness of Euclidean bundles can be stated as follows in the case of the tangent microbundle.

\begin{thm}
Any paracompact topological manifold $M$ has a tangent Euclidean bundle. Furthermore, for any two tangent Euclidean bundles, there exists a third Euclidean bundle which is contained in both.\qed
\end{thm}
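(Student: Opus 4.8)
\medskip

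\noindent\textbf{Proof proposal.} The plan is to read both assertions off the Kister--Masur theorem (Theorem~\ref{thm: Kister Masur}) and its corollary, applied to the tangent microbundle $p_1\colon M\times M'\to M$ with diagonal section $\Delta$. The hypothesis needed for the corollary is already in hand: a paracompact manifold $M$ is dominated by a finite dimensional, locally finite simplicial complex, so the corollary applies to any microbundle over $M$, and in particular to the tangent microbundle (which is a genuine microbundle over $M$ by the usual local triviality near a coordinate chart).

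For existence, I would apply the corollary to Theorem~\ref{thm: Kister Masur} to $p_1\colon M\times M'\to M$ to obtain a Euclidean bundle neighborhood $E$ of $\Delta(M)$; thus $E\subseteq M\times M'$ is open, contains the diagonal, and $\pi:=p_1|E\colon E\to M$ is a Euclidean bundle with zero section $s:=\Delta$. I then set $\eta:=p_2|E\colon E\to M'$ and check the two axioms of a topological exponential map: $\eta(s(x))=p_2(x,x)=x$, and for each $x$ the fiber $\pi^{-1}(x)=E\cap(\{x\}\times M')$ is an open neighborhood of $(x,x)$ in $\{x\}\times M'\cong M'$ on which $p_2$ restricts to a homeomorphism onto an open neighborhood of $x$ in $M'$. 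Hence $(E,\eta)$ is a tangent Euclidean bundle.

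For the second statement, given tangent Euclidean bundles $(E_0,\eta_0)$ and $(E_1,\eta_1)$ I would use the open embeddings $(\pi_i,\eta_i)\colon E_i\into M\times M'$ to identify each $E_i$ with an open neighborhood $\wt E_i$ of the diagonal, with $\pi_i=p_1|\wt E_i$ and $\eta_i=p_2|\wt E_i$. The intersection $\wt E_0\cap\wt E_1$ is an open neighborhood of the diagonal; being an open neighborhood of the zero section of the Euclidean bundle $\wt E_0$ over the locally compact space $M$, it is again a microbundle over $M$ via $p_1$ (shrinking a trivializing chart to one with compact closure yields a trivial Euclidean bundle $V\times B_\varepsilon\subseteq\wt E_0\cap\wt E_1$ over $V$, and such $V$ cover $M$). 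Applying the corollary once more gives a Euclidean bundle neighborhood $E_2\subseteq\wt E_0\cap\wt E_1$ of the diagonal; with $\eta_2:=p_2|E_2$ it is a tangent Euclidean bundle, verified exactly as above, and since $E_2\subseteq\wt E_0$ and $E_2\subseteq\wt E_1$ it is contained in both $(E_0,\eta_0)$ and $(E_1,\eta_1)$.

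I expect no serious obstacle: the theorem is essentially a translation of the already-established Theorem~\ref{thm: Kister Masur} into the language of tangent Euclidean bundles. The only points needing minor care are that the tangent microbundle really is a microbundle over $M$, that the intersection of two Euclidean-bundle neighborhoods of the diagonal is again a microbundle over $M$ (which uses local triviality of Euclidean bundles together with local compactness of $M$), and the two axioms for $\eta$ --- all routine. The substantive content, namely Kister's isotopy lemma and its extension over cell complexes, has been packaged into the cited results.
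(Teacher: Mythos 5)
Your proof is correct and takes the same route as the paper: the statement is presented there as a direct restatement of Kister--Masur (Theorem~\ref{thm: Kister Masur}) and its corollary applied to the tangent microbundle $p_1\colon M\times M'\to M$, with the proof omitted. You simply supply the routine verifications the paper leaves implicit, including the useful observation that $\wt E_0\cap\wt E_1$ is again a microbundle over $M$, so that one more application of the existence corollary gives the ``third bundle contained in both'' directly in the paracompact setting.
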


In the case where $M$ is a smooth manifold, we note that $EM$ is also a smooth manifold although the smooth structure of $EM$ along $\d M$ depends on a choice of extensions $\a'$ of the smooth structure of $M$ to $M'$. Also we have a canonical exponential map $\mu_M:TM\to M'$ defined in a neighborhood $U(M)$ of the zero section and this gives us a diffeomorphism of bundles between $EM$ and $U(M)$.



\subsubsection{Vertical tangent Euclidean bundle}

We are interested in the case when $p:M\to B$ is a bundle over a finite complex $B$ with fiber $X$ a compact topological manifold. In this case we first add the external collar $C=\d\vv M\times(-1,0]$ where $\d\vv M$ is the \emph{vertical boundary} of $M$, i.e. the bundle over $B$ with fiber $\d X$. Then $M'=M\cup C$ is an open manifold bundle over $B$ with fiber $X'=X\cup \d X\times(-1,0]$. The fiber product $M\oplus M'$ is a bundle over $B$ with fiber $X\times X'$ and the vertical tangent microbundle is the neighborhood germ of the fiberwise diagonal $\Delta M$ in $M\oplus M'$. 

Since $M$ is a compact and finite dimensional it is dominated by a finite complex. Therefore, any microbundle over $M$ contains a Euclidean bundle. In particular, there exists a Euclidean bundle $\pi: E\vv M\to M$ unique up to isomorphism and an open embedding 
\[
	(\pi,\eta): E\vv M\into M\oplus M'
\]
over $M$ sending the zero section to the fiberwise diagonal.

We call $(E\vv M,\eta)$ a \emph{vertical tangent Euclidean bundle} for $M\xrarrow{p} B$ and $\eta:E\vv M\to M'$ the \emph{fiberwise topological exponential map}. As before, $(E\vv M,\eta)$ is given up to isomorphism by the image of the embedding $(\pi,\eta):E\vv M\into M\oplus M'$.


\subsubsection{Topological derivative}

Any homeomorphism $f:M_0\to M_1$ induces a homeomorphism $f':M_0'\to M_1'$ by sending $(x,t)\in \d M_0\times [0,1)$ to $(f(x),t)\in\d M_1\times[0,1)$. This gives a map of tangent microbundles:
\[
\xy
\xymatrix{
M_0\times M_0'\ar[rr]^{f\times f'}\ar[d]_{p_1}&& M_1\times M_1'\ar[d]^{p_1}\\
M_0 \ar[rr]^f && M_1
}
\endxy
\]
This morphism of microbundles is the \emph{topological derivative} of $f$. (If $E\to B,E'\to B'$ are bundles or microbundles, then a morphism $E\to E'$ over a map $f:B\to B'$ is defined to be a morphism between $E$ and the pull-back $f^\ast E'$ over $B$.)

Choosing Euclidean subbundles of the microbundles, we can represent the topological derivative by an isomorphism of Euclidean bundles $Ef$.
If $\pi_0:EM_0\to M_0$ is a tangent Euclidean bundle for $M_0$ with embedding $(\pi_0,\eta_0): EM_0\into M_0\times M_0'$ then $\pi_1=f\circ \pi_0:EM_0\to M_1$ is evidently a tangent Euclidean bundle for $M_1$ with embedding $(\pi_1,\eta_1)= ({f\times f'})(\pi_0, \eta_0): EM_0\to M_1\times M_1'$. In other words, there is an isomorphism of Euclidean bundles $Ef:EM_0\to EM_1$ over $f$ as indicated in the following commuting diagrams.
\[
\xy
\xymatrix{
EM_0\ar[d]_{\pi_0}\ar[r]^{Ef} &EM_1\ar[d]^{\pi_1} 
&EM_0\ar[d]_{\eta_0}\ar[r]^{Ef} &EM_1\ar[d]^{\eta_1}\\
M_0 \ar[r]^f & M_1
&M_0' \ar[r]^{f'} & M_1'
}
\endxy
\]
We call $Ef$ the \emph{topological (Euclidean) derivative} of $f$.

Similarly, if $f:M_0\to M_1$ is a fiberwise homeomorphism between two topological manifold bundles over the same space $B$, we get an isomorphism of Euclidean bundles $E\vv f:E\vv M_0\to E\vv M_1$ over $f$ compatible with the fiberwise exponential maps as indicated in the following diagrams in which all arrows commute with the projection to $B$.
\[
\xy
\xymatrix{
E\vv M_0\ar[d]\ar[r]^{E\vv f} &E\vv M_1\ar[d]
& E\vv M_0\ar[d]_{\eta_0}\ar[r]^{E\vv f} &E\vv M_1\ar[d]^{\eta_1}
\\
M_0 \ar[r]^{f} & M_1
& M_0' \ar[r]^{f'} & M_1'
}
\endxy
\]
We call $E\vv f$ the \emph{fiberwise} or \emph{vertical topological (Euclidean) derivative} of $f$.


%
%

\subsection{Linearized Euclidean bundle {\it VM}}\label{subsec13}

The first step to finding a smooth structure on a topological manifold is to impose a linear structure on the tangent Euclidean bundle. 


\subsubsection{Linearization}
We define a \emph{linearization} of a Euclidean bundle $\pi:E\to M$ to be the germ along the zero section of a vector bundle structure on $E$. Since this is a germ, it is a structure on the microbundle of $E$. A linearization makes the microbundle linear. A linearization of $E$ is the same as a lifting of the structure map of $E$ to $BGL(n,\RR)$:
\[
\xymatrix{
&BGL(n,\RR)\ar[d]\\
M \ar@{-->}[ur]\ar[r] & BHomeo(\RR^n,0)
}
\]

A linearization of $E$ can also be viewed as an equivalence class $[\ll]$ of a topological microbundle morphism $\ll:V\to E$ where $V$ is a vector bundle over $B$, two such morphisms $\ll:V\to E,\ll':V'\to E$ being equivalent if $\ll=\ll'\circ\psi$ for some linear isomorphism $\psi:V\to V'$. In particular, $V$ is well-defined up to isomorphism. We call $\psi$ the \emph{comparison map} for $\ll,\ll'$. If $\ll,\ll'$ are inequivalent linearizations of $E$, we also get a comparison map $\psi=(\ll')^{-1}\ll:V\to V'$ which is a nonlinear map germ between vector bundles.


A \emph{linearization} of a topological manifold $M$ is defined to be a linearization of its tangent microbundle. This is given by a microbundle morphism
\[
	\ll:VM\to EM
\]
for some vector bundle $VM$. We call the pair $(M,VM)$ a \emph{linearized manifold}. A \emph{fiberwise linearization} of a topological manifold bundle $W\to B$ is defined to be a linearization $V\vv W$ of the vertical tangent microbundle of $W$ over $B$. Thus, a \emph{homotopy} of linearizations of $M$ is given by a fiberwise linearization of $M\times I$ over $I$.

We note that a microbundle morphism $\ll:VM\to EM$ carries the same information as an exponential map $\mu : U \to M'$ where $U$ is a neighborhood of the zero section of $VM$. As we remarked already, such a structure exists if $M$ is a smooth manifold. Then we have a smooth exponential map (inverse to orthogonal projection in $\RR^\infty$)
\[
	\mu_M: U(M)\to M'
\]
where $U(M)$ is a neighborhood of the zero section in $TM$. This gives a microbundle morphism $TM\to EM$ making $(M,TM)$ into a linearized topological manifold. We call this the \emph{canonical linearization} of $M$.

There is a problem that the topological derivative of a smooth map $M_0\to M_1$ is not covered by a linear map of canonical linearizations. So, instead we use smooth linearizations. A linearization (or fiberwise linearization) $\ll:VM\to EM$ of a smooth manifold $M$ will be called \emph{smooth} if $VM$ has a smooth structure compatible with the linear structure so that $\ll$ is a diffeomorphism in some neighborhood of the zero section. We note that the smooth structure on $VM$ is unique if it exists and the comparison map $\psi$ between any two smooth linearizations is also smooth. Note that the derivative of $\psi$ along the zero section gives an isomorphism of vector bundles $V\to V'$. Therefore, any smooth linearization of a smooth manifold is canonically isomorphic to its tangent bundle as a vector bundle.

\begin{prop} For any compact smooth manifold $M$,
the space of smooth linearizations $\mu:TM\to EM$ with fixed derivative along the zero section is convex and thus contractible.\qed
\end{prop}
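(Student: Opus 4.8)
The plan is to show that the space of smooth linearizations $\mu:TM\to EM$ with a prescribed derivative along the zero section is a convex subset of a suitable affine space of maps, hence contractible via straight-line homotopy. First I would fix a smooth embedding $\a:M\to\RR^\infty$ (part of the data of an object of $\cS_\bullet^d(n)$, so that $M\subset\RR^\infty$ and $M'=M\cup C$ is a smooth submanifold), and recall the canonical smooth exponential map $\mu_M:U(M)\to M'$ constructed above as the inverse of orthogonal projection $\pi:M\times M'\to TM$. Any smooth linearization is, by definition, the germ of a smooth map $\mu:U\to M'$ defined near the zero section of $TM\subset M\times\RR^\infty$, with $\mu(s(x))=x$, with each fiber $U_x$ mapped homeomorphically (by invariance of domain, an open embedding) to a neighborhood of $x$ in $M'$, and with a fixed derivative along $s(M)$. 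I would regard such a $\mu$ as a smooth section of the pullback bundle $p_1^\ast M'\to U\subset TM$, i.e.\ as a smooth map $U\to M'$, and note that since $M'$ is a smooth submanifold of $\RR^\infty$ with a tubular neighborhood, convex combinations can be taken in $\RR^\infty$ and then retracted back into $M'$.

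The key steps, in order, are: (1) identify the set $L$ of smooth linearizations with fixed $1$-jet along $s(M)$ with an appropriate space of germs of smooth maps $\mu:U\to M'$ satisfying the three conditions (basepoint, open-embedding on fibers, prescribed derivative); (2) show that for $\mu_0,\mu_1\in L$ and $t\in[0,1]$, the straight-line combination $\bar\mu_t=(1-t)\a\circ\mu_0+t\,\a\circ\mu_1:U\to\RR^\infty$ followed by the tubular-neighborhood retraction $r:N(M')\to M'$ gives a path $\mu_t=r\circ\bar\mu_t$ in the affine space of smooth maps $U\to M'$ — here one shrinks $U$ if necessary so that $\bar\mu_t$ lands in the tubular neighborhood $N(M')$ for all $t$, which is possible since $\bar\mu_0$ and $\bar\mu_1$ agree to first order along $s(M)$, where $r$ is the identity; (3) check that $\mu_t$ still satisfies the three defining conditions: $\mu_t(s(x))=x$ is immediate since $r$ fixes $M'$ and both endpoints satisfy it; the prescribed derivative along $s(M)$ is preserved because $r$ has derivative the identity along $M'$ and the two endpoints have the same derivative, so the convex combination does too; and the open-embedding-on-fibers condition holds on a (possibly smaller) neighborhood by openness of the immersion/embedding condition together with invariance of domain, exactly as in the definition of $\mu_M$. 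Finally (4) conclude that $L$, being convex (or at least star-shaped, which suffices), is contractible, and that the same argument applied fiberwise over $\Delta^k$ shows the simplicial set of such linearizations is contractible.

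The main obstacle I anticipate is step (2)–(3): ensuring the convex-combination path stays within the class of \emph{valid} linearizations, i.e.\ that each $\mu_t$ remains a fiberwise open embedding onto a neighborhood of the zero section and, in the bundle case, that these shrinkings of $U$ can be done uniformly over the parameter simplex. The point is that ``being an open embedding on each fiber near the zero section'' is an open condition and is automatically satisfied by any smooth map with the prescribed (invertible) derivative along $s(M)$, by the inverse function theorem applied fiberwise — this is precisely the mechanism already used to produce $\mu_M$ from the projection $\pi$. So once the derivative along the zero section is held fixed and invertible, every point of the straight-line path automatically defines a germ of a smooth linearization, and convexity is genuine. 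The boundary-collar subtlety (that the smooth structure of $EM$ along $\d M$ depends on the chosen extension $\a'$) is handled by working throughout with $M'=M\cup C$ and its chosen smooth structure, which is part of the ambient data and is not varied.
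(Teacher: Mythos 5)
Your argument is correct and does establish contractibility, and you correctly anticipate and dispose of the delicate points: that the domain $U$ can be shrunk uniformly so that $\bar\mu_t=(1-t)\,\a\circ\mu_0+t\,\a\circ\mu_1$ stays in the tubular neighborhood of $M'$ (because $\bar\mu_0,\bar\mu_1$ agree to first order along the zero section and $M$ is compact); that the prescribed derivative is preserved because the tubular retraction $r$ has identity derivative along $M'$; and that the fiberwise open-embedding condition holds near the zero section by the inverse function theorem since the derivative is held fixed and invertible. The paper itself states the proposition with no proof, so there is nothing explicit to compare against, but your route is more roundabout than the one the phrasing ``convex'' suggests, and it does not literally produce convexity: the path $\mu_t = r\circ\bar\mu_t$ is not a straight line in any affine structure on the space of linearizations, since $r$ is nonlinear, so what you actually build is a contraction. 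The argument that genuinely earns the word ``convex'' linearizes the \emph{source} instead of the target: compose each smooth linearization $\mu:TM\to EM$ with the inverse of a fixed reference (e.g.\ the canonical $\lambda_0:TM\to EM$) to obtain a germ $\psi=\lambda_0^{-1}\circ\mu:TM\to TM$ over $M$ fixing the zero section, with the prescribed derivative along it, and a local diffeomorphism near the zero section. Because $TM$ is a vector bundle, the fiberwise affine combination $(1-t)\psi_0+t\psi_1$ is literally defined as a germ, it preserves the zero section and the derivative by linearity, and it is again a local diffeomorphism near the zero section by the inverse function theorem applied to the fixed invertible derivative. Hence the space of such germs is convex in the honest sense, $\mu\mapsto\psi$ is a homeomorphism onto it, and contractibility (including for $\Delta^k$-families) follows at once. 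Your extrinsic approach trades the reference linearization for a tubular-neighborhood retraction and an ambient embedding in $\RR^\infty$ and requires somewhat more checking; both approaches are valid, but the intrinsic one is shorter and is the one the statement's wording points at.
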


Suppose $M_0, M_1$ are smooth manifolds with canonical linearizations $\ll_i:TM_i\to EM_i$ and $f:M_0\to M_1$ is a diffeomorphism with tangent map $Tf:TM_0\to TM_1$. Then the map germs
\[
	\ll_0,\ Ef^{-1}\circ\ll_1\circ Tf:TM_0\to EM_0
\]
are smooth linearizations of $M_0$ with the same derivative, namely the identity, along the zero section. Therefore, there is a 1-parameter family of smooth linearizations $\mu_t:TM_0\to EM_0$ all having the same derivative going from $\mu_0=\ll_0$ to $\mu_1=Ef^{-1}\circ\ll_1\circ Tf$. This is an example of a (smooth) tangential homeomorphism of smooth manifolds.


\subsubsection{Stabilizing linearizations}

Suppose that $(M,VM)$ is a linearized manifold with exponential map germ $\mu:VM\to M'$. Then any vector bundle $p:L\to M$ will be seen to have an induced linearization on the (noncompact) manifold $L$. We will usually restrict to a disk bundle $D(L)$ which is compact. 

We choose an extension $L'\to M'$ of the vector bundle $L$ to $M'$ and assume we have a Gauss map $\g:L'\to\RR^N$, i.e. a continuous map which is a linear monomorphism on each fiber. This gives a metric on $L'$ and allows us to take the $\e$-disk bundle $D_\e(L')$. Over any two points $x,y\in M'$ we also have a linear map between fibers of $L'$:
\[
	\pi^x_y:L_x\to L_y
\]
given by orthogonal projection in $\RR^N$. When $x=y$ this is the identity map on $L_x$. Therefore, for some neighborhood $U$ of $x$ in $M'$ we get an isomorphism of vector bundles $\pi^x_U:U\times L_x\cong L'|U$ given by
\[
	\pi^x_U(y,w)=\pi^x_y(w)\in L_y\subseteq L'|U.
\]

The vector bundle of the linearization of $L$ induced by $VM$ will be the pull-back $p^\ast(VM\oplus L)$ of the direct sum $VM\oplus L$. The exponential map on the restriction of $p^\ast(VM\oplus L)$ to the zero section $M\subset L$ is the map
\[
	\ov\mu:VM\oplus L\to L
\]
given on the fiber $V_xM\times L_x$ over $x\in M$ by $\ov\mu=\pi^x_U\circ (\mu_x\times id)$ or
\[
\ov\mu(v,w)=\pi^x_{\mu_x(v)}(w)
\]
for some neighborhood $U$ of $x$ in $M'$.

Since this construction is continuous on the input data, it also works for vector bundles $L$ over fiberwise linearized manifold bundles $(M,V\vv M)$ over a manifold $B$ to produce a linearization of the Euclidean bundle $V\vv M\oplus L$ over $M$.

\begin{prop} An extension of this exponential map to all of $L$ exists and is well-defined up to homotopy. Furthermore, if $M, V\vv M, L, p,\g$ are smooth then $\ov\mu$ and its extension to $L$ will be smooth.\qed
\end{prop}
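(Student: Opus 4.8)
The plan is to write down an explicit extension of $\ov\mu$, check by hand that it has the defining properties of an exponential map, and then deduce ``well-defined up to homotopy'' from the contractibility of the choices involved, using the same kind of convexity argument already used in this section.

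For existence I would take a point $\ell=(x,u)$ of $L$, with $x\in M$ and $u\in L_x$, and a vector $(v,w)$ in the fibre $V_xM\oplus L_x$ of $p^\ast(V\vv M\oplus L)$ over $\ell$, and set
\[
	\ov\mu_\ell(v,w)=\pi^x_{\mu_x(v)}(u+w)\in L_{\mu_x(v)}\subseteq L'
\]
which makes sense as soon as $v$ is small enough that $\mu_x(v)$ lies in a neighbourhood $U$ of $x$ over which $\pi^x_U\colon U\times L_x\cong L'|U$ is an isomorphism; such a $U$ exists for each $\ell$, so the formula defines a germ along the zero section $L\subseteq p^\ast(V\vv M\oplus L)$, and for $u=0$ it reduces to the map $\ov\mu$ of the preceding paragraph, so it does extend it. (In the notation of that paragraph this is simply $\pi^x_U\circ(\mu_x\times t_u)$, where $t_u$ is translation by $u$ in $L_x$.)

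Next I would check that $\ov\mu$ is an exponential map: at $\ell$ one has $\ov\mu_\ell(0,0)=\pi^x_x(u)=u$, so the zero section goes to the identity of $L$; for fixed $\ell$ the map $(v,w)\mapsto\ov\mu_\ell(v,w)$ is injective near the origin because $\mu_x$ is injective (so $v$, hence $\mu_x(v)$, is recovered) and $\pi^x_{\mu_x(v)}$ is a linear isomorphism (so $u+w$, hence $w$, is recovered); its image is an open neighbourhood of $\ell$ in $L'$ because $\mu_x$ is an open map onto a neighbourhood of $x$ in $M'$ and $\pi^x_y$ is a linear isomorphism depending continuously on $y$; and everything varies continuously with $\ell$ because $p$, $\mu$, $\g$ and the projections $\pi^x_y$ do. So $(\pi,\ov\mu)$ embeds a neighbourhood of the zero section openly into $L\times L'$, i.e.\ $\ov\mu$ realises $p^\ast(V\vv M\oplus L)$ as a linearization of $L$ restricting to the given one over $M$.

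For uniqueness up to homotopy I would observe that the construction uses only the extension $L'\to M'$, the Gauss map $\g$, the exponential map $\mu$ of $(M,V\vv M)$, and the orthogonal projections determined by $\g$, and that the space of such data is contractible: Gauss maps into $\RR^\infty$ form a contractible space, $M'$ deformation retracts onto $M$ so $L'$ is unique up to isomorphism, and the space of exponential maps of a fixed linearization that agree with a given one over a subcomplex is contractible by the convexity argument used above for smooth linearizations with fixed derivative along the zero section (interpolate linearly in the ambient linear coordinates supplied by $\g$ and by $\mu$, then patch over a finite complex dominating the compact manifold $D(L)$); running the displayed formula along a path of such choices then gives a homotopy of extensions. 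In the smooth case $\mu_x$ is smooth by hypothesis, $\pi^x_y$ is smooth because $\g$ is, and $p$ together with the vector-space operations are smooth, so the displayed formula and the interpolations above are all smooth; hence $\ov\mu$ and its extension are smooth. I expect this last step, the homotopy-uniqueness, to be the only real obstacle: interpolating between two exponential maps forces one down into local linear coordinates and then into an obstruction-theoretic patching argument over $D(L)$, whereas existence and smoothness are an immediate verification of the formula.
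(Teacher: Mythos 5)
Your explicit formula takes care of existence (and of the smoothness clause) correctly, and it is in fact more concrete than the paper's route, which obtains the extension as a special case of the Linearization Extension Lemma \ref{extending linearizations from the spine}, using that the zero section $M$ is a (fiberwise) deformation retract of $L$. The gap is in the uniqueness clause. What the proposition asserts -- and what the paper needs later -- is that \emph{any} two extensions of $\ov\mu$ to $L$ are homotopic rel the zero section. Your argument only shows that the outputs of your particular formula for the various admissible choices of input data ($L'$, $\g$, $\mu$, the projections $\pi^x_y$) are homotopic to one another; an arbitrary linearization of $L$ restricting to $\ov\mu$ over $M$ need not arise from your formula for any choice of data, so contractibility of the space of choices does not touch it.

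Moreover, the contractibility you invoke for ``exponential maps agreeing with a given one over a subcomplex'' is not justified in the topological category. The convexity statement in the paper applies only to \emph{smooth} linearizations with a fixed derivative along the zero section: there the fixed derivative guarantees that a convex combination is still an immersion, hence an embedding near the zero section. For topological exponential maps there is no derivative controlling local injectivity, a convex combination of two fiberwise open embeddings need not be injective, and interpolating in the ambient coordinates supplied by $\g$ does not even stay inside $L'$, which is not convex; so the proposed ``interpolate, then patch over $D(L)$'' step is exactly where the argument breaks. The paper's proof avoids all of this: choose a fiberwise deformation retraction $r_t$ of $L$ (or $D(L)$) onto the zero section $M$, cover it by a deformation $\tilde r_t$ of the vertical tangent Euclidean bundle, and note that for any extension $\ov\ll$ the family $(\tilde r_t)_\ast^{-1}\circ r_t^\ast(\ov\ll)$ deforms $\ov\ll$, rel $M$, to the extension pulled back from its restriction to $M$; this single retraction argument gives existence, homotopy uniqueness, and smoothness at once. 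If you keep your explicit formula for existence, you still need this (or an equivalent) for the uniqueness statement.
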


Since $M\subset L$ is a deformation retract, this follows from the following important lemma.

\begin{lem}[Linearization extension lemma]\label{extending linearizations from the spine}
Suppose that $W\to B$ is a topological manifold bundle and $K\subseteq W$ is a fiberwise deformation retract of $W$. Then any linearization $V$ of $E\vv W|K$ extends to all of $W$ and any two such extensions are homotopic rel $K$. Furthermore, if $W,V$ are smooth, then this extension will also be smooth.
\end{lem}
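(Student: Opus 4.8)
The plan is to recognize a linearization as a section of a fibre bundle and then to use that, over a deformation retract, sections extend uniquely up to homotopy. Recall (as noted above) that a linearization of a Euclidean bundle $E\to X$ is a lifting of its classifying map $X\to BHomeo(\RR^n,0)$ to $BGL(n,\RR)$; such liftings are exactly the sections of the associated fibre bundle $\cL(E)\to X$ with fibre $Homeo(\RR^n,0)/GL(n,\RR)$, the construction is natural, $\cL(f^\ast E)=f^\ast\cL(E)$, and homotopies of linearizations correspond to vertical homotopies of sections. Applying this to $E=E\vv W$, I would restate the lemma as the assertion that the restriction map on section spaces
\[
	\rho\colon\Gamma_W\bigl(\cL(E\vv W)\bigr)\longrightarrow\Gamma_K\bigl(\cL(E\vv W)|_K\bigr)
\]
is surjective with path-connected fibres: the fibre of $\rho$ over a linearization $\ll$ of $E\vv W|_K$ is precisely its space of extensions to $W$, and a path in that fibre is a homotopy rel $K$.

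A fiberwise deformation retraction of $W$ onto $K$ over $B$ gives in particular a retraction $r\colon W\to K$ and a homotopy $H\colon W\times I\to W$ from $\mathrm{id}_W$ to $\iota r$ that is stationary on $K$, so $\iota\colon K\hookrightarrow W$ is a homotopy equivalence, and in the situations of interest (e.g.\ $K$ a codimension-zero subbundle) also a cofibration, hence a trivial cofibration. For existence, given $\ll\colon V\to E\vv W|_K$, the homotopy $H$ supplies a Euclidean-bundle isomorphism $\Psi\colon(\iota r)^\ast E\vv W=r^\ast(E\vv W|_K)\xrightarrow{\cong}E\vv W$ over $W$ restricting to the identity over $K$; then $\tilde\ll:=\Psi\circ r^\ast\ll\colon r^\ast V\to E\vv W$ is a linearization of $E\vv W$ with $\tilde\ll|_K=\ll$ (because $r|_K=\mathrm{id}$ and $\Psi|_K=\mathrm{id}$). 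This is just the standard trick of extending a section across a trivial cofibration by pulling it back along the retraction. For uniqueness rel $K$: since $\iota$ is a cofibration and $\cL(E\vv W)\to W$ a fibration, $\rho$ is a fibration, and since $\iota$ is a homotopy equivalence, $\rho$ is a homotopy equivalence; hence the fibres of $\rho$ are weakly contractible, so non-empty and path connected, which is exactly existence together with uniqueness up to homotopy rel $K$. Unwound, the last step is the same extension principle for the trivial cofibration $W\times\d I\cup K\times I\hookrightarrow W\times I$: the linearization of $E\vv W\times I$ given by the two extensions over the two ends and the constant homotopy of $\ll$ over $K\times I$ extends over $W\times I$, and that extension is the desired homotopy.

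For the smooth statement I would argue separately, invoking the observation preceding the lemma that a smooth linearization of a smooth bundle is canonically its vertical tangent bundle $T\vv W$ together with a smooth exponential-map germ with identity derivative along the zero section, and that such germs, measured relative to any fixed one, form a convex --- hence contractible --- space. So one extends the vector-bundle part canonically from $T\vv W|_K$ to $T\vv W$ over $W$, and extends the exponential germ from a uniform neighborhood of $K$ to all of $W$ by damping it toward a globally defined smooth exponential map with a smooth bump function (interpolating in the fibres of $T\vv W$, where the structure is linear); the identity-derivative condition is preserved, so by the inverse function theorem the result is still a germ of a fibrewise local diffeomorphism, and the straight-line homotopy between two such extensions, which stays an exponential germ for the same reason, gives uniqueness up to smooth homotopy rel $K$.

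The genuinely delicate point is the topological uniqueness rel $K$, i.e.\ the fact that $W\times\d I\cup K\times I$ is a fiberwise deformation retract of $W\times I$ --- equivalently, that the pushout--product of the trivial cofibration $K\hookrightarrow W$ with the cofibration $\d I\hookrightarrow I$ is again a trivial cofibration. This is standard, but writing the retracting homotopy by hand turns out to need the full NDR-pair structure of $(W,K)$ rather than just the given deformation retraction, so I would simply cite it.
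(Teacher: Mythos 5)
Your existence argument is essentially the one in the paper: cover the retracting homotopy by a family of Euclidean bundle maps $\tilde r_t$, obtain from $\tilde r_1$ an isomorphism $\Psi\colon r^\ast(E\vv W|_K)\cong E\vv W$ restricting to the identity over $K$, and take $\Psi\circ r^\ast\ll$ as the extension. Where you diverge is in the uniqueness step and the smooth addendum. For uniqueness the paper simply writes down the homotopy explicitly: if $\ov\ll$ is any extension, then $(\tilde r_t)_\ast^{-1}\circ r_t^\ast(\ov\ll)$ is a path in the space of linearizations of $E\vv W$ from $\ov\ll$ (at $t=0$) to the canonical extension $\Psi\circ r^\ast\ll$ (at $t=1$), constant over $K$ because $r_t$ fixes $K$ and $\tilde r_t$ can be taken to fix $E\vv W|_K$. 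This one formula settles uniqueness with no appeal to cofibration technology, and in particular it sidesteps the point you flag as delicate --- the need for an NDR structure on $(W,K)$ to know that $W\times\d I\cup K\times I\hookrightarrow W\times I$ is a trivial cofibration. Your fibration argument is perfectly valid under the (usually satisfied) extra hypothesis that $K\hookrightarrow W$ is a cofibration, but the paper's proof is both shorter and weaker in its hypotheses. Similarly, for the smooth case the paper gets smoothness for free by taking $\tilde r_t$ to be fiberwise smooth, so that the same construction produces a smooth extension and a smooth homotopy; your separate bump-function interpolation of exponential germs in $T\vv W$ works (convexity of germ spaces with fixed derivative does preserve the local-diffeomorphism condition), but it is a second argument where the paper needs only one. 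Both approaches are sound; the paper's is more economical because its deformation is given by an explicit formula rather than by an abstract lifting principle.
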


\begin{proof}
Choose a fiberwise deformation retraction $r_t:W\to W$ of $W$ to $K$. Then $r_t$ is covered by a deformation retraction $\tilde{r}_t:E\vv W\to E\vv W$ of the Euclidean bundle $E\vv W$ to $E\vv W|K$ which we can take to be fiberwise smooth in the smooth case and $\tilde{r}_1$ gives an isomorphism $(\tilde{r}_1)_\ast$ between $E\vv W$ and the pull-back $r_1^\ast(E\vv W|K)$ of $E\vv W|K$ to $W$. If $\ll:V\to E\vv W|K$ is a linearization of $E\vv W|K$ then 
\[
	r_1^\ast V\xrarrow{r_1^\ast(\ll)} r_1^\ast (E\vv W|K)\xrarrow{(\tilde{r}_1)_\ast^{-1}} E\vv W
\]
is a linearization of $E\vv W$ which will be fiberwise smooth in the smooth case and $(\tilde{r}_t)_\ast^{-1}\circ r_t^\ast(\ov\ll)$ is a deformation of any linearization $\ov\ll:\ov{V}\to E\vv W$ extending $\ll$ to this one.
\end{proof}


\subsubsection{Tangential homeomorphisms}\label{Tangential homeo} Two linearizations of a manifold $M$ are equivalent if they lie in the same path component of the space of linearizations of $M$, in other words there is a fiberwise linearization of $M\times I$ which agrees with these linearizations at the endpoints. Two linearized manifolds are equivalent if there is a homeomorphism between them so that the linearization of one manifold is equivalent to the pull back of the linearization of the other manifold. We will make these definitions more precise and extend them to manifold bundles over $B$.


\subsubsub{i}{tangential homeomorphism}

By a \emph{tangential homeomorphism} between linearized manifolds we mean a triple:
\[
	(f,Vf,\mu_t): (M_0,VM_0,\ll_0)\to (M_1,VM_1,\ll_1)
\]
where
\begin{enumerate}
\item $f:M_0\to M_1$ is a homeomorphism,
\item $Vf:VM_0\to VM_1$ is a nonsingular linear map over $f$, and
\item $\mu_t:VM_0\to EM_0$ is a one parameter family of linearizations of $M_0$ going from $\mu_0=\ll_0$ to $\mu_1=Ef^{-1}\circ\ll_1\circ Vf$:
\[
\xymatrix{VM_0\ar[d]_{\ll_0=\mu_0}&
VM_0\ar[r]^{Vf}\ar[d]^{\mu_1}& VM_1\ar[d]^{\ll_1}\\
EM_0&EM_0 \ar[r]^{Ef} & EM_1
}
\]
\end{enumerate}
A fiberwise tangential homeomorphism between fiberwise linearized manifolds bundles is defined similarly.


\subsubsub{ii}{deformation of linearized manifolds}

We will see that a tangential homeomorphism $(M_0,VM_0)\to (M_1,VM_1)$ is equivalent to a one parameter family of linearized manifolds $(M_t,VM_t)$ going from $(M_0,VM_0)$ to $(M_1,VM_1)$. Then we define two tangential homeomorphisms to be \emph{isotopic} if the corresponding paths in the space of linearized manifolds are homotopic fixing the endpoints.

Suppose that $W\to I$ is a compact manifold bundle over the unit interval together with a fiberwise linearization $V\vv W\to E\vv W$. This is equivalent to the one parameter family of linearized manifolds $(M_t,VM_t)$ where $M_t=W|t$ with linearization $VM_t=V\vv W|t\to EM_t=E\vv W|t$.

Note that a tangential homeomorphism 
$(f,Vf,\mu_t):(M_0,VM_0)\to (M_1,VM_1)
$ also gives a one parameter family of linearized manifolds 
\[
(M_t,VM_t)=(M_0,\mu_t:VM_0\to EM_0)
\]
going from $(M_0,VM_0)$ to $(M_1,VM_1)$ if we identify $(M_0,VM_0,\mu_1)\cong (M_1,VM_1,\ll_1)$ via the isomorphism $(f,Vf)$. 

Conversely, we have the following.

\begin{prop}
A one parameter family of linearized manifolds $(M_t,VM_t)$ gives a tangential homeomorphism $(M_0,VM_0)\simeq (M_1,VM_1)$ which is uniquely determined up to a contractible choice. Furthermore, all tangential homeomorphisms are given in this way.
\end{prop}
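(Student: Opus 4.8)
The plan is to produce the tangential homeomorphism as essentially ``the identity on $M_0$ carrying the family of linearizations along'' after first trivializing the bundle $W \to I$. Suppose we are given a one-parameter family $(M_t, VM_t)$, i.e.\ a compact manifold bundle $W \to I$ with fiberwise linearization $\ll: V\vv W \to E\vv W$. Since $I$ is contractible, the bundle $W \to I$ is trivial: choose a fiberwise homeomorphism $h: W \cong M_0 \times I$ restricting to the identity over $0$, and let $f: M_0 = W|0 \to W|1 = M_1$ be the resulting homeomorphism of fibers (the ``other end'' of $h$). The topological derivative functor $E\vv{}$ of Section~\ref{subsec12} then identifies $E\vv W$ with $h^\ast(EM_0 \times I)$, and under this identification the pulled-back linearization $h_\ast V\vv W$ becomes a path of linearizations of the single manifold $M_0$: a vector bundle $Vf := VM_0$ at one end equipped with the map $Ef^{-1}\circ\ll_1\circ Vf$, and $\ll_0$ at the other. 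This path is exactly the required family $\mu_t: VM_0 \to EM_0$, and the triple $(f, Vf, \mu_t)$ is a tangential homeomorphism by the definition in \S\ref{Tangential homeo}(i).

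The second sentence (``all tangential homeomorphisms are given in this way'') is immediate: given $(f, Vf, \mu_t)$, the construction already recorded in the paragraph preceding the proposition produces the family $(M_t, VM_t) = (M_0, \mu_t: VM_0 \to EM_0)$, with the identification $(M_0, VM_0, \mu_1) \cong (M_1, VM_1, \ll_1)$ supplied by $(f, Vf)$; applying the first construction to this family returns $(f, Vf, \mu_t)$ up to the ambiguities we must now pin down.

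For the uniqueness-up-to-contractible-choice claim, I would identify exactly which choices were made. There are three: the trivialization $h$ of $W \to I$; the passage from microbundle $E\vv W$ to an actual Euclidean bundle, hence the identification $E\vv W \simeq h^\ast(EM_0\times I)$ (unique up to contractible choice by Kister--Mazur, Theorem~\ref{thm: Kister Masur}, together with Lemma~\ref{Kister's lemma}); and, if one works with genuine linearizations rather than germs, the choice of representatives. The space of trivializations $h$ of a bundle over $I$ extending a given one over $\{0\}$ is the space of paths in $Homeo(M_0)$ starting at the identity, which is contractible; the Kister--Mazur choices form a contractible space by the cited uniqueness statements; and the remaining representative choices are convex in the sense of the proposition on smooth linearizations with fixed derivative. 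Since each layer of choice contributes a contractible space and these fiber over one another, the total space of choices producing a given $(M_t,VM_t)$ is contractible, so the resulting tangential homeomorphism is well-defined up to a contractible choice; combined with the previous paragraph this shows the two constructions are mutually inverse up to contractible ambiguity.

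The main obstacle I expect is bookkeeping the boundary and the external collar correctly: $E\vv W$ was built from $W \oplus W'$ with $W' = W \cup (\d\vv W \times (-1,0])$, and the trivialization $h$ must be chosen compatibly with the collaring so that the induced $f': M_0' \to M_1'$ is the one appearing in the definition of $Ef$. This is exactly the kind of compatibility that the functoriality of $E\vv{}$ under fiberwise homeomorphisms (the ``vertical topological derivative'' $E\vv f$ of \S\ref{subsec12}) is designed to handle, so no new idea is needed, but care is required to ensure every identification is the canonical one and that ``continuity on the input data'' (invoked in the stabilization discussion) genuinely gives the contractibility of each choice space rather than merely its nonemptiness.
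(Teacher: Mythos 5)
Your argument is essentially the paper's: trivialize the bundle over $I$ (equivalently, choose homeomorphisms $f_t:M_0\to M_t$ covered by vector bundle maps, the ``product structure''), transport the linearizations back to $(M_0,VM_0)$ to obtain the triple $(f,Vf,\mu_t)$, observe that the space of such product structures is contractible, and note that applying this to the family associated to a given tangential homeomorphism recovers it. Your extra bookkeeping of the Kister--Masur and collar identifications is harmless but not a different route.
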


\begin{proof} Since bundles over $I$ are trivial, there exist homeomorphisms $f_t:M_0\to M_t$ equal to the identity for $t=0$ covered by nonsingular vector bundle maps $Vf_t:VM_0\to VM_t$ giving us a one parameter family of linearizations 
$
	\mu_t:VM_0\to EM_0
$
making the following diagram commute.
\[
\xymatrix{
VM_0\ar[r]^{Vf_t}\ar[d]^{\mu_t}& VM_t\ar[d]^{\ll_t}\\
EM_0 \ar[r]^{Ef_t} & EM_t
}
\]
So $(M_t,VM_t)$ is isomorphic to $(M_1,\eta_t:VM_1\to EM_1)$ by $(g_t,Vg_t)$ for every $t\in I$. This in turn gives an equivalence of linearized manifolds 
\[
	(g_0,Vg_0,\eta_t\circ Vg_0):(M_0,VM_0)\simeq(M_1,VM_1).
\]
The choices that we made are the product structures for bundles over $I$. The space of such product structures is contractible.

If we start with a tangential homeomorphism and take the corresponding one parameter family of linearized manifolds then this construction recovers the original tangential homeomorphism. Therefore, the construction gives all tangential homeomorphisms.
\end{proof}

\begin{cor}\label{tangential homeos are equivalences}
Every tangential homeomorphism is invertible.
\end{cor}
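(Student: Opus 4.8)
The plan is to run everything through the correspondence established in the preceding Proposition, which identifies a tangential homeomorphism $(f,Vf,\mu_t)\colon(M_0,VM_0,\ll_0)\to(M_1,VM_1,\ll_1)$, up to isotopy, with a path $t\mapsto(M_t,VM_t)$ in the space of linearized manifolds running from $(M_0,VM_0)$ to $(M_1,VM_1)$, up to homotopy rel endpoints. Reversing the path, $t\mapsto(M_{1-t},VM_{1-t})$, is again a one parameter family of linearized manifolds, now running from $(M_1,VM_1)$ to $(M_0,VM_0)$, so by the Proposition it is realized by a tangential homeomorphism $(M_1,VM_1,\ll_1)\to(M_0,VM_0,\ll_0)$, which is our candidate inverse. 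Composition of tangential homeomorphisms corresponds to concatenation of the associated paths (with the usual reparametrization back to $[0,1]$), and the identity tangential homeomorphism of $(M_i,VM_i)$ corresponds to the constant path; since the concatenation of a path with its reverse is homotopic rel endpoints to the constant path, both composites of the candidate with $(f,Vf,\mu_t)$ are isotopic to the respective identities, which is what invertibility means here.

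If one prefers an explicit formula for the inverse, I would take the triple $(f^{-1},Vf^{-1},\nu_t)$ with $\nu_t:=Ef\circ\mu_{1-t}\circ Vf^{-1}\colon VM_1\to EM_1$. Each $\mu_{1-t}$ is a linearization of $M_0$ and $Ef$, $Vf^{-1}$ are microbundle isomorphisms over $f$ and $f^{-1}$, so the conjugate $\nu_t$ is a linearization of $M_1$ for every $t$. The endpoint conditions are then routine. From $\mu_0=\ll_0$ we get $\nu_1=Ef\circ\ll_0\circ Vf^{-1}$, and since $E$ is functorial, $E(f^{-1})=(Ef)^{-1}$, so with $g=f^{-1}$ one has $Eg^{-1}=Ef$ and $Vg=Vf^{-1}$, whence $\nu_1=Eg^{-1}\circ\ll_0\circ Vg$, exactly the condition for $(g,Vg,\nu_t)$ to be a tangential homeomorphism into $(M_0,VM_0,\ll_0)$; and the defining identity $\mu_1=Ef^{-1}\circ\ll_1\circ Vf$ gives $\nu_0=Ef\circ(Ef)^{-1}\circ\ll_1\circ Vf\circ Vf^{-1}=\ll_1$. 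That this explicit triple agrees, up to the contractible ambiguity in the Proposition, with the path-reversal construction is immediate from the definitions.

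The work here is bookkeeping rather than geometry: one must pin down what ``composition'' and ``isotopic'' mean for tangential homeomorphisms -- composition being concatenation of the linearization paths, an operation well defined up to isotopy -- and check that the correspondence of the previous Proposition intertwines this with the fundamental groupoid of the space of linearized manifolds. Once that dictionary is recorded, ``invertible'' is just the observation that morphisms in a fundamental groupoid have inverses, and the only point needing care is reparametrization (concatenation is associative and unital only up to homotopy rel endpoints, which is precisely the equivalence relation built into the definition of isotopy). The fiberwise statement over a base $B$ follows by the same argument with $EM_i$, $VM_i$, $\ll_i$ replaced throughout by the vertical analogues $E\vv M_i$, $V\vv M_i$ and the reversed vertical family of linearized bundles.
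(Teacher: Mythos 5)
Your argument is correct and is essentially the paper's own proof: reverse the corresponding one-parameter family of linearized manifolds and use that the concatenation of a path with its reverse is homotopic rel endpoints to the constant path. The explicit formula $(f^{-1},Vf^{-1},Ef\circ\mu_{1-t}\circ Vf^{-1})$ is a nice extra but not needed beyond the path-reversal argument.
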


\begin{proof}
Take the corresponding one parameter family of linearized manifolds and run it backwards. Then the composition of the tangential homeomorphism with its inverse is isotopic to the identity since the composition of the corresponding path and its inverse is homotopic fixing its endpoints to the constant path.
\end{proof}


\subsubsub{iii}{example}

Suppose that $W$ is a smooth manifold bundle over $B$ which is a topological manifold bundle over $B\times I$ so that over $B\times 0$ and $B\times 1$ we have smooth bundles $M_0\to B$ and $M_1\to B$. Then we will obtain fiberwise tangential homeomorphisms $M_0\times I\to M_1\times I$ and $M_0\times I\to W$ as bundles over $B$.

(a) The first has the form
\[
	(f,V\vv f,\mu_t\vvv):(M_0\times I,T\vv (M_0\times I))\to(M_1\times I,T\vv (M_1\times I))
\]
with terms defined below. (Note that $T\vv (M_0\times I)=(T\vv M_0\oplus \e^1)\times I$ and similarly for $M_1\times I$.)

Since $W$ is a bundle over $B\times I$, $W$ is fiberwise homeomorphic to $M_0\times I$ over $B\times I$. This gives a homeomorphism
\[
	f_t:M_0\to M_t=\pi^{-1}(t)
\]
of bundles over $B$ and we let 
\[
f=f_1\times id_I:M_0\times I\to M_1\times I.
\]
The smooth structure on $W$ gives a linearization of the stabilized Euclidean bundle of $M_t$:
\[
	\mu_t:T\vv W|M_t\to E\vv M_t\oplus\e^1
\]
This gives a fiberwise linearization of the bundle $M_t\times I$ over $B$ which is smooth for $t=0,1$. The one parameter family of linearized manifolds $(M_t\times I, T\vv W|M_t\times I)$ gives a fiberwise tangential homeomorphism of the stabilizations
\[
	M_0\times I\to M_1\times I.
\]

(b) The second fiberwise tangential homeomorphism $M_0\times I\to W$ over $B$ is given as follows.

First, we add an external collar $M_0\times [-1,0]$ to the bottom of $W$. This gives $W^+=W\cup_{M_0}M_0\times[-1,0]$ a bundle over $B$ which is fiberwise diffeomorphic to $W$. But $W^+$ has a new projection map $\pi:W^+\to [-1,1]$. Let $W_t, t\in [0,1]$, be the 1-parameter family of topological manifolds given by $W_t=\pi^{-1}[-1,t]$. Since $W_t$ is topologically embedded in the smooth manifold bundle $W^+$ of the same dimension, $W_t$ obtains a linearization from the smooth linearization of $W^+$. The linearization is smooth for $t=0,1$ since $W_0,W_1$ are smooth submanifolds of $W^+$. This gives a 1-parameter family of linearized topological manifold bundles going from $W_0\cong M_0\times I$ to $W_1=W^+\cong W$ as claimed.


\subsubsection{Tangential homeomorphism of smooth manifolds}

By the discussion in the last subsection, a tangential homeomorphism between two smooth manifolds (with canonical linearizations) is given up to contractible choice by a fiberwise linearized topological bundle over $I$ which is smooth over the end points. We call this a ``tangential (topological) concordance'' between the two smooth manifolds. To avoid repetition, we give the formal definition only for bundles.

\begin{defn}
By a \emph{fiberwise tangential concordance} between two smooth manifold bundles $M_0\to B$, $M_1\to B$ over the same base we mean a linearized topological manifold bundle $W\to B\times I$ so that $W_0=W|B\times 0$ and $W_1=W|B\times1$ are smooth fiberwise linearizations of (the underlying topological manifold bundles of) $M_0$ and $M_1$.
\end{defn}

When we represent a fiberwise tangential concordance by a fiberwise tangential homeomorphism $(f,Vf,\mu_t)$ we would like to say that we can choose $Vf:T\vv M_0\to T\vv M_1$ to be smooth. However, this is not possible without changing the smooth structure of $T\vv M_0$ since $Vf$ is a map over the continuous map $f$ which is not, in general, homotopic to a diffeomorphism.

\begin{prop}
If the vertical tangent bundle of $M_0$ is trivial then any fiberwise tangential concordance between $M_0$ and $M_1$ is represented by a fiberwise tangential homeomorphism $(f,Vf,\mu_t)$ where $Vf:T\vv M_0\to T\vv M_1$ is smooth, using the smooth structure $T\vv M_0\cong M_1\times\RR^n$ and the given smooth structure on $T\vv M_1$. Furthermore the space of all such tangential homeomorphisms representing the same tangential concordance is contractible.
\end{prop}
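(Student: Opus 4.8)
The plan is to construct the tangential homeomorphism from the one-parameter family of linearized manifolds underlying the given concordance, exactly as in the preceding proposition, and then to use the triviality of $T\vv M_0$ to arrange the only non-canonical choices in that construction — the product structures over $I$ — so that the resulting $Vf$ is smooth.

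First I would choose a product structure for $W$ over $I$: a fiberwise homeomorphism $\Phi\colon M_0\times I\to W$ over $B\times I$ which is the identity over $B\times 0$. This produces fiberwise homeomorphisms $f_t=\Phi|_t\colon M_0\to M_t$ over $B$ with $f_0=\mathrm{id}$, hence $f:=f_1\colon M_0\to M_1$. Pulling the fiberwise linearization of $W$ back along $\Phi$ gives a topological vector bundle $\Phi^\ast V\vv W$ over $M_0\times I$ restricting to $T\vv M_0$ over $t=0$ and to $f^\ast T\vv M_1$ over $t=1$. Since $M_0\times I$ deformation retracts onto $M_0\times 0$ and $T\vv M_0$ is trivial, $\Phi^\ast V\vv W$ is a trivial $\RR^n$-bundle, so restricting to $t=1$ and applying $(f^{-1})^\ast$ shows $T\vv M_1$ is trivial too. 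I then fix smooth trivializations $\sigma_0\colon T\vv M_0\cong M_0\times\RR^n$ and $\sigma_1\colon T\vv M_1\cong M_1\times\RR^n$, which exist since a topologically trivial smooth vector bundle is smoothly trivial.

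Next I would choose a product structure for $\Phi^\ast V\vv W$ over $I$: a vector bundle isomorphism $\Psi\colon\Phi^\ast V\vv W\cong T\vv M_0\times I$ over $M_0\times I$ which is the identity over $t=0$. Composing $\Psi^{-1}|_t$ with the canonical projection $f_t^\ast V\vv M_t\to V\vv M_t$ gives nonsingular bundle maps $Vf_t\colon T\vv M_0\to V\vv M_t$ over $f_t$, and hence, with the induced family $\mu_t$ of linearizations, a fiberwise tangential homeomorphism $(f,Vf,\mu_t)$ representing the concordance, with $Vf=Vf_1$. In the coordinates $\sigma_0,\sigma_1$ this $Vf$ reads $(x,v)\mapsto(f(x),C(x)v)$ for a continuous $C\colon M_0\to GL(n,\RR)$; two choices of $\Psi$ rel $t=0$ change $C$ only by a self-map $M_0\to GL(n,\RR)$ null-homotopic in $GL(n,\RR)$, so $[C]\in[M_0,GL(n,\RR)]$ depends only on $\Phi$, while every $C'$ with $[C']=[C]$ is realized by a suitable $\Psi$ (lift a null-homotopy of $C'C^{-1}$ to a path of automorphisms of $T\vv M_0\times I$ starting at the identity). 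By smooth approximation the class $[C\circ f^{-1}]\in[M_1,GL(n,\RR)]$ has a smooth representative $D$; choosing $\Psi$ with $C=D\circ f$ makes $Vf$ equal to $(x,v)\mapsto(f(x),D(f(x))v)$. Transporting the smooth structure of $M_1\times\RR^n$ to $T\vv M_0$ along $(f\times\mathrm{id})\circ\sigma_0$ — this is the structure $T\vv M_0\cong M_1\times\RR^n$ in the statement — the map $Vf$ becomes, in the coordinates $(f\times\mathrm{id})\circ\sigma_0$ and $\sigma_1$, the diffeomorphism $(y,w)\mapsto(y,D(y)w)$; in particular $Vf$ is smooth.

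For the contractibility, recall that by the preceding proposition the space of fiberwise tangential homeomorphisms representing the concordance is, up to contractible ambiguity, the space of pairs $(\Phi,\Psi)$ above, which is contractible (it fibers over the contractible space of $\Phi$'s with contractible fibers). Fixing the smooth structure on $T\vv M_0$ as above, the condition that $Vf$ be smooth depends, over each $\Phi$, only on the endpoint $\Psi|_{t=1}\in\mathrm{Iso}(f^\ast T\vv M_1,T\vv M_0)$ and picks out those $\Psi|_{t=1}$ whose coordinate matrix $C\circ f^{-1}$ is smooth; since the smooth maps $M_1\to GL(n,\RR)$ in a fixed homotopy class form a subspace homotopy equivalent to the space of all such maps, these admissible $\Psi|_{t=1}$ form a subspace of their path component whose inclusion is a homotopy equivalence, and pulling back along the fibration $\Psi\mapsto\Psi|_{t=1}$ shows the admissible $\Psi$'s, hence the pairs $(\Phi,\Psi)$ with $Vf$ smooth, hence the space of smooth representatives, form a contractible space. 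The step needing the most care is the third paragraph — checking that varying $\Psi$ moves $Vf$ only by pre-composition with an automorphism of $T\vv M_0$ in the identity component, so the only obstruction to smoothing $Vf$ is the vanishing obstruction to a smooth representative in a homotopy class of maps to $GL(n,\RR)$ — together with keeping straight which smooth structure on $T\vv M_0$ is in play.
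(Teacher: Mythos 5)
Your proposal is correct and takes essentially the same route as the paper: both reduce the smoothness of $Vf$ to the fact that smooth nonsingular bundle maps from a trivial bundle into $T\vv M_1$ (equivalently, maps into $GL(n,\RR)\simeq O(n)$) include into the continuous ones as a homotopy equivalence, which simultaneously gives existence of a smooth representative and contractibility of the space of such representatives. Your extra paragraphs merely spell out the product-structure bookkeeping and the fibration argument that the paper's one-line proof leaves implicit.
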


\begin{proof}
The space of smooth nonsingular bundle maps $M_1\times \RR^n\to T\vv M_1$ over $M_1$ is homotopy equivalent to $Map(M_1,O(n))$ which is homotopy equivalent to the space of nonsingular continuous bundle maps $M_0\times\RR^n\to T\vv M_1$ over $f$.
\end{proof}

In fact, we can choose $f:M_0\to M_1$ to be a smooth embedding on the core of $M_0$ and we can choose $Vf$ to be smooth over that core.


%
%

\subsection{Tangential smoothing}\label{subsec14}

Given a linearized topological manifold $(M_0,VM_0)$, a \emph{tangential smoothing} of $(M_0,VM_0)$ is a smooth manifold $M_1$ together with a tangential homeomorphism $(M_0,VM_0)\simeq(M_1,TM_1)$. In this section we express this as a point in the homotopy fiber of a map between moduli spaces of manifolds with smooth and linear structures.


\subsubsection{Space of linearized manifolds}

Let $\widetilde S^t(n)$ be the set of all linearized $n$-manifolds.  For concreteness, we take these to be triples $(M,V,\mu)$ where $M\subseteq\RR^\infty$ is a compact topological $n$-manifold embedded in $\RR^\infty$, $\pi:V\to M$ is an $n$-plane bundle over $M$ and $\mu: V\to M'=M\cup C$ is a topological exponential map. As before, $C=\d M\times(-1,0]$ is the standard external collar for $M$.

Let $\widetilde\cS_\bullet^t(n)$ be the simplicial set whose $k$-simplices are continuous $\Delta^k$ families of linearized $n$-manifolds. This is a space which lies between the moduli spaces $\cS_\bullet^d(n)$ and $\cS_\bullet^t(n)$ in the sense that the simplicial forgetful map $\f:\cS_\bullet^d(n)\to\cS_\bullet^t(n)$ factors through $\widetilde\cS_\bullet^t(n)$:
\[
\xy
\xymatrix{
&\widetilde\cS_\bullet^t(n)\ar[dr]^{\psi^t}\\
\cS_\bullet^d(n)\ar[ur]^{\widetilde\f}\ar[rr]^\f&&\cS_\bullet^t(n)
}
\endxy
\]
The second map $\psi^t:\widetilde\cS_\bullet^t(n)\to \cS_\bullet^t(n)$ is the simplicial forgetful map given by projection to the first coordinate: $\psi^t(M,V,\mu)= M$. But, we are mainly interested in the first map
\[
	\widetilde\f:\cS_\bullet^d(n)\to \widetilde\cS_\bullet^t(n).
\]
This simplicial map is defined by taking a $\Delta^k$ family of smooth manifolds to the underlying family of topological manifolds with canonical linearizations.


\subsubsection{Homotopy fiber of $\widetilde\f$}

If we write down the definition it will be obvious that the homotopy fiber of $\widetilde\f$ is the space of fiberwise tangential smoothings of a fixed linearized topological manifold.

\begin{defn} For any linearized topological manifold $(M,VM)$
let $\widetilde\cS_\bullet^{t/d}(M,VM)$ be the simplicial set whose $k$-simplices are fiberwise tangential smoothings of the trivial bundle $M\times\Delta^k\to \Delta^k$ with fiberwise linearization $\ll\times id:VM\times \Delta^k\to EM\times\Delta^k$.
\end{defn}

\begin{prop}\label{the homotopy fiber of the map Sd to lin St}
$\widetilde\cS_\bullet^{t/d}(M,VM)$ is the homotopy fiber of the forgetful functor
\[
	\widetilde\f:{\cS_\bullet^d}(n)\to\widetilde\cS_\bullet^t(n)
\]
over $(M,VM)\in \widetilde\cS_0^t(n)$.\qed
\end{prop}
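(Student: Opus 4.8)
The plan is to identify $\widetilde\cS_\bullet^{t/d}(M,VM)$ with the standard mapping-path-space model for the homotopy fiber of $\widetilde\f$, using the dictionary between tangential homeomorphisms and one-parameter families of linearized manifolds set up in Section~\ref{Tangential homeo}. This is the linearized analogue of the earlier identification of $|\cS_\bullet^{t/d}(X)|$ with the homotopy fiber of $|\f|$, so I would largely mirror that argument.

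First I would fix a model for the homotopy fiber. For the simplicial map $\widetilde\f\colon\cS_\bullet^d(n)\to\widetilde\cS_\bullet^t(n)$, its homotopy fiber over $(M,VM)\in\widetilde\cS_0^t(n)$ is computed by the simplicial set $F_\bullet$ whose $k$-simplices are pairs $\bigl((W,\a),\gamma\bigr)$, where $(W,\a)$ is a $k$-simplex of $\cS_\bullet^d(n)$ and $\gamma$ is a continuous $\Delta^k\times I$ family of linearized $n$-manifolds restricting to the constant family at $(M,VM)$ over $\Delta^k\times 0$ and to $\widetilde\f(W,\a)$ --- the underlying topological family of $(W,\a)$ equipped with its canonical linearization --- over $\Delta^k\times 1$. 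I would check that this path-space construction does compute the homotopy fiber, i.e. that within the category of continuous $\Delta^k$ families $\widetilde\f$ may be replaced by a fibration; this uses the local triviality of manifold bundles over simplices to lift deformations, which is the elementary argument already invoked for $\f$ in Section~\ref{subsec:homotopy fiber of phi}.

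Next I would construct a comparison map $\Psi\colon\widetilde\cS_\bullet^{t/d}(M,VM)\to F_\bullet$. Unwinding the definition, a $k$-simplex of the source is a smooth bundle $M_1\to\Delta^k$ --- which we regard as a $k$-simplex $(W,\a)$ of $\cS_\bullet^d(n)$ --- together with a fiberwise tangential homeomorphism $(f,V\vv f,\mu_t)\colon(M\times\Delta^k,VM\times\Delta^k,\ll\times id)\to(M_1,T\vv M_1)$. The one-parameter family $\mu_t$ of linearizations of $M\times\Delta^k$, together with the isomorphism $(M\times\Delta^k,VM\times\Delta^k,\mu_1)\cong(M_1,T\vv M_1)$ supplied by $(f,V\vv f)$, is precisely a continuous $\Delta^k\times I$ family $\gamma$ of the kind occurring in $F_\bullet$, running from the constant family at $(M,VM)$ to $\widetilde\f(W,\a)$. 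Setting $\Psi\bigl((M_1,f,V\vv f,\mu_t)\bigr)=\bigl((W,\a),\gamma\bigr)$ yields a map which is manifestly simplicial.

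Finally I would show $\Psi$ is a weak equivalence by analysing its point-inverses over the simplices of $F_\bullet$. A preimage of $\bigl((W,\a),\gamma\bigr)$ is exactly a promotion of the deformation $\gamma$ to an honest fiberwise tangential homeomorphism, i.e. a product structure $(f_t,V\vv f_t)$ on the bundle underlying $\gamma$ over $\Delta^k\times I$ that restricts to the identity over $\Delta^k\times 0$. By the proposition of Section~\ref{Tangential homeo} --- a one-parameter family of linearized manifolds determines a tangential homeomorphism uniquely up to a contractible choice, and every tangential homeomorphism arises this way --- the space of such promotions is nonempty and contractible; hence $\Psi$ has contractible homotopy fibers over each simplex and so induces a weak homotopy equivalence on realizations. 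Combined with the first step, this identifies $|\widetilde\cS_\bullet^{t/d}(M,VM)|$ with the homotopy fiber of $|\widetilde\f|$ over $(M,VM)$. I expect the only genuinely delicate point to be the fibration-replacement used to set up the homotopy-fiber model (together with the parallel bookkeeping that the data discarded by $\Psi$ forms a contractible choice); everything else is a formal unwinding of the definitions.
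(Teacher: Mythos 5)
Since the paper marks this proposition with $\qed$ and simply declares it obvious once the definitions are unwound, the relevant comparison is with Section~\ref{Tangential homeo}, which supplies the dictionary between tangential homeomorphisms and one-parameter families of linearized manifolds. You correctly identify this as the key input, and the mapping-path-space model together with the parallel to the non-linearized case in Section~\ref{subsec:homotopy fiber of phi} is the right scaffolding, so your overall approach matches the paper's intent.

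The one place where your argument misfires is the final weak-equivalence step. As you have defined it, $\Psi$ sends a tangential smoothing $(M_1,f,V\vv f,\mu_t)$ to the particular path obtained by varying the linearization on the fixed underlying bundle $M\times\Delta^k$ and then identifying the $t=1$ end with $M_1$ via $(f,V\vv f)$. Such paths are of a very special form, so $\Psi$ is far from surjective onto $F_\bullet$, and its set-theoretic point-inverse over a generic $k$-simplex $\bigl((W,\a),\gamma\bigr)$ of $F_\bullet$ is \emph{empty}, not contractible. What you describe --- the space of product structures on the bundle underlying $\gamma$ restricting to the identity over $\Delta^k\times 0$ --- is not a preimage under $\Psi$; it is the datum needed for a map the other way, $F_\bullet\to\widetilde\cS_\bullet^{t/d}(M,VM)$, and that map does not split $\Psi$ on the nose (composing it with $\Psi$ ``straightens'' $\gamma$ to a generally different path). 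The clean repair is to interpose the simplicial set $G_\bullet$ whose $k$-simplices are triples $\bigl((W,\a),\gamma,\text{product structure on }\gamma\bigr)$: the forgetful map $G_\bullet\to F_\bullet$ has contractible fibers because product structures on bundles over $\Delta^k\times I$ form a contractible space, as is observed in the proof of the proposition in Section~\ref{Tangential homeo}, and one then checks that the evaluation map $G_\bullet\to\widetilde\cS_\bullet^{t/d}(M,VM)$ is likewise a weak equivalence, so that $\Psi$ is one by two-out-of-three rather than by a direct fiber count. Your ingredients are exactly right; only the accounting of whose fibers are being computed needs to be corrected.
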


More generally, if $(M,V\vv M)$ is a fiberwise linearized manifold bundle over $B$, we can define a space $\ttd{B}(M)$ of fiberwise tangential smoothings of $(M,V\vv M)$. This is the homotopy fiber of the map
\[
	\widetilde\f_\ast:|{\cS_\bullet^d}(n)|^B\to|\widetilde\cS_\bullet^t(n)|^B
\]
over the map $B\to |\widetilde\cS_\bullet^t(n)|$ classifying the linearized bundle $(M,V\vv M)$. We are interested in $\pi_0$ of this space. However, this set may be empty. We need at least one smoothing $(M_0,V\vv M_0)$ to make it nonempty. This smoothing can be used as a base point and the other smoothings will be call ``exotic (fiberwise tangential) smoothings.''

\begin{defn}\label{def: fiberwise tangential smoothings}
 If $M_0\to B$ is a smooth manifold bundle, then an \emph{exotic (fiberwise tangential) smoothing} of $M_0$ is defined to be another smooth manifold bundle $M_1\to B$ together with a fiberwise tangentially homeomorphism to $M_0$.
\end{defn}

If $A$ is a submanifold of $\d B$ and $P$ is a smooth subbundle of $M$ then we define $\ttd{B,A}(M,P)$ to be the (simplicial) subspace of $\ttd B(M)$ consisting of fiberwise tangential smoothings of $(M,V\vv M)$ which are equal to the given smoothing on $P\cup M_A$ where $M_A=p^{-1}(A)$.


\subsubsection{Stabilization}
By stabilization we mean taking direct limit with respect to all linear disk bundles. In particular we replace the tangential smoothing space $\ttd{B,A}(M_0)$ with the \emph{stable tangential smoothing space}
\[
	\std{B}{A}(M_0):=\lim_\to
	\ttd{B,A}(D(\xi))
\]
where the direct limit is with respect to all linear disk bundles $\xi$ over $M_0$. 

Stabilization is used to make the vertical tangent bundle trivial. We take the disk bundle $D(\nu)$ of the vertical normal bundle $\nu$, the stably well-defined complement for the vertical tangent bundle. If we replace $M_0$ with $D(\nu)\times D^m$ with corners rounded then this new $M_0$ has a smooth spine $J_0=D(\nu)^-$ (where the $(\ )^-$ indicates removal of a small open collar neighborhood of the vertical boundary of $D(\nu)$) which is a smooth manifold bundle. It has a core $K_0$ which is fiberwise diffeomorphic to $M_0$ and whose complement $M_0-K_0$ is an internal collar neighborhood of the fiberwise boundary $\d\vv M_0$. Thus $M_0-K_0$ is diffeomorphic to $\d\vv M_0\times[0,1)$.

\subsubsub{i}{corners}

There is one problem: We need to know that corners can be rounded off in a canonical way. But, for our purposes, this is easy since any two ways of rounding off corners will clearly be tangentially homeomorphic and we have the following lemma.

\begin{lem}
Let $M_0,M_1$ be two smooth bundles over $B$ which are fiberwise diffeomorphic over $A\subset \d B$ and which are also fiberwise tangentially homeomorphic relative to $A$. Then their tangential smoothing spaces relative to $A$ are simplicially homotopy equivalent:
\[
	\ttd{B,A}(M_0)\simeq
	\ttd{B,A}(M_1).
\]
\end{lem}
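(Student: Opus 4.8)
The plan is to produce the homotopy equivalence by exhibiting a specific fiberwise tangential homeomorphism $g:M_0\to M_1$ relative to $A$, and then showing that precomposition (equivalently, ``transport of structure'') with $g$ induces the desired simplicial homotopy equivalence of smoothing spaces. Concretely, since $M_0$ and $M_1$ are fiberwise tangentially homeomorphic relative to $A$, by Proposition \ref{the homotopy fiber of the map Sd to lin St} (and its bundle version, realized by the one-parameter family description of tangential homeomorphisms in \S\ref{Tangential homeo}) we get a linearized topological bundle $W\to B\times I$ restricting to the given smooth linearizations of $M_0$ and $M_1$ over the two ends, and constant on $M_A$; since it is fiberwise diffeomorphic over $A$ we may arrange this concordance to be the product concordance over $M_A$. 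Then for any smoothing of the trivial family $M_0\times\Delta^k$ we may stack it on top of $W\times\Delta^k$ to obtain a smoothing of $M_1\times\Delta^k$, and this stacking operation is simplicial, natural in $\Delta^k$, and respects the relative condition over $P\cup M_A$.

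First I would make the map precise: define $\Phi:\ttd{B,A}(M_0)\to\ttd{B,A}(M_1)$ on $k$-simplices by sending a fiberwise tangential smoothing $(W',\a)$ of $M_0\times\Delta^k$ (a linearized topological bundle over $B\times I\times\Delta^k$ agreeing with $M_0$ at $0$, smooth at $1$) to the concatenation of $W\times\Delta^k$ with $W'$ along the copy of $M_0$; the gluing is along the common linearized topological bundle $M_0\times\Delta^k$, so the result is again a linearized topological bundle over $B\times I\times \Delta^k$, now agreeing with $M_1$ at one end and smooth at the other. One must check this is well-defined up to the contractible choices involved in reparametrizing $I\sqcup_{\{1\}=\{0\}} I\cong I$ and in rounding, but these choices form contractible spaces, so after passing to realizations (or working simplicially with the standard tricks) $\Phi$ is well-defined up to canonical homotopy. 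Similarly, running the concordance $W$ backwards (it is invertible by Corollary \ref{tangential homeos are equivalences}) gives a map $\Psi:\ttd{B,A}(M_1)\to\ttd{B,A}(M_0)$.

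Next I would show $\Psi\circ\Phi\simeq\mathrm{id}$ and $\Phi\circ\Psi\simeq\mathrm{id}$. The composite $\Psi\circ\Phi$ sends $(W',\a)$ to $W$-reversed stacked on $W$ stacked on $W'$; the bundle $W^{\mathrm{rev}}\sqcup_{M_1} W$ over $B\times I$ is, by the ``run it backwards'' argument in the proof of Corollary \ref{tangential homeos are equivalences}, fiberwise tangentially homeomorphic rel $A$ to the product $M_0\times I$ through a canonical (hence contractible) family; concatenating $W'$ with this family gives a simplicial homotopy from $\Psi\circ\Phi$ to the identity (up to the standard reparametrization homotopy of $I\vee I\vee I$ with $I$). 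Because everything is constant over $M_A$ and fixed on $P$, these homotopies are relative to $A$ and $P$ throughout. The argument for $\Phi\circ\Psi$ is symmetric.

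The main obstacle I anticipate is purely bookkeeping rather than conceptual: making the ``stacking'' operation genuinely simplicial requires a consistent choice of how to identify the concatenation $I\cup_{\mathrm{pt}}I$ with $I$ across all simplicial degrees and compatibly with faces and degeneracies, and similarly for rounding corners at the gluing locus. The clean way to handle this is to not reparametrize at all: replace the target $\ttd{B,A}(M_1)$ by the homotopy-equivalent model of ``multi-stage'' linearized concordances (bundles over $B\times[0,n]$ for varying $n$, with smoothness only required at the top), in which stacking is strictly associative and unital, show this inclusion is an equivalence (a one-sided-collar/Whitehead-type argument, exactly as in Proposition \ref{prop:smoothing = actual smoothing}), and then observe $\Phi$ becomes an honest simplicial map with a strict simplicial inverse up to the elementary homotopy contracting a concordance against its reverse. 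All of this only uses tools already in the excerpt — invertibility of tangential homeomorphisms, contractibility of the relevant spaces of product structures and rounding choices, and the linearization extension lemma \ref{extending linearizations from the spine} to propagate linear structures across the glued region — so no new input is needed.
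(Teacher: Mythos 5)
Your proposal is correct and is essentially the paper's own argument: the paper sends a smoothing of $M_0$ to a smoothing of $M_1$ by composing its tangential homeomorphism with the fixed one between $M_0$ and $M_1$ (equivalently, concatenating with the corresponding path of linearized bundles, which is exactly your stacking of the concordance $W$), and obtains the homotopy inverse by running that path backwards, citing Corollary \ref{tangential homeos are equivalences} to see the composites are homotopic to the identity. Your extra Moore-style bookkeeping to make stacking strictly simplicial is harmless but not needed in the paper's formulation, where composition of tangential homeomorphisms is already a simplicial operation up to the contractible choices built into the definitions.
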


\begin{proof}
The fiberwise tangential homeomorphism $M_0\to M_1$ gives a one parameter family of linearized topological manifold bundles $M_t$ going from $M_0$ to $M_1$ which we view as a path in the space of linearized topological manifolds. 

A point in $\ttd{B,A}(M_0)$ is a smooth bundle $M'$ together with a fiberwise tangential homeomorphism with $M_0$. Composing this with the given fiberwise tangential homeomorphism $M_0\to M_1$ we get a point in $\ttd{B,A}(M_1)$. The inverse morphism give a simplicial map the other way $\ttd{B,A}(M_1)\to \ttd{B,A}(M_0)$ and the composition is the identity by Corollary \ref{tangential homeos are equivalences}
\end{proof}

One important example was given in subsubsubsection \ref{Tangential homeo}(iii). Given a smooth bundle $W\to B$ which fibers topologically over $B\times I$ in such a way that the inverse image of $B\times 0$ and $B\times1$ are smooth bundles $M_0$ and $M_1$, then $W$ defines a tangential homeomorphism $M_0\times I\to M_1\times I$. In other words $M_1\times I$ becomes a point in the stable tangential smoothing space of $M_0$. We denote this by
\[
	top(W)=M_1\times I\in\std{B}{A}(M_0)
\]
This is represented by $M_1$ in the sense that $M_1$ and $top(W)=M_1\times I$ are stably equivalent.

\subsubsub{ii}{flat sides}\label{flat subsubsection}

Stabilization can also be given by the simple process of taking products with disks:
\[
	\std{B}{A}(M_0)=\lim_\to \ttd{B,A}(M_0\times D^n)
\]
The reason is that trivial disk bundles are cofinal in the directed system of all disk bundles over $M_0$. Since corners are not a problem, we can also use cubes $I^n$ instead of disks $D^n$.

Another cofinal system is given by the space of tangential smoothings $W$ of $M_0\times I^n\times I$ which are ``flat'' on $M_0\times I^n\times 0$ in the sense that the tangential homeomorphism
\[
	(f,V\vv f):M\times I^n\times I\to W
\]
is induced by a tangential homeomorphism
\[
	(\d_0f,V\vv\d_0f):M\times I^n\times 0\to \d_0W
\]
in a neighborhood of $M\times I^n\times 0$. This is only a restriction on the tangential map $V\vv f$ and homotopy $\mu\vv$ since any homeomorphism $f$ as above induces a homeomorphism $\d_0f$ on $M\times I^n\times 0$. The ``flatness'' condition is that the maps 
\[
\mu\vv_t:V\vv(M\times I^n\times I)\to V\vv(W)
\]
should send $V\vv(M\times I^n\times 0)$ to $V\vv(\d_0W)$.

It is easy to see that ``flat on one side'' smoothings form a cofinal system. When we pass from a smoothing of $W$ to a smoothing of $W\times I$ we always have a tangential smoothing which is flat on one side (in fact on both sides). Similarly, when we stabilize ``flat on one side'' smoothings we first forget the flatness on one side then take the product with an interval.

\subsubsub{iii}{smoothness of the boundary}

Finally, we need the fact that, after stabilization, tangential smoothings of $M$ which are fixed on the vertical boundary $\d\vv M$ give the same thing as those which don't fix the boundary. We will formulate this more precisely and prove it in the next section.

Suppose that $\d B=\d_0B\cup \d_1B$ where $\d_0B$ and $\d_1B$ meet along their boundary $\d\d_0B=\d\d_1B$. The boundary of the total space $M$ is equal to $\d M=\d_0M\cup \d_1M$ where $\d_0M=\d\vv M\cup M_{\d_0B}$ be the union of the vertical boundary $\d\vv M$ of $M$ and the restriction $M_{\d_0B}$ of $M$ to $\d_0B$ and $\d_1M=M_{\d_1B}$. 

Recall that $\ttd{B,\d_0B}(M,\d\vv M)$ is the space of tangential smoothings of $M$ which are fixed on $\d_0M$. (Def. \ref {def: fiberwise tangential smoothings}) This is a subspace of $\ttd{B,\d_0B}(M)$.

\begin{prop}\label{boundary does not matter}
After stabilization we get a homotopy equivalence:
\[
	\lim_\to \ttd{B,\d_0B}(D(M),D(\d\vv M))\simeq\lim_\to \ttd{B,\d_0B}(D(M))
	\]
	where both limits are with respect to all linear disk bundles $D(M)$ over $M$ and $D(\d\vv M)$ is the restriction of $D(M)$ to $\d\vv M$.
\end{prop}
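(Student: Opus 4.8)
The plan is to regard the map in question as the inclusion $\iota$ of the simplicial subset $\ttd{B,\d_0B}(D(M),D(\d\vv M))$ inside $\ttd{B,\d_0B}(D(M))$, to note that it is a monomorphism of simplicial sets compatible with the stabilization maps that enlarge $D(M)$, and to prove that it becomes a weak homotopy equivalence in the direct limit. Because $\iota$ is injective, it is enough to show that after passing to a sufficiently large disk bundle every $\Delta^k$-family of tangential smoothings of $D(M)$ can be deformed, through tangential smoothings, to one which is standard near $D(\d\vv M)$, and that such a deformation is unique up to a contractible choice; in other words, that stabilization kills the obstruction to making a tangential smoothing standard along the vertical boundary, uniformly in families.

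First I would localize the problem near $\d\vv M$. Choose a fiberwise collar $\d\vv M\times[0,1]$ of $\d\vv M$ in $M$, arrange that $D(M)$ is the product $D(\d\vv M)\times[0,1]$ along it, and let $M^-\subset M$ be the complementary codimension-$0$ subbundle, which is fiberwise diffeomorphic to $M$ and meets the collar along the interior face $\d\vv M\times\{1\}$. Since (tangential) smoothings are excisive (Proposition \ref{smoothing is excisive}), a tangential smoothing of $D(M)$ is the same as a pair of tangential smoothings, one of the collar $D(\d\vv M)\times[0,1]$ and one of $D(M^-)$, agreeing over $D(\d\vv M)\times\{1\}$; the inclusion $\iota$ only changes the collar factor, and because restriction to the interior face is a fibration it suffices to prove the analogous statement for the product bundle $D(\d\vv M)\times[0,1]$ — that, after stabilization, requiring the given smoothing on the outer face $D(\d\vv M)\times\{0\}$ in addition to the interior face makes no difference up to homotopy.

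The remaining collar statement is where stabilization is genuinely used, and it is the main obstacle. Here I would invoke the cofinal system of ``flat on one side'' tangential smoothings from \ref{flat subsubsection}, applied with the collar coordinate itself as the stabilizing interval: one replaces $D(\d\vv M)\times[0,1]$ by $D(\d\vv M)\times[0,1]\times[0,1]$ and works with smoothings that are flat on the new face, and then pulls a given smoothing back along the one-parameter family of self-embeddings that compress the original coordinate $[0,1]$ toward the interior face $\d\vv M\times\{1\}$. This pushes the smoothing into an arbitrarily small neighbourhood of the interior face; the defect that an Alexander-type compression would otherwise accumulate at the outer face $\d\vv M\times\{0\}$ is absorbed by the new flat collar direction, so that the compression is an honest deformation, natural in $\Delta^k$-families and compatible both with the relative condition over $\d_0B$ and with the freedom on the sphere-bundle part of $\partial D(M)$. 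The essential point to verify is that ``flat on one side and standard near $D(\d\vv M)$'' smoothings remain cofinal, so that the outer-face defect is really killed in the limit rather than merely pushed to infinity; granting this, pulling the conclusion back through the excision reduction of the previous paragraph yields the Proposition.
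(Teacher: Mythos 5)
There is a genuine gap, and it sits exactly where you put the weight: the localization to a boundary collar and the compression argument there. Consider first the reduction. Your fibrewise comparison over the restriction to $D(M^-)$ would require the following collar statement: for a \emph{fixed} germ of smoothing along the interior face, the collar smoothings that are in addition standard along the outer face $D(\d\vv M)\times\{0\}$ include as a weak equivalence into all collar smoothings extending that germ. That statement is false in general, not merely unproven. Take the germ to be the standard one: the subspace is then the space of stable smoothings of $D(\d\vv M)\times[0,1]$ rel \emph{both} faces; since $D(\d\vv M)\times[0,1]$ is a linear disk bundle over the smooth bundle $\d\vv M\times I\to B$ whose restriction to the vertical boundary $\d\vv M\times \d I$ is exactly the two faces ($\d X$ being closed), Proposition \ref{boundary does not matter} applied to $\d\vv M\times I$ together with Theorem \ref{main smoothing thm} identifies this subspace with $\Gamsub{B}{\d_0B}\cH^\%_B(\d\vv M\times I)\simeq \Gamsub{B}{\d_0B}\cH^\%_B(\d\vv M)$, which is not weakly contractible (its rational homotopy is computed as in Corollary \ref{main homotopy calculation}). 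On the other hand, your compression, if it were a legitimate deformation, would contract the ambient collar space (smoothings extending the given interior germ, free at the outer face) onto the product extension of that germ. The two halves of your collar argument are therefore jointly inconsistent with the known answer: the difference between ``standard at the outer face'' and ``free at the outer face'', for a fixed germ at the interior face, is measured by the stable structure space of $D(\d\vv M)$ itself and does not vanish. The Proposition is true only globally: freeing the boundary is compensated by changing the smoothing over the rest of $D(M)$, so no argument confined to a collar of $D(\d\vv M)$ and carried out relative to $D(M^-)$ can prove it.

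The compression step is also not a legal move in this category. Pulling a smoothing $\a$ back along a self-embedding that squeezes the collar coordinate toward the interior face produces a map whose image has frontier the $\a$-image of a topological slice $D(\d\vv M)\times\{t\}$; this is in general not a smooth submanifold, so the pullback is not a smoothing in the sense of Section \ref{subsec11}. The assertion that the extra flat direction from \ref{flat subsubsection} ``absorbs'' this defect is precisely the content that would have to be proved, and the ``essential point'' you explicitly leave unverified (cofinality of smoothings that are flat on one side \emph{and} standard near $D(\d\vv M)$) is essentially the Proposition itself. The paper's proof supplies the missing global input in a different way: using immersion theory it first shows there is no stable obstruction to making the tangential homeomorphism smooth on a core $K_0$ (a disk-bundle neighborhood of a spine) placed on a flat side, and then observes that, because the complement of $K_0$ in $D(M)$ is a collar of the vertical boundary, after rounding corners there is a diffeomorphism of $D(M)\times I$ carrying $D(\d\vv M)\times I\cup K_0\times 0$ onto $D(M)\times 0$; this identifies both $\lim_\to\ttd{B,\d_0B}(D(M),D(\d\vv M))$ and $\lim_\to\ttd{B,\d_0B}(D(M))$ with $\lim_\to\cS^{t/d}_{B,\d_0B}(D(M)\times I,D(M)\times 0)$. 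Smoothing the core and then repositioning the boundary collar by a diffeomorphism is the actual mechanism, and it is absent from your proposal.
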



%
%

\subsection{Smoothing theorems}\label{subsec15}

Given a smooth bundle $p:M_0\to B$ and a smooth submanifold $A$ of $\d B$, we would like to determine the set of all isotopy classes of exotic fiberwise smoothings of $M_0$ which are equal to the given smoothing over $A$. By definition this is $\pi_0$ of the space 
$
	\ttd{B,A}(M_0)
$ of all tangential smoothings of $M_0$:
\[
	(f,V\vv f,\mu_t\vvv):(M_0,T\vv M_0,\ll_0)\to (M_1,T\vv M_1,\ll_1)
\]
which are trivial over $A$.
This lies in the null component of $ \ttd{B,A}(M_0)$ if and only if $f:M_0\to M_1$ is isotopic to a fiberwise diffeomorphism and $\mu_t\vvv$ is isotopic to a family of smooth fiberwise linearizations of $M_0$.

When we stabilize $M_0$ will contain a core $K_0$ which is $M_0$ minus an internal fiberwise collar and a spine $J_0$ which is a high codimensional submanifold of $K_0$. Thus $K_0$ will be a smooth manifold bundle diffeomorphic to $M_0$ and both $K_0$ and $J_0$ will be fiberwise deformation retracts of $M_0$.

It will follow from standard immersion theory that $(f,V\vv f)$ is isotopic to a smooth embedding on the core $K_0$ in such a way that $\mu_t\vvv$ becomes isotopic to a smooth linearization over the core. What will remain is the question of smoothability of the internal collar. The extension of the smooth linearization is automatic by the linearization extension lemma \ref{extending linearizations from the spine}. So we have a classical smoothing problem whose obstruction space is well known to be a homology theory.


\subsubsection{Smoothing of disk bundles} We will go over the simplest example: disk bundles.


\subsubsub{i}{the problem}
We are given two smooth bundles $M_0, M_1$ over $B$ with fiber $D^n$, a smooth $n$-dimensional disk, which are fiberwise diffeomorphic over a submanifold $A$ of $B$ and a fiberwise tangential homeomorphism
\[
	(f,V\vv f,\mu_t\vvv):(M_0,T\vv M_0,\ll_0)\to (M_1,T\vv M_1,\ll_1)
\]
which agree with the given diffeomorphism over $A$. 
We want to find an isotopy of $f$ rel $A$ to a diffeomorphism over $B$ in a way compatible with the tangential data given by $V\vv f,\mu_t\vvv$.


\subsubsub{ii}{spines and cores}

The first step is to choose a spine. The spine of a disk is any interior point and the spine of a disk bundle is any section which lies in the interior. Let $s_0,s_1$ be smooth sections of $M_0,M_1$ with images $J_0,J_1$ in the fiberwise interiors so that $s_1=f\circ s_0$ over $A$. We will choose a small standard disk bundle neighborhood $K_0$ of the image $J_0$ of $s_0$.

Next, we deform $f:M_0\to M_1$ so that it takes $J_0$ to $J_1$. Since the fibers are contractible, the sections $s_1$ and $f\circ s_0$ of $M_1$ are homotopic rel $A$. We can use the topological ambient isotopy theorem to extend this to an isotopy of $f$ rel $A$ to a homeomorphism taking $J_0$ to $J_1$. (However, this example of the ambient isotopy theorem is an easy exercise.)

\begin{lem}
Any topological isotopy of $f$ rel $A$ can be extended to $V\vv f$ and $\mu_t\vvv$ to give an isotopy of the tangential homeomorphism $(f,V\vv f,\mu_t\vvv)$.
\end{lem}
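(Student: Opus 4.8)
The plan is to reduce the extension of a topological isotopy of $f$ to the tangential data $(V\vv f, \mu_t\vvv)$ to a lifting problem with contractible fibers, and then invoke that lifting problems with contractible fibers always have solutions, with uniqueness up to contractible choice. So suppose we are given a topological isotopy $f^{(s)}:M_0\to M_1$ rel $A$, $s\in I$, with $f^{(0)}=f$. Each $f^{(s)}$ is a homeomorphism, hence (by the discussion of the topological derivative in Section \ref{subsec12}) has a fiberwise Euclidean derivative $E\vv f^{(s)}:E\vv M_0\to E\vv M_1$ depending continuously on $s$, and agreeing with $Ef$ over $A$. We must produce a continuous family of nonsingular linear maps $V\vv f^{(s)}:T\vv M_0\to T\vv M_1$ over $f^{(s)}$ and a continuous family of one-parameter linearizations $\mu_t\vvv{}^{(s)}:T\vv M_0\to E\vv M_0$ with $\mu_0\vvv{}^{(s)}=\ll_0$ and $\mu_1\vvv{}^{(s)}=(E\vv f^{(s)})^{-1}\circ\ll_1\circ V\vv f^{(s)}$, all reducing to the given data at $s=0$ and all trivial over $A$.

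First I would handle $V\vv f^{(s)}$. Since $f^{(0)}=f$ comes with a chosen nonsingular $V\vv f$ over it, and the $f^{(s)}$ form a homotopy through fiberwise embeddings, the bundle maps over $f^{(s)}$ covering the identity form a fibration over $I$ whose fiber is the space of nonsingular bundle maps $T\vv M_0 \to (f^{(s)})^\ast T\vv M_1$ over $M_0$; this fiber is homotopy equivalent to $\Map(M_0, GL(n,\RR))$, which is nonempty and has the homotopy type needed for the section extension problem to be solvable rel the constraints at $s=0$ and over $A$. Thus $V\vv f^{(s)}$ exists, and the space of such families is contractible once the endpoints and boundary behavior are fixed — this is the same kind of argument used in the Proposition on fiberwise tangential concordances and in the linearization extension lemma \ref{extending linearizations from the spine}. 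Next, having fixed $V\vv f^{(s)}$, I would produce $\mu_t\vvv{}^{(s)}$: for each fixed $s$ the two linearizations $\ll_0$ and $(E\vv f^{(s)})^{-1}\circ\ll_1\circ V\vv f^{(s)}$ of $M_0$ have the same derivative along the zero section (namely $V\vv f^{(s)}$ postcomposed with the derivative of $E\vv f^{(s)}$, which equals the derivative of $\ll_0$ by construction of $V\vv f$), so by the proposition that the space of smooth linearizations with fixed derivative along the zero section is convex — or, more precisely here, the analogous statement that the space of linearizations of a fixed Euclidean bundle with prescribed behavior at the zero section is contractible — a path $\mu_t\vvv{}^{(s)}$ exists and depends continuously on $s$, agreeing with the given $\mu_t\vvv$ at $s=0$ and being trivial over $A$.

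Organizationally, the cleanest way to say this is: the data $(f^{(s)}, V\vv f^{(s)}, \mu_t\vvv{}^{(s)})$ is exactly a path in the space $\ttd{B,A}(M_0)$ (equivalently, a $\Delta^1$-family of tangential homeomorphisms with fixed underlying $M_1$), and the forgetful map from this space of tangential homeomorphisms to the space of underlying homeomorphisms $f^{(s)}$ is, by the two contractibility statements above assembled over the simplices of the parameter space, a fibration with contractible fibers; hence it is a homotopy equivalence, and in particular a path downstairs starting at a point that lifts to $(f, V\vv f, \mu_t\vvv)$ lifts (uniquely up to homotopy) to a path upstairs. Keeping everything rel $A$ throughout poses no difficulty because all the spaces of choices involved are already required to be trivial over $A$ and the corresponding relative mapping spaces remain contractible.

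The main obstacle is purely bookkeeping rather than conceptual: one must verify that the derivative of $E\vv f^{(s)}$ along the zero section is compatible with the chosen $V\vv f^{(s)}$ so that the two linearizations being compared genuinely have the same $1$-jet along the zero section — this is what makes the convexity/contractibility statement applicable and lets the homotopy $\mu_t\vvv{}^{(s)}$ fix its endpoints. Once that compatibility is arranged (which one can always do, since $V\vv f^{(s)}$ was chosen precisely to cover $E\vv f^{(s)}$ up to the needed homotopy), the rest is a routine assembly of contractible-fiber lifting arguments of the type already used repeatedly in Section \ref{subsec13}.
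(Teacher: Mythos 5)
Your overall plan---first extend $V\vv f$ along the isotopy, then fill in the family of homotopies $\mu\vvv_t$---can be made to work, but two of the justifications you give are wrong, and one of them is at the key step. For the $\mu$-step you appeal to the convexity proposition for \emph{smooth} linearizations with fixed derivative along the zero section, and then to ``the analogous statement that the space of linearizations of a fixed Euclidean bundle with prescribed behavior at the zero section is contractible.'' Neither applies: the linearization $(E\vv f^{(s)})^{-1}\circ\ll_1\circ V\vv f^{(s)}$ is only a topological microbundle morphism (it has no derivative along the zero section unless $f^{(s)}$ is smooth), the convexity statement in the paper concerns smooth linearizations of a smooth manifold, and the space of topological linearizations is certainly not contractible---if it were, the homotopy $\mu_t\vvv$ would carry no information and the space $\widetilde\cS_\bullet^t(n)$ would be pointless. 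For the same reason your structural claim that the forgetful map from tangential homeomorphisms to homeomorphisms is a fibration with contractible fibers, hence a homotopy equivalence, is false: its fiber over $f$ consists of the choices of $V\vv f$ (a torsor over the gauge group of $T\vv M_0$) together with a path $\mu_t\vvv$ of linearizations with \emph{both} endpoints prescribed (so, up to homotopy, a loop space of the space of linearizations), and neither factor is contractible; were that map an equivalence, tangential smoothing would reduce to ordinary smoothing of homeomorphisms, contradicting the whole point of the construction.

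What the lemma actually needs---and what is true---is only a lifting statement, not an equivalence. Your construction of $V\vv f^{(s)}$ is fine once phrased as homotopy extension for nonsingular bundle maps over the pair $(M_0\times I, M_0\times 0)$, done rel $M_A$. For $\mu\vvv_t{}^{(s)}$, no contractibility is needed: the prescribed data lives on the three faces $\{s=0\}$, $\{t=0\}$, $\{t=1\}$ of the parameter square, this union is a retract of the square, and so the map into the space of fiberwise linearizations extends with no hypothesis whatsoever on that space (a relative version keeps everything fixed over $A$); the crucial input is that at $s=0$ you already possess the path $\mu_t\vvv$ joining the two endpoints. The paper's own proof bypasses even this bookkeeping: it uses the earlier equivalence between tangential homeomorphisms and one-parameter families of linearized manifold bundles $(M_t,VM_t,\mu_t)$ equipped with a trivializing family of homeomorphisms $f_t:M_0\to M_t$, and simply keeps the same family of linearized bundles while replacing the trivialization by one ending at the isotoped homeomorphism. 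So: salvageable strategy, but as written the contractibility claims are a genuine error, and the correct argument is both weaker and simpler.
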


\begin{proof}
Consider the tangential homeomorphism as a 1-parameter family of linearized manifolds $(M_t,VM_t,\mu_t)$ together with a family of homeomorphisms $f_t:M_0\to M_t$ which is the identity on $B\times 0\cup A\times I$. To prove the lemma we take the same family of linearized manifold bundles with a new family of homeomorphisms.
\end{proof}


\subsubsub{iii}{extending the smooth structure to the core}

Let $(M_t,V\vv M_t,\mu_t\vvv,J_t)$ be the 1-parameter family of linearized manifolds over $B$ with a given choice of spine which is smooth over $B\times 0\cup A\times I\cup B\times 1$.

Let $V_t=V\vv M_t|J_t,E_t=E\vv M_t|J_t$ considered as bundles over $B$ and let $\mu_t:V_t\to E_t$ the microbundle isomorphism given by the restriction of $\mu_t:V\vv M_t\to E\vv M_t$ to $J_t$. Then $V_t$ will be a vector bundle over $B$ and $E_t$ will be a Euclidean bundle over $B$ which is fiberwise homeomorphic to a neighborhood of $J_t$. Since continuous isomorphisms of smooth vector bundles are isotopic to smooth vector bundle isomorphisms, we can choose a family of vector bundle isomorphisms $V_0\cong V_t$ which is the identity for $t=0$ and smooth for $t=1$ and we can do this relative to $A$. This gives a smooth structure on $V_t$ for all $t$ which agrees with the smooth structure over $A$ and over the endpoints.

Using the microbundle isomorphism $\mu_t:V_t\to E_t$ we get a family of smoothings for a neighborhood of $J_t$ in $M_t$ which is compatible with $V\vv M_t$ over $J_t$. By definition, $\mu_t$ will be a smooth linearization of $E_t$. By the linearization extension lemma \ref{extending linearizations from the spine}, we can extend this to a new fiberwise linearization of $M_t$ which is smooth in a neighborhood of $J_t$ (and everywhere where it was already smooth). Furthermore this new linearization will be isotopic to the old one.


\subsubsub{iv}{smoothing the collar}

The situation is the following. We have a 1-parameter family of linearized manifold bundles $(M_t,V\vv M_t,\mu_t\vvv)$ together with a smoothing over the core $K_t$ which is a disk bundle in the interior of $M_t$. We also have a smoothing over $B\times 0\cup A\times I\cup B\times1$ which is compatible with the linearization.

The key point is that smoothing is \emph{excisive} (Proposition \ref{smoothing is excisive}). Therefore, we may remove the interior of the core $K_t$. If we stabilize $M_t$ once more, replacing it with $M_t\times[-1,1]$, we will have the smooth subbundle $K_t\times [-1,0]$ which meets the boundary. Then, after excising the interior of this new subbundle and rounding off the corners, we get $M_t\times[-1,1]-\interior K_t\times [-1,0)$ which is a topological $h$-cobordism bundle over $B$ whose fibers are $h$-cobordisms of $D^n$ and therefore homeomorphic to $D^n\times I$ which have a smooth structure on the base $D^n\times 0$ and sides $\d D^n\times I$ and over $B\times 0\cup A\times I\cup B\times 1$.

This can be rephrased as follows. We have a continuous mapping of pairs
\[
	(B\times I,B\times 0\cup A\times I\cup B\times 1)\to (\cC ob^t(D^n),\cC ob^d(D^n))
\]
where $\cC ob^t(M)$ is the space of topological $h$-cobordisms $W\subseteq \RR^\infty$ of $M$ which are fixed on the base $M\times 0$ and the boundary, $\d M\times I$ and $\cC ob^d(M)$ is the space of pairs $(W,\a)$ where $W\in\cC ob^t(M)$ and $\a$ is a smoothing of $W$ which agrees with a fixed standard smoothing on $M\times 0\cup\d M\times I$. These spaces are topologized as geometric realizations of simplicial subsets of $\cS^t(n+1)$ and $\cS^d(n+1)$.

We use the following facts:\begin{enumerate}
\item $\cC ob^t(D^n)$ is contractible by the Alexander trick.
\item $\cC ob^d(D^n)$ is an $n$-fold loop space since it has an action of the little $n$-cubes operad.
\item {The smooth structure is fixed over $B\times 0$.}
\end{enumerate}
Therefore, we can trivialize the smooth structure over $B\times 0$ by multiplying by its inverse. The smooth structure over $A\times I$ can also be made trivial in homotopy unique way. The map to $\cC ob^h(D^n)$ contains no homotopy information. So, we are reduced to a map
\[
	B/A\to \cC ob^d(D^n)
\]
where $B/A$ means smashing $A$ to a point.

\begin{thm}
If $M_0$ is a smooth disk bundle over $B$ and $A$ is a submanifold of $B$ then the space of stable fiberwise tangential smoothings of $B$ rel $A$ is homotopy equivalent to the space of all pointed maps
\[
	B/A\to \cH(\ast)
\]
where
\[
	\cH(\ast):=\colim\cC ob^d(D^n)
\]
is the stable smooth $h$-cobordism space of a point.\qed
\end{thm}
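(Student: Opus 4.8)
The plan is to assemble, into a single chain of homotopy equivalences, the reductions already carried out in the paragraphs preceding the theorem. Recall (Proposition \ref{the homotopy fiber of the map Sd to lin St}) that the tangential smoothing space is a homotopy fiber of the forgetful map from smooth to linearized manifolds; so the real task is to identify this homotopy fiber after all the geometry has been squeezed into the complement of a core. First I would reduce to the core exactly as in the discussion above: choose smooth sections $s_0,s_1$ of $M_0,M_1$ agreeing over $A$, with spines $J_0,J_1$ and a standard disk subbundle $K_0\supseteq J_0$; use the topological ambient isotopy theorem to isotope the given fiberwise tangential homeomorphism rel $A$ so that it carries $J_0$ to $J_1$, extending the isotopy to the tangential data $(V\vv f,\mu_t\vvv)$ by the lemma above on extension of isotopies. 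Standard immersion theory then makes $(f,V\vv f)$ a smooth embedding on the core, and the linearization extension lemma \ref{extending linearizations from the spine}, together with the fact that continuous isomorphisms of smooth vector bundles over the spine are isotopic to smooth ones, upgrades $\mu_t\vvv$ to a fiberwise linearization smooth near the core --- all through contractible choices, rel $A$ and rel the endpoints $t=0,1$.

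Next I would invoke that smoothing is excisive (Proposition \ref{smoothing is excisive}), so the only remaining obstruction lives on the complement of the core. Stabilizing once more by $[-1,1]$ and rounding corners converts the internal core-collar into a boundary collar and leaves, over the base, a bundle whose fibers are topological $h$-cobordisms of $D^n$, equipped with a partial smoothing that is the fixed standard one over $B\times 0\cup A\times I\cup B\times 1$. In simplicial terms this exhibits the stable tangential smoothing space of $M_0$ rel $A$ as the $B$-parametrized homotopy fiber, rel $A$, of the forgetful map $\cC ob^d(D^n)\to\cC ob^t(D^n)$: the space of sections over $B$, agreeing with the given section over $A$, of the bundle whose fiber is that homotopy fiber.

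Then the three listed facts finish the argument. By the Alexander trick $\cC ob^t(D^n)$ is contractible, so the homotopy fiber of $\cC ob^d(D^n)\to\cC ob^t(D^n)$ is canonically $\cC ob^d(D^n)$ itself, and the underlying topological $h$-cobordism bundle (together with the one-parameter direction) contributes nothing; the section space thus becomes $\Map(B,\cC ob^d(D^n))$ rel $A$. The little $n$-cubes action makes $\cC ob^d(D^n)$ a grouplike $n$-fold loop space, so translating the fixed $A$-value to the basepoint --- a homotopy-unique operation, since the smooth structure over $A$ is the standard one --- identifies this with the pointed mapping space $\Map_*(B/A,\cC ob^d(D^n))$. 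Finally, trivial disk bundles $D^n\times D^m$ are cofinal among linear disk bundles over a disk bundle, so passing to the colimit over $m$ replaces $\cC ob^d(D^n)$ by $\cH(\ast)=\colim_n\cC ob^d(D^n)$, yielding $\std{B}{A}(M_0)\simeq\Map_*(B/A,\cH(\ast))$.

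The main obstacle is the bookkeeping in the last two steps: one must check that everything except the smoothing of the $h$-cobordism complement --- the ambient topological bundle, the one-parameter linearization family $\mu_t\vvv$, and the auxiliary smooth-structure choices made over the spine --- is genuinely a contractible choice, so that the homotopy fiber collapses cleanly onto $\cC ob^d(D^n)$ with the interval direction and the contractibility of $\cC ob^t(D^n)$ conspiring to introduce no stray loop or suspension, and that the basing of the $A$-value is canonical up to contractible choice. The immersion-theoretic step of Step 1 (making $(f,V\vv f)$ a smooth embedding on the core) is the other technical ingredient, but it is standard and is precisely what the discussion preceding the theorem has already reduced to.
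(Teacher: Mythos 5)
Your proposal is correct and follows essentially the same route as the paper: smooth the core (spine/section plus the linearization extension lemma), excise it, stabilize to obtain a fiberwise topological $h$-cobordism of $D^n$ smooth over $B\times 0\cup A\times I\cup B\times 1$, then use the Alexander trick, the little-cubes structure on $\cC ob^d(D^n)$, and the fixed structure over $B\times 0$ to reduce to pointed maps $B/A\to\cC ob^d(D^n)$ and pass to the colimit $\cH(\ast)$. The only differences are cosmetic packaging: you phrase the last reduction via the homotopy fiber of $\cC ob^d(D^n)\to\cC ob^t(D^n)$ and a section-space rel $A$, where the paper uses a map of pairs and multiplication by inverses, and you invoke immersion theory for the core where the paper's disk-bundle argument uses the more elementary section/vector-bundle-isotopy step (reserving immersion theory for the general case).
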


\subsubsub{v}{higher torsion calculation}

We use the well-known fact that $\cH(\ast)$ is rationally homotopy equivalent to $BO$. This was first shown by Farrell and Hsiang and later Hatcher gave an explicit map $G/O\to \cH(\ast)$ and conjectured that it was nontrivial. This was first proved by B\"okstedt and later by Igusa using higher Reidemeister torsion. See \cite{I:Axioms0} for an elementary explanation of this.

We note that $\std{B}{A}(B,0)$ is the space which classifies stable exotic smooth structures on linear disk bundles over $B$ which are trivial over $A$. ($B\to B$ is the unique $D^0$ bundle over $B$ with trivial vertical tangent bundle $0$.) The theorem shows that
\[
	\std{B}{A}(B)\simeq Map(B/A,\cH(\ast)).
\]

\begin{cor}\label{smooth disk bundles and IK torsion}
$\pi_0\std{B}{A}(B)$ is an abelian group and we have an isomorphism
\[
	\t^\IK:\pi_0\std{B}{A}(B)\otimes\RR\cong \bigoplus_{k>0}H^{4k}(B,A;\RR)
\]
given by sending any smooth disk bundle $E\to B$ which is linear over $A$ and any tangential homeomorphism of $E$ to a linear disk bundle to
\[
	\t^\IK(E)=\sum \t_{2k}^\IK(E)\in \bigoplus_{k>0}H^{4k}(B,A;\RR)
\]
\end{cor}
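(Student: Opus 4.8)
The plan is to combine the homotopy equivalence $\std{B}{A}(B)\simeq Map(B/A,\cH(\ast))$ from the preceding theorem with the rational homotopy type of $\cH(\ast)$ and a standard computation of homotopy classes of maps into a product of Eilenberg--MacLane spaces. First I would recall the well-known fact, due to Farrell--Hsiang, that $\cH(\ast)$ is rationally homotopy equivalent to $BO$, so that rationally $\cH(\ast)\simeq_\QQ \prod_{k>0} K(\QQ,4k)$, since the rational homotopy groups of $BO$ are $\QQ$ in degrees $4k$ for $k>0$ and zero otherwise. Taking $\pi_0$ of $Map(B/A,\cH(\ast))$ and tensoring with $\RR$, the standard obstruction-theory/Federer spectral sequence argument (or simply the fact that a rational H-space splits as a product of rational Eilenberg--MacLane spaces) collapses because the homotopy groups of $\cH(\ast)$ are concentrated in the degrees $4k$ and $\RR$ is flat over $\QQ$; this gives
\[
	\pi_0\std{B}{A}(B)\otimes\RR\cong \bigoplus_{k>0}[B/A, K(\RR,4k)]\cong \bigoplus_{k>0}\tilde H^{4k}(B/A;\RR)\cong\bigoplus_{k>0}H^{4k}(B,A;\RR).
\]
The abelian group structure on $\pi_0$ comes from the infinite loop space (or at least H-space) structure on $\cH(\ast)$, which is itself the stabilization witnessed by the little $n$-cubes action mentioned in fact (2) of the previous theorem.

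Next I would identify the resulting isomorphism with the Igusa--Klein higher torsion. The point is that Hatcher's explicit map $G/O\to\cH(\ast)$ (equivalently, after rationalization, $BO\to\cH(\ast)$) realizes the rational equivalence, and Igusa's theorem (see \cite{I:Axioms0}) computes the higher Reidemeister torsion of the universal family over $\cH(\ast)$ — equivalently, the higher torsion pulled back along a smooth disk bundle $E\to B$ classified by a map $B/A\to\cH(\ast)$ is precisely the pullback of the canonical classes in $H^{4k}(\cH(\ast);\RR)$ dual to the generators of $\pi_{4k}$. So defining $\t^\IK(E)=\sum_k \t^\IK_{2k}(E)$ as the pullback of these canonical classes makes the diagram commute tautologically, and the isomorphism above becomes the map $E\mapsto \t^\IK(E)$. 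Here one should be slightly careful about the indexing convention $\t^\IK_{2k}$ versus the cohomological degree $4k$; this is just the standard normalization in the Igusa--Klein theory and I would simply cite it.

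The main obstacle is the identification of the abstract cohomology class produced by rational homotopy theory with the \emph{torsion} invariant $\t^\IK$, i.e., verifying that the tautological universal class on $\cH(\ast)$ really is (a nonzero multiple of) the Igusa--Klein torsion class. This is exactly the content of the B\"okstedt/Igusa nontriviality result for Hatcher's map, together with Igusa's axiomatic characterization of higher torsion; the honest work has been done elsewhere and I would invoke it as a black box, citing \cite{I:Axioms0} (and the historical attributions to Farrell--Hsiang, Hatcher, B\"okstedt). A lesser but genuine point is checking that no higher differentials or extension problems obstruct the splitting of $[B/A,\cH(\ast)]\otimes\RR$ into the direct sum of cohomology groups; this follows because a rational H-space is a product of rational Eilenberg--MacLane spaces and the relevant homotopy groups sit in distinct degrees, so the Atiyah--Hirzebruch-type spectral sequence computing these homotopy classes degenerates rationally. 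With these two inputs the corollary is immediate from the theorem.
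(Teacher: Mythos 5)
Your proposal takes essentially the same approach as the paper: the paper itself offers no separate proof of this corollary but derives it from exactly the inputs you list — the homotopy equivalence $\std{B}{A}(B)\simeq Map(B/A,\cH(\ast))$, the Farrell--Hsiang/Hatcher/B\"okstedt identification $\cH(\ast)\simeq_\QQ BO\simeq_\QQ\prod_{k>0}K(\QQ,4k)$ cited to \cite{I:Axioms0}, and the normalization by which the rational equivalence is witnessed by the universal IK-torsion class in $\prod_{k>0}H^{4k}(\cH(\ast);\RR)$ (made explicit in the paper's Section \ref{subsection222}). Your computation $\pi_0 Map(B/A,\cH(\ast))\otimes\RR\cong\bigoplus_{k>0}H^{4k}(B,A;\RR)$ and the reasons you give for the degeneration are the standard ones and match the paper's intent; the only small point worth flagging is that tensoring $\pi_0$ with $\RR$ commutes with rationalizing the target because $\cH(\ast)$ has finite type (Dwyer, cited later in the paper), which you could have mentioned but which does not affect correctness.
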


We note that $\t^\IK(E)$ ignores the tangential data. If we took a different axiomatic higher torsion theory (such as the nonequivariant higher analytic torsion) we would need to subtract the higher torsion of the linear bundle for which $E$ is an exotic smooth structure.


\subsubsection{immersion theory} 

We are now looking at a stabilized exotic tangential smoothing $(W,V\vv W)$ of the bundle $M_0\to B$ which is given by a tangential homeomorphism $(f,V\vv f,\mu)$ between the two smooth bundles $M_0,M_1$ over $B$. After stabilizing $M_0$ has a high codimensional \emph{spine} $J_0$ which is a smooth submanifold bundle of the fiberwise interior of $M_0$ which is a fiberwise deformation retract of $M_0$. Furthermore $J_0$ can be chosen to have trivial vertical normal bundle in $M_0$. We also have a \emph{core} $K_0$. In this case we choose $K_0$ to be a small tubular neighborhood of $J_0$. We also need to assume that $J_0$ contains a submanifold bundle $P_0$ so that $f$ is already a smooth embedding in a neighborhood of $P_0$ and $W$ is smooth in a neighborhood of $P_0\times I\cup W_A$ where $A$ is a submanifold of $B$.

By standard immersion theory (\cite{Hirsch59},\cite{GromovEliashberg71}), there is a fiberwise immersion $g:K_0\to M_1$ over $B$ which is regularly fiberwise homotopic to $(f,V\vv f)$ restricted to $K_0$. Since the spine has a high codimension, we have by transversality that $g$ is an embedding on $J_0$. By replacing the core $K_0$ with a smaller core we may also assume that $g$ is an embedding on $K_0$. We may also assume that $g$ is equal to $f$ in a neighborhood of $P_0$ and over $A$.

Immersion theory tells us that that $f,g$ are fiberwise homotopic (fixing a neighborhood of  $P_0$) by a one parameter family of continuous maps $h_t:K_0\to M_1$ over $B\rel A$ and the fiberwise derivative $T\vv g:T\vv K_0\to T\vv M_1$ is homotopic through nonsingular linear maps $V\vv h_t$ to the vector bundle map $V\vv f$. Given any $\e>0$ we can choose the immersion $g$ and the homotopy $h_t$ to be within $\e$ of $f$ and that $h_t=f=g$ over $A$ and near $P_0$ for all $t$.

\begin{prop}
After stabilization, we can choose $(h_t,V\vv h_t)$ so that $h_t:K_0\to M_1$ is a fiberwise topological embedding for all $t\in I$.
\end{prop}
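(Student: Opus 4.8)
The plan is to push the whole question down to the spine $J_0$, where a general-position argument becomes available after one more stabilization, and then to fatten the resulting isotopy back up to the core by carrying along a normal framing. Recall that $K_0$ was taken to be a small tubular neighborhood of $J_0$, that $J_0$ has trivial vertical normal bundle $\nu$ in $M_0$, so that $K_0=D(\nu)$ is fiberwise a product $J_0\times D^k$, and that $J_0$ is a fiberwise deformation retract of $K_0$. To get room I would stabilize once more, replacing $M_0$ by $M_0\times D^m$ and $M_1$ by $M_1\times D^m$ for $m$ large (keeping $J_0$ and its framing; after rounding corners the new core is $D(\nu\oplus\e^m)$). The family $(h_t,V\vv h_t)$ stabilizes to $(h_t\times\mathrm{id},\,V\vv h_t\oplus\mathrm{id})$, which still runs between the stabilizations of $f|K_0$ and of $g$ and has the same double points, so nothing is lost --- but now the fiberwise codimension of $J_0$ in $M_1\times D^m$ is as large as we please.

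First I would treat the restriction to the spine. The maps $h_t|J_0:J_0\to M_1\hookrightarrow M_1\times D^m$ form a path, rel $t\in\{0,1\}$, rel a neighborhood of $P_0$, and over $A$, between the fiberwise topological embeddings $f|J_0$ and $g|J_0$. Since this codimension far exceeds the fiber dimension of $J_0$, the space of fiberwise topological embeddings $J_0\hookrightarrow M_1\times D^m$ over $B$ is as highly connected as needed inside the space of all fiberwise maps (topological general position over the manifold base $B$). Hence this path is homotopic, rel the same data, to a path $\bar h_t|J_0$ of fiberwise topological embeddings, kept as close to the original as one likes. Simultaneously I would carry the linear data along, deforming $V\vv h_t|J_0$ through nonsingular families to a family $V\vv\bar h_t|J_0$ over $\bar h_t|J_0$ which restricts at $t=0,1$ to the stabilizations of $V\vv f$ and $V\vv g$.

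Next I would fatten $\bar h_t|J_0$ back to the core. The splitting $T\vv M_0|J_0=T\vv J_0\oplus\nu$ together with $V\vv\bar h_t|J_0$ equips the embedded bundle $\bar h_t(J_0)$ with a vertical normal framing in $M_1\times D^m$; by the fiberwise Kister--Mazur theory (Theorem \ref{thm: Kister Masur}) the bundle $\bar h_t(J_0)$ has a fiberwise Euclidean neighborhood in $M_1\times D^m$ depending continuously on $t$, hence of a size bounded below uniformly in $t\in I$ by compactness. Extending $\bar h_t|J_0$ over a fixed small disk subbundle of this neighborhood by means of the chosen framing produces, for every $t$, a fiberwise codimension-$0$ embedding $\bar h_t:D(\nu\oplus\e^m)\to M_1\times D^m$, and $V\vv\bar h_t$ extends over the core by the deformation-retract argument (cf.\ Lemma \ref{extending linearizations from the spine}), there being plenty of room in codimension $0$. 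Taking the normal framings at $t=0,1$ to be the ones determined by $V\vv f$ and $V\vv g$, the two ends agree --- up to a homotopy rel $J_0$ which can be absorbed into the family --- with the stabilizations of $f|K_0$ and of $g$. Renaming $(\bar h_t,V\vv\bar h_t)$ as $(h_t,V\vv h_t)$ yields the required family of fiberwise topological embeddings; since it differs from the original only by homotopies rel the endpoints, rel a neighborhood of $P_0$, and over $A$, it represents the same point of the stable tangential smoothing space.

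The routine part is the general-position step: the codimension is made as large as desired by stabilizing, so a generic perturbation of the path on $J_0$ is automatically a path of embeddings. The hard part will be the fattening together with the endpoint matching --- carrying out the tubular-neighborhood extension fiberwise in the topological category, where ``tubular neighborhood'' must be read as a Euclidean (i.e.\ microbundle) neighborhood in the sense of Section \ref{subsec12}; keeping the disk radius bounded below uniformly in $t\in I$ and over $B$; and ensuring that the framings produced at $t=0$ and $t=1$ are exactly those coming from $V\vv f$ and $V\vv g$, so that no fresh discrepancy appears at the ends or near $P_0$ or over $A$.
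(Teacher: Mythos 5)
Your strategy (make $h_t$ an isotopy on the spine by general position, then fatten back to the core by a tubular--neighborhood argument) is not the paper's, and as written it has two genuine gaps, both specific to the topological category. The more serious one is the fattening step: Theorem \ref{thm: Kister Masur} says that a \emph{microbundle} contains a Euclidean bundle; it does not say that a locally flat topological embedding such as $\bar h_t(J_0)\subset M_1\times D^m$ admits a normal microbundle at all. Existence of normal microbundles for locally flat TOP embeddings is a delicate matter (it can fail in general, by examples of Rourke--Sanderson, and only stable-range existence theorems are available), and you need even more: a family of such neighborhoods varying continuously in $t\in I$ and over $B$, compatible with the framings forced at $t=0,1$ by $V\vv f$ and $V\vv g$ --- precisely the ``hard part'' you defer. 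A framing of the abstract bundle $V\vv\bar h_t|J_0$ does not by itself produce an embedded Euclidean neighborhood of the image. The second gap is the general-position step itself: the endpoint $h_0=f|J_0$ is only a topological (locally flat) embedding, since $f$ is merely a homeomorphism, so you cannot work in the smooth category rel that endpoint, and ``fiberwise topological general position'' asserting high connectivity of the space of locally flat embeddings inside all fiberwise maps, rel a fixed topological endpoint, is heavy machinery rather than a routine perturbation; as stated it is an appeal to a theorem at least as hard as the proposition.

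The paper's proof is engineered to avoid both issues. It stabilizes not by $\times D^m$ but by the $\e$-disk bundle of $V\vv W$ (after reducing the structure group to $O(n)$), so that the stabilized tangential homeomorphism restricted to the core is literally the vertical topological derivative $E\vv f:(x,y)\mapsto (f(x),f(y))$, while the stabilized $g$ is $E\vv g$, a smooth embedding. The required isotopy through embeddings is then written down explicitly by a graph trick on the neighborhoods $U_\delta$ and $U_{\delta/2}$ of the spine: keeping the first coordinate $f(x)$ (injective) and deforming the second through the injective maps supplied by $V\vv h_t$ gives an isotopy from $E\vv f$ to $f\times g$ on $U_\delta$, and symmetrically $E\vv g$ is isotoped to $f\times g$ on $U_{\delta/2}$; every stage has an injective coordinate, hence is a fiberwise embedding, and the composite isotopy from $E\vv f|U_{\delta/2}$ to $E\vv g|U_{\delta/2}$ is automatically fixed near $P_0$ and over $A$ with the correct endpoints. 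No general position, no normal microbundles, and no endpoint-matching problem arise. To salvage your route you would have to supply a parametrized, stable-range TOP embedding and normal-microbundle theorem with endpoint control, which is considerably harder than the statement being proved.
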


\begin{proof} First, we can reduce the structure group of the linear bundle $V\vv W$ over $W$ to $O(n)$. Then we get a linear $\e$-disk bundle $D\vv W$ over $W$ which, as a bundle over $B\times I$, is linearized with vertical Euclidean bundle isomorphic to $V\vv W\oplus V\vv W$. This is a stabilization. So, it suffices to prove the theorem for $D\vv W$.

The idea of the proof is the following. The interior of the disk bundle $D\vv W$ is homeomorphic to the total space of the vertical tangent Euclidean bundle $E\vv W$. So, the corresponding tangential homeomorphism, when restricted to the core of $D\vv M_0$, is given by the topological vertical derivative $E\vv f$ of $f$. The topological derivative $E\vv g$ of $g$ is a smooth embedding. Therefore, it suffices to show that $E\vv f$ and $E\vv g$ are homotopic through fiberwise topological embeddings.

The total space of $E\vv K_0$ is the set of all pairs $(x,y)$ in the same fiber of $M_0$ over $B$ so that $x\in K_0$ and $y\in B_\e\vv (x)$ where $B_\e\vv (x)$ is the open $\e$-ball neighborhood of $x$ in the fiber of $M_0\to B$. Inside of this space we have the following two subspaces where $\delta<<\e$ is the number so that $K_0=L_{2\delta}$ where $L_{\w}$ the open $\w$-neighborhood of the spine $J_0$: 
\[
	U_\delta= \{(x,y)\in E\vv K_0\st x\in L_\delta, d(x,y)<\delta\}
\]
\[
	U_{\delta/2}=\{(x,y)\in E\vv K_0\st y\in L_{\delta/2}, d(x,y)<\delta/2\}
\]
Then $U_{\delta/2}\subseteq U_\delta$. We will show that the restrictions of $E\vv f,E\vv g$ to $U_{\delta/2}$ are isotopic and that the isotopy agrees with the given homotopy.

The embedding $E\vv f$ maps $(x,y)\in U_\delta$ to $(f(x),f(y))\in E\vv M_1$ by definition. For every fixed $x\in L_\delta$, this mapping sends $x\times B_\delta(x)$ to $f(x)\times E\vv f_x(B_\delta)$ by the mapping $E\vv f_x$ ($E\vv f$ restricted to the fiber of $E\vv M_0\to M_0$ over $x$) on the second factor. But the homotopy $V\vv h_t$ is an isotopy from $E\vv f$ to $E\vv g$. Therefore, the embedding $E\vv f$ is isotopic to the embedding which sends $x\times B_\delta(x)$ to $f(x)\times E\vv g_x(B_\delta)$ by the mapping $E\vv g_x$ on the second factor.

By definition of $E\vv g$, this new embedding is 
\[
	f\times g:(x,y)\to (f(x),g(y)).
\]
So, $E\vv f|U_\delta$ is isotopic to $(f\times g)|U_\delta$. By the same argument, $E\vv g|U_{\delta/2}$ is isotopic to $(f\times g)|U_{\delta/2}$. Since $U_{\delta/2}\subseteq U_\delta$, we get an isotopy from $E\vv f|U_{\delta/2} $ to $E\vv g|U_{\delta/2}$. And this isotopy will be fixed near $P_0$ and over $A$. As was already shown, these are the stabilized versions of $f$ and $g$ on a small tubular neighborhood of the spine of $M_0$. So, we are done.
\end{proof}



\subsubsection{smoothing of the core}

\begin{thm}
There is no stable obstruction to finding a smoothing of the core of $W$. I.e., after stabilizing, the one parameter family of linearized topological manifolds bundles $(M_t,V\vv M_t,\mu_t\vvv)$ has a smoothing compatible with the linearization in some tubular neighborhood of the spine $J_t$. Furthermore, this smoothing will be equal to the given smoothing of $W$ on $P_0\times I\cup W_A$ if $W$ is already smooth on this set.
\end{thm}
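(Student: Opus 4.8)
The plan is to run the argument of the disk-bundle case, with the immersion-theoretic proposition of the preceding subsection playing the role of the topological ambient isotopy theorem. First I would invoke that proposition to reduce, after one further stabilization, to the situation in which the restriction of the tangential homeomorphism to the core $K_0$ of $M_0$ is represented by a one parameter family $(h_t,V\vv h_t)$ with $h_t:K_0\to M_1$ a fiberwise topological embedding for every $t\in I$, $h_0=f|K_0$, $h_1=g$ a smooth fiberwise embedding onto a codimension $0$ open subbundle of $M_1$, and $V\vv h_t$ a homotopy through nonsingular linear maps from $V\vv f|K_0$ to $T\vv g$, all of this fixed in a neighborhood of $P_0$ and over $A$.

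Next I would use $g$ to transport the smooth structure of $M_1$ onto the core, making $K_0$ into a smooth manifold bundle over $B$; near $P_0$ and over $A$ this is the original smooth structure of $M_0$, since there $g=f$ is the given smooth embedding. The family $h_t$ then exhibits $f|K_0$ as topologically isotopic, rel a neighborhood of $P_0$ and over $A$, to the smooth embedding $g$, with the isotopy covered by $V\vv h_t$; hence the tangential homeomorphism restricted to the core becomes isotopic to one whose underlying homeomorphism is $g$ and whose linear part is $T\vv g$.

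What remains is to deform the linearization $\mu_t\vvv$ over the core into a smooth linearization of this new smooth structure, coherently in the family parameter and relative to the already-smooth parts. I would restrict to a small tubular neighborhood $N_t$ of the spine $J_t$ inside the core; there $V\vv M_t$ is a topological vector bundle over a smooth manifold bundle which $V\vv h_t$ identifies with the smooth vertical tangent bundle, and since continuous isomorphisms of smooth vector bundles are isotopic to smooth ones (rel $A$, near $P_0$, and at $t=0,1$) I may take this identification to be smooth, which fixes the derivative of the linearization along the zero section. With that derivative prescribed, $\mu_t\vvv|N_t$ lies in the convex, hence contractible, space of exponential map germs with that derivative, in which the smooth exponential map of $N_t$ is a point; contracting to it yields the desired smooth linearization, the contraction being rel $A$, near $P_0$, and at $t=0,1$ because the data is already smooth and compatible there. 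Finally, by the linearization extension lemma \ref{extending linearizations from the spine} the smooth linearization over $N_t$ extends over $M_t$, uniquely up to homotopy, so the resulting family is a smoothing of a tubular neighborhood of $J_t$ compatible with $V\vv M_t$ and agreeing with the given smoothing of $W$ on $P_0\times I\cup W_A$.

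The main obstacle is the coherence and relativity bookkeeping in the last step: straightening the homeomorphism, its vertical derivative, and the linearization $\mu_t\vvv$ to smooth ones simultaneously in the parameter $t$ and rel $B\times0\cup A\times I\cup B\times1$ and $P_0\times I\cup W_A$. The first two are handed to us by the immersion-theory proposition; the genuinely new ingredient is the linearization, which rests on the convexity of the space of smooth linearizations with prescribed derivative together with the fact that $g$, being the given smooth embedding near $P_0$ and over $A$, does not disturb the smooth structure there.
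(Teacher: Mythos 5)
Your reduction steps follow the paper: you invoke the immersion-theory proposition to make $f$ a smooth embedding on the core (rel $A$ and near $P_0$), and you put the candidate smooth structure near the spine. But the final step --- deforming the given linearization $\mu_t\vvv$ over a neighborhood of the spine into a smooth linearization --- contains a genuine gap, and it is exactly where the content of ``no \emph{stable} obstruction'' lives. The convexity statement you appeal to is the paper's proposition that the space of \emph{smooth} linearizations with a fixed derivative along the zero section is convex; your $\mu_t\vvv$ is merely a topological exponential map germ, so it does not lie in that space, and ``its derivative along the zero section'' is not even defined until one already knows it is smooth. Nor is the space of topological exponential map germs convex: a convex combination of topological embeddings need not be injective, since the inverse-function-theorem argument that makes the smooth case work is unavailable. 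If the space of topological linearizations (rel the boundary conditions) deformation retracted onto the smooth ones for free, the theorem would need no stabilization at all and the smoothing obstructions this paper is computing would vanish identically; so the contraction you propose cannot be had this cheaply.

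What the paper does at this point, and what is missing from your argument, is the following mechanism. After stabilizing so that $V\vv M_t$ and $E\vv M_t$ are trivial, the linearization becomes a family of maps $\widehat\mu_t:M_t\to Homeo(\RR^{k+n})$. A rotation trick (the first lemma) pushes the stabilized $\widehat\mu_t$ into the subgroup of homeomorphisms fibered over the spine coordinates, $g(x,y)=(x,g_x(y))$; the smooth structure near the spine is then changed (using the restriction $\eta$ of the linearization over the spine, rather than by transporting along $g$ as you do, though this difference is minor) so that the restriction over the spine is smooth; and finally the fibered Alexander trick $g^t(x,y)=(x,g_{tx}(y))$ (the second lemma) deformation retracts such homeomorphisms onto diffeomorphisms, deforming $\mu_t\vvv$ to a linearization that is smooth near the spine, compatibly with $t$ and rel $A$ and $P_0$. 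This Alexander-trick retraction, made possible only after stabilization and the rotation trick, is the step that kills the obstruction; your proposal has no counterpart to it, so the proof as written does not go through.
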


\begin{rem}
By replacing $B$ with $B\times I$ and $A$ with $A\times I\cup B\times\{0,1\}$, we conclude that the smoothing of the core is unique up to homotopy.
\end{rem}

\begin{proof}
By the proposition above, we may assume that this 1-parameter family of linearized manifolds is given by a tangential homotopy equivalence $(f,V\vv f,\mu_t\vvv)$ where $M_0,M_1$ are smooth manifold bundles over $B$ which are diffeomorphic over $A$ and $f:M_0\to M_1$ is a smooth embedding on the core $K_0$ and $V\vv f$ is the vertical derivative of $f$ along $K_0$. Returning to the 1-parameter family of manifold bundles $(M_t,V\vv M_t)$, this implies that we have a continuous family of submanifolds $J_t\subseteq M_t$ which are smooth submanifolds for $t=0,1$ and these submanifolds have tubular neighborhoods which have product structures: $K_t\cong J_t^+\times D^n$. where $J_t^+$ is $J_t$ with an external closed collar attached.

After stabilization, we may assume that the vector bundle $V\vv M_t$ is trivial: $V\vv M_t\cong M_t\times\RR^{k+n}$ where $k$ is the dimension of the fiber of $J_t\to B$. The vertical tangent Euclidean bundle will also be trivial: $E\vv M_t\cong M_t\times \RR^{k+n}$ and the linearization is given by a family of microbundle morphisms $\mu_t:V\vv M_t\to E\vv M_t$ which is smooth for $t=0,1$ and on the restriction of $V\vv M_t$ to $V\vv K_t$. This is equivalent to a family of mappings
\[
	\widehat \mu_t:M_t\to Homeo(\RR^{k+n})
\]
By the first lemma below, we can assume, after stabilization, that this map has image in the subgroup of all homeomorphisms of $\RR^{k+n}$ having the form $g(x,y)=(x,g_x(y))$, i.e., they are $\RR^k$ families of homeomorphisms of $\RR^n$.

Along $J_t'$ the spine with an open external collar, we now have a linearization
\[
	\mu_t:V\vv M_t|J_t'\cong J_t'\times\RR^{k+n}\to E\vv M_t|J_t'\cong J_t'\times\RR^{k+n}
\]
which commutes with the projection to $J_t'\times\RR^k$. Now restrict this to the fiber over $J_t'\times 0$. This gives a family of linearizations
\[
	\eta:J_t'\times\RR^n\to J_t'\times \RR^n
\]
This is a map from a smooth linear bundle to a Euclidean bundle which, by the topological exponential map is homeomorphic to a neighborhood of $J_t$ in $M_t$. We can use this map to change the smooth structure in this neighborhood so that $\eta$ is a smooth map. By the second lemma below, we can deform the original linearization to a linearization which is smooth in a neighborhood of the spine. This will contain a somewhat smaller core but it is enough to prove the theorem.
\end{proof}

It remains to prove the two lemmas used in the theorem.

\begin{lem}
The image of the stabilization map $\s:Homeo(\RR^{k+n})\to Homeo(\RR^{k+n+k})$ given by $\s(f)(x,y,z)=(f(x,y),z)$ can be deformed into the subgroup of all homeomorphisms of $\RR^{k+n+k}$ having the form $g(x,y,z)=(x,g_x(y,z)))$. Furthermore this deformation will always send smooth maps to smooth maps.
\end{lem}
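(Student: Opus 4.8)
The plan is to use the classical rotation trick. Write a point of $\RR^{k+n+k}$ as $(x,y,z)$ with $x,z\in\RR^k$ and $y\in\RR^n$, and decompose $f\in Homeo(\RR^{k+n})$ as $f(x,y)=(f_1(x,y),f_2(x,y))$ with $f_1$ valued in $\RR^k$ and $f_2$ valued in $\RR^n$, so that $\s(f)(x,y,z)=(f_1(x,y),f_2(x,y),z)$. The key input is the path $R_t$, $t\in[0,\pi/2]$, of linear automorphisms of $\RR^{k+n+k}$ which fix the middle factor $\RR^n$ pointwise and rotate the first and third $\RR^k$ factors into one another,
\[
	R_t(x,y,z)=(\cos t\cdot x-\sin t\cdot z,\;y,\;\sin t\cdot x+\cos t\cdot z),
\]
so that $R_0=\mathrm{id}$ and $R_{\pi/2}(x,y,z)=(-z,y,x)$.

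First I would take the candidate deformation to be conjugation by this path: $\Phi_t(\phi):=R_t^{-1}\circ\phi\circ R_t$ for $\phi\in Homeo(\RR^{k+n+k})$. This is continuous in $\phi$ in the compact-open topology, $\Phi_0$ is the identity, and $t\mapsto\Phi_t$ is a homotopy through self-maps of $Homeo(\RR^{k+n+k})$; the composites $\Phi_t\circ\s$ then form a deformation of $\s$ starting at $\s$. Next I would compute the endpoint directly:
\[
	\Phi_{\pi/2}(\s(f))(x,y,z)=\bigl(x,\;f_2(-z,y),\;-f_1(-z,y)\bigr),
\]
which is of the required form $g(x,y,z)=(x,g_x(y,z))$ --- here even with $g_x$ independent of $x$, namely $g_x=\tau\circ f\circ\tau'$ for the fixed linear isomorphisms $\tau'\colon(y,z)\mapsto(-z,y)$ and $\tau\colon(a,b)\mapsto(b,-a)$, so that $g_x$ is a homeomorphism of $\RR^{n+k}$ because $f$ is a homeomorphism of $\RR^{k+n}$. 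Finally I would check smoothness: each $R_t$ is linear, and $\s$ sends a diffeomorphism of $\RR^{k+n}$ to a diffeomorphism of $\RR^{k+n+k}$, so $\Phi_t(\s(f))=R_t^{-1}\circ\s(f)\circ R_t$ is a diffeomorphism at every $t$ whenever $f$ is; thus the deformation preserves smoothness at all stages.

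I do not expect a real obstacle: the rotation trick is precisely tailored to this kind of ``absorb the new stabilizing $\RR^k$ into an old $\RR^k$ of the same dimension'' statement. The only points requiring a little care are (i) confirming that conjugation by the fixed path $R_t$ is continuous for the ambient topology on $Homeo$ and genuinely deforms $\s$ (not all of $Homeo(\RR^{k+n+k})$), and (ii) getting the endpoint formula right so that the output really does lie in the stated subgroup --- both routine once the path $R_t$ above is written down.
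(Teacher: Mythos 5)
Your proof is correct and uses essentially the same rotation trick as the paper: conjugating $\s(f)$ by the one-parameter family of linear rotations mixing the two $\RR^k$ factors, with the endpoint $t=\pi/2$ producing a map of the required form $(x,y,z)\mapsto(x,g_x(y,z))$. The only cosmetic difference is that you conjugate by $R_t^{-1}(\cdot)R_t$ while the paper uses $\r_\th(\cdot)\r_{-\th}$, i.e.\ the opposite sign convention, which has no bearing on the conclusion.
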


\begin{proof}
The deformation is given by rotation. Let $\r_\th$ be the linear and thus smooth automorphism of $\RR^{k+n+k}$ given by the matrix
\[
	\r_\th=\begin{pmatrix}
	\cos \th I_k &0& -\sin\th I_k\\
	0 & I_n & 0\\
	\sin\th I_k &0& \cos \th I_k
	\end{pmatrix}
\]
Then, $\r_\th\circ\s(f)\circ\r_{-\th}, 0\le\th\le \pi/2$ is the desired deformation.
\end{proof}

\begin{lem}
Let $G$ be the subgroup of $Homeo(\RR^{k+n})$ consisting of homeomorphisms $g$ having the form $g(x,y)=(x,g_x(y))$ and so that $g_0$ is a smooth diffeomorphism. Let $G_0$ be the subgroup consisting of diffeomorphisms of $\RR^{k+n}$ which lie in $G$. Then $G_0$ is a deformation retract of $G$.
\end{lem}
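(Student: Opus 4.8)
The plan is to build a deformation retraction that collapses both $G$ and $G_0$ onto their common subgroup of constant families, which exhibits $G_0\hookrightarrow G$ as a homotopy equivalence. Write a general element $g\in G$ as $g(x,y)=(x,g_x(y))$ with $g_x\in Homeo(\RR^n)$ depending continuously on $x\in\RR^k$ and $g_0\in Diff(\RR^n)$. For $s\in[0,1]$ I would set
\[
	R_s(g)(x,y)=(x,\,g_{sx}(y)).
\]
Then $R_1(g)=g$, while $R_0(g)(x,y)=(x,g_0(y))$ is the constant family with value $g_0$; since $g_0$ is a smooth diffeomorphism of $\RR^n$, the map $R_0(g)$ is a smooth diffeomorphism of $\RR^{k+n}$ and hence lies in $G_0$ --- in fact in the subgroup $G_c\cong Diff(\RR^n)$ of constant families.

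First I would check that $R_s$ is well defined. Each $R_s(g)$ has the block form $(x,y)\mapsto(x,h_x(y))$ with $h_x=g_{sx}$, it is a homeomorphism of $\RR^{k+n}$ with inverse $(x,y)\mapsto(x,g_{sx}^{-1}(y))$, and its restriction over $x=0$ is $g_0\in Diff(\RR^n)$, so $R_s(g)\in G$. If moreover $g\in G_0$, then $(x,u)\mapsto g_{sx}(u)$ is the composite of the smooth maps $(x,u)\mapsto(sx,u)$, $g$, and the projection to $\RR^n$, hence smooth, and likewise for the inverse, so $R_s(g)\in G_0$; thus $R_s$ also restricts to a self-map of $G_0$. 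Finally, if $g$ is already a constant family then $g_{sx}=g_x$ for all $x$, so $R_s$ fixes $G_c$ pointwise.

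Granting continuity of $(s,g)\mapsto R_s(g)$, these checks show that $\{R_s\}_{s\in[0,1]}$ is a strong deformation retraction of $G$ onto $G_c$ which simultaneously restricts to a strong deformation retraction of $G_0$ onto $G_c$. In particular the inclusion $G_0\hookrightarrow G$ is a homotopy equivalence, with homotopy inverse $R_0$, which is the content of the lemma in the form it is used here. I would stress that this is exactly where the hypothesis ``$g_0$ is a smooth diffeomorphism'' enters: dropping it, the same rescaling would only retract the space of all continuous families of homeomorphisms of $\RR^n$ onto $Homeo(\RR^n)$, and $Diff(\RR^n)\hookrightarrow Homeo(\RR^n)$ is not an equivalence.

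The one thing requiring care is the continuity of $R:[0,1]\times G\to G$, and I expect this mild point-set bookkeeping to be the main obstacle. On a compact set $K_1\times K_2\subseteq\RR^{k+n}$ with $K_1\subseteq[-c,c]^k$, one writes
\[
	g^{(i)}_{s_i x}(y)-g_{sx}(y)=\bigl(g^{(i)}_{s_i x}(y)-g_{s_i x}(y)\bigr)+\bigl(g_{s_i x}(y)-g_{sx}(y)\bigr);
\]
the first term tends to $0$ uniformly in $(x,y)\in K_1\times K_2$ because $g^{(i)}\to g$ uniformly on $[-c,c]^k\times K_2$, and the second tends to $0$ uniformly because $g$ is uniformly continuous there and $s_i x\to sx$ uniformly on $K_1$. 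Applying the same estimate to the inverses, which also converge in $G$, yields continuity of $R$ once one has fixed the topology on $G$ so that inversion is continuous. The relative version needed in the application --- fixing $g$ near a closed set where it is already a smooth family --- follows from the identical formula, since $R_s$ does not disturb the part of $g$ that is already smooth.
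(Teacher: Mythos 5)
Your proof is correct and uses exactly the paper's argument: the paper's two-line proof is the same Alexander-trick rescaling $g^t(x,y)=(x,g_{tx}(y))$, noting that it preserves smoothness and lands in the smooth (constant-family) elements at $t=0$. Your write-up simply supplies the routine details (well-definedness, continuity on compacta, the observation that both $G$ and $G_0$ retract onto the constant families) that the paper leaves implicit.
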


\begin{proof}
This is given by the Alexander trick:
\[
	g^t(x,y)=(x,g_{tx}(y))
\]
for $0\le t\le 1$. If $g$ is smooth then so is $g^t$.
\end{proof}


\subsubsection{ignoring the boundary}

We are now ready to prove Proposition \ref{boundary does not matter} which says that, after stabilization, smoothings which are fixed on the vertical boundary and those which are not form homotopy equivalent spaces. The reason is that, after rounding corners, the core is diffeomorphic to the union of the core with the stabilized vertical boundary. So, after smoothing the core we cannot distinguish between the two spaces.

First, we use the ``flat on one side'' observation (subsubsection \ref{flat subsubsection}) to stabilize and have a flat side $\d_0W\subseteq W$ which is tangentially homeomorphic to $D(M)\times0\subseteq D(M)\times I$ where $D(M)$ is a linear disk bundle over $M$. On the flat side we can make $f$ smooth on the core $K_0\times 0$ using the theorem above. (Equivalently, we can smooth the core of $D(M)$ and then stabilize to get a smoothing of $K_0\times I$ and then forget the smoothing on all but $K_0\times0$. However, uniqueness up to homotopy of this second method is not as easy to see.)

By construction of the core $K_0$, the complement of $K_0$ in $D(M)$ is a product of $\d\vv D(M)$ with an interval. Therefore, the pair $(D(M)\times I,K_0)$ is, after rounding corners, diffeomorphic to $(D(M)\times I,D(M)\times 0)$ and $(W,K_0)$ is a tangential smoothing of that pair. If we apply the same construction to a tangential smoothing of $D(M)\times I$ which is fixed on $D(\d\vv M)\times I$, we make the homeomorphism $f$ smooth on $D(\d\vv M)\times I\cup K_0\times0$ where $K_0$ is a neighborhood of the spine $J_0\cong M$ which is the zero section of the disk bundle $D(M)$. But $D(\d\vv M)\times I$ is a disk bundle over $\d\vv M\times I$ which is an external collar for $M$ and $K_0$ is a disk bundle over $M$ of the same dimension. So, together they form a disk bundle over $M$ with an external collar (after rounding corners). Thus there is a diffeomorphism of $D(M)\times I$ with corners rounded which takes $D(\d\vv M)\times I\cup K_0\times I$ to $D(M)\times 0$, making the two stabilized tangential homeomorphisms equivalent. This proves the following extension of Proposition \ref{boundary does not matter}.

\begin{thm}
After stabilization we get homotopy equivalences:
\[
	\lim_\to \ttd{B,\d_0B}(D(M),D(\d\vv M))\simeq\lim_\to \ttd{B,\d_0B}(D(M))\simeq\lim_\to \cS^{t/d}_{B,\d_0B}(D(M)\times I,D(M)\times 0)
	\]
	where all three limits are with respect to all linear disk bundles $D(M)$ over $M$ and $D(\d\vv M)$ is the restriction of $D(M)$ to $\d\vv M$.

\end{thm}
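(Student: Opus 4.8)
The plan is to establish the two displayed equivalences in turn. The first is the relative refinement of Proposition~\ref{boundary does not matter} outlined just above, and the second identifies the stable tangential smoothing space with a space of homotopy smoothings of a pair. Both arguments run on two ingredients: the ``flat on one side'' stabilization of subsubsection~\ref{flat subsubsection}, and the core smoothing theorem of the previous subsubsection, which after adjoining enough linear disk bundles makes the defining homeomorphism $f$ smooth on a tubular neighborhood $K_0$ of the spine $J_0\cong M$ (the zero section of a linear disk bundle $D(M)$). By the remark following that theorem, applied with $B$ replaced by $B\times I$, this core smoothing is canonical up to homotopy.

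First I would set up the comparison maps. Let $i$ be the inclusion $\lim_\to\ttd{B,\d_0B}(D(M),D(\d\vv M))\to\lim_\to\ttd{B,\d_0B}(D(M))$, and let $\Phi\colon\lim_\to\ttd{B,\d_0B}(D(M))\to\lim_\to\cS^{t/d}_{B,\d_0B}(D(M)\times I,D(M)\times 0)$ be the map that replaces a tangential smoothing $W$ of $D(M)$ by a flat-on-one-side stabilization, i.e.\ a tangential smoothing of $D(M)\times I$ whose flat face $\d_0W$ is tangentially homeomorphic to $D(M)\times 0$, and then forgets the linear structure. The construction of the core $K_0$ makes the complement of $K_0$ in $D(M)$ an internal collar $\d\vv D(M)\times[0,1)$ of the vertical boundary, so after rounding corners the pair $(D(M)\times I,K_0)$ is diffeomorphic to $(D(M)\times I,D(M)\times 0)$; this is precisely what makes $\Phi$ take values in the asserted space.

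Next I would prove $\Phi$ is a homotopy equivalence. Essential surjectivity at the level of the directed systems is the cofinality of flat-on-one-side smoothings (subsubsection~\ref{flat subsubsection}). For the homotopy fibers, observe that from a point of $\cS^{t/d}_{B,\d_0B}(D(M)\times I,D(M)\times 0)$ one recovers a tangential smoothing by smoothing $K_0$ via the core smoothing theorem and then choosing a fiberwise linearization compatible with that smooth core; the linearization extension lemma~\ref{extending linearizations from the spine} guarantees such a linearization exists and is unique up to homotopy rel $K_0$, so after passing to the colimit the fibers of $\Phi$ are contractible. This gives the second displayed equivalence. Finally, running the same construction on a tangential smoothing that is in addition fixed on $D(\d\vv M)$ makes $f$ smooth on $D(\d\vv M)\times I\cup K_0\times 0$; since $D(\d\vv M)\times I$ is a linear disk bundle over the external collar $\d\vv M\times I$ of $M$ and $K_0$ is a linear disk bundle over $M$ of the same fiber dimension, the union $D(\d\vv M)\times I\cup K_0\times I$ is again a linear disk bundle over $M$ with an external collar, so after rounding corners there is a diffeomorphism of $D(M)\times I$ carrying it onto $D(M)\times 0$. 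This identifies $\Phi\circ i$ with an equivalence onto $\lim_\to\cS^{t/d}_{B,\d_0B}(D(M)\times I,D(M)\times 0)$; since $\Phi$ is already an equivalence, $i$ is too, which is the first displayed equivalence.

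The step I expect to be the main obstacle is coherence rather than any single geometric fact: one has to check that all the auxiliary choices --- the flat-on-one-side stabilization, the core smoothing, the several corner roundings, and the diffeomorphism folding $D(\d\vv M)\times I$ into the core --- can be made simultaneously for all linear disk bundles $D(M)$ and compatibly with the structure maps of the directed systems, and all relative to $\d_0B$, so that they assemble into honest maps on colimits and into coherent homotopies. The uniqueness up to homotopy in the remark after the core smoothing theorem handles the core itself, and the remaining coherence is obtained by the usual device of replacing $(B,\d_0B)$ by $(B\times I,\ \d_0B\times I\cup B\times\{0,1\})$ at each stage and reapplying the same arguments.
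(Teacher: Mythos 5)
Your proposal is correct and follows essentially the same route as the paper: flat-on-one-side stabilization (cofinality), smoothing of the core with its uniqueness remark, the corner-rounding diffeomorphisms identifying $K_0$ (resp.\ $D(\d\vv M)\times I\cup K_0\times I$) with $D(M)\times 0$, and the linearization extension lemma to drop the tilde, with the first equivalence deduced by comparing both spaces to $\lim_\to \cS^{t/d}_{B,\d_0B}(D(M)\times I,D(M)\times 0)$. The only difference is organizational --- you make the comparison maps $i$ and $\Phi$ and the two-out-of-three argument explicit, which the paper leaves implicit.
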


We removed the tilde from the last version of stabilization since, by the linearization extension lemma \ref {extending linearizations from the spine}, the linearization of a smoothing of $(D(M)\times I,D(M)\times0)$ is unique up to contractible choice.


\subsubsection{little cubes operad}

Using the third form of stabilization given in the theorem above, we can see the infinite loop space structure on the stabilized smoothing space $\std{B}{\d_0B}(M)$. Recall that the space $C_k(n)$ of $k$ little $n$ cubes in $I^n$ is given by $k$ disjoint embeddings $y_i:I^n\to I^n$ which are given by affine linear maps $y_i(x)=a_ix+b_i$ where $a_i$ are positive real numbers and $b_i\in I^n$.
\[
	\a:C_k(n)\times \cS^{t/d}_{B,\d_0B}(D(M)\times I^{n+1},D(M)\times I^n)^k\to \cS^{t/d}_{B,\d_0B}(D(M)\times I^{n+1},D(M)\times I^n)
\]
\[
	\a(y; W_1,\cdots,W_k)= D(M)\times I^{n+1}\cup_{y} \coprod W_i
\]
where the base of each $W_i$ is attached to the top of $D(M)\times I^{n+1}$ using the map
\[
	1_{D(M)}\times y_i:\d_0 W_i= D(M)\times I^n\times0\to D(M)\times I^n\times 1
\]
and the resulting corners are rounded.

There is an easier way to describe the addition operation in the case when the supports of the exotic smooth structures are disjoint. An element of $\ttd{B,\d_0B}(M,C)$ is said to have {\bf support} in the closure of the complement of $C$ in $M$.

\begin{prop}\label{prop: addition on smooth structures is disjoint union}
The addition operation on the stable smoothing space $\std{B}{\d_0B}(M)$ given by the little cubes operad action described above is given unstably on smooth structures on $M$ with disjoint supports $S_i\subseteq M-\d\vv M$ by $W=\sum W_i$ which is equal to $M$ in the complement of $\coprod S_i$ and equal to $W_i$ on $S_i$.
\end{prop}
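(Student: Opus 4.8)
The plan is to compare the two operations locally over $M$, using a decomposition adapted to the disjoint supports, and then to glue the local comparisons together via the excisiveness of smoothings (Proposition \ref{smoothing is excisive}).

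First I would work in the model $\cS^{t/d}_{B,\d_0B}(D(M)\times I^{n+1},D(M)\times I^n)$ with $n$ large, in which the little $n$-cubes act as in the statement, and choose pairwise disjoint closed codimension-$0$ subbundles $\bar N_i\subseteq M-\d\vv M$ with $S_i\subseteq\interior N_i$. Since $W_i$ agrees with the standard smoothing of $D(M)\times I^{n+1}$ over $D(M|_{M-S_i})$, it in particular agrees with it over $D(M|_{M-\interior N_i})$, hence over all of the other $N_j$ and over $\partial N_i$. Thus $M=\overline{M-\bigcup_i\interior N_i}\cup\bigcup_i\bar N_i$ is a closed cover by codimension-$0$ subbundles on all of whose pairwise intersections every $W_i$ is standard.

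The core of the argument is then two local computations for $\a(y;W_1,\dots,W_k)$. Over $D(M|_{M-\bigcup_i\interior N_i})$ every $W_j$ is the trivial product concordance, so the glued object is a stack of trivial $h$-cobordisms on top of the trivial $D(M)\times I^{n+1}$, which after rounding corners is the standard smoothing; this agrees with the restriction of $W=\sum W_i$, which is $M$ there. Over $D(M|_{\bar N_i})$ only $W_i$ is nontrivial, and gluing the remaining (trivial) $W_j$ only stacks trivial collars, which does not change the smoothing up to the concordance relation defining $\cS^{t/d}$ (the unit axiom for composition of $h$-cobordisms). So over $\bar N_i$ the operad sum restricts to the restriction of the single-cube action $\a(y_i;W_i)$; and since the space $C_1(n)$ of single little $n$-cubes is contractible and acts through the unit up to homotopy, $\a(y_i;W_i)\simeq W_i$. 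Hence over $\bar N_i$ the operad sum and $W=\sum W_i$ agree as well. Feeding these matching local restrictions, together with the fact that both smoothings are standard on every overlap, into Proposition \ref{smoothing is excisive} identifies $\a(y;W_1,\dots,W_k)$ with $W=\sum W_i$; and since the whole construction is stable the identification descends to $\std{B}{\d_0B}(M)$.

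I expect the main obstacle to be making the ``localization'' in the second computation precise: that restricting $\a(y;W_1,\dots,W_k)$ over a subbundle where all but one $W_j$ are trivial really does reduce, up to the concordance relation on $\cS^{t/d}$, to the single-cube action on $W_i$. This needs two things checked carefully --- that the vertical gluing $\cup_y$, which takes place in the $I^{n+1}$ and top $I^n$ directions, commutes with restriction to a subbundle over $M$, and that stacking a trivial $h$-cobordism on top is the identity operation --- both of which are intuitively clear but rely on the product structures of the $W_j$ away from their supports and on a little corner-rounding bookkeeping. A subsidiary point is the Alexander-trick argument identifying the glued object $D(M)\times I^{n+1}\cup_y\coprod W_i$ topologically with $D(M)\times I^{n+1}$ rel its bottom face, so that $\a(y;W_1,\dots,W_k)$ genuinely represents an element of the same space as the $W_i$.
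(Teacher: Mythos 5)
Your decomposition-and-gluing argument takes a genuinely different route from the paper. The paper constructs an explicit global deformation in two steps: first ``lower'' each $W_i$ from its attached position above $M\times D^N\times I$ into the slot $M\times y_i(D^N)\times I$ inside it (possible because the bottom and sides of $W_i$ have been arranged to be standard), producing a smooth structure on $M\times D^N\times I$ supported in $\coprod_i S_i\times y_i(D^N)\times I$; then expand each little cube $y_i$ to the identity, which is exactly where disjointness of the $S_i$ is used; the end result is the stabilization of $\sum W_i$ by inspection. Since the deformation is global over all of $M$, no gluing of local comparisons is required.

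Your sketch has a gap precisely at the gluing step. Proposition \ref{smoothing is excisive} glues smoothings that literally agree on overlaps; to identify the two points of $\std{B}{\d_0B}(M)$ you need to glue \emph{concordances}, i.e.\ smoothings of $M\times I$. Over $A=\overline{M-\bigcup_i\interior N_i}$ the two smoothings coincide, so the concordance there is the constant one; for the concordance you produce over $\bar N_i$ to glue with that, it must be constantly equal to the standard smoothing on $(\partial\bar N_i)\times I$. Your localization argument for $\bar N_i$ --- strip off the trivial $W_j$'s attached via the other cubes, then use the contractibility of $C_1(n)$ --- is not obviously rel $\partial\bar N_i$: the trivial $W_j$'s are attached over all of $\bar N_i$ including its boundary, so removing them and contracting $C_1(n)$ both move the smoothing on $\partial\bar N_i$. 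You flag the localization as the obstacle, but the sharper requirement (a concordance rel the boundary of each $\bar N_i$) is neither proved nor named, and that is where the actual work lies. A secondary imprecision: the little cubes attach the $W_j$'s side by side, not as a vertical stack, so ``unit axiom for composition of $h$-cobordisms'' is not quite the right invocation; what is needed is a unit-type deformation of the $E_n$-structure carried out rel boundary. The paper's two-step global deformation sidesteps all of this boundary bookkeeping, which is why it is the shorter route.
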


We will only apply this proposition to the group structure on $\pi_0\std{B}{\d_0B}(M)$. 

\begin{proof}
We will prove that the following diagram commutes up to homotopy for any $y\in C_k(n)$ where $C_i$ is the closure of the complement of $S_i$ in $M$.
\[
\xymatrix{
\prod \ttd{B,\d_0B}(M,C_i)\ar[d]\ar[r]^\Sig &
	\ttd{B,\d_0B}(M,\bigcap C_i)\ar[d]
	\\
\prod \std{B}{\d_0B}(M) \ar[r]^{\a(y;-)}& 
	\std{B}{\d_0B}(M)
	}
\]
Note that, except for $\Sig$, these maps are only defined up to homotopy.

Suppose that $(M_i)$ is an element of the upper left corner. Thus $M_i$ is a tangential smoothing of $M$ over $(B,\d_0B)$ with support in $S_i$. If we stabilize by taking a product with a disk, we get $M_i\times D^N\times I$ which is a smooth structure on $M\times D^N\times I$ with support in $S_i\times D^N\times I$. Using the smoothing of the core on one flat side argument explained in great depth in this section, we make the tangential homeomorphism smooth on the core which is equivalent to $S_i\times D^{N}\times 0$. By conjugating by a smooth isotopy (after rounding corners) we can make the tangential homeomorphism smooth on $S_i\times (D^N\times 0\cup \d D^N\times I)$. Call this new bundle $W_i$ with base $\d_0W_i\cong M\times D^N\times 0$. At this point we can use by the linearization extension lemma \ref {extending linearizations from the spine} which implies that the linearization is unique up to homotopy and therefore can be ignored. This brings us to the lower left corner of the diagram.

The little cubes operation now produces the smooth bundle
\[
	M\times D^N\times I\cup_y \coprod W_i 
\]
Since $W_i$ has bottom and sides equal to $M\times (D^N\times 0\cup \d D^N\times I)$, we can lower $W_i$ into $M\times D^N\times I$ to make
\[
	M\times D^N\times I -\coprod M\times y_i(D^N)\times I\cup \coprod W_i
\]
This is a family of smooth bundles giving a homotopy of the mapping from the upper left to the lower right of our diagram. Since this new smooth structure on $M\times D^N\times I$ has support in the union of $S_i\times y_i(D^N)\times I$ and the $S_i$ are disjoint, we can expand the embeddings $y_i:D^N\to D^N$ until they are the identity and obtain an isotopy of the structure. The result is a smooth structure on $M\times D^N\times I$ given by $W_i$ on $S_i\times D^N\times I$ which is a description of the stabilization of $\sum M_i$. So, we have shown that the diagram commutes up to homotopy.
\end{proof}


\subsubsection{Morlet's Theorem}

Since $D(M)$ can be chosen to have a trivial tangent bundle and can be stabilized by taking a limit with respect to all trivial disk bundles, we can now use the following theorem of Burghelea and Lashof which follows from Morlet's comparison theorem. (See \cite{BL74}, \cite[Thm H]{BL77}.)

\begin{thm} Let $X$ be a compact smooth manifold with trivial tangent bundle. Then the space of stable smooth structures on $X\times I$ equal to the standard smooth structure on $X\times 0\cup \d X\times I$ is a homology theory in $X$.
\end{thm}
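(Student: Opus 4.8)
The plan is to check that the functor sending a compact parallelizable manifold $X$ to $\cF(X)$, the space of stable smooth structures on $X\times I$ agreeing with the standard structure on $X\times0\cup\d X\times I$, is a covariant, $1$-excisive homotopy functor which commutes with filtered colimits and has value $\cH(\ast)=\cF(\ast)$ on a point; from this it follows, by the standard recognition principle for $1$-excisive functors (or directly by induction over a handle decomposition of $X$, starting from $\cF(\ast)=\cH(\ast)$), that $\cF(X)\simeq\Omega^{\infty}(X_{+}\wedge\mathbb H)$, i.e.\ a homology theory in $X$ with coefficients the connective spectrum $\mathbb H$ of the infinite loop space $\cH(\ast)$. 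First I would replace the stabilization over all linear disk bundles by the cofinal subsystem of trivial ones (subsubsection \ref{flat subsubsection}), so that $\cF(X)$ is the direct limit of the smoothing spaces of $X\times I^{k}\times I$ rel $X\times I^{k}\times0\cup\d(X\times I^{k})\times I$; triviality of $TX$ enters only to make these stabilized manifolds honest products. By Proposition \ref{boundary does not matter} it is irrelevant whether the vertical boundary is fixed, and by the core-smoothing results of this section together with the linearization extension lemma \ref{extending linearizations from the spine} the linearization data contribute nothing; moreover, after stabilization an exotic structure is supported in a regular neighborhood of a spine, so $\cF(X)$ depends only on the homotopy type of $X$ and --- via ``extend by the standard structure'' along a codimension-$0$ embedding --- is a covariant homotopy functor.

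The structural input is Proposition \ref{smoothing is excisive}: a smoothing of $(A\cup B)\times I$ rel the standard part is exactly a compatible pair of smoothings of $A\times I$ and $B\times I$. Passed to the stabilized homotopy-functorial version, this says that $\cF(A\cup B)$ is the homotopy pullback of $\cF(A)\to\cF(A\cap B)\leftarrow\cF(B)$ for a decomposition of $X$ into codimension-$0$ submanifolds meeting along their boundaries; since every homotopy pushout of finite complexes has this form after thickening to regular neighborhoods, $\cF$ carries homotopy pushouts to homotopy pullbacks. The case $A\cap B=\emptyset$, together with the fact that an exotic structure has \emph{compact} support (compare Proposition \ref{prop: addition on smooth structures is disjoint union}), gives that $\cF$ sends disjoint unions to homotopy direct sums and commutes with filtered colimits. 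Assembling these properties yields the homology-theory form claimed above.

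I expect the real obstacle to be the identification $\cF(\ast)\simeq\cH(\ast)$ \emph{as a stage of a homology theory}, equivalently the confirmation that $\cF$ is homological rather than cohomological in $X$. The a priori (Kirby--Siebenmann) description of smoothings of $X\times I$ as sections of a $\mathrm{Top}/O$-bundle is manifestly cohomological in $X$, and the disk-bundle calculation preceding Corollary \ref{smooth disk bundles and IK torsion} yields $\cH(\ast)$ only when $X$ occupies the role of the \emph{base}; one must pass from this ``base'' picture to the ``fiber'' picture. That passage is precisely Morlet's comparison theorem (\cite{BL74}, \cite[Thm H]{BL77}): the stabilized smoothing space is the ``integrated'' form of the local problem, obtained by scanning the little-$n$-cubes action on $\cC ob^{d}(D^{n})$ --- hence on $\cH(\ast)$ --- over the parallelizable manifold $X$, a non-abelian Poincar\'e-duality map which rewrites the section space as $\Omega^{\infty}(X_{+}\wedge\mathbb H)$. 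I would either invoke \cite{BL74},\cite{BL77} directly, or reprove this scanning statement and adapt it to the relative-boundary, fiberwise framework in force here; granting it, the axiom-checking above is comparatively routine given the machinery already developed.
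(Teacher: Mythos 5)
The paper does not prove this theorem: it states it with a citation to Burghelea--Lashof (\cite{BL74}, \cite[Thm H]{BL77}) as a consequence of Morlet's comparison theorem, and since your proposal in the end falls back on invoking the same references, your approach is essentially the paper's. Your additional sketch of a from-scratch proof is sensible and correctly locates the real content --- the passage from the a priori cohomological sections-of-a-$\mathrm{Top}/O$-bundle picture to the homological one, which \emph{is} Morlet's comparison theorem --- but one caveat if you wanted to carry it through: your covariance is defined only on compact parallelizable manifolds and codimension-$0$ embeddings, not on all spaces and maps, so the off-the-shelf recognition principle for $1$-excisive functors does not apply verbatim and you would need the handle-decomposition induction you mention parenthetically (or simply the Burghelea--Lashof argument itself).
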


Let $\cH^\%(X)$ denote this homology theory of $X$. This is the homology theory associated to the spectrum of $\cH(\ast)$:
\[
	\cH^\%(X)=\Omega^\infty(X_+\wedge \cH(\ast)).
\]
(See the section on homotopy theory below.) Then the theorem above together with the smoothing of the core theorem gives the following.

\begin{cor}\label{smoothing theorem elemental case} Let $X\times D^k\to D^k$ be the trivial bundle with fiber $X$. Then
\[
	\std{D^k}{S^{k-1}}(X\times D^k)\simeq \Omega^k\cH^\%(X)
\]
\end{cor}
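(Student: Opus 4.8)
The plan is to first strip the bundle structure over $D^k$ down to a $k$-fold loop coordinate and then recognize the remaining fibrewise object through Morlet's theorem. Take $B=D^k$, so $\d_0B=S^{k-1}$ and $\d_1B=\varnothing$, and set $M=X\times D^k$ with $\d\vv M=\d X\times D^k$. Applying the three-term equivalence at the end of the subsection on ignoring the boundary (whose proof is exactly where the smoothing-of-the-core theorem is used) gives
\[
	\std{D^k}{S^{k-1}}(X\times D^k)=\lim_\to\ttd{D^k,S^{k-1}}(D(M))\simeq\lim_\to\cS^{t/d}_{D^k,S^{k-1}}\!\bigl(D(M)\times I,\,D(M)\times 0\bigr),
\]
the limits running over all linear disk bundles $D(M)$ over $M=X\times D^k$.

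Because $M\to D^k$ is the \emph{trivial} bundle, the disk bundles $D(M)$ may be taken of the form $D(\nu)\times D^k$, where $\nu$ is a vertical disk bundle over $X$; since trivial disk bundles over $X$ are cofinal in the stabilizing system, we may further assume that $D(\nu)$ is a compact smooth manifold with trivial tangent bundle. For such a trivial bundle, a $D^k$-parametrized family of homotopy smoothings of $D(\nu)\times D^k\times I$ that is fixed on the base and over $S^{k-1}$ is the same thing as a based map $D^k/S^{k-1}=S^k\to\cS^{t/d}(D(\nu)\times I,\,D(\nu)\times 0)$, the basepoint being the standard smoothing; hence this parametrized space is $\Omega^k\cS^{t/d}(D(\nu)\times I,\,D(\nu)\times 0)$. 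As $S^k$ is compact, the stabilizing colimit commutes with $\Omega^k$, so
\[
	\std{D^k}{S^{k-1}}(X\times D^k)\simeq\Omega^k\,\lim_\to\cS^{t/d}\!\bigl(D(\nu)\times I,\,D(\nu)\times 0\bigr),
\]
the colimit now over vertical disk bundles $\nu$ on $X$.

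Since each $D(\nu)$ has trivial tangent bundle, the Burghelea--Lashof/Morlet theorem (the theorem just quoted) identifies $\lim_\to\cS^{t/d}(D(\nu)\times I,\,D(\nu)\times 0)$ with the homology theory $\cH^\%(D(\nu))$. A homology theory is homotopy invariant and $D(\nu)$ deformation retracts onto $X$, so $\cH^\%(D(\nu))\simeq\cH^\%(X)$ compatibly with the stabilization maps; combining with the previous display gives $\std{D^k}{S^{k-1}}(X\times D^k)\simeq\Omega^k\cH^\%(X)$, and one checks the equivalence is compatible with the little-cubes actions. The step I expect to be most delicate is the second one: verifying that for the trivial bundle the passage to $D^k$-families (rel $S^{k-1}$, rel the base) really does commute with taking the space of (homotopy) smoothings, with the standard smoothing as basepoint, and that this identification together with the homotopy invariance $D(\nu)\simeq X$ is compatible with the stabilization maps, so that all the colimits can legitimately be interchanged with $\Omega^k$.
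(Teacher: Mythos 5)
Your proposal is correct and takes essentially the same route the paper intends: the corollary is stated there as an immediate consequence of the three-term stabilization equivalence (coming from smoothing of the core and the ``ignoring the boundary'' theorem) together with the Burghelea--Lashof/Morlet theorem, and your argument simply writes out those steps, using triviality of the bundle over $D^k$ to convert rel-$S^{k-1}$ families into $k$-fold loops and homotopy invariance of $\cH^\%$ to replace $D(\nu)$ by $X$. The points you flag as delicate (cofinality of bundles pulled back from $X$, commuting the stabilizing colimit with $\Omega^k$, compatibility of the identifications with stabilization) are exactly the details the paper leaves implicit, and your treatment of them is consistent with the paper's conventions.
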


Taking $k=0$ we see that
\[
	\cH^\%(X)\simeq \stdone\ast(X)
\]
is the space of stable tangential smoothings of $X$. We can extend this calculation to the general case using the following lemma.

\begin{lem}[fibration lemma]
We have a fibration sequence:
\[
	\std{B}{\d_0B}(M)\to 
	\stdone{B}(M)\to 
	\stdone{\d_0B}(M)
\]
\end{lem}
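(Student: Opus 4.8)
The plan is to establish the fibration sequence first \emph{without} stabilizing, by recognizing the three spaces as section spaces of one and the same fibration over $B$, and then to pass to the direct limit.

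First I would recall, from Proposition~\ref{the homotopy fiber of the map Sd to lin St} and the paragraph following it, that for a fiberwise linearized bundle $M\to B$ the space $\ttd B(M)$ is the homotopy fiber of
\[
	\widetilde\f_\ast\colon |\cS_\bullet^d(n)|^B\longrightarrow|\widetilde\cS_\bullet^t(n)|^B
\]
over the classifying map $c_M\colon B\to|\widetilde\cS_\bullet^t(n)|$ of $(M,V\vv M)$. Replacing $|\widetilde\f|$ by a Hurewicz fibration $p\colon\cE\to\cX$, so that $\widetilde\f_\ast=\Map(B,p)$ is again a Hurewicz fibration, this homotopy fiber becomes a genuine fiber, namely the space $\Gamma_B(\pi_M)$ of sections of the pullback $\pi_M:=c_M^\ast\cE\to B$, whose fiber over $b$ is the space of tangential smoothings of $(M_b,V\vv M|_{M_b})$; the given smooth structure on $M\to B$ corresponds to a distinguished section $s_0$. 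Since pullback commutes with restriction and the classifying map of $M_{\d_0B}\to\d_0B$ is $c_M|_{\d_0B}$, the same device identifies $\ttd{\d_0B}(M_{\d_0B})\simeq\Gamma_{\d_0B}(\pi_M|_{\d_0B})$ and $\ttd{B,\d_0B}(M)\simeq\Gamma_B(\pi_M;\d_0B)$, the space of sections agreeing with $s_0$ over $\d_0B$ (Definition~\ref{def: fiberwise tangential smoothings}), and makes the geometric restriction map $\ttd B(M)\to\ttd{\d_0B}(M_{\d_0B})$ the restriction-of-sections map $r\colon\Gamma_B(\pi_M)\to\Gamma_{\d_0B}(\pi_M|_{\d_0B})$.

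Next I would invoke the classical fact that, because $\d_0B\hookrightarrow B$ is a cofibration (a CW pair), restriction of sections of a Hurewicz fibration along it is itself a Hurewicz fibration, with fiber over $s_0|_{\d_0B}$ equal to $\Gamma_B(\pi_M;\d_0B)$. Together with the previous step this produces the unstable fibration sequence
\[
	\ttd{B,\d_0B}(M)\longrightarrow\ttd B(M)\longrightarrow\ttd{\d_0B}(M_{\d_0B}).
\]
Finally I would stabilize. Every vector bundle over the compact manifold $M$ is a direct summand of a trivial bundle, so the trivial disk bundles $M\times D^m$ are cofinal among all linear disk bundles over $M$ (subsubsection~\ref{flat subsubsection}); moreover $(M\times D^m)_{\d_0B}=M_{\d_0B}\times D^m$, so the three direct systems are compatibly indexed by $m$ and the restriction maps commute with the stabilization maps. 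Since the formation of homotopy fibers commutes with filtered colimits (a filtered colimit of Kan fibrations is a Kan fibration, and filtered colimits are exact), taking $\lim_\to$ over $m$ of the sequences above yields the asserted fibration sequence
\[
	\std B{\d_0B}(M)\longrightarrow\stdone B(M)\longrightarrow\stdone{\d_0B}(M).
\]

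The step I expect to be the main obstacle is the identification, in the second paragraph, of the \emph{geometric} restriction map with the restriction-of-sections map, and, relatedly, the matching of the subspace $\ttd{B,\d_0B}(M)\subseteq\ttd B(M)$ of smoothings that are \emph{literally} standard over $M_{\d_0B}$ with the fiber $\Gamma_B(\pi_M;\d_0B)$ of $r$. This is a naturality statement for the homotopy-fiber description of $\ttd B(-)$ under restriction of the base, and is cleanest to run at the simplicial level, where these are honest subobjects; everything else (restriction along a cofibration being a fibration, cofinality of trivial disk bundles, exactness of filtered colimits) is standard.
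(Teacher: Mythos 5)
Your route is genuinely different from the paper's, and its skeleton (section spaces of the pulled-back fibration, restriction along the cofibration $\d_0B\subset B$, passage to the colimit over the cofinal system of trivial disk bundles) is reasonable. But the step you defer to the end is not a routine naturality check; it is the entire geometric content of the lemma, and as written your proposal does not prove it. The space $\ttd{B,\d_0B}(M)$ is defined by a \emph{strict} condition (smoothings literally equal to the given one over $M_{\d_0B}$), whereas what your formal argument produces as the fiber term is the homotopy fiber of the restriction map, i.e.\ sections rel $\d_0B$ of a fibrant replacement. To identify the two — equivalently, to verify the homotopy lifting (or Kan horn-filling) property for the geometric restriction map $\ttd{B}(M)\to\ttd{\d_0B}(M_{\d_0B})$ — you must be able to take a deformation ($\Delta^k$-family) of smooth structures over $\d_0B$, together with compatible partial data over $B$, and realize it by a deformation of smooth structures on all of $M$ restricting to it. Nothing in the abstract section-space picture supplies this: the identification $\ttd{B}(M)\simeq\Gamma_B(\pi_M)$ is only a homotopy equivalence, natural though it is, and it does not carry the strictly defined subspace $\ttd{B,\d_0B}(M)$ onto the strict fiber of the section restriction; comparing the strict fiber with the homotopy fiber is exactly where a geometric input is needed, and saying it is "cleanest to run at the simplicial level" only relocates the same problem to a horn-filling statement that still has to be proved.

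The missing ingredient is precisely what the paper's one-sentence proof provides: choose a collar neighborhood $C$ of $\d_0B$ in $B$ and a topological product structure $M_C\cong M_{\d_0B}\times I$; then any deformation of the smooth structure over $\d_0B$ extends, by feeding it in along the collar coordinate, to a deformation of the smooth structure of $M$ with support in $M_C$. This is the homotopy lifting property for the restriction map on the geometric models, and it simultaneously shows that the strict fiber $\ttd{B,\d_0B}(M)$ computes the homotopy fiber; stabilizing (where your cofinality and colimit remarks are fine and match subsubsection \ref{flat subsubsection}) then yields the stated fibration sequence. If you want to keep your section-space formulation, you should state and use this collar extension argument explicitly — it is what fills the horns — rather than defer it as an expected obstacle.
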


\begin{proof}
If we choose a collar neighborhood $C$ of $\d_0B$ in $B$ and a topological product structure $M_C\cong M_{\d_0B}\times I$ we can easily extend deformations of smooth structures of $M$ over $\d_0B$ to deformations (with support in $M_C$) of the smooth structure of $M$.
\end{proof}

From the basic case given in Corollary \ref{smoothing theorem elemental case} and the fibration lemma the general case will easily follow:

\begin{thm}[main smoothing theorem]\label{main smoothing thm} Let $W\to B$ be a compact smooth manifold bundle. Then we have a natural homotopy equivalence
\[
	\g_W: \std{B}{\d_0B}(W)\simeq\Gamsub{B}{\d_0B}\cH^\%_B(W)
\]
where $\cH^\%_B(W)$ is the fiberwise $\cH^\%$ homology bundle of $W$ over $B$, i.e. the bundle whose fiber over $b\in B$ is $\cH^\%(W_b)$ where $W_b=p^{-1}(b)$.
\end{thm}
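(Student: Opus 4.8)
The strategy is to realize $\g_W$ as a fiberwise assembly map and to prove it is an equivalence by induction on a handle decomposition of $B$ relative to $\d_0B$, using Corollary~\ref{smoothing theorem elemental case} for the cells and the fibration lemma to glue. To construct $\g_W$, take $k=0$ in Corollary~\ref{smoothing theorem elemental case} to get, for each $b\in B$, an equivalence $\stdone\ast(W_b)\simeq\cH^\%(W_b)$ natural in the smooth manifold $W_b$. A point of $\std B{\d_0B}(W)$ is a stable fiberwise tangential smoothing of $W\to B$; restricting it over each fiber gives a continuously varying family of stable tangential smoothings of the fibers $W_b$, i.e.\ a section of $\cH^\%_B(W)$ which is the constant (given) smoothing over $\d_0B$. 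Carrying this out in $\Delta^k$-families defines the simplicial map $\g_W\colon\std B{\d_0B}(W)\to\Gamsub B{\d_0B}\cH^\%_B(W)$, which is manifestly natural in $(B,\d_0B,W)$; this accounts for the ``natural'' in the statement.

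Now choose a handle decomposition of $B$ relative to $\d_0B$: a chain of codimension-zero submanifolds $\d_0B\times I=B_0\subset B_1\subset\cdots\subset B_m=B$ with $B_{j+1}=B_j\cup h_j$, where $h_j\cong D^{\,k}\times D^{\,q-k}$ ($q=\dim B$) is attached to $\d B_j$ along $\d_-h_j:=S^{k-1}\times D^{\,q-k}$. Excision in the base direction (the excisive nature of smoothings, Proposition~\ref{smoothing is excisive}, together with the collar argument used to prove the fibration lemma) identifies the space of smoothings of $W|B_{j+1}$ which are standard over $B_j$ with $\std{h_j}{\d_-h_j}(W|h_j)$, and the fibration lemma then makes
\[
\std{h_j}{\d_-h_j}(W|h_j)\ \to\ \std{B_{j+1}}{\d_0B}(W)\ \to\ \std{B_j}{\d_0B}(W)
\]
a fibration sequence; restriction of sections gives the analogous fibration sequence for $\Gamsub{-}{-}\cH^\%_{-}(W)$, and $\g$ is a map of fibration sequences. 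Comparing the two long exact sequences via the five-lemma, induction on $j$ reduces the theorem to two checks: (i) $\g$ is an equivalence over $B_0=\d_0B\times I$, where both spaces are contractible (the single given smoothing, resp.\ sections rel all of $\d_0B\times I$); and (ii) $\g$ is an equivalence on each handle term $\std{h_j}{\d_-h_j}(W|h_j)$.

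For (ii), the pair $(h_j,\d_-h_j)$ deformation retracts onto $(D^k,S^{k-1})$, and both $\std{}{}(-)$ and $\Gamsub{}{}(-)$ carry such a deformation retraction of the base to a homotopy equivalence, so the handle term becomes $\std{D^k}{S^{k-1}}(W|D^k)$. A smooth fiber bundle over the contractible base $D^k$ is smoothly trivial, so $W|D^k\cong X\times D^k$ with $X=W_b$, and Corollary~\ref{smoothing theorem elemental case} gives $\std{D^k}{S^{k-1}}(X\times D^k)\simeq\Omega^k\cH^\%(X)$. The corresponding section space of the trivial bundle $D^k\times\cH^\%(X)$ rel $S^{k-1}$ is $\Map_*(D^k/S^{k-1},\cH^\%(X))=\Omega^k\cH^\%(X)$, and naturality of $\g$ together with the already-established $k=0$ case identifies $\g$ on the handle term with this identity. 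This completes the induction and produces the natural equivalence $\g_W$.

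The homotopy-theoretic bookkeeping above is routine; the real content — and the step I expect to be the main obstacle — is establishing that the smoothing functor $U\mapsto\std U{\d_0B}(W|U)$ is genuinely excisive in the base direction, so that the handle-by-handle fibration sequences are valid and the base-case computation propagates. Equivalently, one must know that a stable tangential smoothing is local over $B$, can be reconstructed from compatible smoothings over the pieces of a cover, and that this reconstruction is compatible with $\g$. This rests on the excision property of smoothings (Proposition~\ref{smoothing is excisive}) and on the collar-neighborhood argument used to prove the fibration lemma; the linearization data never obstructs, being unique up to contractible choice by Lemma~\ref{extending linearizations from the spine}.
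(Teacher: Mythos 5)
Your overall strategy is the paper's own: induct over a cell decomposition of $B$, compare the fibration sequence of smoothing spaces (obtained from excision in the base direction plus the fibration lemma) with the restriction fibration of section spaces, and feed in Corollary \ref{smoothing theorem elemental case} for the elemental pieces. The paper does exactly this, except with a smooth triangulation of $B$ in which $\d_0B$ and $\d_1B$ are subcomplexes, adjoining one $k$-simplex $\s$ at a time and using the identification $\std{A\cup\s}{A}(W_{A\cup\s})=\std{\s}{\d\s}(W_\s)$; the relative version over $\d_0B$ is then obtained by one more map of fibration sequences rather than being built into the decomposition as you do.

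The one step that does not go through as written is your handle-term identification. You assert that the deformation retraction of pairs $(h_j,\d_-h_j)\simeq(D^k,S^{k-1})$ induces an equivalence $\std{h_j}{\d_-h_j}(W|h_j)\simeq\std{D^k}{S^{k-1}}(W|D^k)$. For the section space this is standard, but for the smoothing space such homotopy invariance in the base is not among the tools available before the theorem is proved; indeed, once the theorem is known both sides are $\Omega^k\cH^\%(X)$, so the claim is essentially a special case of what you are proving. Corollary \ref{smoothing theorem elemental case} is established only for a disk rel its \emph{entire} boundary, whereas your handle $h_j\cong D^k\times D^{q-k}$ is taken rel only the attaching region $S^{k-1}\times D^{q-k}$. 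The paper sidesteps this by triangulating: each cell is a $k$-simplex rel its whole boundary, so the elemental corollary applies verbatim. Your argument is repaired either by switching to that simplicial induction, or by separately proving $\std{h_j}{\d_-h_j}(W|h_j)\simeq\Omega^k\cH^\%(X)$ compatibly with $\g$ by a further cell-by-cell argument in the $D^{q-k}$ direction --- which in effect reproduces the triangulation proof.
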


\begin{rem} Since the fibers of $\cH^\%_B(W)\to B$ are infinite loop spaces and $\cH(\ast)$ has finite type \cite{Dwyer80}, this implies that $\pi_0\std{B}{\d_0}(W)$ is a finitely generated abelian group.
\end{rem}

\begin{proof}
Choose a smooth triangulation of $B$ so that $\d_0B$ and $\d_1B$ are subcomplexes. Let $A$ be a subcomplex of $B$ containing the $k-1$ skeleton and let $\s$ be a $k$ simplex of $B$. Then we have a mapping between two fibration sequences:
\[
\xymatrix{
\std{\s}{\d\s}(W_\s)\ar[d]_\a\ar[r] &
	\stdone{A\cup\s}(W_{A\cup\s})\ar[d]_\b\ar[r] &
	\stdone{A}(W_A)\ar[d]_\g\\
\Gamsub{\s}{\d\s}\cH^\%_\s(W_\s) \ar[r]& 
	\Gam_{A\cup\s}\cH^\%_B(W) \ar[r]&
	\Gam_{A}\cH^\%_B(W)
	}
\]
We use the excisiveness of smoothing to identify $\std{A\cup\s}{A}(W_{A\cup\s})=\std{\s}{\d\s}(W_\s)$. Since $\s$ is contractible, $W_\s$ is a product bundle $W_\s\cong \s\times W_b$. So, $\a$ is a homotopy equivalence by Corollary \ref{smoothing theorem elemental case}. If $\g$ is a homotopy equivalence then $\b$ will be a homotopy equivalence. Therefore, by induction on the number of simplicies, $\stdone{B}(W)\simeq\Gam_B\cH^\%_B(W)$. Another map of fibration sequences proves the relative version stated in the theorem.
\end{proof}

%
%

\subsubsection{Stratified smoothing theorem}\label{ss: stratified smoothing}


We will use the following trivial observation to extend the main smoothing theorem to the ``stratified'' case.

\begin{lem}[additivity of smoothing] Suppose that $E_i$ are disjoint smooth bundles over $B$. Then
\[
	\ttd{B,\d_0B}(\smallcoprod E_i, \smallcoprod \d\vv E_i)=\prod \ttd{B,\d_0B}(E_i,\d\vv E_i)
\]
\end{lem}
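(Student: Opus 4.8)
The plan is to exploit the fact that every ingredient of a fiberwise tangential smoothing is defined connected-component by connected-component, so that over a disjoint union it splits as a product over the summands. Write $M=\smallcoprod E_i$. Since the $E_i$ are disjoint as subbundles, each is a union of components of $M$, hence the vertical structures split: $V\vv M=\smallcoprod V\vv E_i$ and the vertical tangent Euclidean bundle satisfies $E\vv M=\smallcoprod E\vv E_i$, with $E\vv f$ restricting accordingly. A point of $\ttd{B,\d_0B}(M,\smallcoprod\d\vv E_i)$ is (a $\Delta^k$-family of) a smooth bundle $M_1\to B$ together with a fiberwise tangential homeomorphism $(f,V\vv f,\mu_t\vvv)\colon(M,V\vv M)\to(M_1,T\vv M_1)$ agreeing with the given smoothing over $\smallcoprod\d\vv E_i\cup M_{\d_0B}$. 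Because $f$ is a homeomorphism it carries components to components, so $M_1=\smallcoprod M_{1,i}$ with $M_{1,i}:=f(E_i)$ a smooth subbundle and $f|E_i=:f_i\colon E_i\to M_{1,i}$ a homeomorphism; likewise $V\vv f$, $E\vv f$ and the path of linearizations $\mu_t\vvv$ restrict to data $(f_i,V\vv f_i,\mu_{t,i}\vvv)$ on each $E_i$, where each $\mu_{t,i}\vvv$ is automatically a one-parameter family of linearizations of $E_i$ with the correct endpoints since this is checked fiberwise.

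Next I would observe that the relative conditions split in the same way: $M_{\d_0B}=\smallcoprod (E_i)_{\d_0B}$, so ``agrees with the given smoothing on $\smallcoprod\d\vv E_i\cup M_{\d_0B}$'' is exactly the conjunction over $i$ of ``agrees with the given smoothing on $\d\vv E_i\cup(E_i)_{\d_0B}$''. Hence restriction defines a simplicial map $\ttd{B,\d_0B}(\smallcoprod E_i,\smallcoprod\d\vv E_i)\to\prod\ttd{B,\d_0B}(E_i,\d\vv E_i)$. The inverse sends a tuple of fiberwise tangential smoothings $(f_i,V\vv f_i,\mu_{t,i}\vvv)$ of $(E_i,V\vv E_i)$ rel $\d\vv E_i$ to their disjoint union $(\smallcoprod f_i,\smallcoprod V\vv f_i,\smallcoprod\mu_{t,i}\vvv)$, which is a fiberwise tangential smoothing of $(M,V\vv M)$ rel $\smallcoprod\d\vv E_i$ because smoothness of the target, nonsingularity of the linear map, and the linearization-homotopy conditions are all local. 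These two simplicial maps are mutually inverse in every degree, giving the asserted identification.

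The only point that needs care --- and the main (still minor) obstacle --- is the bookkeeping needed to regard $\smallcoprod E_i$ as a single point of $\cS^t_\bullet(n)$: one must present it as a bundle embedded in $B\times\RR^\infty$ with the images of the $E_i$ pairwise disjoint. Any two such presentations are related by a fiberwise ambient isotopy (there is always enough room in $\RR^\infty$), so the identification $\smallcoprod E_i\leftrightarrow(E_i)$ is canonical up to contractible choice; once a fixed convention for forming disjoint unions inside $\RR^\infty$ is chosen this is an honest equality of simplicial sets, and in any case it is a canonical homotopy equivalence, which is all that is used later. The content of the lemma is just the componentwise nature of all the structures involved, so apart from this remark the proof is immediate.
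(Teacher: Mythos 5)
Your proof is correct, and it takes the only natural route: everything in the definition of a fiberwise tangential smoothing (the homeomorphism, the linear map on vertical tangent/linearizing bundles, the path of linearizations, and the relative conditions over $\d_0B$ and the vertical boundary) is checked component-by-component, so restriction and disjoint union give mutually inverse simplicial maps. The paper in fact offers no proof at all --- it calls this a ``trivial observation'' --- so your write-up simply supplies the bookkeeping the authors left implicit, including the sensible remark that fixing a convention for placing $\coprod E_i$ in $B\times\RR^\infty$ is what turns the canonical homotopy equivalence into an honest identification of simplicial sets.
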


The basic case of the stratified smoothing theorem is the following. Suppose that $M$ is a smooth bundle over $B$ and $E$ is a codimension $0$ subbundle of $M$ which is a disjoint union of bundles: $E=\coprod E_i$. Then clearly,
\[
\ttd{B,\d_0}(E,\d\vv E)\cong \ttd{B,\d_0}(M,C)\subseteq \ttd{B,\d_0}(M,\d\vv M)
\]
where $C$ is the closure of the complement of $E$ in $M$.

By naturality of the homotopy equivalence $\g_E,\g_M$ we get the following.

\begin{lem}\label{gamma is compatible with inclusion}
After stabilization the homotopy equivalences $\g_E,\g_M$ given by Theorem \ref{main smoothing thm} are compatible with inclusion in the sense that the following diagram commutes.
\[
\xymatrix{
\std{B}{\d_0}(E)\ar[d]^\subseteq\ar[r]^(.45){\g_E}_(.45)\simeq &
	\Gamsub{B}{\d_0}\cH^\%_B(E)\ar[d]^\subseteq\\
\std{B}{\d_0}(M)\ar[r]^(.45){\g_M}_(.45)\simeq &
	\Gamsub{B}{\d_0}\cH^\%_B(M)
	}
\]
\end{lem}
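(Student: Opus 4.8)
The plan is to unwind what ``naturality of $\g_E,\g_M$'' means and to check that it is inherited by every ingredient in the construction of $\g_W$ in Theorem \ref{main smoothing thm}. First I would pin down the two vertical maps in the square. On the left, the map $\std{B}{\d_0}(E)\hookrightarrow\std{B}{\d_0}(M)$ is the stabilization of $\ttd{B,\d_0}(E,\d\vv E)\cong\ttd{B,\d_0}(M,C)\subseteq\ttd{B,\d_0}(M,\d\vv M)$ recorded just above the lemma, where $C$ is the closure of the complement of $E$ in $M$: a tangential smoothing of $E$ fixed on $\d\vv E$ and over $\d_0B$ is extended by the identity smoothing over $C$. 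On $\pi_0$ this realizes $\std{B}{\d_0}(E)$ as the subgroup of smoothings of $M$ supported in $E$, in the sense of Proposition \ref{prop: addition on smooth structures is disjoint union}. On the right, the fiber inclusions $E_b\hookrightarrow M_b$ induce maps $\cH^\%(E_b)\to\cH^\%(M_b)$ because $\cH^\%(X)=\Omega^\infty(X_+\wedge\cH(\ast))$ is covariantly functorial in $X$; these assemble to a bundle map $\cH^\%_B(E)\to\cH^\%_B(M)$ over $B$ and hence to the map $\Gamsub{B}{\d_0}\cH^\%_B(E)\to\Gamsub{B}{\d_0}\cH^\%_B(M)$ of section spaces appearing in the square.

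Next I would recall how $\g_W$ is built: by induction over the simplices of a smooth triangulation of $B$, using the fibration lemma, the excisiveness of smoothing (Proposition \ref{smoothing is excisive}), and the basic identification of Corollary \ref{smoothing theorem elemental case} on each product piece $W_\s\cong\s\times W_b$. I would check that each of these is functorial for a codimension $0$ subbundle inclusion: the fibration sequences over a simplex for $E$ map to those for $M$; the excision used to strip everything away but a tubular neighborhood of the spine (hence to reduce to the core) commutes with restricting from $M$ to $E$; and the disjoint-support description of the group law in Proposition \ref{prop: addition on smooth structures is disjoint union} matches, under $\g$, the excision isomorphism relating $\cH^\%$ of $M$ rel $C$ to $\cH^\%$ of $E$ rel $\d\vv E$ on the homology side. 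Granting naturality of the single-simplex equivalence, the square then commutes by exactly the induction that proves Theorem \ref{main smoothing thm}, with a map of inductive diagrams replacing each diagram in that argument.

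The main obstacle is therefore the naturality of the basic equivalence $\std{D^k}{S^{k-1}}(X\times D^k)\simeq\Omega^k\cH^\%(X)$ in the fiber variable $X$, with respect to the codimension $0$ inclusion $E_b\hookrightarrow M_b$. This rests on the Burghelea--Lashof form of Morlet's comparison theorem, which identifies $X\mapsto\{\text{stable smooth structures on }X\times I\text{ rel }X\times0\cup\d X\times I\}$ not merely object by object but as a homology theory. I would verify that this identification is natural by observing that a codimension $0$ embedding $X'\hookrightarrow X$ acts on stable smooth structures by ``extend by the standard structure on the complement,'' and that this corresponds under the identification precisely to the structure map $\cH^\%(X')\to\cH^\%(X)$ of the homology theory, both being induced by the inclusion of spaces. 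Once this naturality is in hand, the rest is bookkeeping: trace the inclusion $E\subseteq M$ through the inductive construction of $\g$ and note that at every stage one obtains a commuting square, the innermost of which is the natural Burghelea--Lashof square.
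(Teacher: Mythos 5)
Your proposal is correct and takes essentially the same route as the paper, which establishes this lemma simply by invoking the naturality of the homotopy equivalences $\g_E,\g_M$ from Theorem \ref{main smoothing thm}. Your write-up just makes that naturality explicit, tracing the codimension $0$ inclusion (extend by the identity smoothing over the complement $C$) through the inductive construction of $\g$ down to the elemental Burghelea--Lashof case of Corollary \ref{smoothing theorem elemental case}, which is exactly what the paper leaves implicit.
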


We need to extend this lemma to the case when $E$ is replaced with a stratified subbundle $E^\delta$ of $W$. In that case the space $\stdone B(E^\delta)$ and the bundle $\cH^\%_B(E^\delta)$ take new meanings which we now explain.

\begin{defn}
Suppose that $W$ is a smooth bundle over $B$ and we have a smooth triangulation of $B$ so that $\d_0B$ is a subcomplex. Over each simplex $\s$ of $B$, suppose we have a smooth codimension-0 compact manifold subbundle $E_\s\subseteq W_\s$. Since $\s$ is contractible, $E_\s\cong F\times\s$ for some compact manifold $F$. Suppose that $E_\t\subseteq E_\s$ for all $\t\subseteq\s$. Suppose also that $E_\t$ is empty for all $\t\subseteq\d_0B$. For example, we could let $E_\t=\emptyset$ if $\t\subseteq\d_0B$ and $E_\s=W_\s$ otherwise. (In general, $E_\t \subsetneq E_\s|\t$.) We let $E^\delta\subseteq W$ be the union over all simplices $\s$ of the restriction of $E_\s$ to the interior of $\s$. We say that $E^\delta$ is a \emph{stratified subbundle} of $W$. 
\end{defn}

Let $\stdone{B}(E^\delta)$ be the subspace of $\std{B}{\d_0}(W)$ of all tangential stable smoothings of $W$ which have support in the interior of $E_\s$ over the interior of each simplex $\s$. For example, if $B=\s$ is single simplex and $\d_0B=\d\s$ then $\stdone \s(E^\delta)=\std{\s}{\d\s}(E)$ since $E^\delta|\d\s$ is empty (``deleted'' from $E$). Since the complement of $E$ in $W$ is fixed, $\stdone B(E^\delta)$ is isomorphic to the space of stable tangential smoothings of $E$ which are fixed over any ``deleted'' simplex and on the vertical boundary $\d\vv E$ of $E$.

Let $\cH^\%_B(E^\delta)$ denote the stratified subbundle of $\cH^\%_B(W)$ which is equal to $\cH^\%_\s(E_\s)$ over the interior of every simplex $\s$.

\begin{thm}[stratified smoothing theorem]\label{stratified smoothing theorem} Suppose that $E^\delta$ is a stratified subbundle of $W$. Then we have a homotopy equivalence
\[
	\g_{E^\delta}:\stdone B(E^\delta)\simeq \Gam_B\cH^\%_B(E^\delta).
\]
with terms defined above.
\end{thm}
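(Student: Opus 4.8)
The plan is to imitate the proof of the main smoothing theorem (Theorem \ref{main smoothing thm}), doing induction over the simplices of $B$ but now keeping track of the stratification data $E_\s \subseteq W_\s$ at each stage. First I would set up the induction: choose the given smooth triangulation of $B$ with $\d_0B$ a subcomplex, let $A$ range over subcomplexes containing the $(k-1)$-skeleton, and let $\s$ be a $k$-simplex. The inductive hypothesis is that $\g_{E^\delta}$ restricted to $A$ — call it $\g_{E^\delta|A}$ — is a homotopy equivalence $\stdone{A}(E^\delta|A) \simeq \Gam_A \cH^\%_B(E^\delta)$, where $E^\delta|A$ means the stratified subbundle obtained by restricting the stratification to simplices of $A$.

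The key step is to produce, for each simplex $\s$, a map of fibration sequences analogous to the one in the proof of Theorem \ref{main smoothing thm}:
\[
\xymatrix{
\stdone{\s}(E^\delta|\s)\ar[d]_{\a}\ar[r] &
	\stdone{A\cup\s}(E^\delta|{A\cup\s})\ar[d]_{\b}\ar[r] &
	\stdone{A}(E^\delta|A)\ar[d]_{\g}\\
\Gam_{\s}\cH^\%_\s(E_\s) \ar[r]&
	\Gam_{A\cup\s}\cH^\%_B(E^\delta) \ar[r]&
	\Gam_{A}\cH^\%_B(E^\delta)
}
\]
The left-hand fibration uses excisiveness of smoothing (Proposition \ref{smoothing is excisive}) to identify the relative smoothing space over $(A\cup\s, A)$ with the smoothing space over $\s$ rel $\d\s$; here one must observe that over the \emph{interior} of $\s$ the stratified subbundle is simply $E_\s$, and over $\d\s$ it is the deleted piece, so that $\stdone{\s}(E^\delta|\s) = \std{\s}{\d\s}(E_\s)$ by definition of the stratified smoothing space. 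Since $\s$ is contractible, $E_\s \cong F\times\s$ is a product bundle, so $\a$ is a homotopy equivalence by Corollary \ref{smoothing theorem elemental case} (applied to the fiber $F$, after noting that $\std{\s}{\d\s}(F\times\s) \simeq \Omega^k \cH^\%(F) \simeq \Gam_\s\cH^\%_\s(F\times\s)$). The right-hand vertical map $\g$ is a homotopy equivalence by the inductive hypothesis. Therefore $\b$ is a homotopy equivalence by the five lemma (or rather, by comparison of long exact sequences of homotopy groups / the gluing lemma for homotopy fibrations), completing the inductive step. Starting the induction on $A = \d_0B$ (where $E^\delta$ is empty, so both sides are a point), we conclude after finitely many steps that $\g_{E^\delta}$ is a homotopy equivalence over all of $B$. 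A final comparison of fibration sequences handles the relative version if needed, exactly as in Theorem \ref{main smoothing thm}.

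I expect the main obstacle to be verifying that the three maps genuinely fit into compatible fibration sequences in the stratified setting — specifically, checking that the section space $\Gam_{A\cup\s}\cH^\%_B(E^\delta) \to \Gam_A\cH^\%_B(E^\delta)$ really is a fibration with fiber $\Gam_\s\cH^\%_\s(E_\s)$ rel $\d\s$, given that the "bundle" $\cH^\%_B(E^\delta)$ jumps discontinuously across the boundaries of simplices (its fiber dimension changes). The resolution is that a section over $A\cup\s$ extending a given section over $A$ is constrained over $\d\s$ to land in the deleted (smaller) stratum, so the space of extensions is precisely the space of sections over $\s$ vanishing appropriately on $\d\s$ — i.e., $\Gam_\s\cH^\%_\s(E_\s)$ with the rel-$\d\s$ condition — and this fibration property is a formal consequence of the fact that restriction maps between section spaces over a cofibration pair are fibrations. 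One should also take care that the compatibility of $\g_{E^\delta}$ with the inclusion $\stdone B(E^\delta) \hookrightarrow \std B{\d_0}(W)$ is respected, which follows from Lemma \ref{gamma is compatible with inclusion} applied simplex-by-simplex to the codimension-$0$ inclusions $E_\s \subseteq W_\s$.
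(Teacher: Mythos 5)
Your proposal is correct and follows essentially the same route as the paper: induction over simplices of a suitable triangulation, comparing the fibration sequence of stable smoothing spaces $\std{\s}{\d\s}(E_\s)\to\stdone{A\cup\s}(E^\delta_{A\cup\s})\to\stdone{A}(E^\delta_A)$ with the corresponding sequence of section spaces, with $\a$ a homotopy equivalence by the elemental/main smoothing result over the contractible simplex $\s$ and $\g$ an equivalence by induction. Your additional remarks on the fibration property of the restriction maps and on the identification $\stdone{\s}(E^\delta|\s)=\std{\s}{\d\s}(E_\s)$ simply spell out points the paper leaves implicit.
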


\begin{proof}
The argument is the same as in the main smoothing theorem (\ref{main smoothing thm}), but it is short and worth repeating. Let $A$ be a subcomplex of $B$ containing the $k-1$ skeleton and let $\s$ be a $k$-simplex. Then we have a mapping of fiber sequences:
\[
\xymatrix{
\std{\s}{\d\s}(E_\s)\ar[d]_\a\ar[r] &
	\stdone{A\cup\s}(E^\delta_{A\cup\s})\ar[d]_\b\ar[r] &
	\stdone{A}(E^\delta_A)\ar[d]_\g\\
\Gamsub{\s}{\d\s}\cH^\%_\s(E_\s) \ar[r]& 
	\Gam_{A\cup\s}\cH^\%_B(E^\delta) \ar[r]&
	\Gam_{A}\cH^\%_B(E^\delta)
	}
\]
$\a$ is a homotopy equivalence by the main smoothing theorem and $\g$ is a homotopy equivalence by induction on the size of $A$. So $\b$ is a homotopy equivalence and the stratified smoothing theorem follows.
\end{proof}

We are interested in the following special case. Suppose that $L$ is a compact smooth $q$-manifold ($q=\dim B$) with $\d L=\d_0 L\cup \d_1 L$ where $\d_0L,\d_1L$ meet along a corner set $\d\d L$. Suppose that $\ll:L\to B$ is a smooth immersion with image disjoint from $\d_0B$ so that $\ll^{-1}(\d_1 B)=\d_1L$ and $\ll(\d_0L)$ meets $\d B$ transversely along $\ll(\d\d L)$. (See Figure \ref{fig: L to B example}.) Assume that the immersion $\ll$ is self-transverse, so that there exists a smooth triangulation of $B$ for which the number of inverse image points in $L$ is constant on each open simplex.
\begin{figure}[htbp]
\begin{center}
%
{
\setlength{\unitlength}{1in}
{\mbox{
\begin{picture}(3.5,1.2)
      \thicklines
{
      \put(0,1.1){\line(1,0){3}}
      \put(1.5,1.1){
      \qbezier(-1.3,0)(.5,-.8)(0,-.9)
      \qbezier(.9,0)(-.5,-.8)(0,-.9)
            \qbezier(-.2,0)(.9,-1)(0,-1.1)
            \qbezier(.4,0)(-.9,-1)(0,-1.1)
       }
       \put(3.1,1){$\d_1B$}
       }
       \put(1,.85){$\ll(L)$}
\end{picture}}
}}
\caption{In this example, $L$ is a square and $\ll:L\to B$ maps $\d_1L=$ two opposite sides into $\d_1B$. The diamond shaped region is covered twice by $\ll(L)$.}
\label{fig: L to B example}
\end{center}
\end{figure}

\begin{defn}\label{def:stratified embedding-immersion pair}
Let $\pi:E\to L$ be a compact manifold bundle with the same dimension as $W$. Then, by a \emph{stratified embedding-immersion pair} (or simpley \emph{stratified embedding}) we mean a smooth codimension 0 embedding $\tilde\ll:E\to W$ together with a codimension 0 immersion $\ll:L\to B$ as given above, making the following commuting diagram.
\[
\xymatrix{
E\ar[d]_\pi\ar[r]^{\tilde\ll}&
	W\ar[d]^p\\
L \ar[r]^\ll & 
	B
	}
\]
\end{defn}

Given a stratified embedding-immersion pair $(\tilde\ll,\ll):E\to W$, it follows from the definition that $\tilde\ll^{-1}(W_{\d_1B})=E_{\d_1L}$. Also, the image in $W$ of the complement of $E_{\d_0L}$ in $E$ is a stratified subbundle of $W$ over $B$ with respect to a suitable triangulation of $B$. Call this image $E^\delta$. A fiberwise smooth structure for $E$ over $L$ which is equal to the given smooth structure over $\d_0L$ is equivalent to a fiberwise smooth structure on $W$ with support in $E^\delta$:
\[
	\std{L}{\d_0}(E)\cong \stdone{B}(E^\delta)\subseteq \std{B}{\d_0}(W)
\]
This implies that
\[
	\Gamsub{L}{\d_0}\cH^\%_L(E)\simeq
	\Gam_B\cH^\%_B(E^\delta)\subseteq 
	\Gamsub{B}{\d_0B}\cH^\%_B(W)
\]
So, we get the following key result which is the extension of Lemma \ref{gamma is compatible with inclusion} to the stratified case.

\begin{cor}\label{corollary of stratified smoothing theorem} If $(\tilde\ll,\ll):E\to W$ is a stratified embedding and $E^\delta\subset W$ is the corresponding stratified subbundle then the homotopy equivalences $\g_{E^\delta}$ of Theorem \ref{stratified smoothing theorem} is compatible with the homotopy equivalences $\g_E,\g_W$ of Theorem \ref{main smoothing thm}. I.e., the following diagram commutes.
\[
\xymatrix{
\std{L}{\d_0L} (E)\ar[d]_{\g_E}^\simeq\ar[r]^\mu_\simeq &
	\stdone{B} (E^\delta)\ar[d]_{\g_{E^\delta}}^\simeq\ar[r]^(.43)\subseteq&
	\std{B}{\d_0B} (W)\ar[d]_{\g_W}^\simeq\\
\Gamsub{L}{\d_0L}\cH^\%_L(E)\ar[r]^\mu_\simeq &
	\Gam_B\cH^\%_B(E^\delta)\ar[r]^(.43)\subseteq&
	\Gamsub{B}{\d_0B}\cH^\%_B(W)
	}
\]
\end{cor}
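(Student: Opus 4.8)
The plan is to deduce the commutativity of the diagram entirely from naturality of $\g$ applied to the building blocks, exactly as in the proof of the stratified smoothing theorem, together with the identifications $\mu$ already recorded in the discussion preceding the corollary. First I would observe that the right-hand square is a special case of the stratified version of Lemma \ref{gamma is compatible with inclusion}: by construction $E^\delta$ is a stratified subbundle of $W$, the inclusion $\stdone B(E^\delta)\hookrightarrow\std B{\d_0B}(W)$ is the inclusion of the subspace of smoothings supported in $E^\delta$, and the corresponding inclusion $\Gam_B\cH^\%_B(E^\delta)\hookrightarrow\Gam_B\cH^\%_B(W)$ is the inclusion of sections supported in the sub-bundle $\cH^\%_B(E^\delta)\subseteq\cH^\%_B(W)$. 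Since $\g_W$ and $\g_{E^\delta}$ were both constructed by the same inductive cell-by-cell procedure (Theorem \ref{main smoothing thm} and Theorem \ref{stratified smoothing theorem}), and since over each open simplex the relevant map is the elemental equivalence of Corollary \ref{smoothing theorem elemental case} applied to the product bundles $E_\s\subseteq W_\s$, the square commutes up to homotopy; this is precisely the naturality already proved in Lemma \ref{gamma is compatible with inclusion}, now carried out simplex-wise.

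Next I would handle the left-hand square. Here the horizontal maps $\mu$ are the homeomorphisms
$\std{L}{\d_0L}(E)\cong\stdone B(E^\delta)$ and $\Gamsub{L}{\d_0L}\cH^\%_L(E)\simeq\Gam_B\cH^\%_B(E^\delta)$ set up just before the statement, which come from the fact that a fiberwise smooth structure on $E$ over $L$, trivial over $\d_0L$, is the same data as a fiberwise smooth structure on $W$ with support in $E^\delta$; the self-transversality of $\ll$ lets us choose triangulations of $L$ and $B$ so that $\ll$ restricts to an embedding on each simplex of $L$ and the number of sheets of $\ll(L)$ is constant on each open simplex of $B$. Under these triangulations the inductive construction of $\g_E$ over simplices of $L$ maps, simplex by simplex, to the inductive construction of $\g_{E^\delta}$ over simplices of $B$: over an open simplex $\tau$ of $L$ with $\ll(\tau)$ inside an open simplex $\s$ of $B$, both sides are the elemental equivalence of Corollary \ref{smoothing theorem elemental case} for the same product bundle $E_\tau\cong E_b\times\tau$, and the operad/infinite-loop structures used to split off the contributions of different sheets over a common simplex of $B$ (Proposition \ref{prop: addition on smooth structures is disjoint union}) match on both sides because smoothings with disjoint supports add by disjoint union. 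So the left square also commutes up to homotopy, and pasting the two squares gives the corollary.

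The main obstacle I expect is bookkeeping the two triangulations and the multi-sheeted structure of $\ll$ cleanly enough that the identification $\mu$ is genuinely compatible with the \emph{inductive} constructions of $\g_E$ and $\g_{E^\delta}$, rather than merely with the final equivalences abstractly. Concretely, over an open simplex $\s$ of $B$ the fiber $\cH^\%_\s(E_\s)$ with $E_\s=\coprod_j E_\s^{(j)}$ a disjoint union indexed by the sheets of $\ll$ must be compared with the product $\prod_j\cH^\%_{\tau_j}(E)$ over the simplices $\tau_j$ of $L$ lying above $\s$, and one must check that the Mayer–Vietoris/excision steps (using that smoothing is excisive, Proposition \ref{smoothing is excisive}) and the additivity of smoothing (Lemma, additivity of smoothing) used to glue these contributions are carried by $\g$ to the corresponding splitting of sections. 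Once this sheet-wise compatibility is in place at the level of the basic cell, the five-lemma-style induction on the number of simplices — identical in form to the one in the proof of Theorem \ref{stratified smoothing theorem} — finishes the argument; I would state that induction briefly and refer back rather than rewrite it.
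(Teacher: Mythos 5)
Your proposal is correct and follows essentially the same route as the paper: the paper offers no separate argument beyond the identifications $\std{L}{\d_0L}(E)\cong\stdone{B}(E^\delta)$ and $\Gamsub{L}{\d_0L}\cH^\%_L(E)\simeq\Gam_B\cH^\%_B(E^\delta)$ recorded just before the statement, the naturality of $\g$ with respect to inclusion (Lemma \ref{gamma is compatible with inclusion}), and the fact that $\g_{E^\delta}$ is built by the same simplex-by-simplex induction as $\g_W$ and $\g_E$, with the sheet-wise disjoint-union splitting over each simplex handled by additivity of smoothing. Your elaboration of the triangulation bookkeeping and the product decomposition over the sheets of $\ll$ is exactly the content the paper leaves implicit, so nothing further is needed.
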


We observe that both $\stdone{B} (E^\delta)$ and $\Gam_B\cH^\%_B(E^\delta)$ are independent of the decomposition $\d B=\d_0B\cup \d_1B$ as long as $\d_0B$ is disjoint from the image of $\ll:L\to B$.

 %
 %


%
%


\section{Homotopy theory}\label{Appendix B}

The computation of $\pi_0\Gamsub{B}{\d_0B}\cH^\%_B(M)\otimes\RR$ is an exercise in elementary homotopy theory which we will now explain. First we need to recall the definition of generalized homology.

 %
 %

\subsection{Review of generalized homology}\label{subsec21}
 
 We assume that all our spaces are Hausdorff and homotopy equivalent to CW-complexes. Suppose that $\bf G$ is a \emph{prespectrum}, i.e., a collection of pointed spaces $G_0, G_1, \cdots$ and pointed maps $\Sigma G_n\to G_{n+1}$ (which is equivalent to a pointed map $G_n\to \Omega G_{n+1}$). Then, for any pointed space $X$, we get another prespectrum $X\wedge \bf G$ with $n$-th space $X\wedge G_n$ since $\Sigma(X\wedge G_n)\cong X\wedge \Sigma G_n$. Two prespectra are considered to be the same if the spaces $G_n$ and structure maps $\Sig G_n\to G_{n+1}$ agree for sufficiently large $n$. Therefore, $G_n$ need only be defined for large $n$. We assume that $G_n$ is $n-1$ connected for large $n$.
 
 If $\xi$ is an $m$-dimensional vector bundle and $\e^k$ is the trivial $k$-plane bundle over the same base space then the Thom space $D(\xi\oplus\e^k)/S(\xi\oplus\e^k)$ of $\xi\oplus\e^k$ is the $k$-fold suspension of the Thom space $D(\xi)/S(\xi)$ of $\xi$. Define a prespectrum ${\bf T}(\xi)$ starting in degree $m$ so that $T(\xi)_{m+k}=D(\xi\oplus \e^{k})/S(\xi\oplus\e^k)$. This is the suspension spectrum of the formally desuspended usual Thom space:
\[
	T(\xi)=\Sig^{-m}D(\xi)/S(\xi)
\]
It is well-defined on the stable vector bundle associated to $\xi$ and $T_m(\xi)=D(\xi)/S(\xi)$ is $m-1$ connected. If $\xi$ is oriented then the Thom Isomorphism Theorem tells us that the reduced homology of $T(\xi)$ is isomorphic to the homology of the base space of $\xi$.

Associated to any prespectrum $\bf G$ we have the space
\[
	\Omega^\infty {\bf G}:=\colim \Omega^n G_n
\]
We will assume that the maps $G_n\to \Omega G_{n+1}$ are embeddings. Then $\Omega^\infty \bf G$ is an infinite loop space since $\Omega^\infty {\bf G}=\Omega(\Omega^\infty {\bf F})$ where $F_n=G_{n+1}$ is the connected \emph{delooping} of $\bf G$ which we denote ${\bf F}=\Omega^{-1}\bf G$.

The homology/reduced homology groups of a space $X$ with coefficients in the spectrum associated to $\bf G$ are defined to be the homotopy groups:
\[
	H_n(X;{\bf G}):=\pi_n(\Omega^\infty(X_+\wedge {\bf G}))=\colim \pi_{n+k}(X_+\wedge G_k)
\]
\[
	\ov H_n(X;{\bf G}):=\pi_n(\Omega^\infty(X\wedge {\bf G}))=\colim \pi_{n+k}(X\wedge G_k)
\]
where $X_+=X\coprod\ast$ is $X$ with an added disjoint basepoint. Section 8 of \cite{DWW} explains how any homotopy functor $\bf G$ gives an ``excisive'' functor ${\bf G}^\%(X)\simeq X_+\wedge {\bf G}(\ast)$ and when ${\bf G}(X)$ is a spectrum valued functor, meaning $G_n(X)\cong\Omega G_{n+1}(X)$, they used the same notation with a different meaning (note the font difference):
\[
G^\%(X):=\colim\Omega^n (G^\%)_n(X)\simeq \Omega^\infty(X_+\wedge {\bf G}(\ast))
\]
We will always use this second definition: $G^\%(X)=\Omega^\infty{\bf G}^\%(X)$. We also use the notation
\[
	\ov{G}^\%(X):=\Omega^\infty(X\wedge {\bf G}(\ast))
\]
so that $G^\%(X)=\ov{G}^\%(X_+)$. Then $\ov{G}^\%$ is a functor that takes cofibration sequences to fibration sequences and homotopy push-out squares to homotopy pull-back squares. In particular:
\begin{equation}\label{eq:ov(G)(X v Y)=ov(G)X x ov(G)Y}
\ov{G}^\%(X\vee Y)\simeq \ov{G}^\%(X)\times\ov{G}^\%(Y)
\end{equation}
\[
	\ov{G}^\%( X)\simeq\Omega\ov{G}^\%(\Sig X)
\]
\[
	G^\%(T(\xi))\simeq \Omega^{\dim\xi} G^\%(D(\xi)/S(\xi)).
\]
For any spectrum valued functor $\bf G$ so that $G_n$ is $n-1$ connected for all sufficiently large $n$, we call $G^\%$ a \emph{homology functor} and $\ov G^\%$ a \emph{reduced homology functor}.

 %
 %

\subsection{Fiberwise homology}\label{subsec22}

Suppose that $X\to M\xrarrow{p} B$ is a fiber bundle where $B$ is a compact oriented smooth $q$-manifold. Then let $G^\%_B(M)$ be the bundle over $B$ with fiber $G^\%(X)$. Since the fibers are pointed, this bundle has a trivial section.

If $A\subseteq B$ is a cofibration let $\Gamsub{B}{A} G^\%_B(M)$ be the space of sections of $G^\%_B(M)$ which are trivial on $A$. This is an infinite loop space since
\[
	\Gamsub{B}{A}G^\%_B(M)\cong \Omega^k\Gamsub{B}{A}(\Omega^{-k}G)^\%_B(M)
\]
where $\Omega^{-k}{G}=\{G_{k+n}\}$. In particular, $\pi_0\Gamsub{B}{A}G^\%_B(M)$ is an abelian group.

%
%
 
\subsubsection{basic homotopy calculation}

Suppose that $\d B$ is a union of two $q-1$ dimensional submanifolds $\d_0 B,\d_1B$ which meet along their common boundary $\d\d_0B=\d\d_1B$. For any $A\subseteq B$ we use the notation $M_A=p^{-1}(A)$. Then

\begin{thm}[basic homotopy calculation]\label{thm: homotopy type of space of sections} For any homology functor $G^\%$, there is a natural homotopy equivalence
\[
\f_B:\Gamsub{B}{\d_0 B}G^\%_B(M) \simeq \Omega^{q}\ov{G}^\%(T(\xi)/T(\xi_1))
\]
where $T(\xi)$ is the Thom space of $\xi$, the pull-back to $M$ of the stable normal bundle of $B$, and $T(\xi_1)\subseteq T(\xi)$ is the subspace given by restricting $\xi$ to $M_{\d_1B}$.
\end{thm}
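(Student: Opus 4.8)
\emph{Proof strategy.} The plan is to deduce this from a fiberwise form of Atiyah--Lefschetz duality, in two steps: first construct the comparison map $\f_B$ explicitly by a Pontryagin--Thom collapse, so that its naturality is automatic, and then prove it is a homotopy equivalence by a Mayer--Vietoris induction over a handle decomposition of $B$, using the push-out/pull-back behaviour of $\ov G^\%$ recorded in \S\ref{subsec21}.

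\emph{Construction of $\f_B$.} Fix a smooth embedding $B\subseteq\RR^{q+k}$ with normal bundle $\nu=\nu_B$ of rank $k$, and a closed tubular neighbourhood identified with the disk bundle $D(\nu)$, so that $S(\nu)$ is its frontier. Then $\xi=p^\ast\nu$ is a rank $k$ bundle over $M$, the disk bundle $D(\xi)\to D(\nu)$ is the pull back of $M\to B$ along $D(\nu)\to B$, and $D(\xi)/S(\xi)$ is a space whose formal $k$-fold desuspension is, by definition, $T(\xi)$. Hence
\[
\Omega^{q+k}\ov G^\%\bigl(D(\xi)/S(\xi)\bigr)\simeq\Omega^{q}\ov G^\%(T(\xi)),
\]
and collapsing in addition the part of $D(\xi)/S(\xi)$ lying over $M_{\d_1B}$ yields $\Omega^{q}\ov G^\%(T(\xi)/T(\xi_1))$. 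Now let $s$ be a section of $G^\%_B(M)=\Omega^\infty_B{\bf G}^\%_B(M)$ which is trivial over $\d_0B$. Pull $s$ back over the tubular neighbourhood $D(\nu)$ and precompose with the Thom collapse $c\colon S^{q+k}\to D(\nu)/S(\nu)$, also collapsing the part of $D(\nu)$ over $\d_0B$ (legitimate since $s$ is trivial there). Because the fiber of ${\bf G}^\%_B(M)$ over $b$ is $(X_b)_+\wedge{\bf G}(\ast)$, the bundle structure converts this data into a point of $\Omega^{q+k}\ov G^\%(D(\xi)/S(\xi))$ whose image lies in the subspace cut out by $M_{\d_1B}$; this is $\f_B(s)$. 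By construction $\f_B$ is natural in the bundle $M\to B$ and in the splitting $\d B=\d_0B\cup\d_1B$.

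\emph{The induction.} Choose a handle decomposition of $B$ built on a collar $\d_0B\times I$ (on $\emptyset$ if $\d_0B=\emptyset$), with each handle $h=D^i\times D^{q-i}$ attached along $C=S^{i-1}\times D^{q-i}$ inside the free part of the boundary, and induct on the number of handles. Passing from a handlebody $B'$ to $B=B'\cup_C h$ leaves $\d_0B$ unchanged and moves the belt region $D^i\times S^{q-i-1}$ into $\d_1B$. On the domain side, restriction of sections gives a fibration $\Gamsub{B}{\d_0B}G^\%_B(M)\to\Gamsub{B'}{\d_0B}G^\%_{B'}(M_{B'})$ whose fiber is $\Gamma_{(h,C)}G^\%_h(M_h)\simeq\Omega^{i}G^\%(X)$, because $M_h\simeq h\times X$ and $(h,C)\simeq(D^i,S^{i-1})$. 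On the target side, $D(\xi)/S(\xi)$ taken relative to $M_{\d_1B}$ is the corresponding homotopy push-out of the Thom spaces of the restrictions of $\xi$ to $B'$, $h$ and $C$; applying $\ov G^\%$ (which carries homotopy push-outs to homotopy pull-backs, as does $\Omega^{q}$) and using $G^\%(T(\eta))\simeq\Omega^{\dim\eta}G^\%(D(\eta)/S(\eta))$ to absorb the codimension $k$, one finds that the new handle again contributes $\Omega^{i}G^\%(X)$. Since $\f$ is natural it carries the first configuration to the second, so $\f_B$ is an equivalence provided $\f_{B'}$ is (induction) and $\f$ is already an equivalence over a single handle and over the collar. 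Over the collar $\d_0B\times I$ both sides are contractible; over a single handle $h\cong D^q$ with $\d_1B=\d D^q$ both sides are $G^\%(X)$ --- the domain because $h$ is contractible, the target because $T(\xi)/T(\xi_1)\simeq\Sig^{\infty}(M_h/M_{\d D^q})=\Sig^{q}\Sig^{\infty}X_+$, so $\Omega^{q}\ov G^\%$ of it is $G^\%(X)$ --- and one checks directly that $\f_{D^q}$ realizes this identification (equivalently one reduces further to $B=\ast$, where $\xi=\e^k$, $T(\xi)=\Sig^{\infty}M_+$, and both sides equal $G^\%(X)$).

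\emph{Main obstacle.} The delicate point is entirely the bookkeeping in the inductive step: on the domain side an index $i$ handle contributes $\Omega^{i}$ of the fiber, while on the Thom-space side it dualizes to a cell of complementary type, so one must organize the handle decomposition together with the evolving splitting $\d B=\d_0B\cup\d_1B$ so that the push-out building $D(\xi)/S(\xi)$ relative to $M_{\d_1B}$ matches, handle by handle, the Mayer--Vietoris assembly of the section space, and so that the collapse map defining $\f_B$ intertwines the two gluings --- the attaching region $C$ lies in the domain's constraint locus but is a gluing region on the Thom side, whereas the belt region passes into $\d_1B$ and is collapsed. Everything else --- commuting $\Gamma_B$ with $\Omega^\infty$ over the finite complex $B$, the fact that restriction maps of section spaces are fibrations, and the push-out/pull-back behaviour of $\ov G^\%$ --- is standard and already recorded in \S\ref{subsec21}. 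A shorter but less self-contained route is to invoke relative parametrized Atiyah--Lefschetz duality: applied to the fiberwise suspension spectrum of $M\to B$ smashed with the constant spectrum ${\bf G}(\ast)$, it identifies $\Gamsub{B}{\d_0B}G^\%_B(M)$ with $\Omega^q$ of $\ov G^\%$ of the Thom spectrum of $p^\ast(-\tau_B)$ over $M$ relative to $M_{\d_1B}$, which is exactly $\Sig^{-q}(T(\xi)/T(\xi_1))$.
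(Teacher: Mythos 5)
Your argument is correct in outline, but organizes the proof differently from the paper. The paper proceeds in three reductions: Lemma \ref{sections lemma} treats the case $B\subset D^q$ of codimension $0$ with $\d_0B=\emptyset$, by Mayer--Vietoris over a good covering of $B$ by closed $q$-disks, using that the functors $X\mapsto\Gam_X G^\%_X(M_X)$ and $\Omega^q\ov G^\%$ carry cofiber squares to fiber squares; the relative case $\d_0B\neq\emptyset$, still codimension $0$, is reduced to this by attaching an external collar to $\d_0B$ and comparing the resulting fibration sequences; and a general $(B,\d_0B,M)$ is reduced to codimension $0$ by replacing it with the disk bundle $(D(\nu),D(\nu)_{\d_0B},D(\xi))$ of the normal bundle of an embedding $B\into D^{q+n}$. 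You instead define $\f_B$ at once for general $(B,\d_0B,M)$ by a Pontryagin--Thom collapse on the tubular neighbourhood $D(\nu)$ --- in substance the paper's composite $\b^{-1}\circ\f_{(D(\nu),\d_0)}\circ\a$ --- and prove it is an equivalence by a single induction over a handle decomposition of $B$ built on a collar of $\d_0B$. The two inductions do the same work: your index-$i$ handle contributes $\Gamma_{(h,C)}G^\%_h(M_h)\simeq\Omega^iG^\%(X)$ on the section side, and on the Thom side passing from $B'$ to $B=B'\cup_C h$ changes the relative Thom space by the cofiber $M_h/M_{D^i\times S^{q-i-1}}\simeq S^{q-i}\wedge X_+$, which contributes $\Omega^q\ov G^\%(S^{q-i}\wedge X_+)\simeq\Omega^iG^\%(X)$; naturality of $\f$ then intertwines the two fibration sequences. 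What the paper's good-covering route buys is uniformity: each local piece is a $q$-disk with empty $\d_0$, so the argument never tracks the migrating $\d_0/\d_1$ split or the belt versus attaching-region bookkeeping that you rightly flag as the delicate step, and the $\d_0$-constraint and the codimension are then each handled separately and cleanly by the collar and disk-bundle reductions. What your version buys is one induction in place of three reductions and a closer kinship with the standard handle-by-handle proof of Atiyah--Lefschetz duality, which is the shorter route you mention at the end. One point to make explicit if you flesh this out: your stated elementary case, $h\cong D^q$ with $\d_1h=\d D^q$, is the $0$-handle; for $i>0$ the inductive step uses the variant $(h,\d_0h,\d_1h)=(D^i\times D^{q-i},\,S^{i-1}\times D^{q-i},\,D^i\times S^{q-i-1})$, which the same Thom-space computation handles but which is a genuinely different boundary split from the one you verify directly.
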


The proof of this fact is very similar to the proof of Poincar\'{e} duality and is explained below. Here is the example that we have in mind.

\begin{cor}\label{main homotopy calculation}
Let $\cH(X)$ be the space of stable $h$-cobordisms of $X$. Then there exists a natural isomorphism
\[
	\th_M:\pi_0\Gamsub{B}{\d_0 B}\cH^\%_B(M)\otimes\RR\cong \bigoplus_{k>0}H_{q-4k}(M,M_{\d_1B};\RR)
\]
where $q=\dim B$.
\end{cor}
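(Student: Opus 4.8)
The plan is to feed the homology functor $G^\%=\cH^\%$ into Theorem~\ref{thm: homotopy type of space of sections} and then compute the resulting rational homology of a Thom spectrum. First I would apply that theorem with $G^\%=\cH^\%$ to obtain a natural homotopy equivalence
\[
	\f_B:\Gamsub{B}{\d_0 B}\cH^\%_B(M)\;\simeq\;\Omega^{q}\ov{\cH}^\%\big(T(\xi)/T(\xi_1)\big),
\]
where $\xi$ is the pull-back to $M$ of the stable normal bundle of $B$ and $T(\xi_1)$ is its restriction to $M_{\d_1B}$. Passing to $\pi_0$ and using that for a spectrum $Y$ one has $\pi_n\ov{\cH}^\%(Y)=\pi_n\big(Y\wedge{\bf\cH}(\ast)\big)=\ov H_n\big(Y;{\bf\cH}(\ast)\big)$, the reduced $\cH(\ast)$-homology, this produces a natural isomorphism
\[
	\pi_0\Gamsub{B}{\d_0 B}\cH^\%_B(M)\;\cong\;\ov H_q\big(T(\xi)/T(\xi_1);{\bf\cH}(\ast)\big).
\]

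Next I would rationalize the coefficient spectrum. Since $\cH(\ast)$ has finite type \cite{Dwyer80} and is rationally homotopy equivalent to $BO$ (cf.\ the discussion preceding Corollary~\ref{smooth disk bundles and IK torsion}), its rational homotopy is $\RR$ in each degree $4k$ with $k>0$ and vanishes in all other degrees; hence ${\bf\cH}(\ast)$ splits rationally as the wedge $\bigvee_{k>0}\Sig^{4k}H\RR$ of suspensions of an Eilenberg--MacLane spectrum. Smashing with $Y=T(\xi)/T(\xi_1)$ and taking homotopy groups then gives
\[
	\ov H_q\big(Y;{\bf\cH}(\ast)\big)\otimes\RR\;\cong\;\bigoplus_{k>0}\ov H_{q-4k}(Y;\RR).
\]

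Finally I would invoke the Thom isomorphism. Because $B$ is oriented, $\xi=p^\ast\nu_B$ is an oriented bundle over $M$, so by the Thom isomorphism theorem --- in the normalized form built into the definition of the Thom spectrum ${\bf T}(\xi)$, whose reduced homology is that of the base $M$ --- one has $\ov H_j\big(T(\xi)/T(\xi_1);\RR\big)\cong H_j(M,M_{\d_1B};\RR)$ for every $j$. Combining the three identifications yields the desired natural isomorphism
\[
	\th_M:\pi_0\Gamsub{B}{\d_0 B}\cH^\%_B(M)\otimes\RR\;\cong\;\bigoplus_{k>0}H_{q-4k}(M,M_{\d_1B};\RR),
\]
with naturality following from that of $\f_B$ in Theorem~\ref{thm: homotopy type of space of sections} together with naturality of the Thom isomorphism.

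The step requiring the most care is the rationalization: one must be sure that the finite-type hypothesis genuinely permits ${\bf\cH}(\ast)$ to be split rationally as the indicated wedge of Eilenberg--MacLane spectra concentrated in degrees $4k$ --- i.e.\ that the rational equivalence $\cH(\ast)\simeq_\QQ BO$ passes to spectra and infinite loop spaces in the way needed --- and that the resulting grading shift by $4k$ is exactly the one appearing in the statement. Steps one and three are then essentially formal, modulo the bookkeeping hidden in the conventions for Thom spectra and for $\ov G^\%$ evaluated on spectra.
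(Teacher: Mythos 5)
Your proposal follows essentially the same route as the paper: apply Theorem~\ref{thm: homotopy type of space of sections} with $G^\%=\cH^\%$ to identify $\pi_0\Gamsub{B}{\d_0 B}\cH^\%_B(M)$ with $\ov{H}_{q}(T(\xi)/T(\xi_1);\cH(\ast))$, split the coefficient spectrum rationally into Eilenberg--MacLane pieces in degrees $4k$ (the paper phrases this via the rational equivalence of infinite loop spaces $\cH(\ast)\simeq_\QQ G/O\simeq_\QQ\prod_{k>0}K(\ZZ,4k)$ from \cite{BW87}, which is the same rational type as $BO$ and addresses your caveat about passing the equivalence to spectra), and finish with the Thom isomorphism. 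The argument is correct.
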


\begin{proof}
By the theorem we have
\[
	\pi_0\Gamsub{B}{\d_0 B}\cH^\%_B(M)\cong \ov{H}_{q}(T(\xi)/T(\xi_1);\cH(\ast)).
\]
But we have a rational equivalence of infinite loop spaces \cite{BW87}:
\[
	\cH(\ast)\simeq_\QQ G/O\simeq_\QQ \prod_{k>0} K(\ZZ,4k)
\]
So, rationally we have:
\[
	\ov{H}_{q}(T(\xi)/T(\xi_1);\cH(\ast))\cong_\QQ \bigoplus_{k>0}\ov{H}_{q-4k}(T(\xi)/T(\xi_1))\cong_\QQ \bigoplus_{k>0}H_{q-4k}(M,M_{\d_1B})
\]
using the Thom isomorphism theorem at the last step. Extend scalars to $\RR$ to get the result. The specific choice $\th_M$ of this natural isomorphism is given below.
\end{proof}


\subsubsection{normalization of $\theta_M,\Theta_M$}\label{subsection222}

To make a specific choice for the isomorphism in Corollary \ref {main homotopy calculation} we need a specific rational homotopy equivalence
\[
	\cH(\ast)\simeq_\QQ\prod_{k>0}K(\ZZ,4k)
\]
This is equivalent to choosing a fixed rational cohomology class in $\prod_{k>0}H^{4k}(\cH(\ast);\QQ)$. We take this to be the higher \IK-torsion invariant which is a real cohomology class
\[
	\t^\IK\in \prod_{k>0}H^{4k}(\cH(\ast);\RR)
\]
which comes from an element of $\prod_{k>0}H^{4k}(\cH(\ast);\zeta(2k+1)\QQ)$ by \cite{I:Axioms0}.

With this choice we get a natural isomorphism
\[
	\theta_M:\pi_0\Gamsub{B}{\d_0 B}\cH^\%_B(M)\otimes\RR\xrarrow{\cong}  \bigoplus_{k>0}H_{q-4k}(M,M_{\d_1B};\RR)
\]

\begin{thm}\label{computation of stable smooth structures}
We have a natural isomorphism
\[
	\Theta_M:\pi_0\std{B}{\d_0B}(M)\otimes\RR\cong  \bigoplus_{k>0}H_{q-4k}(M,M_{\d_1B};\RR).
\]
\end{thm}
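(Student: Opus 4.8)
The plan is to simply combine the two main threads developed in the two sections of the paper. From the main smoothing theorem \ref{main smoothing thm} we have a natural homotopy equivalence $\g_M:\std{B}{\d_0B}(M)\simeq\Gamsub{B}{\d_0B}\cH^\%_B(M)$, hence a natural bijection on $\pi_0$; since both sides are infinite loop spaces this is an isomorphism of abelian groups, and tensoring with $\RR$ preserves it. On the other side, Corollary \ref{main homotopy calculation}, together with the normalization chosen in subsubsection \ref{subsection222}, gives a natural isomorphism $\theta_M:\pi_0\Gamsub{B}{\d_0B}\cH^\%_B(M)\otimes\RR\xrarrow{\cong}\bigoplus_{k>0}H_{q-4k}(M,M_{\d_1B};\RR)$. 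So I would define
\[
	\Theta_M:=\theta_M\circ(\g_M)_\ast\otimes\RR,
\]
and the theorem is immediate.

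The steps, in order, would be: (1) invoke Theorem \ref{main smoothing thm} to get $(\g_M)_\ast:\pi_0\std{B}{\d_0B}(M)\xrarrow{\cong}\pi_0\Gamsub{B}{\d_0B}\cH^\%_B(M)$, noting this is a group isomorphism because $\g_M$ is a map of infinite loop spaces (the source is an infinite loop space by stabilization, the target because it is a section space of a bundle of infinite loop spaces, cf. Section \ref{subsec22}); (2) tensor with $\RR$, which is exact, so $(\g_M)_\ast\otimes\RR$ is still an isomorphism; (3) compose with $\theta_M$ from subsubsection \ref{subsection222}; (4) remark that naturality of $\Theta_M$ (with respect to pullback along maps of base spaces, and crucially with respect to stratified embeddings) follows from naturality of each factor — $\g_M$ is natural by Theorem \ref{main smoothing thm} and Corollary \ref{corollary of stratified smoothing theorem}, and $\theta_M$ is natural by construction and by the naturality built into Theorem \ref{thm: homotopy type of space of sections} and the Thom isomorphism.

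There is essentially no obstacle: both hard inputs — the identification of the stable smoothing space with a section space (which rests on immersion theory, the smoothing-of-the-core theorem, Morlet's theorem, and the fibration lemma) and the rational computation of $\pi_0$ of that section space (which rests on Poincar\'e–Lefschetz duality, the Thom isomorphism, and the Bökstedt–Waldhausen rational equivalence $\cH(\ast)\simeq_\QQ G/O\simeq_\QQ\prod_k K(\ZZ,4k)$) — have already been carried out earlier in the excerpt. The only thing worth a sentence of care is the claim that $(\g_M)_\ast$ is a homomorphism of abelian groups and not merely a bijection of sets; this is where one uses that the addition on $\pi_0\std{B}{\d_0B}(M)$ coming from the little cubes operad (Proposition \ref{prop: addition on smooth structures is disjoint union}) is carried by $\g_M$ to the loop-space addition on the section space, which in turn is the one used implicitly when we write $\theta_M$ as a group isomorphism. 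So the single point to verify is the compatibility of the infinite-loop-space structures under $\g_M$, and that is part of the content of Theorem \ref{main smoothing thm}.
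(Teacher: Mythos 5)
Your proposal is correct and is essentially the paper's own proof: the paper defines $\Theta_M=\th_M\circ(\g_M)_\ast$ using the homotopy equivalence $\g_M$ of Theorem \ref{main smoothing thm} and the normalized isomorphism $\th_M$ from Corollary \ref{main homotopy calculation}. Your extra remarks on the compatibility of infinite-loop-space structures and on naturality are sensible elaborations of the same argument, not a different route.
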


\begin{proof} 
Take $\Theta_M=\th_M\circ (\g_M)_\ast$ where $(\g_M)_\ast=\pi_0(\g_M)\otimes \RR$ is the isomorphism induced by the homotopy equivalence
\[
	\g_M:\std{B}{\d_0B}(M)\simeq \Gamsub{B}{\d_0 B}\cH^\%_B(M)
\]
of Theorem \ref {main smoothing thm}.
\end{proof}

In the case when $M\to B$ is a linear disk bundle, the elements of $\std{B}{\d_0B}(M)$ are represented by disk bundles $M'\to B$ which are linear over $\d_0B$. The higher \IK-torsion invariant $\t^\IK(M')\in \bigoplus_{k>0}H^{4k}(B,\d_0B;\RR)$ is zero if $M'$ is a linear bundle over all of $B$. The choice of the isomorphisms $\th_M,\Theta_M$ is designed to make the following equation hold:

\begin{prop}\label{prop: normalization of theta} 
Suppose that $M\to B$ is a linear disk bundle and $M'\to B$ represents an element of $\pi_0\std{B}{\d_0B}(M)$. Then
\[
	D\t^\IK(M')=p_\ast\Theta_M(M')
\]
where $D$ is Poicar\'e duality and $p_\ast$ is the map in homology induced by $p:M\to B$. In other words, the following diagram commutes.
\[
\xymatrix{
\pi_0\std{B}{\d_0B}(M)\otimes\RR\ar[d]^\cong_{\Theta_M}\ar[r]^{\t^\IK}&
	\bigoplus_{k>0}H^{4k}(B,{\d_0B};\RR)\ar[d]_{\cong}^{D} \\
	\bigoplus_{k>0}H_{q-4k}(M,M_{\d_1B};\RR)
	\ar[r]_{\cong}^{p_\ast}&
	\bigoplus_{k>0}H_{q-4k}(B,{\d_1B};\RR)
	}
\]
\end{prop}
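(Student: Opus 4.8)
The plan is to reduce to the universal case $M=B$ --- the unique $D^0$-bundle over $B$, which has trivial vertical tangent bundle --- and then to observe that on that case both sides of the asserted identity are, by construction, the same pullback of the universal higher \IK-torsion class.

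First I would use that $M\to B$ is a linear disk bundle $D(\eta)$, so that $M$ fiberwise deformation retracts onto its zero section $B$. Then every linear disk bundle over $M$ is, after rounding corners, a linear disk bundle over $B$ and conversely, so the two stabilizations agree: there is a canonical identification $\std{B}{\d_0B}(M)=\std{B}{\d_0B}(B)$ under which the smoothing $M'$ of $M$ corresponds to the same bundle $M'$, now regarded as a stable exotic linear disk bundle over $B$ (it is fiberwise tangentially homeomorphic to the linear bundle $M$ and linear over $\d_0B$, so $\t^\IK(M')$ is defined). The fiberwise projection $p\colon M\to B$, viewed as a map of bundles over $B$, induces a map $p_\#$ of $\cH^\%$-section spaces; since $\cH^\%$ is a homology functor and the fiber $D^n$ is contractible, $\cH^\%_B(M)\to\cH^\%_B(B)$ is a fiberwise homotopy equivalence, and $p_\#$ realizes $p_\ast$ on homotopy. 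The construction of $\g$ in Theorem~\ref{main smoothing thm} (built from the elemental case of Corollary~\ref{smoothing theorem elemental case} via $\cH^\%(D^n)\simeq\cH^\%(\ast)$) is compatible with $p$, and $\th$ (Corollary~\ref{main homotopy calculation}) is compatible with $p$ by naturality of the duality $\f_B$ of Theorem~\ref{thm: homotopy type of space of sections} and of the Thom isomorphism; so the diagram
\[
\xymatrix{
\std{B}{\d_0B}(M)\ar[d]_{p_\#}^\cong\ar[r]^{\g_M}_\simeq& \Gamsub{B}{\d_0B}\cH^\%_B(M)\ar[d]^{p_\#}\ar[r]^{\th_M} & \bigoplus_{k>0}H_{q-4k}(M,M_{\d_1B};\RR)\ar[d]^{p_\ast}\\
\std{B}{\d_0B}(B)\ar[r]^{\g_B}_\simeq& \Gamsub{B}{\d_0B}\cH^\%_B(B)\ar[r]^{\th_B}& \bigoplus_{k>0}H_{q-4k}(B,\d_1B;\RR)
}
\]
commutes (after applying $\pi_0(-)\otimes\RR$ to the first two columns, $\g_M$ and $\g_B$ being homotopy equivalences). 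Its top composite is $\Theta_M$ and its bottom composite is $\Theta_B$, so $p_\ast\Theta_M=\Theta_B\circ p_\#$, and it remains to prove $\Theta_B=D\circ\t^\IK$ on $\pi_0\std{B}{\d_0B}(B)\otimes\RR$, with $\t^\IK$ the isomorphism of Corollary~\ref{smooth disk bundles and IK torsion}.

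For $M=B$ the bundle $\cH^\%_B(B)$ is trivial with fiber $\cH(\ast)$, so $\g_B$ identifies $\pi_0\std{B}{\d_0B}(B)\otimes\RR$ with $[B/\d_0B,\cH(\ast)]\otimes\RR$ (pointed homotopy classes); write $\kappa_{M'}\colon B/\d_0B\to\cH(\ast)$ for the class of $M'$. By Corollary~\ref{smooth disk bundles and IK torsion} and the description of the higher torsion of linear disk bundles as the pullback of a universal characteristic class, $\t^\IK(M')=\kappa_{M'}^\ast(\t^\IK)$, the pullback of the universal class $\t^\IK\in\prod_{k>0}H^{4k}(\cH(\ast);\RR)$, living in $\bigoplus_{k>0}H^{4k}(B,\d_0B;\RR)$. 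On the other hand $\th_B$ is the composite of the duality $\f_B$ of Theorem~\ref{thm: homotopy type of space of sections}, $\Gamsub{B}{\d_0B}\cH^\%_B(B)\simeq\Omega^q\ov{\cH}^\%(T(\xi)/T(\xi_1))$, with the rational equivalence $\cH(\ast)\simeq_\QQ\prod_{k>0}K(\ZZ,4k)$ (the Farrell--Hsiang computation) \emph{normalised in Section~\ref{subsection222} by this very class $\t^\IK$}, and with the Thom isomorphism for the oriented stable normal bundle $\xi$ of $B$. Unwinding these, $\th_B(\g_B)_\ast[M']$ is the image of $\kappa_{M'}^\ast(\t^\IK)$ under the composite of the Atiyah (Spanier--Whitehead) duality built into $\f_B$ with the Thom isomorphism, and this composite is, with the orientation conventions in force, precisely Poincar\'e--Lefschetz duality $D\colon H^{4k}(B,\d_0B;\RR)\xrarrow{\cong}H_{q-4k}(B,\d_1B;\RR)$. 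Hence $\Theta_B=D\circ\t^\IK$, and combining with the first step, $D\t^\IK(M')=p_\ast\Theta_M(M')$.

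The hard part will be purely bookkeeping: checking that the universal torsion class used to normalise $\th$ in Section~\ref{subsection222} is literally the class that defines $\t^\IK$ on disk bundles in Corollary~\ref{smooth disk bundles and IK torsion}, and matching the orientation and sign conventions so that the composite of $\f_B$ with the Thom isomorphism is the duality map $D$ in the sense used in the statement. Both normalisations were chosen to make the square commute, so once conventions are fixed there is no genuine geometric obstacle --- the only external inputs are the naturality/compatibility statements for $\g$ and $\th$ already available, now invoked for the single map $p\colon M\to B$.
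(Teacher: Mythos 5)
Your proposal is correct and is essentially the paper's own argument in expanded form: the paper proves this in one line by observing that $\th_M$ (and hence $\Theta_M$) was normalized in Section \ref{subsection222} by the very class $\t^\IK\in\prod_{k>0}H^{4k}(\cH(\ast);\RR)$, so that the identity holds by construction once one notes $\t^\IK(M)=0$ for linear bundles (making the relative torsion equal the absolute torsion $\t^\IK(M')$). Your reduction to the universal $D^0$-bundle $B\to B$ and the unwinding of $\f_B$, the Thom isomorphism, and the universal class is exactly the bookkeeping the paper leaves implicit in the phrase ``designed to make the equation hold.''
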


\begin{proof}
This follows from the definition of $\Theta_M$ and the fact that $\t^\IK(M)=0$ for linear disk bundles making $\t^\IK(M',M)=\t^\IK(M')-\t^\IK(M)=\t^\IK(M')$.
\end{proof}

\begin{rem}
The immediate corollary that $\t^\IK$ is an isomorphism in the above diagram is reformulation of the result of Farrell-Hsiang \cite{FarrellHsiangDiffDn} using the properties of higher torsion (see, e.g., \cite{I:Axioms0}).
\end{rem}

%
%
 
\subsection{Proof of Theorem \ref{thm: homotopy type of space of sections}}\label{subsec23}

 We prove the theorem first in the special case when $B$ is a compact $q$-manifold embedded in $D^q$ and $\d_0B$ is empty (so $\d_1B=\d B$). In that case the normal bundle of $B$ is trivial, so $T(\xi)=M_+$ and $T(\xi)/T(\xi_1)=M/M_{\d B}$. Let
 \[
 	\f_B:\Gam_BG^\%_B(M)\to\Omega^q\ov{G}^\%(M/M_{\d B})
 \]
 be the map given as follows.
 Take the inclusion of $M$ into the trivial bundle $B\times M$ via the map $(p,id_M):M\to B\times M$. This induces a map
\[
	\psi_B:\Gam_BG_B^\%(M)\to \Gam_BG_B^\%(B\times M)=Map(B,G^\%(M)).
\]
For any $\g\in \Gam_BG_B^\%(M)$ the mapping $\psi_B(\g):B\to G^\%(M)$ sends $\d B$ into $G^\%(M_{\d B})$. So, it induces a mapping
\[
	\f_B(\g):D^q/S^{q-1}\to D^q/(D^q\backslash int B)=B/\d B\xrarrow{\psi_B(\g)}  \ov{G}^\%(M/M_{\d B})
\]
representing an element of $\Omega^q\ov{G}^\%(M/M_{\d B})$. In the relative case, $\psi_{(B,\d_0B)}(\g)$ sends $\d_1B$ into $G^\%(M_{\d_1B})$ and $\d_0B$ into $\ast=G^\%(\emptyset)\subset G^\%(M_{\d_1B})$. So $\psi_{(B,\d_0B)}(\g)$ induces a mapping
\[
	\f_{(B,\d_0B)}(\g):D^q/S^{q-1}\to B/\d B\to  \ov{G}^\%(M/M_{\d_1 B})
\]
giving an element of $\Omega^q\ov{G}^\%(M/M_{\d_1 B})$.

\begin{lem}\label{sections lemma}
Suppose that $B$ is a compact $q$-manifold embedded in $D^q$. Then the mapping
\[
	\f_B:\Gam_BG^\%_B(M)\to\Omega^q\ov{G}^\%(M/M_{\d B})
\]
described above is a homotopy equivalence.
\end{lem}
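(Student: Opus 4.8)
The plan is to mimic the standard inductive proof of Poincar\'e--Lefschetz duality, with the reduced homology functor $\ov G^\%$ playing the role of ordinary homology. For the induction to close up it is cleanest to prove, all at once, the relative statement of Theorem~\ref{thm: homotopy type of space of sections} in the present ($\xi$ trivial) setting: for every compact codimension-$0$ submanifold $B\subseteq D^q$ with a splitting $\d B=\d_0B\cup\d_1B$ along a corner, the map $\f_{(B,\d_0B)}\colon\Gam_{(B,\d_0 B)}G^\%_B(M)\to\Omega^q\ov G^\%(M/M_{\d_1 B})$ constructed above is a homotopy equivalence; taking $\d_0B=\emptyset$ gives the lemma. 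Since $\f$ is manifestly natural in $B$ and in $M$, only the equivalence needs proof, and I would establish it by induction on the number of handles in a handle decomposition of $B$ (equivalently, on the size of a cover of $\mathrm{int}\,B$ by charts).

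\emph{Base case.} When $B$ is a single handle, after rounding corners $B\cong D^q$ with $\d_0B,\d_1B$ complementary unions of faces, and the bundle is trivial, $M\cong B\times X$. The section space is $\Map_{\ast}(B/\d_0B,\,G^\%(X))$, which is $\simeq G^\%(X)$ if $\d_0B=\emptyset$ and contractible otherwise (then $B/\d_0B\simeq\ast$). On the target side $M/M_{\d_1B}\cong(B/\d_1B)\wedge X_+$, which is $\Sig^q(X_+)$ if $\d_0B=\emptyset$ and contractible otherwise; iterating the suspension equivalence $\ov G^\%(Y)\simeq\Omega\ov G^\%(\Sig Y)$ gives $\Omega^q\ov G^\%(\Sig^qX_+)\simeq\ov G^\%(X_+)=G^\%(X)$. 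Unwinding the construction, $\f_{(B,\d_0B)}$ is (up to these identifications) the iterated structure map of the spectrum underlying $\ov G^\%$, hence an equivalence; the case $\d_0B\neq\emptyset$ is trivially an equivalence of contractible spaces. A finite disjoint union of handles is handled the same way using \eqref{eq:ov(G)(X v Y)=ov(G)X x ov(G)Y}.

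\emph{Inductive step.} Write $B=B'\cup h$, with $h$ a handle attached to $\d_1B'$ along a collared codimension-$0$ piece $B_0\subseteq\d_1B'$, and choose $\d_0h,\d_0B_0$ compatibly. Restriction of sections then presents $\Gam_{(B,\d_0B)}G^\%_B(M)$ as the homotopy pullback of $\Gam_{(B',\d_0B')}G^\%_{B'}(M)$ and $\Gam_{(h,\d_0h)}G^\%_h(M)$ over $\Gam_{(B_0,\d_0B_0)}G^\%_{B_0}(M)$, the restriction maps being fibrations thanks to the collars. Dually, the decomposition $B=B'\cup_{B_0}h$ induces, after collapsing the appropriate boundary, a homotopy pushout square of relative Thom spaces with total object $M/M_{\d_1B}$; since $\ov G^\%$ --- hence $\Omega^q\ov G^\%$ --- carries homotopy pushouts to homotopy pullbacks, the target of $\f_{(B,\d_0B)}$ is the homotopy pullback of the three corresponding target spaces. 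As $\f$ is natural for restriction to codimension-$0$ pieces, it is a map of these two homotopy-pullback squares that is an equivalence on the three smaller pieces by the inductive hypothesis ($B'$ has one fewer handle, and $h$, $B_0$ reduce to the base case), hence an equivalence on $B$. A finite handle decomposition of $B$ completes the induction.

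\emph{Expected main obstacle.} The homotopy theory here is soft; as with Poincar\'e duality, the real work is geometric bookkeeping. One must set up the handle decomposition, the collars, and the designated faces so that every piece and overlap stays in the class to which the induction applies, so that the section-restriction maps are genuine fibrations (so the strict pullback of section spaces computes the homotopy pullback), and --- the most delicate point --- so that collapsing boundaries turns $B=B'\cup_{B_0}h$ into an honest homotopy pushout of relative Thom spaces, compatibly with $\f$. Once this is arranged, verifying that the two Mayer--Vietoris squares commute with $\f$ and running the corner-by-corner equivalence is routine.
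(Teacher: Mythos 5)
Your strategy is the same Mayer--Vietoris induction the paper uses (decompose $B$ into disk-like codimension-$0$ pieces, base case a trivial bundle over a disk, and match the homotopy pullback of section spaces against the image under $\Omega^q\ov{G}^\%$ of a homotopy pushout of quotient spaces), but the inductive step as you wrote it has a genuine problem of orientation. The square that makes the argument work is the square of \emph{collapse} maps dual to restriction of sections: $M/M_{\d_1B}\to M_{B'}/M_{\ov{\d B'\setminus \d_0B}}$ and $M/M_{\d_1B}\to M_{h}/M_{\ov{\d h\setminus\d_0B}}$, both mapping further to the corresponding quotient for (a collar thickening of) the overlap $B_0$; in the $B'$- and $h$-corners the overlap $B_0$ \emph{is} collapsed, precisely because on the section side the restriction maps impose no triviality along $B_0$. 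This square is a strict (hence homotopy) pushout with $M/M_{\d_1B}$ at the \emph{initial} corner and the overlap quotient at the colimit corner, since the subspace collapsed at the overlap corner is the union of the subspaces collapsed at the two middle corners; applying $\Omega^q\ov{G}^\%$ then exhibits $\Omega^q\ov{G}^\%(M/M_{\d_1B})$ as the homotopy pullback of the other three, matching the section square, and naturality of $\f$ for restriction finishes the step. Your square --- a pushout ``with total object $M/M_{\d_1B}$'', i.e.\ the one induced by the inclusions $M_{B_0}\into M_{B'}$, $M_{B_0}\into M_h$ with $M/M_{\d_1B}$ at the pushout corner --- does not give this: applying $\ov{G}^\%$ to it puts $\ov{G}^\%(M/M_{\d_1B})$ at the \emph{terminal} corner of a homotopy pullback square, from which one cannot conclude it is the homotopy limit of the rest, and its remaining corners (where $B_0$ is not collapsed) do not correspond under $\f$ to the section spaces in your restriction pullback. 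So the key inference ``the target of $\f_{(B,\d_0B)}$ is the homotopy pullback of the three corresponding target spaces'' does not follow from the square you describe.

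There is a second gap: your induction does not close. The overlap of a $k$-handle with the rest of $B$ is (a thickening of) $S^{k-1}\times D^{q-k}$, which for $k\ge 2$ is not a disk, so the claim that ``$h$, $B_0$ reduce to the base case'' is false; moreover when $B$ has only two handles the overlap can itself need two handles, so induction on the number of handles alone never reaches it. This is repairable (e.g.\ a lexicographic induction on the pair consisting of the number of handles and the top index, or separate elemental cases for $S^{k-1}\times D^{q-k+1}$), but the paper sidesteps the issue entirely: it covers $B$ by closed $q$-disks forming a good cover, so every piece and every multiple intersection is again a $q$-disk and hence lies in the base case, proves the absolute statement ($\d_0B=\emptyset$) by induction on the number of disks in the cover, and then deduces the relative Theorem \ref{thm: homotopy type of space of sections} afterwards by an external-collar/fibration-sequence argument rather than carrying the pair $(B,\d_0B)$ through the induction. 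If you rewrite your step with the collapse square (overlap at the colimit corner, $B_0$ collapsed in the middle corners) and replace the handle decomposition by a good cover --- or strengthen the induction as indicated --- your argument becomes the paper's proof.
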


Suppose for a moment that this is true. 

\begin{proof}[Proof of Theorem \ref{thm: homotopy type of space of sections}] 
Consider the next case when $B$ is a compact $q$-manifold embedded in the interior of $D^q$ and $\d B=\d_0B\cup\d_1B$. Let $C\cong \d_0B\times I$ be an external collar neighborhood for $\d_0B$ in $D^q$ so that $J=B\cup C\simeq B$ and $B\cap C=\d_0B$. Then the bundle $M$ over $B$ extends to a bundle $M_J\to J$ which is unique up to isomorphism and the mappings $\f_J,\f_C,\f_{(B,\d_0B)}$ are compatible making the following diagram commute.
\[
\xy
\xymatrix{
{\Gamsub{B}{\d_0 B} G^\%_B(M)}\ar[d]^{\f_{(B,\d B)}}\ar[r]
&
 {\Gam_{J} G^\%_{J}(M_J)}\ar[d]^{\f_J}\ar[r]
 &{\Gam_{C} G^\%_C(M_C)}\ar[d]^{\f_C}
 \\
{\Omega^q\ov G^\%(M/M_{\d_1B})} \ar[r] 
&
 {\Omega^q\ov{G}^\%(M_J/M_{\d J})}\ar[r] 
 &
{\Omega^q\ov{G}^\%(M_C/M_{\d C})}
}
\endxy
\]
The top row is a fibration sequence since $\Gamsub{B}{\d_0B}G^\%_B(M)=\Gamsub{J}{C}G^\%_J(M_J)$ is the fiber of the restriction map ${\Gam_{J} G^\%_{J}(M_J)}\to {\Gam_{C} G^\%_C(M_C)}$ and the bottom row is a fibration sequence since
\[
	M/M_{\d_1B}\to M_J/M_{\d J} \to M_C/M_{\d C}
\]
is a cofibration sequence. Since $\f_J,\f_C$ are homotopy equivalences by the lemma above, the induced map $\f_{(B,\d_0B)}$ is also a homotopy equivalence and $T(\xi)/T(\xi_1)=M/M_{\d_1B}$, so the theorem holds in this case.

In the general case we choose an embedding $B^q\hookrightarrow D^{q+n}$ and let $\nu$ be the $n$-dimensional normal bundle of $B$. Let $\xi$ be the pull back of $\nu$ to $M$ and let $D(\nu),S(\nu),D(\xi),S(\xi)$ be the corresponding disk and sphere bundles. Then $D(\xi),S(\xi)$ are the pull-backs of $D(\nu),S(\nu)$ to $M$ and therefore, $D(\xi)\to D(\nu)$ is a fibration with fiber $X$ and $S(\xi)=D(\xi)_{S(\nu)}$. Since $\d_1D(\nu)=S(\nu)\cup D(\nu)_{\d_1B}$, we have $D(\xi)_{\d_1D(\nu)}=S(\xi)\cup D(\xi_1)$. So,
\[
	 D(\xi)/D(\xi)_{\d_1D(\nu)}=\Sig^n(T(\xi)/T(\xi_1))
\]
making $\b$ a homotopy equivalence in the diagram below. Since $(B,\d_0 B,M)\simeq (D(\nu),D(\nu)_{\d_0 B},D(\xi))$, $\a$ is also a homotopy equivalence. 

Finally, $D(\nu)$ is a $q+n$ submanifold of $D^{q+n}$ and the closure of the complement of $\d_0D(\nu)=D(\nu)_{\d_0B}$ in $\d D(\nu)$ is $\d_1D(\nu)$. So, the map $\f= {\f_{(D(\nu),D(\nu)_{\d_0 B})}}$ is a homotopy equivalence by Lemma \ref{sections lemma} which we are assuming. Therefore, the composition is a homotopy equivalence, proving the theorem:
\[
\xymatrix{
\Gamsub{B}{\d_0 B}G^\%_B(M) \ar[d]_\a \ar[r]^(.45)\simeq&
\Omega^{q}\ov G^\%(T(\xi)/T(\xi_1)) \\
\Gamsub{D(\nu)}{D(\nu)_{\d_0 B}}G^\%_{D(\nu)}(D(\xi)) \ar[r]^\f & 
\Omega^{q+n}\ov{G}^\%(D(\xi)/D(\xi)_{\d_1D(\nu)})\ar[u]^\b
}
\]
\end{proof}

\begin{proof}[Proof of Lemma \ref{sections lemma}]
Suppose first that $B=D^q$. Then
\[
	\Gam_BG^\%_B(M)\cong Map(D^q,G^\%(X))\simeq G^\%(X)\simeq \Omega^q\ov{G}^\%(\Sig^q(X_+))\cong\Omega^q\ov{G}^\%(M/M_{\d B})
\]
and this homotopy equivalence is given by $\f_B$. 

In general we can choose a finite covering of $B$ by closed $q$-disks $A_i$ which is a ``good covering'' in the sense that the intersection of any finite number of $A_i$ is either empty or homeomorphic to a $q$-disk. Let $C=A_1\cup \cdots\cup A_{k-1}$ and $B=C\cup A_k$. By induction on $k$ we know that $\f_C,\f_{C\cap A_k}$ and $\f_{A_k}$ are homotopy equivalences. Now look at the commuting cubical diagram given by mapping each object of the left hand square to the corresponding object of the right hand square in the following diagrams.
\[
\xy
\xymatrix{
{\Gam_BG^\%_B(M)}\ar[r]\ar[d]
&{\Gam_{A_k}G^\%_{A_k}(M_{A_k})}\ar[d] 
&
{\Omega^q\ov{G}^\%(M|{ B})}\ar[r]\ar[d]
&{\Omega^q\ov{G}^\%(M|{A_k})}\ar[d]
\\
{\Gam_CG^\%_C(M_C)}\ar[r]
& {\Gam_{A_k\cap C}G^\%_{A_k\cap C}(M_{A_k\cap C})} 
& 
{\Omega^q\ov{G}^\%(M|C)}\ar[r] 
&{\Omega^q\ov{G}^\%(M|{A_k\cap C})}
}
\endxy
\]
Here $M|C=M_C/M_{\d C}\cong M/M_{B\backslash int C}$ and similarly for $C$ replaced with $B,A_k,A_k\cap C$. Since the functors $X\mapsto\Gam_X G_X^\%(M_X)$ and $\Omega^n\ov{G}^\%$ send cofiber squares to fiber squares, both squares are fiber squares. This implies that $\f_B:\Gam_BG^\%_B(M)\to \Omega^q\ov{G}^\%(M|{ B})$ is a homotopy equivalence as claimed.
\end{proof}



\subsection{Stratified subbundles}\label{subsec24}

Recall from Definition \ref{def:stratified embedding-immersion pair} that a stratified embedding-immersion pair consists of a codimension $0$ immersion $\ll:(L,\d_1L)\to (B,\d_1B)$ covered by an embedding of smooth bundles $\tilde\ll:E\to W$ of the same dimension. By the Corollary \ref{corollary of stratified smoothing theorem} of the stratified smoothing Theorem \ref{stratified smoothing theorem} we have the following commuting diagram where the maps $\mu,\g$ are homotopy equivalences.
\[
\xymatrix{
\std{L}{\d_0L} (E)\ar[d]^\simeq_{\g_E}\ar[r]^\mu_\simeq &
	\stdone{B} (E^\delta)\ar[d]^\simeq_{\g_{E^\delta}}\ar[r]^(.43)\subseteq&
	\std{B}{\d_0B} (W)\ar[d]^\simeq_{\g_W}\\
\Gamsub{L}{\d_0L}\cH^\%_L(E)\ar[r]^\mu_\simeq &
	\Gam_B\cH^\%_B(E^\delta)\ar[r]^(.43)\subseteq&
	\Gamsub{B}{\d_0B}\cH^\%_B(W)
	}
\]

We need to prove that our calculation of the bottom three terms is compatible with the two arrows.

\begin{thm}[stratified homotopy calculation]\label{stratified homotopy calculation} The following diagram commutes for any homology theory $G^\%$.
\[
\xymatrix{
\Gamsub{L}{\d_0L}G^\%_L(E)\ar[d]^\simeq_{\f_L}\ar[r]^\mu_\simeq &
	\Gam_BG^\%_B(E^\delta)\ar[r]^(.43)\subseteq&
	\Gamsub{B}{\d_0B}G^\%_B(W)\ar[d]^\simeq_{\f_B}\\
\Omega^q\ov G^\%(T(\xi_E)/T(\xi_{\d_1E})) \ar[rr]^{\tilde\ll_\ast}& 
	&
	\Omega^q\ov G^\%(T(\xi)/T(\xi_{\d_1W}))
	}
\]
Here $\xi=p^\ast\nu_B$ is the pull-back of the normal bundle $\nu_B$ of $B$ to $W$ and $\xi_E=\tilde\ll^\ast\xi$. The bottom arrow is induced by the inclusion $T(\xi_E)\into T(\xi)$ given by $\tilde\ll:E\to W$. The mapping $\mu$ is the natural homotopy equivalence described below.
\end{thm}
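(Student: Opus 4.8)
The plan is to recall the transport map $\mu$ and then prove the square commutes by induction over the simplices of a triangulation of $B$ compatible with $\ll$, mirroring the proof of the stratified smoothing Theorem \ref{stratified smoothing theorem}; the base case over a single simplex will be reduced to elementary naturality properties of the equivalence $\f$ of Theorem \ref{thm: homotopy type of space of sections}. So, first fix a smooth triangulation of $B$ for which $\#\ll^{-1}(b)$ is constant on each open simplex, $\d_0B$ and $\d_1B$ are subcomplexes, and $\ll$ carries the preimage of each closed simplex $\s$ onto $\s$ by a disjoint union of diffeomorphisms $\ll|_{\s_j}\colon\s_j\xrarrow{\cong}\s$. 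Over a point $b$ in the interior of $\s$ the fibre of $G^\%_B(E^\delta)$ is $G^\%(\coprod_{l\in\ll^{-1}(b)}E_l)\cong\prod_{l\in\ll^{-1}(b)}G^\%(E_l)$ by \eqref{eq:ov(G)(X v Y)=ov(G)X x ov(G)Y}, and $\mu$ sends a section $s$ over $L$ to the section $b\mapsto(s(l))_{l\in\ll^{-1}(b)}$. This is continuous across the walls $\ll(\d_0L)$ precisely because $s$ vanishes on $\d_0L$ (a preimage point leaving through $\d_0L$ then contributes the basepoint), and $\mu(s)$ vanishes over $\d_0B$ automatically since $E^\delta$ is empty there; thus $\mu$ followed by the evident inclusion of $\Gam_BG^\%_B(E^\delta)$ into $\Gamsub{B}{\d_0B}G^\%_B(W)$ is the top row of the diagram.

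Next I would run the simplex induction. Let $A$ be a subcomplex of $B$ containing the $(k-1)$-skeleton and $\s$ a $k$-simplex. Exactly as in the proof of Theorem \ref{stratified smoothing theorem} there is a map of fibration sequences whose two outer columns are covered, by induction, by the corresponding Thom-space maps; hence it suffices to treat the fibre column, namely the square relating $\mu$, $\f$ and $\tilde\ll_\ast$ over $\s$, after the identification $\Gamsub{A\cup\s}{A}G^\%_B(E^\delta)\cong\Gamsub{\s}{\d\s}G^\%_\s(\bar E_\s)$, where $\bar E_\s=\coprod_j\tilde\ll(E|_{\s_j})$ --- the disjoint union over the lifts $\s_j$ of $\s$ not contained in $\d_0L$ --- is a genuine codimension-$0$ subbundle of $W_\s$. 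Under the bundle splitting $G^\%_\s(\bar E_\s)=\prod_jG^\%_\s(\tilde\ll(E|_{\s_j}))$ and the identification of the source with $\prod_j\Gamsub{\s_j}{\d\s_j}G^\%_{\s_j}(E|_{\s_j})$, the restricted $\mu$ is the product over $j$ of the transport isomorphisms along $\ll|_{\s_j}$, and it is followed by the inclusion $\Gamsub{\s}{\d\s}G^\%_\s(\bar E_\s)\into\Gamsub{\s}{\d\s}G^\%_\s(W_\s)$; correspondingly, $\tilde\ll_\ast$ over $\s$ is the map on $\ov G^\%$ induced by $\tilde\ll\colon\coprod_jE|_{\s_j}=E|_{\ll^{-1}(\s)}\to W_\s$ on the relevant Thom spaces.

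For the base case over $\s$ I would invoke three properties of the natural equivalence $\f_{(-)}$, all immediate from its construction (the fibrewise embedding $M\into B\times M$ inducing $\psi_B$ on section spaces, followed by the Pontryagin--Thom collapse of Lemma \ref{sections lemma}): (i) $\f$ is natural for homeomorphisms of the base, which turns the transport along each diffeomorphism $\ll|_{\s_j}$ into the map induced on $\ov G^\%$ by the total-space homeomorphism $E|_{\s_j}\xrarrow{\cong}\tilde\ll(E|_{\s_j})$; (ii) $\f$ is compatible with the splitting \eqref{eq:ov(G)(X v Y)=ov(G)X x ov(G)Y} and the corresponding disjoint-union splitting of section spaces, which assembles the factors over the various $\s_j$ into the map induced by $\tilde\ll\colon\coprod_jE|_{\s_j}\to W_\s$; and (iii) $\f$ is natural in the bundle variable, so that for the genuine codimension-$0$ subbundle inclusion $\bar E_\s\into W_\s$ over the fixed base $\s$ the square relating the equivalences $\f$ for the two bundles commutes. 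Chaining (i)--(iii) shows the base-case square factors precisely as $\tilde\ll_\ast$ restricted over $\s$, hence commutes, and reassembling over all simplices gives the theorem.

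I expect the real work to be bookkeeping rather than a single geometric difficulty: one must check that $\mu$ and $\tilde\ll_\ast$ are compatible with the restriction maps defining the fibration sequences, so that the three-stage ladder over $(\s,A\cup\s,A)$ commutes stage by stage; that the triangulation can be chosen with all the stated constancy and transversality properties, including that the closure of a lift $\s_j$ meeting $\d_0L$ still maps diffeomorphically onto $\s$; and that the stabilization $B\hookrightarrow D^{q+n}$ needed to trivialize the normal bundle is already absorbed in Theorem \ref{thm: homotopy type of space of sections} applied over each disk-shaped simplex, so it need not be performed by hand. The one place where genuine care is required is the behaviour of $\mu$ near $\ll(\d_0L)$, where the cardinality of $\ll^{-1}(b)$ jumps: this is exactly where the hypothesis of triviality on $\d_0L$ enters, and it is what makes $\mu(s)$ a well-defined continuous section of the stratified bundle $G^\%_B(E^\delta)$ in the first place.
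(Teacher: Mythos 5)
Your outline has the right ingredients (the pointwise description of $\mu$, reduction to elemental pieces, and naturality of $\f$), but the proposed induction scheme has a structural gap on the Thom--space side of the square. In the proof of Theorem \ref{stratified smoothing theorem}, which you want to mirror, \emph{both} rows of the ladder are section spaces over $B$ restricted to subcomplexes, so restriction automatically produces fibration sequences on both rows. In the present diagram the right-hand vertical $\f_B$ lands in $\Omega^q\ov G^\%(T(\xi)/T(\xi_{\d_1W}))$, which is not a space of sections over $B$: for a proper subcomplex $A$ (of dimension $<q$), or for a $k$-simplex $\s$ with $k<q$, there is no map ``$\f_A$'' or ``$\f_\s$'' into an $\Omega^q\ov G^\%$ of a Thom space in the framework of Theorem \ref{thm: homotopy type of space of sections}, and the collapse maps needed to make the bottom row of your ladder a fibration sequence exist only for codimension-$0$ decompositions of the base (this is exactly why Lemma \ref{sections lemma} and Lemma \ref{second lemma for stratified homotopy calculation} induct over a good covering by closed $q$-disks $N(\s)$, not over simplices rel boundary). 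So the step ``exactly as in the proof of the stratified smoothing theorem there is a map of fibration sequences whose outer columns are covered by the corresponding Thom-space maps'' does not go through as stated: the Thom-space maps for the pieces, and their compatibility with restriction, are precisely what has to be constructed and proved.

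Relatedly, your properties (i)--(iii) of $\f$ are not ``immediate from the construction'' in the generality you need. When $\nu_B$ is nontrivial, $\f_B$ is only defined through the zig-zag $\Gamsub{B}{\d_0 B}G^\%_B(W)\to\Gamsub{D(\nu)}{\d_0}G^\%_{D(\nu)}(D(\xi))\to\Omega^{q+n}\ov G^\%(\,\cdot\,)\leftarrow\Omega^q\ov G^\%(T(\xi)/T(\xi_{\d_1W}))$ coming from an embedding $B\hookrightarrow D^{q+n}$, so naturality under base homeomorphisms (your (i)) and under restriction to pieces depends on choices of embeddings and is itself a claim requiring proof; your final remark that the stabilization ``is already absorbed'' and ``need not be performed by hand'' is where this is being assumed rather than shown. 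The paper's route is to perform the stabilization first: replace $(L,E)\to(B,W)$ by the stratified embedding-immersion pair $(D(\nu_L),D(\xi_E))\to(D(\nu),D(\xi))$, where the base is a codimension-$0$ submanifold of $D^{q+n}$ and $\f$ is given by the explicit Pontryagin--Thom formula; there the commutativity with $\mu$ and with the inclusion is essentially pointwise (this, together with the good-covering induction establishing the equivalences, is the content of Lemma \ref{second lemma for stratified homotopy calculation} and its naturality clauses), and the theorem follows by comparing each term of the stabilized diagram with the corresponding unstabilized term. To salvage your argument you would need to prove those naturality statements for codimension-$0$ restrictions and inclusions after stabilizing --- at which point you would have reproduced the paper's proof rather than a simplex-by-simplex alternative.
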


The proof is given below, following two corollaries and two lemmas.

Since $\ll:L\to B$ is a codimension 0 immersion, the normal bundle of $B$ pulls back to the normal bundle of $L$: $\nu_L=\ll^\ast\nu_B$. Since $p\circ \tilde\ll=\ll\circ\pi:E\to B$, it follows that $\xi_E=\tilde\ll^\ast\xi\cong \pi^\ast \nu_L$. So, both vertical arrows in the diagram are the homotopy equivalences of the previous theorem.

The mapping $\mu$ can be described as follows. For any $b\in B$ let $x_1,\cdots,x_k$ be the elements of $L-\d_0L$ which map to $b$. Then $E^\delta_b=\coprod \tilde\ll(E_{x_i})$. So
\[
	G^\%(E_b^\delta)\simeq \prod G^\%(\tilde\ll E_{x_i})
\]
where the projection map $(E_b^\delta)_+\to (\tilde\ll E_{x_i})_+$ is the identity on $\tilde\ll E_{x_i}$ and sends the other components to the disjoint base point. (Then apply $\ov G^\%(X_+)=G^\%(X)$.)

There is a sixth space which can be inserted in the middle of the bottom arrow of the above diagram: $\Omega^q\ov G^\%(T(\xi_{E^\delta})/T(\xi_{\d_1E^\delta}))$ where $\xi_{E^\delta}$ is the restriction of the bundle $\xi$ to $E^\delta$ and $\xi_{\d_1E^\delta}$ is the restriction of $\xi$ to $\d_1E^\delta=E^\delta\cap W_{\d_1B}$.

The case that interests us is $G=\cH$ where, using the Thom Isomorphism Theorem we have the following.

\begin{cor}\label{cor of stratified homotopy}
The following diagram commutes where both horizontal arrows are induced by the stratified embedding $\tilde\ll:E\to W$.
\[
\xymatrix{
\pi_0\Gamsub{L}{\d_0L}\cH^\%_L(E)\otimes \RR\ar[d]^\cong_{\theta_E}\ar[rr]^{\tilde\ll_\ast} &
	&
	\pi_0\Gamsub{B}{\d_0B}\cH^\%_B(W)\otimes\RR\ar[d]^\cong_{\theta_W}\\
\bigoplus_{k>0}H_{q-4k}(E,\d_1E;\RR) \ar[rr]^ {\tilde\ll_\ast}& 
	&
	\bigoplus_{k>0}H_{q-4k}(W,\d_1W;\RR)
	}
\]
\end{cor}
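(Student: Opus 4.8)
The plan is to deduce the corollary from the stratified homotopy calculation (Theorem~\ref{stratified homotopy calculation}) with $G^\%=\cH^\%$, by applying $\pi_0(-)\otimes\RR$, the natural rational splitting of $\cH(\ast)$, and the Thom isomorphism, exactly as in the proof of Corollary~\ref{main homotopy calculation}. Recall that, with the normalization fixed in Section~\ref{subsection222}, $\theta_E$ is \emph{by definition} the composite
\[
\begin{aligned}
\pi_0\Gamsub{L}{\d_0L}\cH^\%_L(E)\otimes\RR
&\xrarrow{(\f_L)_\ast}
\pi_0\Omega^q\ov{\cH}^\%(T(\xi_E)/T(\xi_{\d_1E}))\otimes\RR\\
&\xrarrow{\cong}
\bigoplus_{k>0}\ov{H}_{q-4k}(T(\xi_E)/T(\xi_{\d_1E});\RR)
\xrarrow{\cong}
\bigoplus_{k>0}H_{q-4k}(E,\d_1E;\RR),
\end{aligned}
\]
where $\f_L$ is the homotopy equivalence of Theorem~\ref{thm: homotopy type of space of sections} (applied to $E\to L$), the first isomorphism uses the rational equivalence $\cH(\ast)\simeq_\QQ\prod_{k>0}K(\ZZ,4k)$ of \cite{BW87} (pinned down by $\t^\IK$), and the last is the Thom isomorphism for the oriented bundle $\xi_E$; and $\theta_W$ is the strictly analogous composite built from $\f_B$, the same rational equivalence, and the Thom isomorphism for $\xi$. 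So it suffices to check that each of these three arrows is natural with respect to $\tilde\ll$.

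For the first arrow this is exactly Theorem~\ref{stratified homotopy calculation} with $G=\cH$: its outer rectangle, with $(\f_L)_\ast$ and $(\f_B)_\ast$ down the sides and the map of Thom spaces $T(\xi_E)/T(\xi_{\d_1E})\to T(\xi)/T(\xi_{\d_1W})$ along the bottom, commutes. (Here one also uses the description of $\mu$ recorded after the statement of that theorem, so that the top map is the composite $\Gamsub{L}{\d_0L}\cH^\%_L(E)\xrarrow{\mu}\Gam_B\cH^\%_B(E^\delta)\subseteq\Gamsub{B}{\d_0B}\cH^\%_B(W)$.) The second arrow is natural in the space variable because the rational splitting $\cH(\ast)\simeq_\QQ\prod K(\ZZ,4k)$ is a map of spectra and hence induces a natural transformation between the reduced homology functors $\ov{\cH}^\%$ and $\ov{(\prod K(\ZZ,4k))}^\%$; consequently the self-map of $\Omega^q\ov{\cH}^\%(-)$ induced by the map of Thom spaces corresponds, on $\pi_0(-)\otimes\RR\cong\bigoplus_k\ov{H}_{q-4k}(-;\RR)$, to the map induced by that same map of Thom spaces. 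For the third arrow: since $\ll\colon L\to B$ is a codimension-$0$ immersion, $\nu_L\cong\ll^\ast\nu_B$ and therefore $\xi_E=\tilde\ll^\ast\xi$, so $\tilde\ll\colon E\to W$ is covered by the tautological bundle map $\xi_E\to\xi$ and the induced map of Thom spaces is precisely the bottom arrow of Theorem~\ref{stratified homotopy calculation}; naturality of the Thom isomorphism for this bundle map, using the compatible orientations of $E$, $W$ and their normal bundles, identifies the induced map on $\ov{H}_{q-4k}$ of Thom spaces with $\tilde\ll_\ast\colon H_{q-4k}(E,\d_1E;\RR)\to H_{q-4k}(W,\d_1W;\RR)$. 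Stacking these three commuting squares gives the corollary.

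The main obstacle is the last point: one must unwind the definition of the bottom arrow of Theorem~\ref{stratified homotopy calculation}, check that the map of Thom spaces it induces really is the one covered by the pullback bundle map $\xi_E\to\xi$, and track orientations through the immersion $\ll$ so that no sign intervenes. Since $\ll$ is a local diffeomorphism onto its image and all the manifolds in sight are oriented, this is bookkeeping rather than a genuine difficulty; it is, however, the only step where compatibility could conceivably fail. (This is also the place where one uses that both $\stdone B(E^\delta)$ and $\Gam_B\cH^\%_B(E^\delta)$ --- and hence the whole picture --- are independent of the decomposition $\d B=\d_0B\cup\d_1B$ as long as $\d_0B$ is disjoint from $\ll(L)$, which permits passing to a triangulation adapted to the immersion when applying Theorem~\ref{stratified homotopy calculation}.)
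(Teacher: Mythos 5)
Your proposal is correct and follows the same route as the paper: the paper obtains this corollary directly from Theorem~\ref{stratified homotopy calculation} with $G=\cH$, combined with the Thom Isomorphism Theorem and the rational splitting $\cH(\ast)\simeq_\QQ\prod_{k>0}K(\ZZ,4k)$ normalized by $\t^\IK$, exactly the three-step composite you unwind. Your explicit naturality checks (for $\f$, for the splitting, and for the Thom isomorphism under the pullback bundle map $\xi_E=\tilde\ll^\ast\xi$) simply spell out what the paper leaves implicit.
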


Combining this with Corollary \ref{corollary of stratified smoothing theorem} we obtain the following.

\begin{cor}\label{main corollary}
The isomorphism $\Theta_W=\th_W\circ(\g_W)_\ast$ is compatible with stratified embeddings $\tilde\ll:E\to W$ in the sense that the following diagram commutes.
\[
\xymatrix{
\pi_0\std{L}{\d_0L} (E)\otimes \RR\ar[d]^\cong_{\Theta_E}\ar[rr]^{\tilde\ll_\ast} &
	&
	\pi_0\std{B}{\d_0B} (W)\otimes\RR\ar[d]^\cong_{\Theta_W}\\
\bigoplus_{k>0}H_{q-4k}(E,\d_1E;\RR) \ar[rr]^ {\tilde\ll_\ast}& 
	&
	\bigoplus_{k>0}H_{q-4k}(W,\d_1W;\RR)
	}
\]
\end{cor}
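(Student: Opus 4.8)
The plan is to obtain Corollary \ref{main corollary} formally, by vertically pasting together the two commuting diagrams already established: Corollary \ref{corollary of stratified smoothing theorem}, which asserts that the smoothing equivalences $\g_E,\g_{E^\delta},\g_W$ are compatible with the stratified embedding $\tilde\ll\colon E\to W$, and Corollary \ref{cor of stratified homotopy}, which asserts that the homotopy-theoretic identification $\theta$ is compatible with it. Recall that by construction $\Theta_E=\theta_E\circ(\g_E)_\ast$ and $\Theta_W=\theta_W\circ(\g_W)_\ast$, where $(\g_E)_\ast=\pi_0(\g_E)\otimes\RR$ and $(\g_W)_\ast=\pi_0(\g_W)\otimes\RR$, and that the top arrow $\tilde\ll_\ast$ in the diagram to be proved is, by definition, $\pi_0(-)\otimes\RR$ applied to the horizontal composite $\std{L}{\d_0L}(E)\xrightarrow{\mu}\stdone B(E^\delta)\hookrightarrow\std{B}{\d_0B}(W)$ of Corollary \ref{corollary of stratified smoothing theorem}.

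First I would apply $\pi_0(-)\otimes\RR$ to the diagram of Corollary \ref{corollary of stratified smoothing theorem} and read off its outer rectangle, obtaining $(\g_W)_\ast\circ\tilde\ll_\ast=\tilde\ll_\ast\circ(\g_E)_\ast$, where the right-hand $\tilde\ll_\ast$ is $\pi_0\otimes\RR$ of the corresponding composite $\Gamsub{L}{\d_0L}\cH^\%_L(E)\xrightarrow{\mu}\Gam_B\cH^\%_B(E^\delta)\hookrightarrow\Gamsub{B}{\d_0B}\cH^\%_B(W)$ on section spaces. Next I would invoke Corollary \ref{cor of stratified homotopy}, which gives $\theta_W\circ\tilde\ll_\ast=\tilde\ll_\ast\circ\theta_E$ with the bottom $\tilde\ll_\ast$ the map induced on relative homology by $\tilde\ll\colon E\to W$. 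Composing the two identities yields
\[
\Theta_W\circ\tilde\ll_\ast=\theta_W\circ(\g_W)_\ast\circ\tilde\ll_\ast=\theta_W\circ\tilde\ll_\ast\circ(\g_E)_\ast=\tilde\ll_\ast\circ\theta_E\circ(\g_E)_\ast=\tilde\ll_\ast\circ\Theta_E,
\]
which is exactly the commutativity asserted in Corollary \ref{main corollary}.

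The one point requiring care — and hence the only (mild) obstacle — is the bookkeeping that the three maps all denoted $\tilde\ll_\ast$ (on stable smoothing spaces, on section spaces of the $\cH^\%$-bundles, and on relative homology) really are the compatible ones: that is, that the horizontal composite of Corollary \ref{corollary of stratified smoothing theorem} and the horizontal arrow of Corollary \ref{cor of stratified homotopy} are linked through the Thom isomorphism used to define $\theta_E$ and $\theta_W$. This is already encoded in Corollary \ref{cor of stratified homotopy}, whose proof via Theorem \ref{stratified homotopy calculation} explicitly tracks the inclusion $T(\xi_E)\hookrightarrow T(\xi)$ through the Thom isomorphism; granting that corollary, the present statement follows with no further work, and I do not foresee any additional difficulty.
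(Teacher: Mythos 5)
Your proposal is correct and is essentially the paper's own argument: the paper obtains Corollary \ref{main corollary} precisely by combining Corollary \ref{corollary of stratified smoothing theorem} with Corollary \ref{cor of stratified homotopy}, i.e., by pasting the two commuting diagrams and using $\Theta=\th\circ(\g)_\ast$. Your extra care about identifying the three maps denoted $\tilde\ll_\ast$ is exactly the bookkeeping the paper's diagrams already encode, so no further work is needed.
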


\begin{lem}
There is a homotopy equivalence $\mu:\Gamsub{L}{\d_0L}G^\%_L(E)\to \Gam_BG^\%_B(E^\delta)$ which sends $\g$ to the section $\mu(\g)$ which sends $b\in B$ to $(\g(x_i))_i\in \prod G^\%(\tilde\ll E_{x_i})$.
\end{lem}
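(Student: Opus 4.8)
The plan is to factor $\mu$ as an identification of two spaces of sections followed by a homotopy equivalence induced by a fiberwise equivalence of bundles over $B$; the latter is a formal consequence of the coproduct property of $G^\%$. Fix $b\in B$ and let $x_1,\dots,x_k$ be the preimages of $b$ under $\ll$ lying in $L\setminus\d_0L$ (possibly $k=0$). The stratified structure gives $E^\delta_b=\coprod_i\tilde\ll(E_{x_i})$, so $E^\delta_b=\emptyset$ and $G^\%(E^\delta_b)=\ast$ precisely when every preimage of $b$ lies in $\d_0L$ or $b\notin\ll(L)$. Since $G^\%(A\sqcup A')=\ov G^\%(A_+\vee A'_+)\simeq\ov G^\%(A_+)\times\ov G^\%(A'_+)=G^\%(A)\times G^\%(A')$ by \eqref{eq:ov(G)(X v Y)=ov(G)X x ov(G)Y}, and since the embedding $\tilde\ll$ restricts to a homeomorphism $E_{x_i}\xrightarrow{\cong}\tilde\ll(E_{x_i})$, one obtains, stratum by stratum of the chosen self-transverse triangulation of $B$, a natural fiberwise homotopy equivalence
\[
	G^\%_B(E^\delta)\ \simeq\ \widetilde G,\qquad \widetilde G_b:=\prod_i G^\%(E_{x_i}).
\]
Being fiberwise and natural, hence compatible with transition functions, this induces a homotopy equivalence $\Gam_B\widetilde G\simeq\Gam_BG^\%_B(E^\delta)$.

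The second step is to identify $\Gamsub{L}{\d_0L}G^\%_L(E)$ with $\Gam_B\widetilde G$. As $\ll$ is a codimension-$0$ immersion it is a local homeomorphism, and the triangulation organizes $L$ into finitely many sheets over $B$; a section of $\widetilde G$ is exactly a continuous rule assigning to each $b$ and each preimage $x_i\in L\setminus\d_0L$ of $b$ an element of $G^\%(E_{x_i})$. Over an open simplex $\ll$ is a finite disjoint union of homeomorphisms, so this datum is tautologically a section of $G^\%_L(E)$ over the part of $L$ lying over that simplex and not in $\d_0L$; the content of the lemma is that these local identifications glue, giving a homeomorphism
\[
	\Gamsub{L}{\d_0L}G^\%_L(E)\ \cong\ \Gam_B\widetilde G,\qquad \g\ \longmapsto\ \bigl(b\mapsto(\g(x_i))_i\bigr).
\]
Over a simplex $\t\subseteq\ll(\d_0L)$ the corresponding sheet of $L$ terminates and, by the stratified bookkeeping $E_\t\subsetneq E_\s|\t$ together with $G^\%(\emptyset)=\ast$, the matching factor of $\widetilde G$ drops out; continuity of a section of $\widetilde G$ across $\t$ therefore forces the unrolled section of $G^\%_L(E)$ to be the trivial section along $\d_0L$, and conversely any section trivial on $\d_0L$ glues back. (This is the section-level analogue of the identification $\std{L}{\d_0}(E)\cong\stdone B(E^\delta)$ noted above.) The composite $\Gamsub{L}{\d_0L}G^\%_L(E)\xrightarrow{\cong}\Gam_B\widetilde G\xrightarrow{\simeq}\Gam_BG^\%_B(E^\delta)$ is the map $\mu$, and on fibers it sends $\g$ to $(\g(x_i))_i\in\prod_iG^\%(\tilde\ll E_{x_i})\simeq G^\%(E^\delta_b)$, as stated.

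The only delicate point is the gluing in the second step: continuity of the unrolling map, and of its inverse, at the frontier where sheets of $L$ merge with one another or run into $\d_0L$. This is cleanest to organize as an induction over the skeleta of the triangulation of $B$, exactly as in the proof of the stratified smoothing Theorem \ref{stratified smoothing theorem}. Over each open simplex the statement is immediate; in the inductive step over a simplex $\s$ relative to a subcomplex $A$ containing the lower skeleton, one uses that the $\Gam$-construction and the unrolling map are compatible with the fibration sequences coming from restriction, so that the middle map is a homotopy equivalence once the two outer ones are. The fiberwise coproduct equivalence of the first step is natural and therefore contributes no further difficulty.
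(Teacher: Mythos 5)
You take a more direct route than the paper does. The paper never proves this lemma in isolation; the equivalence $\Gam_B G^\%_B(E^\delta) \simeq \Gamsub{L}{\d_0L}G^\%_L(E)$ is established inside Lemma \ref{second lemma for stratified homotopy calculation} by showing that both section spaces are equivalent to a common third space $\Omega^q \ov G^\%(E/E_{\d_1 L})$, first in the elemental case (each $\ll_i : L_i \to B$ an embedded $q$-disk, where the identification is tautological), then for general $B \subset D^q$ by a good-covering Mayer--Vietoris, and finally for arbitrary $B$ via the normal bundle embedding in Theorem \ref{stratified homotopy calculation}. You instead directly build the ``unrolling'' identification of section spaces and propose to verify it is an equivalence by induction over the skeleta of the triangulation. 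Both strategies are viable; the paper's indirect one has the advantage that it simultaneously produces the loop-space description needed for Theorem \ref{stratified homotopy calculation}, while yours is leaner if one only wants $\mu$.

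Two things in your argument deserve to be tightened. First, the factorization through $\widetilde G$ is essentially a renaming: $\widetilde G_b$ \emph{is} $G^\%(E^\delta_b)$ after the coproduct-to-product equivalence of \eqref{eq:ov(G)(X v Y)=ov(G)X x ov(G)Y} and the homeomorphisms $\tilde\ll: E_{x_i} \to \tilde\ll(E_{x_i})$, so invoking ``fiberwise homotopy equivalence of bundles implies homotopy equivalence of section spaces'' is overkill and, worse, is not a formal fact here: $G^\%_B(E^\delta)$ is a \emph{stratified} subbundle of the fibration $G^\%_B(W)$, not a fibration in its own right, so you cannot appeal to the standard fiber-homotopy-equivalence principle without some version of the very skeleton induction you defer to later. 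Second, the claimed homeomorphism $\Gamsub{L}{\d_0L} G^\%_L(E) \cong \Gam_B\widetilde G$ is where all the content lives, and the issue is not ``sheets merging'' (a self-transverse immersion has sheets that cross, not merge) but rather continuity across $\ll(\d_0 L)$, where a sheet terminates and the corresponding product factor drops out. This does work because a section of the stratified bundle is by definition a section of $G^\%_B(W)$ landing in the correct subspaces, and continuity there forces the dying factor to go to the basepoint exactly as you want; but this needs to be said, and the skeleton induction needs to actually be set up: show compatibility of the unrolling map with the restriction fibration sequences $\Gamsub{A\cup\s}{A}\to\Gam_{A\cup\s}\to\Gam_A$ on both sides, check the elemental step over a single open simplex (where $\ll$ is a finite disjoint union of homeomorphisms and the identification is tautological), and invoke the five lemma. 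As written, your proof is a credible sketch but defers its only nontrivial step.
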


As before, the proof of the theorem relies on the following lemma which does the case when $B^q$ is embedded in $D^q$.

\begin{lem}\label{second lemma for stratified homotopy calculation}
Suppose that $B^q$ is embedded in the $q$-disk $D^q$. Then
\[
	\Gam_B G^\%_B(E^\delta)\simeq \Gam_{L,\d_0L}G^\%_L(E)\simeq \Omega^q \ov G^\%(E^\delta/E^\delta_{\d B})\simeq \Omega^q \ov G^\%(E/E_{\d_1L})
\]
Furthermore the mapping $\f_B:\Gam_B G^\%_B(E^\delta)\to \Omega^q \ov G^\%(E^\delta/E^\delta_{\d B})$ giving this homotopy equivalence is natural with respect to restriction and inclusion as explained below.
\end{lem}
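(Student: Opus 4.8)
The plan is to run the same argument as the proof of Lemma~\ref{sections lemma}, now carrying the immersion $\ll$ along. First I would pin down the map $\f_B$: a section $\g\in\Gam_BG^\%_B(E^\delta)$ assigns to $b\in B$ a point $\g(b)\in G^\%(E^\delta_b)=\ov G^\%((E^\delta_b)_+)$, and since $(E^\delta_b)_+$ maps naturally (in $b$) to $E^\delta/E^\delta_{\d B}$ via $E^\delta_+\to E^\delta_+/(E^\delta_{\d B})_+$ (using $\ll(L)\cap\d_0B=\emptyset$, so $E^\delta_{\d B}=E^\delta\cap W_{\d_1B}$ and $E^\delta_b\subseteq E^\delta_{\d B}$ for $b\in\d B$), the rule $b\mapsto\g(b)$ defines $\psi_B(\g)\colon B\to\ov G^\%(E^\delta/E^\delta_{\d B})$ carrying $\d B$ to the basepoint; composing $S^q=D^q/S^{q-1}\to D^q/(D^q\setminus\interior B)=B/\d B\xrarrow{\psi_B(\g)}\ov G^\%(E^\delta/E^\delta_{\d B})$ gives $\f_B(\g)\in\Omega^q\ov G^\%(E^\delta/E^\delta_{\d B})$, exactly as in Lemma~\ref{sections lemma}. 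The other two equivalences in the statement are then auxiliary: $\Gam_BG^\%_B(E^\delta)\simeq\Gamsub{L}{\d_0L}G^\%_L(E)$ is the map $\mu$ of the preceding lemma, and $\Omega^q\ov G^\%(E^\delta/E^\delta_{\d B})\simeq\Omega^q\ov G^\%(E/E_{\d_1L})$ will come from a pointed homotopy equivalence $E^\delta/E^\delta_{\d B}\simeq E/E_{\d_1L}$.

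For that last identification: since $\tilde\ll$ is a codimension-$0$ embedding with $\tilde\ll^{-1}(W_{\d_1B})=E_{\d_1L}$, and $E^\delta$ is $\tilde\ll(E)$ with $\tilde\ll(E|_{\d_0L})$ deleted, $\tilde\ll$ identifies $E^\delta$ with $E\setminus E|_{\d_0L}$ and $E^\delta_{\d B}$ with $E_{\d_1L}\setminus E|_{\d_0L}$. Deleting the boundary face $E|_{\d_0L}$ is reversed up to homotopy by pushing inward along a collar $\d_0L\times[0,1)\subset L$, chosen to respect the corner $\d\d L=\d_0L\cap\d_1L$; this deformation retracts the pair $(E\setminus E|_{\d_0L},\,E_{\d_1L}\setminus E|_{\d_0L})$ onto a pair homeomorphic to $(E,E_{\d_1L})$, yielding the desired pointed equivalence of quotients, compatible with $\mu$.

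To show $\f_B$ is a homotopy equivalence I would imitate Lemma~\ref{sections lemma} verbatim. Choose a finite good covering of $B$ by closed $q$-disks $A_i$, fine enough that every $A_i$ and every nonempty finite intersection is a $q$-disk over which $\ll$ restricts to a disjoint union of embeddings---possible because $\ll$ is a proper self-transverse immersion, so each point of $B$ has a neighbourhood over which $\ll^{-1}$ is a disjoint union of embedded sheets. Over such a disk $A$, $\ll^{-1}(A)=\coprod_jL_j$ with each $\ll|_{L_j}$ an embedded $q$-disk, $E^\delta|_A=\coprod_j\tilde\ll(E|_{L_j})$ is a disjoint union of trivial $X$-bundles, and $E^\delta|_A/E^\delta|_{\d A}=\bigvee_j\Sig^qX_+$; so
\[
\Gam_AG^\%_A(E^\delta|_A)\simeq\prod_jG^\%(X)\simeq\prod_j\Omega^q\ov G^\%(\Sig^qX_+)\simeq\Omega^q\ov G^\%\bigl(E^\delta|_A/E^\delta|_{\d A}\bigr),
\]
realized by $\f_A$ as in the base case of Lemma~\ref{sections lemma}. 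Then induct on the number of disks: with $B=C\cup A_k$ and $C\cap A_k$ a disjoint union of disks, both functors $X\mapsto\Gam_XG^\%_X(E^\delta|_X)$ and $X\mapsto\Omega^q\ov G^\%(E^\delta|_X/E^\delta|_{\d X})$ carry the relevant cofiber squares to fiber squares, so the Mayer--Vietoris cube forces $\f_B$ to be an equivalence, and since all of $\d B$ here is $\d_1$, no separate collar trick on $\d_0B$ is needed. Naturality of $\f_B$ with respect to restriction (to a sub-disk or subcomplex) and to the inclusions $E^\delta\subseteq W$, $\tilde\ll\colon E\to W$ is then automatic: it holds tautologically for disks and is propagated by the commuting Mayer--Vietoris diagrams.

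The step I expect to be the real obstacle is the bookkeeping of the $B$-side object $E^\delta$, whose fibre jumps across the strata of the self-intersection locus of $\ll$: one must choose the covering so that $E^\delta$ is genuinely a disjoint union of honest subbundles over every disk and every intersection used in the induction, and then verify that the identifications above actually intertwine the concrete map $\f_B$ with the $L$-side description under $\mu$---i.e.\ that the naturality square commutes on the nose rather than merely matching homotopy types. Both reductions land in the disk case, where the only care needed is with basepoints and with the identification $\ov G^\%(\bigvee_j Y_j)\simeq\prod_j\ov G^\%(Y_j)$ from \eqref{eq:ov(G)(X v Y)=ov(G)X x ov(G)Y}.
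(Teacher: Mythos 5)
Your overall architecture — the same map $\f_B$ as in Lemma \ref{sections lemma}, a base case over disks, a good cover with a Mayer--Vietoris induction, the collar argument identifying $E^\delta/E^\delta_{\d B}$ with $E/E_{\d_1L}$, and naturality read off from the definitions — matches the paper's proof. But your base case has a genuine gap. You assert that over each covering disk $A$ the preimage $\ll^{-1}(A)$ is a disjoint union of sheets $L_j$ each mapping onto $A$, so that $E^\delta|_A$ is a disjoint union of honest bundles over $A$, $E^\delta|_A/E^\delta|_{\d A}=\bigvee_j\Sig^qX_+$, and $\Gam_AG^\%_A(E^\delta|_A)\simeq\prod_jG^\%(X)$. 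This only describes the \emph{full} sheets. Whenever $\d_0L\neq\emptyset$ (the typical case, cf.\ Figure \ref{fig: L to B example}), $\ll(\d_0L)$ is a codimension-one subset of the interior of $B$, so any covering disk meeting it contains sheets that terminate in the interior of $A$ along the deleted face $\d_0L$, where the fibre of $E^\delta$ jumps from the fibre of $E$ to $\emptyset$. No refinement of the cover removes this, so the repair you flag at the end (``choose the covering so that $E^\delta$ is genuinely a disjoint union of honest subbundles over every disk'') is unattainable; self-transversality of $\ll$ controls the multiplicity of sheets but not their partiality. For partial sheets your two formulas are simply wrong: the wedge summand is not $\Sig^qX_+$ and the sectional factor is not $G^\%(X)$. (A smaller slip: the fibre of $E$ can vary from stratum to stratum, so a single $X$ is not available.)

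What is missing is the paper's three-case analysis of an elemental disk. For a sheet $L_i$ entirely interior to $A$ (all of $\d L_i$ deleted), sections must die along the deleted edge, giving $\Map(L_i/\d L_i,G^\%(E_i))\simeq\Omega^qG^\%(E_i)=\Omega^q\ov G^\%(E_{i\,+})$, which matches the wedge summand $(E_i^\delta)_+\simeq (E_i)_+$ rather than $\Sig^qX_+$; for a half-sheet, with $\d_0L_i$ and $\d_1L_i$ both $(q{-}1)$-disks meeting along a $(q{-}2)$-sphere, both the section space and $\Omega^q\ov G^\%(E_i/E_{i\,\d_1L_i})$ are contractible; only for a full sheet ($L_i\cong A$, $\d_0L_i=\emptyset$) does one get $G^\%(E_i)\simeq\Omega^q\ov G^\%(\Sig^q(E_i)_+)$ as you claim. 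Once these three verifications are in place — together with a cover adapted to the stratification, e.g.\ the paper's explicit disks $N(\s)$ built from a triangulation on which the number of $\ll$-preimages is constant on open simplices, which guarantees every sheet over every $N(\s)$ and every intersection is of one of the three types — your Mayer--Vietoris induction and the naturality statements go through exactly as in Lemma \ref{sections lemma}.
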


\emph{Naturality with respect to inclusion} means that the following diagram commutes for any choice of decompositions $\d B=\d_0B\cup\d_1B$ so that $E^\delta_{\d_0B}$ is empty. The horizonal maps in the diagram are induced by the inclusion $E^\delta\into W$.
\[
\xymatrix{
\Gam_B G^\%_B(E^\delta)\ar[r]\ar[d]_{\f_B} &
\Gam_{B,\d_0B} G^\%_B(W) \ar[d]^{\f_{B,\d_0}}\\
	\Omega^q \ov G^\%(E^\delta/E^\delta_{\d B})\ar[r]&
	\Omega^q \ov G^\%(W/W_{\d_1 B})
	}
\]

\emph{Naturality with respect to restriction} means that the following diagram commutes assuming that $A\subseteq B$ is a $q$-submanifold transverse to the image of $\ll: L\to B$. 
\[
\xymatrix{
\Gam_B G^\%_B(E^\delta)\ar[r]\ar[d]_{\f_B} &
\Gam_{A} G^\%_A(E^\delta_A) \ar[d]^{\f_{A}}\\
	\Omega^q \ov G^\%(E^\delta/E^\delta_{\d B})\ar[r]&
	\Omega^q \ov G^\%(E^\delta_A/E^\delta_{\d A})
	}
\]
The top horizontal arrow is given by restriction of sections to $A$ and the bottom horizonal arrow is induced by the quotient map $E^\delta/E^\delta_{\d B}\to E^\delta_A/E^\delta_{\d A}$.

\begin{proof}[Proof of Lemma \ref{second lemma for stratified homotopy calculation}]
The proof is basically the same as the proof of Lemma \ref{sections lemma}. First we consider the elemental case in which $B=D^q$ and $L$ is a disjoint union of $q$-disks $L_i$ with embeddings $\ll_i:L_i\to B$ so that $\ll_i^{-1}(\d B)=\d_1L_i$. Let $E_i\subseteq W$ be the image of $E_{L_i}$. Then each $E_i$ falls into one of three elemental cases:
\begin{enumerate}
\item[(0)] $\d_1L_i$ is empty. Then 
\[
\Gam_B G_B^\%(E_i)\cong \Gam_{L_i,\d L_i} G^\%_{L_i}(E)\cong Map(L_i/\d L_i,G^\%(E))\simeq\Omega^q G^\%(E_i)=\Omega^q \ov G^\%(E_{i\,+})
\]
\item $\d_1L_i$ and $\d_0L_i$ are $q-1$ disks meeting along a $q-2$ sphere. In this case,
\[
	\Gam_B G_B^\%(E_i)\simeq \ast\simeq \Omega^q \ov G^\%(E_i/E_{i\d_1L_i})
\]
\item $\d_1 L_i=\d B=S^{q-1}$ and $\d_0L_i$ is empty. Then $L_i=B$ and
\[
	\Gam_B G_B^\%(E_i)\cong  Map(B,G^\%(E_i))\simeq G^\%(E_i)\simeq \Omega^q \ov G^\%(E_i/E_{i\,\d B})
\]
\end{enumerate}
Therefore,
\[
	\Gam_B G_B^\%(E_i)\simeq \Omega^q\ov G^\%(E_i/E_{i,\d B})
\]
for each $i$ and we conclude that
\[
\Gam_B G_B^\%(E^\delta)\simeq\prod \Gam_B G_B^\%(E_i)\simeq
\prod\Omega^q\ov G^\%(E_i/E_{i,\d B})\simeq
\Omega^q\ov G^\%(E^\delta/E^\delta_{\d B}).
\]

In general we can choose a finite covering of $B$ by closed $q$-disks $A_i$ which is a ``good covering'' in the sense that the intersection of any finite number of $A_i$ is either empty or homeomorphic to an $q$-disk and the restriction of $E$ to each of these disks is elemental as described above. It is easy to do this very explicitly. First subdivide once to make sure the triangulation is sufficiently fine. Choose any fixed positive $\e<1/{q+1}$. For every simplex $\s$ take the set $N(\s)$ of all points $b\in B$ so that $t_i\le\e$ for every barycentric coordinate $t_i$ of $b$ corresponding to a vertex $v_i$ not in $\s$. Then $N(\s)$ is a polyhedron, being given by linear inequalities of barycentric coordinates and it is the closure of its interior which contains $\s$ as a deformation retract and is thus contractible. Therefore $N(\s)$ is a $q$-disk. Also, it is obvious that $N(\s)\cap N(\t)=N(\s\cap \t)$. Also, similar arguments show that each component of $L$ over $N(\s)$ is a $q$ disk and falls into one of the three cases discussed above. So $A_i=N(\s_i)$ form a good covering. 

The rest of the proof is almost word-for-word the same as the second half of the proof of Lemma \ref{sections lemma} except that we need $E$ to be replaced with $E^\delta$ and we need two more commuting squares with $B$ replaced by $L$ and $C$ and $A_k$ replaced by their inverse images in $L$. Then we have four fiber squares in which corresponding terms are homotopy equivalent by induction on $k$ proving the first part of the lemma.

It remains to show that the mapping 
\[
	\f_B:\Gam_B G^\%_B(E^\delta)\to \Omega^q \ov G^\%(E^\delta/E^\delta_{\d B})
\]
which gives the homotopy equivalence is natural with respect to inclusion and restriction. But this follows from the definitions. A section $\g$ of $G^\%_B(E^\delta)$ sends a point $b\in B$ to $\g(b)\in G^\%(E_b)\subseteq G^\%_B(E^\delta)$. The corresponding map $\f_B(\g):D^q/S^{q-1}\to \ov G^\%(E/E_{\d B})$ sends $b\in B\subseteq D^q$ to $\g(b)\in G^\%(E_b)\to \ov G^\%(E/E_{\d B})$. This is clearly compatible with inclusion: we simply map these images into larger sets. This is also compatible with restriction: the points $b\in A$ are sent to the same points as before and $b\notin A$ are sent to the basepoint by both $\g$ and $\f_A(\g)$.
\end{proof}

\begin{proof}[Proof of stratified homotopy calculation \ref{stratified homotopy calculation}]

The proof is analogous to the second paragraph in the proof of Theorem \ref{thm: homotopy type of space of sections}. Choose an embedding $B^q\hookrightarrow D^{q+n}$, take $\nu$ to be the $n$-dimensional normal bundle of $B$ and let $\xi,\nu_L,\xi_E$ be the pull backs of $\nu$ to $W,L,E$ respectively. Let $D(\nu),S(\nu),D(\xi)$, etc. be the corresponding disk and sphere bundles. Then we have a new stratified embedding-immersion pair:
\[
\xymatrix{
	D(\xi_E)\ar[d]\ar[r]^{\tilde\ll'} &
	D(\xi)\ar[d]\\
	D(\nu_L) \ar[r]^{\ll'}&
	D(\nu)
	}
\]
giving a stratified subbundle $D(\xi_E)^\delta\subseteq D(\xi)$. Also, let

$\d_0D(\nu)=D(\nu)_{\d_0B}\simeq \d_0B$

  $\d_0D(\nu_L)=D(\nu_L)_{\d_0L}\simeq \d_0L$.
  
\noi Then:

$\d_1D(\nu)=D(\nu)_{\d_1B}\cup S(\nu)$

$D(\xi)_{\d_1D(\nu)}=D(\xi_{\d_1W})\cup S(\xi)$

$\d_1D(\nu_L)=(\ll')^{-1}\d D(\nu)=D(\nu_L)_{\d_1L}\cup S(\nu_L)$
  
 $D(\xi_E)_{\d_1D(\nu_L)}=D(\xi_{\d_1E})\cup S(\xi_E)$.

Since $D(\nu)$ is $q+n$ manifold in $D^{q+n}$, Lemma \ref{second lemma for stratified homotopy calculation} applies and we get the following commuting diagram where the vertical arrows are homotopy equivalences.
\[
\xymatrix{
\Gamsub{D(\nu_L)}{\d_0}G_{D(\nu_L)}^\%(D(\xi_E))\ar[d]^\simeq_{\f_{D(\nu_L)}}\ar[r]^\mu_\simeq &
	\Gam_{D(\nu)}G^\%_{D(\nu)}(D(\xi_E)^\delta)\ar[r]^\subseteq\ar[d]^\simeq_{\f_{D(\nu)}}&
	\Gamsub{D(\nu)}{\d_0}G^\%_{D(\nu)}({D(\xi)})\ar[d]^\simeq_{\f_{D(\nu),\d_0}}\\
\Omega^{q+n}\ov G^\%\left(\frac{D(\xi_E)}{D(\xi_E)_{\d_1\!D(\nu_L)}}\right) \ar[r]^{\mu'}_{\simeq}& 
	\Omega^{q+n}\ov G^\%\left(\frac{D(\xi_E)^\delta}{D(\xi_E)^\delta_{\d D(\nu)}}\right)\ar[r]^{\tilde\ll_\ast}&
	\Omega^{q+n}\ov G^\%\left(\frac{D(\xi)}{D(\xi)_{\d_1\!D(\nu)}}\right)
	}
\]

Each term in the diagram above is homotopy equivalent to the corresponding term in the following diagram, proving the theorem.

\[
\xymatrix{
\Gamsub{L}{\d_0L}G^\%_L(E)\ar[d]^\simeq_{\f_L}\ar[r]^\mu_\simeq &
	\Gam_BG^\%_B(E^\delta)\ar[r]^(.43)\subseteq\ar[d]^\simeq_{\f_B}&
	\Gamsub{B}{\d_0B}G^\%_B(W)\ar[d]^\simeq_{\f_{B,\d_0\!B}}\\
\Omega^q\ov G^\%\left(\frac{T(\xi_E)}{T(\xi_{\d_1\!E})}\right) \ar[r]^{\mu'}_{\simeq}& 
	\Omega^q\ov G^\%\left(\frac{T(\xi_{E^\delta})}{T(\xi_{\d_1\!E^\delta})}\right)\ar[r]^{\tilde\ll_\ast}&
	\Omega^q\ov G^\%\left(\frac{T(\xi)}{T(\xi_{\d_1\!W})}\right)
	}
\]

\end{proof}

This completes the proof of Theorem \ref{stratified homotopy calculation} which implies Corollary \ref{main corollary} that the rational calculation of the space of stable smooth structures given in Theorem \ref{computation of stable smooth structures} is compatible with stratified embeddings. This is used in the companion paper \cite{First} to transfer our more or less complete understanding of the rational stable exotic smooth structures on disk bundles and their relation to higher Reidemeister torsion to a corresponding understanding of rational stable exotic smooth structures on smooth manifold bundles with odd dimensional fibers.


 %
 %

\bibliographystyle{amsplain}

 %
 %

\end{document}